\def\E{\mathbb{E}}
\def\var{\mathbb{Var}}
\def\dist{\mathrm{dist}}
\def\E{\mathbb{E}}
\def\R{\mathbb{R}}
\def\P{\mathbb{P}}
\def\Z{\mathbb{Z}}
\def\eps{\varepsilon}
\def\del{\delta}
\def\gam{\gamma}
\def\cB{\mathcal {B}}
\def\cC{\mathcal {C}}
\def\cG{\mathcal {G}}
\def\cP{\mathcal {P}}
\def\1{\mathbf{1}}
\def\lam {\lambda}
\def\tce{t_c + \eps}
\def\tce2{t_c + \frac{\eps}{2}}
\def\bT{\mathbb{T}}
\def\var{\text{var}}
\newtheorem*{theorem*}{Theorem}
\newtheorem{theorem}{Theorem}
\newtheorem{lemma}[theorem]{Lemma}
\newtheorem{cor}[theorem]{Corollary}
\newtheorem*{defn*}{Definition}
\newtheorem{assm}{Assumption}
\newtheorem{prop}[theorem]{Proposition}
\newtheorem*{prop*}{Proposition}
\newtheorem{conj}[theorem]{Conjecture}
\newtheorem*{conj*}{Conjecture}
\newtheorem*{fact*}{Fact}
\newtheorem{rmk}{Remark}
\newcommand{\ord}{\mathrm{ord}}
\newcommand{\dis}{\mathrm{dis}}
\newcommand{\err}{\mathrm{err}}
\newcommand{\bydef}{\coloneqq}
\newcommand{\Potts}{\mathrm{Potts}}
\newcommand{\free}{\mathrm{free}}
\newcommand{\wire}{\mathrm{wire}}
\newcommand{\bb}[1]{\mathbb{#1}}
\newcommand{\cc}[1]{\mathcal{#1}}
\renewcommand{\R}{\bb R}
\newcommand{\N}{\bb N}
\newcommand{\ob}[1]{\left(#1\right )} 
\newcommand{\cb}[1]{\left[#1\right ]} 
\newcommand{\abs}[1]{\left\vert#1\right\vert} 
\begin{document}
\title{Finite-size scaling, phase coexistence, and algorithms for the random cluster model on random graphs}

\author{Tyler Helmuth\thanks{Department of Mathematical Sciences, Durham University,  tyler.helmuth@durham.ac.uk.}
\and
Matthew Jenssen\thanks{School of Mathematics, University of Birmingham, m.jenssen@bham.ac.uk.}
\and
Will Perkins\thanks{Department of Mathematics, Statistics, and Computer Science, University of Illinois at Chicago, math@willperkins.org.  Supported in part by NSF grants DMS-1847451 and CCF-1934915.}}

\date{\today}

\maketitle
\thispagestyle{empty}

\begin{abstract}
  For $\Delta \ge 5$ and $q$ large as a function of $\Delta$, we give
  a detailed picture of the phase transition of the random cluster
  model on random $\Delta$-regular graphs.  In particular, we
  determine the limiting distribution of the weights of the ordered
  and disordered phases at criticality and prove exponential decay of
  correlations and central limit theorems away from criticality.
  Our techniques are based on using polymer models and the cluster
  expansion to control deviations from the ordered and disordered
  ground states.  These techniques also yield efficient approximate
  counting and sampling algorithms for the Potts and random cluster
  models on random $\Delta$-regular graphs at \emph{all} temperatures
  when $q$ is large. This includes the critical temperature at which
  it is known the Glauber and Swendsen-Wang dynamics for the Potts
  model mix slowly.  We further prove new slow-mixing results for
  Markov chains, most notably that the Swendsen-Wang dynamics mix
  exponentially slowly throughout an open interval containing the
  critical temperature. This was previously only known at the critical
  temperature.
  
  Many of our results apply more generally to $\Delta$-regular graphs
  satisfying a small-set expansion condition.
\end{abstract}

\section{Introduction}

The random cluster model on a graph $G = (V,E)$ with parameters $q,
\beta\geq 0$ is the measure $\mu_G$ on $\{0,1\}^E$ with
\begin{equation}
  \label{eq:PF}
\mu_G(A) \bydef \frac{q^{c(A) } (e^{\beta}-1)^{|A|}}{ Z_G(q,\beta)}\, ,
\qquad
Z_G(q,\beta) \bydef \sum_{A \subseteq E} q^{c(A)} (e^{\beta}-1)^{| A|}\,  ,
\end{equation}
where $c(A)$ is the number of connected components of $(V,A)$.
Setting $p \bydef 1- e^{-\beta}$ gives a description of $\mu_G$ as a
tilted bond percolation model with edge probability $p\in \cb{0,1}$:
\begin{equation*}
  \label{eq:PFp}
  Z_G(q,- \log (1-p)) = \sum_{A \subseteq E} q^{c(A)} \left  (\frac{p}{1-p} \right)^{| A|} =
  \frac{1}{(1-p)^{|E|}} \sum_{A\subseteq E}q^{c(A)}p^{|A|}(1-p)^{|E\setminus A|}\, .
\end{equation*}

The random cluster model is a generalization of the \emph{$q$-color ferromagnetic
  Potts model}, which, for $q$ a positive integer, is the probability
distribution on $[q]^{V}$  defined by
\begin{equation*}
\mu_G^{\Potts}(\sigma) \bydef  \frac{1}{Z^{\Potts}_G(q,\beta)}\prod_{\{u,v\}\in E } e^{\beta \mathbf 1
  _{\sigma_u=\sigma_v}} \, , \qquad
Z^{\Potts}_G(q,\beta) \bydef \sum_{\sigma \in [q]^V} \prod_{\{u,v\} \in E } e^{\beta \mathbf 1 _{\sigma_u=\sigma_v}}  \,.
\end{equation*}
In particular, the case $q=2$ case is the Ising model.  The connection
between the Potts and random cluster models is that for integer $q$,
\begin{equation*}
Z_G(q,\beta) = Z^{\Potts}_G(q,\beta)\, ,
\end{equation*}
and moreover, there is a natural coupling of the measures $\mu_G$ and
$\mu_G^{\Potts}$. The coupling is as follows. Given an edge set $A$
distributed according to the random cluster measure $\mu_G$,
independently assign a uniformly chosen color from $[q]$ to each connected component of $(V,A)$ to obtain a coloring
$\sigma\in [q]^{V}$.  The distribution of $\sigma$ is
$\mu_G^{\Potts}$~\cite{edwards1988generalization}.  For an introduction
to the random cluster model, see~\cite{grimmett2006random}.

This paper concerns a relatively complete set of results about the
probabilistic and algorithmic behavior of the large-$q$ random cluster
model on random $\Delta$-regular graphs.  In particular, we obtain a
detailed description of the phase diagram; establish strong
correlation decay and finite-size scaling statements; prove central
limit theorems off criticality; obtain efficient approximate counting
and sampling algorithms at \emph{all} temperatures $\beta>0$; and
establish slow-mixing of standard Markov chains in a neighborhood of
the critical temperature $\beta_{c}$. Many of our results apply more
generally to $\Delta$-regular graphs satisfying a small-set expansion
condition, see Section~\ref{sec:expansion-profiles}.  We will shortly
give precise statements of these results, but before doing this we
briefly give some context and an outline of our methods.

Our techniques are based on polymer models and the cluster expansion,
tools developed to investigate the phase diagrams of statistical
physics models on
lattices~\cite{gruber1971general,pirogov1975phase,kotecky1986cluster,laanait1991interfaces}. In
particular, we adapt to the random graph and expander setting the idea
from~\cite{laanait1991interfaces} of analyzing the Potts model phase
transition by controlling the ordered and disordered phases of the
random cluster model via separate convergent cluster expansions. The
key to this approach is obtaining convergent ordered and disordered
expansions for parameter regimes that overlap --- in particular, the
expansions both converge at the critical temperature. These expansions
give us strong control on the dominant and sub-dominant contributions
to the partition function, and enable us to prove our probabilistic
and algorithmic results. While the use of expansion methods to obtain
probabilistic results is well-known, algorithmic implications are more
recent~\cite{HelmuthAlgorithmic2,JKP2,liao2019counting,galanis2021fast,chen2021sampling}.

The most crucial technical aspect of this paper is thus the
development of convergent expansions, and our main innovation here is
a 
polymer model that applies to the ordered phase on expander
graphs. This relies on an inductive construction of polymers that
circumvents the difficulty created by the non-local weight $q^{c(A)}$
present in the random cluster model. In prior work studying the random
cluster model on $\mathbb{Z}^{d}$ via Pirogov--Sinai theory, this
non-local weight was handled by using notions of boundaries arising
from the topology of Euclidean space. Our inductive construction
defines a notion of `boundary' that encodes the connected components
of $A$, and hence the computation of $q^{c(A)}$, for typical edge sets
$A$. The success of this encoding, and its usefulness for deriving a
convergent expansion, relies crucially on (i) expansion properties of
the underlying graph and (ii) the fact that in the ordered phase,
typical edge sets $A$ consist of a large fraction of all edges. The
second point is one aspect of the fact that the phase transition of
the random cluster model is first order when $q$ is large. This leads
to the resulting boundaries being geometrically small, which is
important for obtaining a convergent expansion.

The techniques typically used to understand statistical physics models
on random graphs are very different from ours: typical methods include
the first and second moment methods, the cavity method, and the
interpolation
method~\cite{mezard2009information,montanari2012weak,sly2014counting,dembo2014replica,coja2018charting,coja2020replica}.
Using polymer models and the cluster expansion allows us to obtain
results that are not, to date, accessible via the aforementioned
techniques.  The strengths of our approach include: the ability to
make statements about every vertex or pair of vertices in a graph
(e.g., Theorem~\ref{thmPhases}, part (6)); a precise characterization
of phase coexistence (Theorem~\ref{thmCritical}); and control of both
the ordered and disordered contributions to the partition function at
and away from criticality which leads to strong algorithmic
consequences (Theorems~\ref{thmAlgorithm} and~\ref{thmSlowMix}).  On
the other hand, our approach is inherently perturbative in that it
requires $q$ to be large, and it does not as readily yield explicit
formulae for critical thresholds.

\subsection{The phase diagram of the random cluster model on random graphs}
\label{secProbabilistic}

On $\Z^{d}$, meaning on sequences of graphs $G_{n}\uparrow \Z^{d}$ in
an appropriate sense, a great deal is known about the random cluster
model, see~\cite{duminilcopin} and references therein. When $q\geq 1$
these models are known to undergo a phase transition at a critical
temperature $\beta_{c}(q)$ from a \emph{disordered state
  ($\beta<\beta_{c}$)} to an \emph{ordered state ($\beta>\beta_{c}$)}.
The nature of this transition depends on the value of $q$ and the
dimension $d$. For the present paper the most relevant results concern
when $q$ is large. In this case configurations in the disordered state
typically consist of relatively few edges, while in the ordered state
typical configurations have relatively few missing edges. Moreover,
exactly at $\beta_{c}$ typical configurations look like either an
ordered or a disordered configuration. In physical parlance, the phase
transition is \emph{first-order} ~\cite{laanait1991interfaces}. Finer
results concerning finite-size scaling are also
known~\cite{borgs1991finite}. Roughly speaking, these results concern
how $\abs{V(G_{n})}^{-1}\log Z_{G_{n}}$ differs from
$\lim_{n\to\infty} \abs{V(G_{n})}^{-1}\log Z_{G_{n}}$, i.e., the
corrections to the leading order behavior of $\log Z_{G_{n}}$ as
$G_{n}\uparrow \Z^{d}$.  Below we will identify a first-order phase
transition for the random cluster model on random regular graphs
(Theorem~\ref{thmPhases}) and determine the finite-size scaling of the
(random) log partition function (Theorem~\ref{thm:Q}).

\subsubsection{Expansion profiles}
\label{sec:expansion-profiles}

To state the class of graphs to which our results apply, we need a
refined notion of edge expansion.  The \emph{expansion profile} of a
$\Delta$-regular graph $G=(V,E)$ is
\begin{equation} 
  \phi_G(\alpha) \bydef \min_{S \subset V, |S| \le \alpha |V|} \frac{ |
    E(S,S^c)|  }{  \Delta |S|  }, \qquad \alpha\in(0,1/2], 
\end{equation}
where $E(S,S^{c})\subset E$ is the set of edges with one vertex in
$S$, one in $S^{c}$. For $\Delta \in \{3,4,\dots\}$ and
$\delta\in (0,1/2)$ we will be interested in the family
$\cG_{\Delta, \del}$ of $\Delta$-regular graphs that satisfy:
\begin{enumerate}
\item $\phi_G(1/2) \ge 1/10$,
\item $\phi_G(\del) \ge5/9 $.
\end{enumerate}
We note that the constants $1/10$ and  $5/9$ are somewhat arbitrary; what we
use in our proofs is that they are greater than $0$ and $1/2$, respectively.

\subsubsection{Locally tree-like graphs and local convergence of
  probability measures}
\label{sec:locally-tree-like}
Given a graph $G$, 
let $B_T(v)$ denote the depth-$T$ neighborhood of a vertex
$v \in V(G)$.  A sequence of graphs $G_n$ is \emph{locally tree-like}
if for every $T>0$, with probability tending to one as $n\to \infty$ over the choice of
a uniformly random vertex $v$ from $G_n$, $B_T(v)$ is a tree.

Recall that \emph{random cluster measures on the infinite
  $\Delta$-regular tree} $\bT_\Delta$ can be defined by taking weak
limits of measures on finite trees with boundary conditions.  Two
random cluster measures $\mu^{\free}$ and $\mu^{\wire}$ on
$\bT_{\Delta}$ are of particular importance: they are respectively
obtained by taking weak limits with free boundary conditions and with
wired boundary conditions, i.e., all leaves `wired' into one connected
component. See~\cite[Chapter 10]{grimmett2006random} for more details,
including a proof that these weak limits exist and are unique.
The measure $\mu^{\free}$ is particularly simple as it is an
independent edge percolation measure.

In what follows we adopt the convention that the index of a graph
sequence denotes the number of vertices in the graph:
$|V(G_{n})|=n$. We further assume $n$ is increasing, but not
necessarily through consecutive integers, so as to ensure $n\Delta/2$
is an integer. Limits as $n\to \infty$ are understood in this sense.
  
For a sequence of $\Delta$-regular graphs $G_{n}$ we say a corresponding sequence
$\mu_n$ of probability measures on $\{0,1\}^{E(G_{n})}$
\emph{converges locally} to a random cluster measure $\mu_{\infty}$ on
the infinite $\Delta$-regular tree $\bT_{\Delta}$, denoted
$\mu_n \xrightarrow{loc} \mu_{\infty}$, if for every $\eps, T>0$ and
$n$ sufficiently large, with probability at least $1-\eps$ over the
choice of a random vertex $v$ from $G_n$, the distribution of $\mu_n$
restricted to $B_T(v)$ is within $\eps$ total variation distance of
the distribution of $\mu_{\infty}$ restricted to the depth-$T$
neighborhood of the root.
See~\cite{montanari2012weak,sly2014counting} for examples and more
details of this notion of convergence.

\subsubsection{Main probabilistic results and related literature}
\label{sec:results}

The statements of our results require some notation.  Fix
$\Delta, \del$ in the definition of $G_{\Delta,\del}$ and let
$\eta \bydef \min \{1/100, \del/5 \}$.  Given a graph $G_n=(V,E)$ on
$n$ vertices, let $\Omega_n = \{0,1\}^{E}$ and let
\begin{align*}
\Omega_{\dis} &\bydef \{ A \in \Omega_n : |A| \le \eta |E| \} \,  \\
\Omega_{\ord} &\bydef \{ A \in \Omega_n : |A| \ge (1-\eta) |E| \} \\
\Omega_{\err} &\bydef \Omega_n \setminus (\Omega_{\dis} \cup \Omega_{\ord}) \,
\end{align*}
so that
$\Omega_n = \Omega_{\dis} \sqcup \Omega_{\ord} \sqcup \Omega_{\err}$.
We write $\mu_n$ for the random cluster measure on $G_n$.

Recall that a sequence of probability measures $\mu_{n}$ on $\Omega_n$
has \emph{exponential decay of correlations with rate $\epsilon>0$} if
there exists a $C>0$ such that
\begin{equation*}
  \abs{\mu_{n}(e,f)-\mu_{n}(e)\mu_{n}(f)} \leq Ce^{-\epsilon \cdot
    \mathrm{dist}_{G_{n}}(e,f)}, \qquad \text{for all } e,f\in E(G_{n}),
\end{equation*}
where $\mathrm{dist}_{G_{n}}(\cdot,\cdot)$ is the graph distance on
$G_{n}$ and we have used the customary abuse of notation
$\mu_{n}(e) = \mu_{n}( \{ A\subset E : e\in A\})$ and similarly for
other marginals.

Finally with $\mathbf A $ denoting the random edge subset drawn according to a random cluster measure, given a sequence of random cluster measures $\mu_n$ we say $|\mathbf A|$ obeys a  central limit theorem under $\mu_n$ if for each  $t \in \R$,
\begin{align*}
\lim_{n \to \infty} \mathbb P \left( \frac{|\mathbf A |- \E_{\mu_n} |\mathbf A|}{  \sqrt{\var_{\mu_n} (|\mathbf A|)}  }  \le t \right) = \frac{1}{\sqrt{2 \pi}} \int_{- \infty}^{t} e^{-x^2/2} \, dx \,.
\end{align*}

\begin{theorem}
  \label{thmPhases}
  Suppose $\Delta\geq 5, \del >0$. For $q=q(\Delta,\del)$ sufficiently
  large there exists a $\beta_{c}(q,\Delta)$
  so that the following holds for every sequence
  $G_{n}\in \cc G_{\Delta,\delta}$ of locally tree-like graphs.
  \begin{enumerate}
  \item The limit $\lim_{n\to\infty}n^{-1}\log Z_{G_{n}}$ exists and  is an analytic function of
    $\beta$ on $(0,\infty)\setminus \{\beta_{c}\}$.  For $\beta \le
    \beta_c$ the limit equals $f_{\dis}=\log q + \frac{\Delta}{2}
    \log\left(1+\frac{e^{\beta}-1}{q}\right)$, while for $\beta \ge
    \beta_c$ the limiting value is given by a function
    $f_{\ord}(q,\Delta,\beta)$ defined in
    Section~\ref{sec:phase-transition}. 
  \item For $\beta < \beta_c$,
    $\limsup_{n \to \infty} n^{-1} \log \mu_n(\Omega_n \setminus \Omega_{\dis}) <0$.
  \item For $\beta > \beta_c$,
    $\limsup_{n \to \infty} n^{-1} \log \mu_n(\Omega_n \setminus \Omega_{\ord}) <0$.
  \item For $\beta < \beta_c$, $\mu_{n} \xrightarrow{loc} \mu^{\free}$ as $n\to \infty$.
  \item For $\beta > \beta_c$, $\mu_{n} \xrightarrow{loc} \mu^{\wire}$ as $n\to \infty$.
\item For $\beta\neq\beta_{c}$, $\mu_{n}$ exhibits exponential decay
  of correlations.
  \item For $\beta \neq \beta_c$, $|\mathbf A|$ obeys a  central limit theorem under $\mu_n$. 
  \item For all $\beta\ge0$, $ \mu_n(\Omega_{\err}) = O( e^{-n})$. \label{itemslow}
  \end{enumerate}
\end{theorem}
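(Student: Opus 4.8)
The plan is to separate the off-critical case $\beta\neq\beta_{c}$ from the critical case $\beta=\beta_{c}$. For $\beta\neq\beta_{c}$ there is nothing new to prove: since $\Omega_{\err}\subseteq\Omega_{n}\setminus\Omega_{\dis}$ and $\Omega_{\err}\subseteq\Omega_{n}\setminus\Omega_{\ord}$, part~(2) gives that $\mu_{n}(\Omega_{\err})\le\mu_{n}(\Omega_{n}\setminus\Omega_{\dis})$ is exponentially small in $n$ when $\beta<\beta_{c}$, and part~(3) gives the analogous bound via $\Omega_{\ord}$ when $\beta>\beta_{c}$. The content of the statement is therefore the bound at the coexistence point $\beta=\beta_{c}$, where neither of the above inclusions helps: both $\mu_{n}(\Omega_{n}\setminus\Omega_{\dis})$ and $\mu_{n}(\Omega_{n}\setminus\Omega_{\ord})$ are bounded below by a constant there.

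At $\beta=\beta_{c}$ I would use the defining feature of the construction, namely that the disordered and ordered cluster expansions \emph{both} converge at $\beta_{c}$. Writing $Z_{G_{n}}=Z_{\dis}+Z_{\ord}+Z_{\err}$ with $Z_{\bullet}\bydef\sum_{A\in\Omega_{\bullet}}q^{c(A)}(e^{\beta_{c}}-1)^{|A|}$, we have $Z_{G_{n}}\ge\max\{Z_{\dis},Z_{\ord}\}\ge q^{n}$ (from $A=\emptyset$), so it suffices to show $Z_{\err}\le e^{-cn}q^{n}$ for some $c=c(q,\Delta)>0$. Split $\Omega_{\err}$ at the halfway point: $\Omega_{\err}^{-}=\{A:\eta|E|\le|A|\le|E|/2\}$ contributes at least $\eta|E|$ \emph{open} edges per configuration, while $\Omega_{\err}^{+}=\{A:|E|/2\le|A|\le(1-\eta)|E|\}$ contributes at least $\eta|E|$ \emph{closed} edges. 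A configuration in $\Omega_{\err}^{-}$ whose open edges contain no connected cluster of macroscopic size lies in the domain of the disordered polymer model, and there its polymers have total size at least $\eta|E|$, i.e.\ of order $n$; a standard large-deviation estimate for a convergent polymer model (each polymer of size $s$ carries weight at most $e^{-cs}$, so the total polymer size has an exponential tail, the expansion having convergence to spare at $\beta_{c}$) bounds the total weight of such configurations by $e^{-cn}$ times the disordered polymer partition function, which is itself at most $Z_{G_{n}}$. Symmetrically one treats the configurations in $\Omega_{\err}^{+}$ whose closed edges contain no macroscopic cluster, using the ordered expansion.

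What remains — and this is the crux — are the configurations that escape both polymer models: an $A\in\Omega_{\err}^{-}$ whose open edges do contain a connected cluster of size at least $\gamma n$ (for the threshold $\gamma$ used to define polymers) has a macroscopic \emph{disordered} defect, and, having a majority of closed edges, also a macroscopic \emph{ordered} defect, so it is controlled by neither expansion directly. These I would bound by hand using the hypotheses defining $\cG_{\Delta,\del}$: if the open edges of $A$ concentrate on a vertex set $W$, then $\Delta$-regularity limits the number of edges $W$ can carry and hence forces many cycles among them, pushing $c(A)$ well below $n$, while the small-set expansion bound $\phi_{G}(\del)\ge 5/9>1/2$ (together with $\phi_{G}(1/2)\ge1/10$) forces a large closed edge-boundary of $W$; combining these two effects makes $q^{c(A)}(e^{\beta_{c}}-1)^{|A|}$ exponentially smaller than $q^{n}\le Z_{G_{n}}$, while the number of choices of $W$ is only exponential with a strictly smaller rate. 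Matching this crude bound against the polymer threshold $\gamma$ — choosing $\gamma$ small enough that the exceptional configurations are genuinely rare, yet large enough that the two expansions cover everything else — is the delicate point, and is exactly the entropy-versus-energy accounting that the polymer framework of Section~\ref{sec:phase-transition} is built to handle, so I expect part~(8) to follow quickly from the partition-function estimates established there. Tracking the generous numerical constants in the definition of $\cG_{\Delta,\del}$ then upgrades $e^{-cn}$ to the stated $O(e^{-n})$.
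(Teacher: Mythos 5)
There is a genuine gap, on two levels. First, the statement you are asked to prove is the full eight-part theorem, and your proposal only engages with part~(8): parts (1)--(7) --- the existence and analyticity of the free-energy limit, the very existence and characterization of $\beta_c$ (which you use throughout as given), the subdominant-phase bounds (2)--(3), local convergence to $\mu^{\free}$/$\mu^{\wire}$, exponential decay of correlations, and the CLT --- are never addressed. Worse, your reduction of part~(8) to parts (2)--(3) off criticality is circular in spirit: in the paper the implication runs the other way. Parts (2)--(3) are proved (Proposition~\ref{propFreeEnergy}) by combining the free-energy asymptotics of Proposition~\ref{prop:limf} with the error bound $Z^{\err}/Z\le e^{-n}$ of Lemma~\ref{lem:ErrQual}, so one cannot quote (2)--(3) to dispose of the error set; the bound on $\Omega_{\err}$ is an input to those parts, not a consequence of them.

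Second, even for part~(8) at $\beta=\beta_c$ your plan leaves the decisive step as a sketch. The configurations you single out as the crux --- elements of $\Omega_{\err}$ containing a macroscopic cluster, hence outside the reach of either polymer model --- are exactly where the work is, and ``forcing many cycles'' plus ``a large closed boundary'' is not yet an argument; you would also need to handle the fact that the disordered polymer representation (polymers capped at $\eta n$ edges, Lemma~\ref{lem:DisQual} only covering $|A|\le\eta|E|$) does not automatically extend to all of your set $\Omega_{\err}^{-}$. The paper avoids all of this: part~(8) is proved uniformly in $\beta\ge 0$ (including $\beta_c$) by a purely deterministic counting argument with no polymer machinery. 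Lemma~\ref{lemNoMiddle} shows, via a case analysis on the number of vertices in small components and both expansion conditions defining $\cG_{\Delta,\del}$, that every $A\in\Omega_{\err}$ satisfies $c(A)/n+|A|/|E|\le 1-\eta/40$; combined with the trivial lower bound $Z\ge\bigl(\max\{q,(e^{\beta}-1)^{\Delta/2}\}\bigr)^{n}$ and the crude count $|\Omega_{\err}|\le 2^{n\Delta/2}$, this gives $Z^{\err}/Z\le e^{-n}$ for $q$ large (Lemma~\ref{lem:ErrQual}). Your ``by hand'' step is essentially a rediscovery of Lemma~\ref{lemNoMiddle}, but it is precisely the part you have not carried out, and without it (and without proofs of parts (1)--(7)) the proposal does not establish the theorem.
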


Theorem~\ref{thmPhases} gives a rather complete description of the
phase transition and probabilistic properties of the random cluster model on locally tree-like
graphs in $\cG_{\Delta,\del}$.  At all temperatures, all but an
exponentially small fraction of the measure is on configurations with
at most an $\eta$- or at least a $(1-\eta)$-fraction of all edges.
There is a unique phase transition at $\beta_c$, $\Omega_{\dis}$ has
all but an exponentially small fraction of the measure for
$\beta<\beta_c$, and $\Omega_{\ord}$ has all but an exponentially
small fraction of the measure for $\beta>\beta_c$. Correlations decay
exponentially at $\beta\neq\beta_{c}$. In fact, as the proof will
show, we can make stronger statements about correlation decay
conditional on $\Omega_{\dis}$ or $\Omega_{\ord}$; see also
Lemma~\ref{lemPolyExpDecay} below.  As part of our proof we
determine the critical point asymptotically in $q$:
$\beta_c(q,\Delta)= (1+o_q(1))\frac{2 \log q}{\Delta}$, see
Section~\ref{sec:phase-transition}.

For the $q$-color Potts model, some of these results were known
previously, and without the restriction that $q$ is large.  Dembo,
Montanari, and Sun~\cite{dembo2014replica} proved that for \emph{any}
sequence $G_n$ of $2\Delta$-regular locally tree-like graphs the limit
$\frac{1}{n} \log Z_{G_n}$ exists for all $\beta$ and equals the
replica-symmetric Bethe formula for the free energy, given implicitly
by a variational formula.  For random $\Delta$-regular graphs Galanis,
{\v{S}}tefankovi{\v{c}}, Vigoda, and
Yang~\cite{galanis2016ferromagnetic} established a detailed picture of
the phase transition, and they determined $\beta_c$ explicitly:
\begin{equation}
\label{eqBetaCexplicit}
\beta_c (q, \Delta) =   \log  \frac{  q-2 }{ (q-1)^{1-2/\Delta} -1   } \,
\end{equation}
for $q\ge 3 $ and $\Delta \ge 3$ integers.  They also proved versions
of Theorem~\ref{thmPhases} parts (2), (3), and (7) for the Potts model
with slightly different definitions of $\Omega_{\ord}$ and
$\Omega_{\dis}$.\footnote{The definitions of
  $\Omega_{\ord}$ and $\Omega_{\dis}$
  in~\cite{galanis2016ferromagnetic} specify the number of vertices
  receiving each of the $q$-colors.}  Taken together with the results
of~\cite{dembo2014replica} this implies the formula~\eqref{eqBetaCexplicit} for
$\beta_c$ for integral $q\geq 3$ holds for all sequences of $\Delta$-regular
locally tree-like graphs.  For the case $q=2$ of the Ising model, Dembo
and Montanari~\cite{dembo2010ising} and Montanari, Mossel, and
Sly~\cite{montanari2012weak} proved local convergence results for
locally tree-like graphs. See also~\cite{sly2014counting} for further
results on general $2$-spin models on locally tree-like graphs.  Giardin{\`a}, Giberti, van der Hofstad, and Prioriello~\cite{giardina2015quenched} proved a central limit theorem for the magnetization of Ising model in random regular graphs in the uniqueness regime. To the best of our knowledge Theorem~\ref{thmPhases} gives the first central limit theorem in a supercritical phase of a spin model on a sparse random graph. 

It is important to note that $\beta_c$ in Theorem~\ref{thmPhases} is
\emph{not} the Gibbs uniqueness threshold $\beta_u$ of the random
cluster model on the infinite tree. Instead it is the `order-disorder
threshold' in the terminology of~\cite{galanis2016ferromagnetic}.  In
particular, $\beta_u < \beta_c$. We also remark that there is another
uniqueness threshold $\beta_{u}^{\star}>\beta_{c}$ conjectured by
H{\"a}ggstr{\"o}m~\cite{haggstrom1996random}, but this conjecture
concerns a class of Gibbs measures that does not include
$\mu^{\free}$.
 
Note that the correlation decay given in part (6) of
Theorem~\ref{thmPhases} is very strong compared to decorrelation
statements for random graphs outside the range of tree uniqueness
obtained by other methods.  The statements in,
e.g.~\cite{coja2018charting,coja2020replica}, assert that the
correlation between two randomly chosen vertices in the graph tends to
$0$ with high probability, while the correlation decay property in
part (6) holds for \emph{every} pair of vertices, and moreover the
decay is exponential in the distance.

 Next we turn our attention more specifically to random
 $\Delta$-regular graphs, i.e., when $G_n$ is chosen uniformly at
 random from the set of all $\Delta$-regular graphs on $n$
 vertices. Recall we implicitly assume $n \Delta/2$ is an integer. As
 alluded to above, Theorems~\ref{thmPhases} and~\ref{thmSlowMix} apply
 to the random $\Delta$-regular graph when $\Delta\ge 5$: in
 Proposition~\ref{prop:RRG-exp} below we cite results showing that
 there is a $\del>0$ such that realizations of the random graph belong
 to $\cG_{\Delta,\del}$ with high probability.  Here and in what
 follows, we say a property $P_{n}$ of graphs on $n$ vertices holds
 with high probability if $\P\cb{ P_{n}} = 1-o(1)$ as $n \to \infty$.

Theorem~\ref{thmPhases} primarily concerned the behavior at
$\beta\neq\beta_{c}$, but it is also interesting to investigate the
behavior precisely at
$\beta_{c}$.
For the Potts model, Galanis, {\v{S}}tefankovi{\v{c}}, Vigoda, and
Yang~\cite{galanis2016ferromagnetic} showed that with high probability
over the choice of $G_n$, at $\beta=\beta_c$ both
$\mu_n(\Omega_{\ord}) \ge n^{-c}$ and
$\mu_n(\Omega_{\dis}) \ge n^{-c}$ for some constant $c>0$, and also
that $\mu_n(\Omega \setminus (\Omega_{\dis} \cup \Omega_{\ord}))$ is
exponentially small.\footnote{In fact~\cite{galanis2016ferromagnetic}
  uses slightly different definitions of $\Omega_{\ord}$ and
  $\Omega_{\dis}$, as was mentioned earlier.}  This is a
logarithmic-scale phase coexistence result, and using this, 
they proved that the Swendsen--Wang dynamics mix slowly at
criticality.  Our next result gives a complete phase coexistence result for the random
cluster model, and hence also the Potts model, on the random
$\Delta$-regular graph for $\Delta \ge 5$ and $q$ large by determining
precisely the limiting  distribution of $\mu_n(\Omega_{\dis}) $ and
$\mu_n(\Omega_{\ord}) $ at criticality. 
 \begin{theorem}
 \label{thmCritical}
For $\Delta \ge 5$ and $q =q(\Delta)$ large enough, there
 is a non-constant, positive random variable $Q$ so that for the random cluster model on the random $\Delta$-regular graph at $\beta = \beta_c$:
 \begin{enumerate}
\item The random variable $\mu_n(\Omega_{\dis})$ converges in distribution to $1/(Q+1)$ and $\mu_n(\Omega_{\ord})$ converges in distribution to $Q/(Q+1)$ as $n \to \infty$
\item  The random cluster measure $\mu_n$ conditioned on $\Omega_{\dis}$ and on $\Omega_{\ord}$ converges locally                       
to $\mu^{\free}$ and $\mu^{\wire}$ respectively. 
\item  $Q/q \to 1$ in probability as $q \to \infty$. 
\end{enumerate}
\end{theorem}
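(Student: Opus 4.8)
\emph{Proof sketch.} All three parts reduce to understanding the ratio $R_n \bydef Z_{\ord}/Z_{\dis}$ at $\beta = \beta_c$, where $Z_{\ord}$ and $Z_{\dis}$ denote the restrictions of the defining sum for $Z_{G_n}$ to $\Omega_{\ord}$ and $\Omega_{\dis}$. By part (8) of Theorem~\ref{thmPhases}, $Z_{G_n} = (1 + O(e^{-n}))(Z_{\ord} + Z_{\dis})$, so
\[
  \mu_n(\Omega_{\dis}) = \frac{1 + O(e^{-n})}{1 + R_n}, \qquad
  \mu_n(\Omega_{\ord}) = \frac{(1 + O(e^{-n}))\,R_n}{1 + R_n}.
\]
Hence part (1) follows once we show that $R_n$ converges in distribution to a positive, non-constant random variable $Q$, and then $1/(Q+1)$ and $Q/(Q+1)$ are the stated limits by the continuous mapping theorem.

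To analyse $R_n$, I would invoke the convergent ordered and disordered cluster expansions, which both hold at $\beta_c$. These yield
\[
  \log Z_{\dis}(G_n) = n f_{\dis} + D_{\dis}(G_n) + o(1), \qquad
  \log Z_{\ord}(G_n) = n f_{\ord} + \log q + D_{\ord}(G_n) + o(1),
\]
where: the bulk terms $n f_\bullet$ collect the contributions of clusters supported in tree neighbourhoods of $G_n$ and reproduce the bulk free energies of Theorem~\ref{thmPhases}(1) by local tree-likeness; the extra $\log q$ records that the ordered ground state $A = E$ has a single connected component; and $D_\bullet(G_n)$ is the remaining ``topological'' part, namely the sum of cluster weights whose supports enclose a cycle of $G_n$, together with the (finitely localized) modifications of the bulk clusters caused by those cycles. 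Since $f_{\dis}(\beta_c) = f_{\ord}(\beta_c)$ by Theorem~\ref{thmPhases}(1), the bulk terms cancel in the difference and $\log R_n = \log q + D_{\ord}(G_n) - D_{\dis}(G_n) + o(1)$.

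The main obstacle is the distributional convergence of $D_{\ord}(G_n) - D_{\dis}(G_n)$. A cluster can enclose a cycle $C$ of $G_n$ only if its support meets $C$, and convergence of the expansions forces the total weight of clusters enclosing a fixed cycle of length $\ell$ to be bounded by a polynomial in $\ell$ times $x^\ell$ in the disordered case and $y^\ell$ in the ordered case, where $x$ is of order $(e^{\beta_c}-1)/q$ and $y$ of order $1/(e^{\beta_c}-1)$; for $q$ large in terms of $\Delta$ one has $(\Delta-1)x, (\Delta-1)y < 1$. In the random $\Delta$-regular graph the numbers $Y_\ell$ of cycles of length $\ell$ converge jointly to independent Poisson variables of mean $(\Delta-1)^\ell/(2\ell)$, and with high probability short cycles are vertex-disjoint and pairwise far apart, so interactions between defects at distinct cycles, and the contribution of cycles of length $\omega(1)$, are negligible. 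Therefore $D_{\ord}(G_n) - D_{\dis}(G_n) \xrightarrow{d} G \bydef \sum_{\ell \ge 3} c_\ell Y_\ell$, where $c_\ell$ is the per-$\ell$-cycle correction and the series converges absolutely since $\sum_\ell ((\Delta-1)x)^\ell + ((\Delta-1)y)^\ell < \infty$. We then set $Q \bydef q\,e^{G}$, which is positive by construction, and non-constant because a direct computation of $c_\ell$ for the smallest relevant $\ell$ shows $c_\ell \ne 0$ for $q$ large, so $\var(G) \ge c_\ell^2 \,\var(Y_\ell) > 0$.

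Part (2) follows by observing that the argument behind Theorem~\ref{thmPhases}(4),(5) in fact proves local convergence of the \emph{conditional} measures: conditioned on $\Omega_{\dis}$ (resp.\ $\Omega_{\ord}$), $\mu_n$ is the corresponding polymer Gibbs measure, a summable perturbation of the ground state, whose restriction to a bounded neighbourhood of a uniformly random vertex converges, by local tree-likeness and the convergent expansion, to $\mu^{\free}$ (resp.\ $\mu^{\wire}$) on $\bT_\Delta$; both expansions converge at $\beta_c$, so both conditional local limits hold there. Finally, for part (3), we have $Q/q = e^{G}$ with $G = \sum_{\ell \ge 3} c_\ell Y_\ell$. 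As $q \to \infty$, $\beta_c = (1 + o_q(1))\tfrac{2\log q}{\Delta}$, so $x$ is of order $q^{2/\Delta - 1} \to 0$ and $y$ of order $q^{-2/\Delta} \to 0$; thus each $c_\ell \to 0$ while $|c_\ell| \le ((\Delta-1)x)^\ell + ((\Delta-1)y)^\ell$ stays summable uniformly for $q$ large. Dominated convergence gives $G \to 0$ almost surely, hence in probability, and therefore $Q/q \to 1$ in probability.
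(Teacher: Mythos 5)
Your proposal is correct and takes essentially the same route as the paper: the paper proves Proposition~\ref{propQDistribution} by comparing the ordered and disordered cluster expansions on $G_n$ with those on $\bT_\Delta$, localizing the graph--tree discrepancy at short cycles (which are with high probability isolated and asymptotically Poisson), so that $\log (Z^{\ord}/Z^{\dis})$ converges in distribution to $\log q + \sum_k (\alpha_k^{\ord}-\alpha_k^{\dis})Y_k$ — exactly your $c_\ell$, $G$, and $Q=qe^{G}$ — with part (2) obtained from the conditional local-convergence statements valid on all of $[\beta_0,\beta_1]\ni\beta_c$, and part (3) from $\E\bigl|\sum_k(\alpha_k^{\ord}-\alpha_k^{\dis})Y_k\bigr|=o_q(1)$. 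The only soft spots are minor: the per-cycle decay you attribute to ``convergence of the expansions'' is stated more sharply (poly$(\ell)\,x^\ell$, $y^\ell$) than what the Kotecký--Preiss tail bound directly yields (order $q^{-\ell/200\Delta}$, which still beats the $(\Delta-1)^\ell$ growth of $\E Y_\ell$ once $q\ge \Delta^{400\Delta}$, and is what the paper uses), and the non-vanishing of the lowest-order correction $c_\ell$ (hence non-constancy of $Q$) is asserted rather than computed, though the paper leaves that point at the same level of detail.
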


We will also prove a stronger form of Theorem~\ref{thmCritical} part
(2), showing that this holds for all locally tree-like graphs in
$\cG_{\Delta, \del}$ for $\beta$ in an interval around $\beta_c$, see
Proposition~\ref{propLocalConvergence}.  

Theorem~\ref{thmCritical} is derived via the following result, in
which $f_{\dis}$ and $f_{\ord}$ are the functions from
Theorem~\ref{thmPhases} part (1). In particular, the result will
  be used to characterize
the distribution of the random variable $Q$ from
Theorem~\ref{thmCritical}. The functions $\alpha_{k}^{\dis}$
and $\alpha_{k}^{\ord}$ in the theorem statement depend on $q, \Delta, \beta$ and are defined in
Section~\ref{sec:finite-size-scaling}.
\begin{theorem}
  \label{thm:Q}
  Fix $\Delta\geq 5$. Let $Y_1, Y_2, \dots$ be a sequence of
  independent Poisson random variables where $Y_k$ has mean
  $(\Delta-1)^{k}/(2k)$. For the random $\Delta$-regular  graph $G_n$
  and $q=q(\Delta)$ large enough,
  \begin{enumerate}
  \item For $\beta < \beta_c$,
    $\log Z_{G_{n}} - n f _{\dis}$ converges in
    distribution to $W^{\dis}$ given by the almost surely absolutely
    convergent series
    \begin{equation*}
      W^{\dis} \bydef \sum_{ k \ge 3}  \alpha_k^{\dis} Y_k \,.
    \end{equation*}
  \item For $\beta > \beta_c$,
    $\log Z_{G_{n}} - n f _{\ord}$ converges in
    distribution to $W^{\ord}$ given by the almost surely absolutely
    convergent series
    \begin{equation*}
      W^{\ord} \bydef \log q +  \sum_{ k \ge 3}  \alpha_k^{\ord} Y_k \,.
    \end{equation*}
  \item For $\beta =\beta_c$,
  \begin{equation*}
    \frac{Z_{G_n}}{ \exp \left(  n f_{\dis} (\beta_c) \right)}
    \longrightarrow \exp (W^{\dis} )+ \exp (W^{\ord})
    \end{equation*}
   in probability as $n \to \infty$. 
\end{enumerate}
\end{theorem}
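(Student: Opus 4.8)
The plan is to feed the convergent ordered and disordered cluster expansions from Section~\ref{sec:phase-transition} --- which, crucially, both converge at $\beta=\beta_c$ --- into the local structure of the random $\Delta$-regular graph near its short cycles. Write $\hat Z_{\dis},\hat Z_{\ord}$ for the two polymer partition functions, so that $Z_{G_n}=(1+o(1))(\hat Z_{\dis}+\hat Z_{\ord})$ with an exponentially small error. Grouping clusters according to the connected subgraph $H\subseteq G_n$ they occupy, and using $|E|=n\Delta/2$, one rewrites
\[
\log\hat Z_{\dis}=n\log q+\sum_{H\subseteq G_n}\Psi_{\dis}(H),\qquad
\log\hat Z_{\ord}=\log q+|E|\log(e^{\beta}-1)+\sum_{H\subseteq G_n}\Psi_{\ord}(H),
\]
where the sums run over connected subgraphs $H$ of $G_n$, the quantities $\Psi_{\dis}(H),\Psi_{\ord}(H)$ depend only on the local isomorphism type of $H$, and a Koteck\'y--Preiss bound $\sup_{e}\sum_{H\ni e}|\Psi_{\bullet}(H)|\,e^{c|E(H)|}\le C$ holds with a rate $c=c(q)$ that grows without bound as $q\to\infty$; I would take $q$ large enough that $c$ exceeds a suitable constant $c_{0}(\Delta)$.

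First I would extract the bulk. For the disordered model the polymer weight of a connected edge-set $C$ is $\bigl(\tfrac{e^{\beta}-1}{q}\bigr)^{|E(C)|}q^{b_{1}(C)}$, where $b_{1}$ denotes the cyclomatic number, with two polymers compatible iff vertex-disjoint; restricted to any forest this is a product measure over edges, with partition function $(1+\tfrac{e^{\beta}-1}{q})^{m}$ on a forest with $m$ edges. Matching this exact identity with the cluster expansion and inducting on the number of edges forces $\Psi_{\dis}(H)=\log(1+\tfrac{e^{\beta}-1}{q})$ when $H$ is a single edge and $\Psi_{\dis}(H)=0$ when $H$ is a forest with at least two edges; hence $\log\hat Z_{\dis}-nf_{\dis}=\sum_{H\subseteq G_{n}:\,b_{1}(H)\ge 1}\Psi_{\dis}(H)$. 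The ordered model admits no such exact cancellation, but Theorem~\ref{thmPhases}(1) with the cluster expansion identifies $f_{\ord}=\tfrac{\Delta}{2}\log(e^{\beta}-1)+\sum_{S}\Psi_{\ord}(S)\,m_{S}$, the sum over isomorphism types of finite trees $S$ with $m_{S}$ the density of copies of $S$ in $\bT_{\Delta}$; subtracting, $\log\hat Z_{\ord}-\log q-nf_{\ord}=\sum_{S\text{ a tree}}\Psi_{\ord}(S)(N_{S}-nm_{S})+\sum_{H\subseteq G_{n}:\,b_{1}(H)\ge 1}\Psi_{\ord}(H)$, where $N_{H}$ denotes the number of copies of $H$ in $G_{n}$.

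I would then localize to short cycles using two inputs: the uniform bound $\E[N_{H}]=O\bigl(C(\Delta)^{|E(H)|}\,n^{1-b_{1}(H)}\bigr)$; and the classical fact that the cycle counts $(X_{k})_{k\ge 3}$ of $G_{n}$ converge jointly in distribution to independent Poisson variables $(Y_{k})_{k\ge 3}$ with $\E Y_{k}=(\Delta-1)^{k}/(2k)$, together with the high-probability fact that every short cycle has a tree-like neighbourhood and no two short cycles are close. By the first input the contribution of subgraphs with $b_{1}(H)\ge 2$ has expected absolute value $O(1/n)$, hence is $o_{p}(1)$. A connected $H$ with $b_{1}(H)=1$ contains a unique cycle, of some length $k\ge 3$, and the second input gives $N_{H}\xrightarrow{d}e_{H}Y_{k}$ jointly over all such $H$, where $e_{H}$ is the deterministic number of ways to grow a $k$-cycle-with-its-tree-neighbourhood into a copy of $H$; likewise, for a tree $S$, $N_{S}-nm_{S}$ converges jointly to a linear function of the $(Y_{k})$, because the correction to a tree count is a fixed polynomial in the cycle counts vanishing at the origin whose higher-degree terms correspond to $b_{1}\ge 2$ configurations of $o_{p}(1)$ count. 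Collecting the coefficient of $Y_{k}$ --- which is the $\alpha_{k}^{\dis}$, resp.\ $\alpha_{k}^{\ord}$, defined in Section~\ref{sec:finite-size-scaling} --- and exchanging the limit $n\to\infty$ with the infinite sum over isomorphism types, one obtains parts (1) and (2), with $W^{\dis}$ and $W^{\ord}$ built from the \emph{same} cycle statistics of $G_{n}$; almost-sure absolute convergence of $\sum_{k}\alpha_{k}^{\bullet}Y_{k}$ follows from $|\alpha_{k}^{\bullet}|=O(e^{-ck})$ (each such term involves at least $k$ edges), $\E Y_{k}=O((\Delta-1)^{k})$, and $c>\log(\Delta-1)$.

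The exchange of limit and infinite sum is the main obstacle, and is exactly where largeness of $q$ relative to $\Delta$ is used: assigning to each $H$ with $b_{1}(H)\ge 1$ its shortest cycle $C$ and rooting the Koteck\'y--Preiss bound at an edge of $C$ gives
\[
\E\Bigl[\sum_{\substack{H\subseteq G_{n}:\ b_{1}(H)\ge 1\\ |E(H)|>L}}|\Psi_{\bullet}(H)|\Bigr]\ \le\ C\sum_{k\ge 3}\E[X_{k}]\,e^{-c\max(L,k)},
\]
which, using a uniform bound $\E[X_{k}]\le C(\Delta)^{k}$, tends to $0$ as $L\to\infty$ uniformly in $n$ provided $c>c_{0}(\Delta):=\log C(\Delta)$ (and similarly for the tree-count tail). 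Thus the truncated sums over $|E(H)|\le L$ --- each a finite sum converging by the Poisson/local-structure input --- determine the limit up to an error controlled by $L$, and letting $L\to\infty$ finishes (1)--(2). Part (3) then follows by combining (1)--(2) with $f_{\dis}(\beta_{c})=f_{\ord}(\beta_{c})$ (the defining property of $\beta_{c}$; see Section~\ref{sec:phase-transition}) and $\mu_{n}(\Omega_{\err})=O(e^{-n})$ (Theorem~\ref{thmPhases}(\ref{itemslow})): at $\beta=\beta_{c}$,
\[
\frac{Z_{G_{n}}}{e^{nf_{\dis}(\beta_{c})}}=(1+o(1))\bigl(e^{\log\hat Z_{\dis}-nf_{\dis}(\beta_{c})}+e^{\log\hat Z_{\ord}-nf_{\ord}(\beta_{c})}\bigr),
\]
so the joint convergence of the two exponents to $(W^{\dis},W^{\ord})$ and the continuous mapping theorem give convergence in distribution to $e^{W^{\dis}}+e^{W^{\ord}}$; the stated convergence in probability is the assertion that, under the coupling through the common cycle statistics of $G_{n}$, the left-hand side differs from $\exp(\sum_{k}\alpha_{k}^{\dis}X_{k})+\exp(\log q+\sum_{k}\alpha_{k}^{\ord}X_{k})$ by $o_{p}(1)$, which is immediate from the above. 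A secondary technical point is the bookkeeping in the ordered phase verifying that only the linear-in-$(Y_{k})$ part of the tree-count corrections survives.
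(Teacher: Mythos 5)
Your overall strategy is the one the paper uses: exploit the fact that both polymer expansions converge on an interval containing $\beta_c$, compare the cluster expansion on $G_n$ with the one on $\bT_\Delta$, localize the discrepancy to short cycles (using that with high probability short cycles are isolated and have tree-like neighbourhoods), feed in the Poisson limit for cycle counts, and justify the exchange of limits with Koteck\'y--Preiss tail bounds that require $q\ge \Delta^{C\Delta}$; part (3) then follows from (1)--(2), $f_{\dis}(\beta_c)=f_{\ord}(\beta_c)$, Lemma~\ref{lem:ErrQual}, and the fact that both approximants are functions of the same cycle statistics of $G_n$. Where you diverge is the mid-level bookkeeping. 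The paper works with the per-vertex identity \eqref{eqPinnedDis}--\eqref{eqPinnedOrd}, truncates at scale $m=O(\log n)$ via Lemma~\ref{lemVertexTail}, notes exact cancellation at every vertex whose $m$-neighbourhood is a tree, and then re-anchors the surviving terms at one vertex per cycle; with this organization the coefficient of $X_k$ is \emph{by construction} the rooted difference $\alpha_k^{\ast}$ between $\bT_\Delta^{C_k}$ and $\bT_\Delta$, in both phases, with no separate treatment of the ordered bulk. You instead group clusters by their spanned subgraph $H$, kill the disordered tree contributions via the product-measure identity (this is exactly the mechanism of Lemma~\ref{lem:fdislem}, and for the disordered phase your route is clean: one checks directly that $\sum_{H\supseteq C_k}\Psi_{\dis}(H)=\alpha_k^{\dis}$ since rooted tree contributions cancel), but in the ordered phase you are forced to introduce tree-count deviations $N_S-nm_S$ and then claim that the linear-in-$(X_k)$ part of these, weighted by $\Psi_{\ord}(S)$ and summed over infinitely many tree types, recombines with the unicyclic terms to give precisely $\alpha_k^{\ord}$. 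That identity is true, but you only assert it (and yourself flag it as a remaining technical point); it is the main thing your write-up leaves unproved, and the paper's formulation makes it automatic. Two smaller caveats: (i) for the ordered model $\Psi_{\ord}(H)$ is not a function of the abstract isomorphism type of $H$ alone, because $w^{\ord}_\gamma$ involves $c'(\gamma)$, which depends on how $E_u(\gamma)$ sits inside the ambient graph --- this is harmless on the high-probability event that the relevant neighbourhoods are trees or trees plus one short cycle, but it should be said; (ii) your tail estimate uses a uniform bound $\E[X_k]\le C(\Delta)^k$ for \emph{all} $k$ up to the truncation scale, which holds (deterministically $X_k\le n\Delta(\Delta-1)^{k-1}/(2k)$, and $n\le \Delta^{O(k)}$ once $k\gtrsim \log n/\log\Delta$) but is stated without justification; the paper sidesteps this by truncating the expansion at $m=O(\log n/\log q)$ and invoking Lemma~\ref{lemShortCycles}(1) only in that range. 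With these points filled in, your argument gives the same conclusions, including the Kolmogorov two-series/absolute-convergence step and part (3), which match the paper.
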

The random variable $Q$ in Theorem~\ref{thmCritical} is $\exp(W^{\ord}-W^{\dis})$ with $\beta=\beta_c$. Theorem~\ref{thm:Q} can be viewed as a kind of  finite-size scaling
result, as the random variables $W^{\dis}$ and $W^{\ord}$ capture the
deviations in $ Z_{G_n}$ from the bulk tree-like
behavior. Theorem~\ref{thmCritical} is a comparison of the size of
these corrections for the ordered and disordered contributions to the
partition function. 

The Poisson random variables $Y_k$ in Theorem~\ref{thm:Q}
correspond to the limiting distribution of the number of cycles of
length $k$ in the random regular graph.  Similar Poisson random
variables arise  in analogous results for a class of
random constraint satisfaction problems (including random graph
coloring) in the replica symmetric regime that have been obtained by
combining the small subgraph conditioning method with the
second-moment method or rigorous implementations of the cavity method~\cite{coja2018charting,rassmann2019number,coja2020replica}.  See
also~\cite{montanari2005compute,chertkov2006loop,lucibello2014finite}
for more on finite-size effects in spin models on random graphs and
corrections to the Bethe formula due to short cycles.
 
\begin{rmk}
  For integer $q$, the results of Theorems~\ref{thmPhases}
  and~\ref{thmCritical} can be immediately transferred to the Potts
  model via the Edwards-Sokal coupling. In particular this means that
  the function $\beta_c(q,\Delta)$ in Theorem~\ref{thmPhases} must
  agree with~\eqref{eqBetaCexplicit} for integer $q$.
\end{rmk}

\begin{rmk}
  The lower bound $\Delta \ge 5$ in Theorems~\ref{thmPhases}
  and~\ref{thmCritical} and Theorem~\ref{thmAlgorithm} below
  facilitates some arguments involving small-set expansion.  We
  believe our results could be extended to $\Delta=3,4$ with a more
  delicate analysis.
\end{rmk}

While we are able to obtain a much more complete description of the
phase diagram of the random cluster and Potts models on random
$\Delta$-regular graphs than in previous works, we emphasize that our
techniques are inherently perturbative, i.e., rely on taking $q$
large. In particular, our arguments require $q \ge \Delta^{C \Delta}$
for some fixed, but large, $C$.  It would be very interesting to
extend these results to all $q > 2$.

\subsection{Approximate counting and sampling}
\label{secAlgorithmic}

One motivation for 
this paper is to investigate the
relationship between phase transitions and computational complexity.
The family of graphs most central to 
the interplay
between phase transitions and algorithms are arguably random graphs.
Random graphs provide candidate hard instances for several important
NP-hard problems like max independent set, $q$-coloring, and MAX-CUT.
Explanations for the computational hardness of these instances are
given by structural properties of the relevant statistical physics
models (hard-core model, anti-ferromagnetic Ising and Potts models),
including \emph{replica symmetry breaking} of the solution space as
predicted by the cavity method from statistical
physics~\cite{krzakala2007gibbs,achlioptas2008algorithmic,molloy2018freezing}.
On the other hand, another class of models, including the hard-core
model on random bipartite graphs and the ferromagnetic Ising and Potts
models, do not exhibit replica symmetry breaking: they are replica
symmetric over the entire range of parameters~\cite{sly2014counting,dembo2014replica}.  Nonetheless these models still
play an important role in computational complexity: they are used as
gadgets in hardness reductions for approximate counting and
sampling~\cite{sly2010computational,galanis2011improved,sly2014counting,galanis2016inapproximability,galanis2016ferromagnetic}.

In this paper we investigate approximate counting and sampling
problems in  replica symmetric models on random graph through the lens of the $q$-color ferromagnetic
  Potts and random cluster models. These models exhibit a
first-order `disorder/order' phase
transition (proved for the Potts model in~\cite{galanis2016ferromagnetic}, and for the random cluster model in this paper), and this phase transition
has been used in the construction of gadgets to show \#BIS-hardness of
sampling from the Potts model on bounded degree
graphs~\cite{galanis2016ferromagnetic}. But are these instances
computationally hard themselves?  For the case of large $q$, we
establish that the answer is
`no': there are efficient approximate sampling and counting algorithms
for the Potts and random cluster models at \emph{all}
temperatures. Very few all-temperature
  algorithms for statistical physics models on random regular
  graphs are known (examples include the special cases of the Ising and monomer-dimer models for which the problems are tractable on all graphs~\cite{jerrum1989approximating,jerrum1993polynomial}).  To the best of our knowledge this is the first such result for a problem where
  approximate counting is  NP- or \#BIS-hard in the worst case (in this
  case, \#BIS-hard).  Our counting algorithms are deterministic, and  
rely crucially on exploiting the first-order phase transition established in Theorem~\ref{thmPhases}.

There are two main computational problems associated to statistical
physics models like the random cluster and Potts models: the
\emph{counting problem} of computing the partition function $Z$, and
the \emph{sampling problem} of outputting a random configuration
distributed as $\mu$.  In general these problems are \#P-hard even for
restricted classes of graphs and parameter settings.  As a result,
current research is focused on finding efficient \textit{approximate}
counting and sampling algorithms.  We say $\hat Z$ is an
\emph{$\eps$-relative approximation to $Z$} if
$ e^{-\eps} \hat Z \le Z \le e^{\eps} \hat Z$.  A \emph{fully
  polynomial-time approximation scheme} (FPTAS) is an algorithm that,
given a graph $G$ and any $\eps>0$, outputs an $\eps$-relative
approximation to $Z_G$ and runs in time polynomial in $1/\eps$ and
$|V(G)|$.  A \emph{polynomial-time sampling algorithm} is a randomized
algorithm that, given a graph $G$ and any $\eps >0$, outputs a configuration $A$ with
distribution $\hat \mu$ in time polynomial in $|V(G)|$ and $1/\eps$ such that $\| \mu_G - \hat \mu \|_{TV} \le \eps$.

There is an extensive literature on approximate counting and sampling
from the Potts and random cluster models which we discuss below in
Section~\ref{sec:RelatedAlg}. First, however, we state our algorithmic
results.  Recall the class of graphs $\cG_{\Delta,\del}$ from Section~\ref{sec:expansion-profiles}.

\begin{theorem}
  \label{thmAlgorithm}
  For every $\Delta \ge 5$, $\del >0$, and $q$ large enough as a
  function of $\Delta, \del$, there is an FPTAS and a polynomial-time
  sampling algorithm for the $q$-color Potts and random cluster models
  at all inverse temperatures $\beta \ge 0$ over the class
  $\cG_{\Delta, \del}$.
\end{theorem}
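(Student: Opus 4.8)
The plan is to derive Theorem~\ref{thmAlgorithm} as a consequence of the convergent cluster expansions for the ordered and disordered polymer models, together with the finite-size structural results of Theorem~\ref{thmPhases}. The starting point is the decomposition $Z_{G_n} = Z_{\ord} + Z_{\dis} + Z_{\err}$, where $Z_{\ord}$ (resp.\ $Z_{\dis}$) is the contribution to the partition function from configurations in $\Omega_{\ord}$ (resp.\ $\Omega_{\dis}$), and $Z_{\err}$ collects the rest. By Theorem~\ref{thmPhases}, part~\eqref{itemslow}, the error term satisfies $\mu_n(\Omega_{\err}) = O(e^{-n})$, so $Z_{\err}$ is negligible on the relevant (exponential) scale and can be dropped at the cost of an additive $e^{-\Omega(n)}$ error, which is far smaller than any $e^{-\eps}$ multiplicative tolerance. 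The main work is therefore to approximate $Z_{\ord}$ and $Z_{\dis}$ separately and efficiently.

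For each of the two phases I would express $Z_{\ord}$ and $Z_{\dis}$ as abstract polymer partition functions: $Z_{\bullet} = Z^{\mathrm{ground}}_{\bullet} \cdot \Xi_{\bullet}$, where $Z^{\mathrm{ground}}_{\bullet}$ is the explicitly computable ground-state weight (all edges present, respectively no edges present, up to the combinatorial factors coming from $q^{c(A)}$ and $(e^\beta-1)^{|A|}$) and $\Xi_{\bullet} = \sum_{\text{compatible families}} \prod_\gamma w(\gamma)$ is a polymer partition function over the polymers constructed in the body of the paper. The crucial input — established in the sections developing the ordered and disordered expansions — is that the polymer weights satisfy the Koteck\'y--Preiss (or Koteck\'y--Preiss-type) convergence criterion uniformly in $\beta \ge 0$ and $n$, for $q$ large as a function of $\Delta,\del$. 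Granting this, $\log \Xi_{\bullet}$ equals the absolutely convergent cluster expansion $\sum_{\text{clusters } \Gamma} \varphi(\Gamma) \prod_{\gamma \in \Gamma} w(\gamma)$, with tails decaying geometrically in the total size of the cluster. The algorithm then truncates this expansion at clusters of size $O(\log(n/\eps))$; since polymers live on the bounded-degree graph $G_n$, the number of clusters of a given size anchored at a fixed vertex is singly exponential in the size, so the truncated sum has $\mathrm{poly}(n/\eps)$ terms and can be enumerated and evaluated in polynomial time. Exponentiating gives an $\eps$-relative approximation to each $\Xi_{\bullet}$, hence to $Z_{\ord}$ and $Z_{\dis}$, and summing yields the FPTAS for $Z_{G_n}$. (For the Potts model one invokes the identity $Z_G(q,\beta) = Z^{\Potts}_G(q,\beta)$ for integer $q$.) The sampling algorithm follows by the now-standard reduction from approximate counting: one uses the FPTAS to approximately sample from the polymer model in each phase — abstract polymer Gibbs measures admit polynomial-time samplers whenever the Koteck\'y--Preiss condition holds, via self-reducibility on the truncated cluster expansion — chooses the ordered vs.\ disordered phase with probability proportional to the estimates of $Z_{\ord}$ and $Z_{\dis}$, and then decodes the sampled polymer configuration into an edge subset $A$; the $O(e^{-n})$ mass on $\Omega_{\err}$ contributes at most that much to the total variation error.

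The step I expect to be the genuine obstacle is not in this excerpt but is exactly the technical heart flagged in the introduction: verifying the Koteck\'y--Preiss convergence criterion for the \emph{ordered} polymer model \emph{uniformly down to and including $\beta_c$}, and in particular uniformly over all $\beta \ge 0$. The non-local factor $q^{c(A)}$ obstructs a naive polymer decomposition, and the inductive construction of polymers with a combinatorial `boundary' encoding the connected components must be controlled so that polymer weights decay geometrically in size; this is where the small-set expansion hypotheses ($\phi_{G_n}(1/2)\ge 1/10$, $\phi_{G_n}(\del)\ge 5/9$) and the first-order nature of the transition (typical ordered configurations missing only an $\eta$-fraction of edges) enter, and where the requirement $\Delta \ge 5$ and $q \ge \Delta^{C\Delta}$ comes from. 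Once that uniform convergence is in hand, the remaining algorithmic steps — truncation, term enumeration on a bounded-degree graph, self-reducible sampling, and assembling the two phases — are routine applications of the polymer-model algorithmic framework. I would also need to note the mild point that the $O(e^{-n})$ bound on $\mu_n(\Omega_{\err})$ is used both to justify dropping $Z_{\err}$ and to bound the sampler's total variation error, and that all constants in the runtime depend only on $\Delta,\del,q$ and not on $n$ or $\beta$.
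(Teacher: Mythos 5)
Your proposal is correct and follows essentially the same route as the paper: discard $Z^{\err}$ via Lemma~\ref{lem:ErrQual}, approximate $Z^{\dis}$ and $Z^{\ord}$ by the scaled polymer partition functions (Lemmas~\ref{lem:DisQual} and~\ref{lem:OrdQual}, Corollary~\ref{cor:polyapprox}), truncate the convergent cluster expansions at clusters of size $O(\log(n/\eps))$ and enumerate them on the bounded-degree graph, and obtain the sampler from the polymer-model sampling framework combined with a choice between the two phases weighted by the counting estimates. The only details the paper adds are that the case $\eps \le e^{-n/2}$ is handled by brute-force enumeration (so the $e^{-n}$ additive errors are genuinely negligible), and that the identity $Z^{\ast} = (\text{ground-state weight})\cdot \Xi^{\ast}$ holds only up to an $e^{-n}$ relative error because the finite-graph polymer models restrict the total size of compatible collections.
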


\begin{cor}
  \label{thmAlgRandom}
  For $\Delta \ge 5$ and $q= q(\Delta)$ large enough, with high
  probability over the random $\Delta$-regular graph, there is a FPTAS
  and polynomial-time sampling algorithm for the $q$-color Potts and
  random cluster models at all temperatures.
\end{cor}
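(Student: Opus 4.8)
The plan is to derive the corollary from Theorem~\ref{thmAlgorithm}: it suffices to exhibit a $\del=\del(\Delta)>0$ such that the uniformly random $\Delta$-regular graph $G_n$ lies in $\cG_{\Delta,\del}$ with high probability, since then the FPTAS and sampling algorithm of Theorem~\ref{thmAlgorithm}, run on $G_n$, succeed on the high-probability event $\{G_n \in \cG_{\Delta,\del}\}$. The required statement about $G_n$ is exactly Proposition~\ref{prop:RRG-exp}, so the corollary is immediate once that proposition is in hand; here I sketch how I would prove it. It is convenient to work in the configuration model, transferring conclusions to the uniform model via the usual contiguity of the two when $\Delta$ is fixed.

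One must check the two defining properties of $\cG_{\Delta,\del}$. For the coarse expansion bound $\phi_{G_n}(1/2)\ge 1/10$, I would use a first-moment estimate over all vertex subsets $S$ with $|S|\le n/2$: for fixed $S$ the number of edges leaving $S$ is sharply concentrated near $\Delta|S|(1-|S|/n)$, and the probability that it falls below $\tfrac{1}{10}\Delta|S|$ is exponentially small in $\Delta n$ with a rate that, for $\Delta\ge 5$, beats the $\binom{n}{|S|}$ union-bound factor; alternatively one can invoke Friedman's second-eigenvalue theorem together with the discrete Cheeger inequality. For the small-set bound $\phi_{G_n}(\del)\ge 5/9$, observe that for a $\Delta$-regular graph $|E(S,S^c)|=\Delta|S|-2e(S)$, where $e(S)$ is the number of internal edges of $S$, so the bound is equivalent to $e(S)\le \tfrac{2}{9}\Delta|S|$ for all $S$ with $|S|\le \del n$. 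A standard first-moment computation in the configuration model bounds the expected number of sets of size $s$ spanning at least $\tfrac{2}{9}\Delta s$ edges by $\exp(-c(\Delta)\,s)$ provided $s\le \del n$ with $\del=\del(\Delta)$ chosen sufficiently small; summing over $3\le s\le \del n$ then gives $o(1)$. This last step is where the hypothesis $\Delta\ge 5$ is helpful, since the dense-subset count is more delicate for $\Delta\in\{3,4\}$.

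With Proposition~\ref{prop:RRG-exp} established, the corollary follows: fix the resulting $\del=\del(\Delta)$, take $q=q(\Delta,\del)$ large enough for Theorem~\ref{thmAlgorithm} to apply (a function of $\Delta$ alone, since $\del$ depends only on $\Delta$), and note that the algorithms of that theorem are correct on every input in $\cG_{\Delta,\del}$ --- in particular they need not certify membership in the class, which is fortunate since edge expansion is not obviously efficiently checkable --- so running them on $G_n$ yields a correct FPTAS and polynomial-time sampler with probability $1-o(1)$. I expect the only genuine work to be the small-set expansion estimate of Proposition~\ref{prop:RRG-exp}; the reduction to Theorem~\ref{thmAlgorithm} is otherwise routine.
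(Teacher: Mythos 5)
Your reduction is exactly the paper's: Corollary~\ref{thmAlgRandom} is deduced in one line from Theorem~\ref{thmAlgorithm} together with Proposition~\ref{prop:RRG-exp}, and your handling of the quantifiers (fix $\del=\del(\Delta)$, then $q=q(\Delta,\del)=q(\Delta)$, then run the algorithms on the whp event $G_n\in\cG_{\Delta,\del}$) matches what the paper does. Where you diverge is in how Proposition~\ref{prop:RRG-exp} itself is established: the paper does not redo any expansion computation, but simply cites known results (Theorem~\ref{thmExpander}, combining \cite{bollobas1988isoperimetric} for $\phi_G(1/2)$ and \cite[Theorem 4.16]{hoory2006expander} for the small-set bound $\phi_G(\del)\ge 5/9$), whereas you sketch a self-contained first-moment argument in the configuration model. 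Your sketch is sound and is essentially the computation underlying those cited results; in particular your reformulation of the small-set condition as $e(S)\le\frac{2}{9}\Delta|S|$, and your observation that this density exceeds $1$ only when $\Delta\ge 5$, correctly identifies where the hypothesis $\Delta\ge5$ enters. One caveat on your alternative route for $\phi_G(1/2)$: Friedman plus Cheeger gives $\phi_G(1/2)\ge(\Delta-2\sqrt{\Delta-1}-\eps)/(2\Delta)$, which at $\Delta=5$ is just \emph{below} the threshold $1/10$ in the definition of $\cG_{\Delta,\del}$, so for that borderline degree you must use the first-moment (Bollob\'as-type) estimate rather than the spectral one. Finally, you are right that certification of membership in $\cG_{\Delta,\del}$ is not needed for the literal statement of the corollary; the paper nevertheless includes, in the second part of Proposition~\ref{prop:RRG-exp}, a polynomial-time accept/reject procedure (via the approximation algorithm of \cite{guruswami2013rounding} and a slight strengthening of the whp expansion bounds), which buys the stronger guarantee that one can recognize, for a given input graph, that the algorithms of Theorem~\ref{thmAlgorithm} apply --- so, contrary to your parenthetical, membership is efficiently certifiable whp.
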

In particular, there is an algorithm that in polynomial-time decides
to accept or reject a random $n$-vertex,  $\Delta$-regular graph $G$. The
acceptance condition is simply that $G \in \cG_{\Delta, \del}$, see
Lemma~\ref{prop:RRG-exp} below. For accepted graphs, the algorithm of
Theorem~\ref{thmAlgorithm} can be used.

Our methods also allow us to obtain negative algorithmic results,
i.e., to establish exponentially slow mixing of some well-known Markov
chains. Precise definitions of the Markov chains appearing in the next
theorem, and of mixing times, can be found in Section~\ref{secSlow}.

The Swendsen--Wang dynamics~\cite{swendsen1987nonuniversal} are
non-local dynamics for the Potts model devised to circumvent the
problem of phase coexistence by allowing re-coloring of many vertices
in a single step of the chain.  On the lattice $(\Z/n \Z)^d$ (with $q$
sufficiently large) the Swendsen--Wang dynamics are expected to be
fast except at criticality, where the mixing time is
$\exp(\Omega(n^{d-1}))$~\cite{borgs2012tight,gheissari2018mixing}. On
the other hand, for the mean-field model (i.e., on the complete graph)
the mixing time is exponentially slow in an entire interval around
$\beta_c$~
\cite{gore1999swendsen,blanca2015dynamics,galanis2019swendsen,gheissari2020exponentially}.
It has been conjectured that the Swendsen--Wang dynamics for random
regular graphs exhibit mean-field behavior, mixing exponentially
exponentially slowly for $q>2$ and $\beta$ in the entire interval
$( \beta_u, \beta_{u}^{\star})$. See~\cite{blanca2021random} for a
discussion. The next theorem takes a step towards confirming that the
mean-field picture is correct for random regular graphs by proving
slow mixing in an interval. Previously slow mixing was only known at
criticality, consistent with both lattice and mean-field-type
behavior~\cite{galanis2016ferromagnetic,blanca2021random}.

  \begin{theorem}
  \label{thmSlowMix}
  For all $\Delta\geq 5$ and $q =q(\Delta)$ large enough, there is an
  interval $(\beta_m, \beta_M)$ containing $\beta_c$ so that for any
  sequence of locally tree-like graphs $G_n \in \cc G_{\Delta,\delta}$, the mixing times of the random cluster Glauber dynamics and the Swendsen--Wang dynamics\footnote{For non-integer $q$ we consider the Chayes-Machta dynamics~\cite{chayes1998graphical}, a generalization of Swendsen--Wang to non-integer $q$.  The results also apply to the Potts Glauber dynamics.} are $e^{\Omega(n)}$.  
  \end{theorem}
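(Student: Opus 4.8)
The plan is a bottleneck (conductance) argument. For a reversible chain with stationary distribution $\mu_n$ and transition matrix $P$ on $\Omega_n$, and any $S\subseteq\Omega_n$, the mixing time is bounded below by a constant multiple of $\min\{\mu_n(S),\mu_n(S^c)\}/Q(S,S^c)$, where $Q(S,S^c)=\sum_{x\in S,\,y\notin S}\mu_n(x)P(x,y)$ is the equilibrium flow across the cut. It therefore suffices to exhibit an interval $(\beta_m,\beta_M)\ni\beta_c$ and, for each chain, a cut whose flow is exponentially smaller than $\min\{\mu_n(S),\mu_n(S^c)\}$. The cut will separate the disordered configurations from the ordered ones; the bottleneck is the set $\Omega_{\err}$ of intermediate-density configurations, which is exponentially small by Theorem~\ref{thmPhases}(8).

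\emph{Choosing the interval.} By Theorem~\ref{thmPhases}(1) the limiting free energy is continuous, so $f_{\dis}(\beta_c)=f_{\ord}(\beta_c)$; and throughout the open $\beta$-region where both cluster expansions converge, the expansions of Section~\ref{sec:phase-transition} give $n^{-1}\log Z_{\dis}\to f_{\dis}$ and $n^{-1}\log Z_{\ord}\to f_{\ord}$, where $Z_{\dis},Z_{\ord}$ denote the restrictions of $Z_{G_n}$ to $\Omega_{\dis},\Omega_{\ord}$. Since Theorem~\ref{thmPhases}(8) gives $Z_{G_n}=(1+O(e^{-n}))(Z_{\dis}+Z_{\ord})$, we obtain
\begin{equation*}
  n^{-1}\log\min\{\mu_n(\Omega_{\dis}),\mu_n(\Omega_{\ord})\}\ \longrightarrow\ -\,\bigl|f_{\dis}(\beta)-f_{\ord}(\beta)\bigr|
\end{equation*}
uniformly over $G_n\in\cG_{\Delta,\del}$. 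The right-hand side is continuous and vanishes at $\beta_c$, so once a flow rate $\eps_1>0$ valid for both chains is fixed (below), we may take $(\beta_m,\beta_M)\ni\beta_c$ inside the overlap region with $|f_{\dis}-f_{\ord}|<\eps_1/2$ on it; then $\min\{\mu_n(\Omega_{\dis}),\mu_n(\Omega_{\ord})\}\ge e^{-\eps_1 n/2}$ for all large $n$.

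\emph{The flow estimates.} For the random cluster Glauber dynamics (and, by the same argument in Potts configuration space, the Potts Glauber dynamics) take $S=\{A:|A|\le\lfloor\theta|E|\rfloor\}$ with any fixed $\theta\in(\eta,1-\eta)$; then $\mu_n(S)\ge\mu_n(\Omega_{\dis})$, $\mu_n(S^c)\ge\mu_n(\Omega_{\ord})$, and a single-edge update crosses $\partial S$ only from a configuration with exactly $\lfloor\theta|E|\rfloor$ edges, all of which lie in $\Omega_{\err}$; hence $Q(S,S^c)\le\mu_n(\Omega_{\err})=O(e^{-n})$. For the Swendsen--Wang (and Chayes--Machta) dynamics a step is non-local, so take $S=\Omega_{\dis}$ and use reversibility and stationarity to estimate the flow in the convenient direction:
\begin{equation*}
  Q(\Omega_{\dis},\Omega_{\dis}^{c})=\pr_{\mu_n\otimes P}\!\bigl[\mathbf A\in\Omega_{\dis}^{c},\ \mathbf A'\in\Omega_{\dis}\bigr]\ \le\ \pr\!\bigl[\mathbf A\in\Omega_{\ord},\ \mathbf A'\in\Omega_{\dis}\bigr]+\mu_n(\Omega_{\err}).
\end{equation*}
By the expansion hypothesis $\phi_G(1/2)\ge1/10$, every $\mathbf A\in\Omega_{\ord}$ has a connected component whose complement spans at most $5\eta n$ vertices: isolating a set of $s\in(5\eta n,n/2]$ vertices from a $\Delta$-regular graph with $\phi_G(1/2)\ge1/10$ would delete more than $\eta|E|$ edges, which $\mathbf A$ does not have available. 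This giant component survives one step. In a Swendsen--Wang step it receives a single colour, so at least $(1-10\eta)|E|$ edges are monochromatic, and the resampled set $\mathbf A'$ includes each monochromatic edge independently with probability $p=1-e^{-\beta}$; since $\beta_c=(1+o_q(1))\tfrac{2\log q}{\Delta}$ is large, $p$ exceeds $\eta/(1-10\eta)$ throughout the interval once $q$ is large, so $\E|\mathbf A'|\ge(1-10\eta)p|E|>\eta|E|$ and a Chernoff bound gives $\pr[|\mathbf A'|\le\eta|E|]\le e^{-\eps_1 n}$ for some $\eps_1=\eps_1(\Delta,\del)>0$; that is, $\mathbf A'\in\Omega_{\ord}$ with overwhelming probability. (For Chayes--Machta, split on whether the giant component is activated: if not, $\mathbf A'$ keeps all $(1-O(\eta))|E|$ of its edges; if so, the same percolation estimate applies.) Hence $Q(\Omega_{\dis},\Omega_{\dis}^c)=e^{-\Omega(n)}$. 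Taking $\eps_1$ smaller if necessary so that it is a valid flow rate for both chains, the bottleneck bound together with $\min\{\mu_n(S),\mu_n(S^c)\}\ge e^{-\eps_1 n/2}$ yields mixing time $\ge e^{\eps_1 n/2-o(n)}=e^{\Omega(n)}$. All constants depend on $G_n$ only through $\Delta,\del$ and the two expansion constants, so the bound holds uniformly over locally tree-like $G_n\in\cG_{\Delta,\del}$.

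\emph{The main obstacle.} The crux is the decision to estimate the Swendsen--Wang flow \emph{from the ordered phase}: ordered configurations carry a giant component forced by edge expansion, and one non-local step cannot dissolve it except with a genuinely exponentially small probability. The symmetric estimate from the disordered side is much weaker --- disordered configurations converge locally to a subcritical percolation measure and are only guaranteed to have components of size $O(\log n)$, so a union bound over which components get recoloured degrades the flow rate to $e^{-\Omega(n/\log n)}$, which gives only quasi-exponential mixing; reversibility is exactly what allows us to sidestep this. The residual work is bookkeeping: reconciling the various $O(\eta)$ losses entering the bound on the number of monochromatic edges with the Chernoff threshold, verifying that the common domain of convergence of the two expansions contains a genuine neighbourhood of $\beta_c$ (so that the lower bound on $\min\{\mu_n(\Omega_{\dis}),\mu_n(\Omega_{\ord})\}$ is available there), and carrying out the Chayes--Machta case carefully.
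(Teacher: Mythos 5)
Your proposal is correct and follows essentially the same route as the paper: a conductance/bottleneck argument at the disordered--ordered split, with the disordered-to-ordered flow of the Swendsen--Wang/Chayes--Machta step controlled via reversibility by showing that one step started from an ordered configuration lands in $\Omega_{\dis}$ with probability $e^{-\Omega(n)}$ (the paper's Lemma~\ref{lemconductance}, proved by a case analysis on the number of active--active edges rather than your giant-component argument), the middle phase controlled by $\mu_n(\Omega_{\err})=O(e^{-n})$ from Lemma~\ref{lem:ErrQual}, and the lower bound on $\min\{\mu_n(\Omega_{\dis}),\mu_n(\Omega_{\ord})\}$ on an interval around $\beta_c$ obtained from the near-equality of $f_{\dis}$ and $f_{\ord}$ (the paper's Lemma~\ref{lembetainterval}, which uses a derivative bound where you use continuity). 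Your explicit level-set cut for the single-edge Glauber dynamics is the ``simpler'' argument the paper alludes to and omits, so the differences are only in bookkeeping, not in the approach.
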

  As remarked above, slow mixing was previously only known at the
  critical point $\beta = \beta_c$ on random $\Delta$-regular graphs
  for $q\geq 2\Delta/\log\Delta$~\cite{galanis2016ferromagnetic}.  Our
  ability to prove Theorem~\ref{thmSlowMix} is due to the fact that
  our methods give us detailed information about the relative
  probabilities of ordered and disordered configurations in an
  interval around $\beta_{c}$.

\subsubsection{Related algorithmic work}
\label{sec:RelatedAlg}

For the special case of the Ising model ($q=2$), Jerrum and
Sinclair~\cite{jerrum1993polynomial} gave a fully polynomial-time
randomized approximation scheme (FPRAS) for all graphs and all inverse
temperatures $\beta \ge 0$.  See also~\cite{guo2018}, which shows that
the $q=2$ random cluster dynamics are rapidly mixing on all
graphs. When $q=1$ the random cluster model is Bernoulli bond
percolation, and algorithmic tasks are trivial. The existence of
efficient algorithms at all temperatures and on all graphs appears to
be a rather special property, though. For $q\notin\{1,2\}$ most
positive algorithmic results to date have been restricted to the
high-temperature regime ($\beta$ small) or, when $q$ is integral, the
low-temperature regime ($\beta$ large).

The most relevant results for the present paper are that, for the case
of random
$\Delta$-regular graphs, Blanca, Galanis, Goldberg,
{\v{S}}tefankovi{\v{c}}, Vigoda, and Yang~\cite{blanca2020sampling}
give an efficient sampling algorithm for the Potts and random cluster
models when the temperature is above the uniqueness threshold of the
infinite $\Delta$-regular tree, i.e., $\beta < \beta_u(\mathbb
T_\Delta)$; Blanca and Gheissari then showed $O(n \log
n)$ mixing of the random cluster dynamics for
$\beta<\beta_u$~\cite{blanca2021random}.  In a more general setting,
Bordewich, Greenhill, and Patel showed the Glauber dynamics for the
Potts model on graphs of maximum degree at most
$\Delta$ mix rapidly for $\beta \le (1+o_q(1))\log
q/(\Delta-1)$ and showed slow mixing of the Glauber dynamics on random
regular graphs for $\beta \ge (1+o_q(1))\log q/(\Delta-1
-1/(\Delta-1))$~\cite{bordewich2016mixing}. See also~\cite{CDKPR20}
for deterministic algorithms in a slightly smaller range of $\beta$.

On the hardness side, Goldberg and
Jerrum~\cite{goldberg2012approximating} showed that approximating the
Potts model partition function $Z^{\Potts}$ 
for $q\ge 3$ on general graphs is \#BIS-hard; that is, it
is as hard as approximating the number of independent sets in a
bipartite graph~\cite{dyer2004relative}.  Galanis,
{\v{S}}tefankovi{\v{c}}, Vigoda, and
Yang~\cite{galanis2016ferromagnetic} refined these results by showing
that for $\beta >\beta_{c}(q,\Delta)$ (recall~\eqref{eqBetaCexplicit}), 
it is \#BIS-hard to approximate $Z^{\Potts}$ on graphs of maximum
degree $\Delta$. The description of the phase diagram of the Potts
model on random regular graphs found
in~\cite{galanis2016ferromagnetic} and discussed above is a crucial
ingredient for this result.

For $q>2$ the algorithms mentioned above apply only in the
high-temperature regime.  At very low temperatures 
efficient sampling and counting algorithms have
recently been developed for structured classes of
graphs~\cite{HelmuthAlgorithmic2,barvinok2019weighted,huijben2021sampling} and for
expander graphs~\cite{JKP2,galanis2020fast,carlson2020efficient,galanis2021fast}. By making use of the
ideas in~\cite{HelmuthAlgorithmic2} in combination with the
application of Pirogov-Sinai theory to the random cluster model
in~\cite{borgs2012tight}, Borgs, Chayes, Helmuth, Perkins, and
Tetali~\cite{PottsAll2019stoc} gave efficient counting and sampling
algorithms for the random cluster model on $d$-dimensional tori
$(\Z/n\Z)^{d}$ at \emph{all} temperatures when $q$ is large enough as
a function of $d$.  As the tori $(\Z/n\Z)^{d}$ approximate
  $\Z^{d}$ as $n\to\infty$, the results of~\cite{PottsAll2019stoc} can be
interpreted as saying that the phase transition for the $q$-state
random cluster model on $\Z^{d}$ is not an algorithmic barrier, at
least when $q$ is large. Our Theorem~\ref{thmAlgorithm} has a
similar interpretation: informally speaking, it says that the phase transition for
the $q$-state random cluster model on random $\Delta$-regular graphs
is not an algorithmic barrier when $q$ is large.  

On the other hand, it is shown in~\cite{galanis2016ferromagnetic} that
this phase transition does have a link to computational complexity,
since it can be used to construct gadgets which show the
\#BIS-hardness of approximating the Potts partition function for
$\beta > \beta_c$.  Similarly, while the phase transition in the
hard-core model on random $\Delta$-regular bipartite graphs has a
direct link to the NP-hardness of approximating the independence
polynomial on bounded degree
graphs~\cite{sly2014counting,sly2010computational,galanis2016inapproximability,weitz2006counting},
it is still plausible that there are efficient sampling and counting
algorithms for the hard-core model on random bipartite graphs at all
activities.  For activities large enough, efficient algorithms are given in~\cite{JKP2,liao2019counting,chen2021sampling}, with the last paper obtaining the best known bound of $\lam = \Omega(\log \Delta/ \Delta)$.  The authors of~\cite{chen2021sampling} observe, however, that $ \Omega(\log \Delta/ \Delta)$ appears to be a barrier for the type of polymer model argument used in these papers, and so finding efficient algorithms for the hard-core model at and slightly above criticality will likely require new ideas and techniques.

\subsection{Open problems}
\label{sec:previous}

For integer $q$, the explicit formula~\eqref{eqBetaCexplicit} for
$\beta_c(q,\Delta)$ was found in~\cite{galanis2016ferromagnetic}.  We conjecture that this formula holds for non-integral $q$ as well:
\begin{conj}
  \label{conjBc}
  For all $\Delta \ge 3$, $q>2$, the critical point of the random
  cluster model on the random $\Delta$-regular graph satisfies
  \begin{equation*}
    \beta_c(q,\Delta) = \log  \frac{  q-2 }{ (q-1)^{1-2/\Delta} -1   }  \,.
  \end{equation*}
\end{conj}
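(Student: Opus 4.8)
\emph{Proof proposal.} Write $g(q,\Delta,\beta) \bydef f_{\ord}(q,\Delta,\beta) - f_{\dis}(q,\Delta,\beta)$, where $f_{\dis}$ is the (explicit) free energy of the free measure $\mu^{\free}$ on $\bT_\Delta$ appearing in Theorem~\ref{thmPhases}(1) and $f_{\ord}$ is the free energy associated with the wired measure $\mu^{\wire}$. The strategy has two ingredients: (i) show that $\lim_{n\to\infty} n^{-1}\log Z_{G_n} = \max\{f_{\dis},f_{\ord}\}$ in probability for all $q>2$ and $\Delta\ge3$ — equivalently, that the random cluster free energy on the random $\Delta$-regular graph is given by the replica-symmetric/Bethe variational formula and that the only relevant fixed points of the associated tree recursion are the free and wired ones; and (ii) verify by direct computation that $g(q,\Delta,\cdot)$ has a transversal zero at $\beta=\log\frac{q-2}{(q-1)^{1-2/\Delta}-1}$. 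Granting (i), the critical point is exactly the crossing locus $\{g=0\}$ and the transition is first order there, so (ii) finishes the proof. When $q$ is large, ingredient (i) is precisely what Theorem~\ref{thmPhases} (together with the definition of $f_{\ord}$) establishes, so in that regime the conjecture reduces entirely to the computation (ii) — a computation not carried out here for non-integer $q$.

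For (ii): set up the Bethe recursion for the random cluster model on $\bT_\Delta$. Parametrising boundary conditions by the probability $r$ that the root is connected to the boundary within a pendant subtree, one gets a one-dimensional rational map $r\mapsto F(r)$ whose fixed points index the translation-invariant Gibbs states; $\mu^{\free}$ corresponds to the trivial fixed point $r=0$ and $\mu^{\wire}$ to the dominant nontrivial fixed point for large $\beta$. The Bethe free energy density is the standard ``vertex term minus $\tfrac{\Delta}{2}\times$ edge term'' functional evaluated at a fixed point; plugging in $r=0$ should reproduce $f_{\dis}=\log q+\tfrac{\Delta}{2}\log\bigl(1+\tfrac{e^\beta-1}{q}\bigr)$, and plugging in the nontrivial fixed point gives a closed form for $f_{\ord}$ in terms of $F$. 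Setting $f_{\dis}=f_{\ord}$ then becomes an algebraic identity in $e^\beta,q,\Delta$; I would solve it and check that it collapses to $e^{\beta_c}=\frac{q-2}{(q-1)^{1-2/\Delta}-1}$, cross-checking against the derivation of~\eqref{eqBetaCexplicit} in~\cite{galanis2016ferromagnetic}, which is the Edwards--Sokal image of exactly this tree calculation. Transversality (so that the crossing is a genuine first-order point, not a degenerate tangency) should follow from the fact that $f_{\dis},f_{\ord}$ are convex and increasing in $\beta$ with distinct derivatives at the crossing, since the wired phase has strictly larger edge density there; this too is a finite computation with $F$. I expect this step to be routine algebra.

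The main obstacle is (i) for $q$ not large, since the cluster-expansion machinery of this paper is inherently perturbative and gives no information as $q\to 2^{+}$. I would treat the two inequalities separately. The lower bound $\lim_n n^{-1}\log Z_{G_n}\ge f_{\dis}$ is straightforward (restrict the partition-function sum to sparse edge sets and use that $G_n$ is locally tree-like so local statistics match $\mu^{\free}$), and $\ge f_{\ord}$ should follow from a planting/second-moment construction on dense edge sets or from a wired local limit. The delicate half is the upper bound $\le\max\{f_{\dis},f_{\ord}\}$: one wants an interpolation argument in the spirit of Dembo--Montanari--Sun, but the non-local weight $q^{c(A)}$ is not a product of local factors, so Guerra--Toninelli interpolation does not apply off the shelf. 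Plausible remedies are: (a) for rational $q$, rewrite $q^{c(A)}$ via an exact representation (a fuzzy-Potts or fractional-colour gadget) turning the model into a genuine factor model and then pass to a limit in $q$; (b) exploit stochastic monotonicity in $q$ for $q\ge1$ to sandwich between integer values, for which the Potts results of~\cite{dembo2014replica,galanis2016ferromagnetic} already give the Bethe formula; or (c) adapt the interpolation so that it carries the component count, using that on a locally tree-like graph $c(A)$ equals $|V|-|A|$ plus the cyclomatic number of $A$, a quantity controlled by the short-cycle statistics. Even granting an interpolation upper bound equal to the Bethe optimum, one must still rule out an intermediate phase, i.e.\ show the optimiser is $\max\{f_{\dis},f_{\ord}\}$ and not some other fixed point of $F$; for $q>2$ this should follow from monotonicity of $F$ (only the free fixed point and one dominant nontrivial fixed point are relevant), but establishing it uniformly as $q\to 2^{+}$, and simultaneously handling $\Delta\in\{3,4\}$, is where I expect genuinely new input — most likely a combination of the interpolation method with a fine analysis of the random cluster tree recursion — to be required.
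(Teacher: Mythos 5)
This statement is Conjecture~\ref{conjBc}, which the paper explicitly leaves \emph{open}: there is no proof in the paper to compare against, and the paper itself only suggests (in Section~\ref{sec:previous}) that generalizing the methods of~\cite{galanis2016ferromagnetic,dembo2014replica} might yield it. Your proposal is therefore not a proof but a research program, and you are candid about this. The decisive gap is your ingredient (i): establishing $\lim_n n^{-1}\log Z_{G_n}=\max\{f_{\dis},f_{\ord}\}$ (equivalently, validity of the Bethe/replica-symmetric formula and the absence of other relevant fixed points) for \emph{all} $q>2$ and $\Delta\ge 3$. None of the tools currently on the table supply this: the paper's cluster-expansion machinery is perturbative in $q$ and, as you note, Guerra--Toninelli interpolation fails off the shelf because $q^{c(A)}$ is not a product of local factors. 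Of your proposed remedies, (b) cannot close the gap even in principle for the exact statement: monotonicity in $q$ sandwiches free energies between integer values of $q$, which bounds $\beta_c(q,\Delta)$ between $\beta_c(\lfloor q\rfloor,\Delta)$ and $\beta_c(\lceil q\rceil,\Delta)$ but cannot pin down the exact non-integer formula. Remedies (a) and (c) are genuinely open technical problems, not steps you can currently execute.

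A secondary gap is in ingredient (ii), which you describe as routine algebra. Even in the large-$q$ regime where the paper's Theorem~\ref{thmPhases} identifies $\beta_c$ implicitly via $f_{\ord}=f_{\dis}$, the paper has no closed form for $f_{\ord}$: it is defined as a cluster-expansion sum $\overline f_{\ord}$ over rooted clusters on $\bT_\Delta$ (Section~\ref{sec:limit-free-energ}), and the explicit evaluation that works for $f_{\dis}$ (Lemma~\ref{lem:fdislem}) exploits the fact that the free measure on a finite tree is \emph{independent} edge percolation, so all higher joint cumulants vanish. The wired measure is not independent, so that trick does not transfer, and identifying $\overline f_{\ord}$ with the Bethe functional at the nontrivial fixed point requires an additional argument (note also that the naive per-vertex limit of $\log Z^{\wire}_{\bT_\Delta^L}$ is boundary-dominated, so one cannot simply take tree limits). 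Consequently, even granting (i) for large $q$, your (ii) is not yet a finite verification but another lemma to prove; and for general $q>2$, as you yourself conclude, genuinely new input is needed. As it stands, the proposal does not establish the conjecture.
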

It would be interesting to see if the methods of~\cite{galanis2016ferromagnetic,dembo2014replica} could be generalized to the random cluster model to prove this conjecture.

As discussed earlier, our methods rely in an essential way on $q$
being very large.  Phenomenologically, one
expects the same behavior for all $q>2$.  It would be very interesting to extend our results to this
setting. One possible approach, at least for algorithmic results, is
to show that a Markov chain started from either $A= \emptyset$ or
$A =E$ can be used to obtain approximate samples.

\subsection{Organization of the paper}

In Section~\ref{sec:polym-model-repr} we define ordered and disordered
polymer models, and prove estimates showing that their cluster
expansions converge in overlapping regions of parameters that cover
all inverse temperatures $\beta$ when $q$ is large as a function of
$\Delta$.

The definitions and estimates of Section~\ref{sec:polym-model-repr}
allows us to study the random cluster model via the polymer models.
In Section~\ref{sec:phase-transition} we exploit this polymer model
framework to prove Theorem~\ref{thmPhases}. In
Section~\ref{sec:RRGexp} we show that the random $\Delta$-regular
graph belongs to $\cG_{\Delta,\del}$ with high probability and that
membership in $\cG_{\Delta,\del}$ can be checked efficiently. In
Section~\ref{sec:finite-size-scaling} we prove
Theorems~\ref{thmCritical} and~\ref{thm:Q}.  In Section~\ref{secSlow}
we prove Theorem~\ref{thmSlowMix}.

The estimates of Section~\ref{sec:polym-model-repr} reduce the
algorithmic problems to algorithmic problems concerning 
the two polymer models.  In Section~\ref{sec:algor-cons} we provide
efficient approximate counting and sampling algorithms for the polymer
models, proving Theorem~\ref{thmAlgorithm}.

\section{Polymer model representations and estimates}
\label{sec:polym-model-repr}

Recall the definition of $\cG_{\Delta,\del}$ from
Section~\ref{sec:expansion-profiles}. In this section we assume the following.
\begin{assm}
  \label{assm}
  We assume $q \ge1$, $\Delta \ge 5$,  $G\in \cG_{\Delta,\del}$, and set parameters
  \begin{equation}
    \label{eq:assm}
    n \bydef \abs{V(G)}, \quad e^{\beta_{0}}-1\bydef q^{1.9/\Delta}, \quad
    e^{\beta_{1}}-1\bydef q^{2.1/\Delta}, \quad \eta \bydef \min \{1/100,\delta/5\}.
  \end{equation}
\end{assm}

\subsection{\emph{A priori} estimates}
\label{sec:an-empha-priori}

The intuition for understanding the large-$q$ random cluster model on
an expander graph is that at \textit{all} temperatures a typical configuration
consists of either very few of the edges or nearly all of the edges.
The following  lemma will allow us to make this
precise. Recall $c(A)$ is the number of connected components induced
by an edge set $A$.

\begin{lemma}
\label{lemNoMiddle}
Suppose $G \in \cG_{\Delta, \del}$ and $n \ge 360/(\eta\del)$. For $A \subset E$ such that
$\eta |E| \le |A| \le (1-\eta)|E|$,
\begin{equation*}
\frac{c(A)}{n} + \frac{|A|}{|E|} \le 1-\eta/40 \,.
\end{equation*}
\end{lemma}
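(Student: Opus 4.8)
The plan is to show that any edge set $A$ with $|A|$ in the "middle" range cannot have too many connected components, by exploiting the expansion hypotheses of $\cG_{\Delta,\del}$. The crucial observation is that a large connected component count $c(A)$ forces many small components, and small vertex sets have large edge boundary by small-set expansion — but those boundary edges are missing from $A$, so $A$ cannot contain too large a fraction of $E$. I would make this a win-win: either $c(A)/n$ is already small (so the bound is trivially satisfied since $|A|/|E| \le 1-\eta$), or $c(A)/n$ is non-negligible, in which case I extract enough small components to produce a set $S$ of vertices with $|S|\le \delta n$ whose edge boundary $E(S,S^c)$ is forced to be large by $\phi_G(\delta)\ge 5/9$, and none of those boundary edges lie in $A$, contradicting $|A|\ge \eta|E|$ being compatible with $|A|/|E|$ close to $1$.

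First I would set up the counting. Let $c=c(A)$ and note $A$ has no edges between distinct components, so if $S$ is the union of any collection of components, then $E(S,S^c)\cap A=\emptyset$, whence $|E\setminus A|\ge |E(S,S^c)|\ge \Delta\phi_G(|S|/n)\,|S|$ provided $|S|\le n/2$. Since there are $c$ components partitioning $V$, one can order the components by size and greedily accumulate the smallest ones into $S$ until $|S|$ first exceeds, say, $\delta n/2$ (this is possible as long as $c$ is large enough that the small components carry enough mass; if instead a few large components dominate, the number of components is small and $c/n$ is tiny, landing us in the easy case). Then $\delta n/2 \le |S| \le \delta n$ — the upper bound because each component added when $|S|$ was still $\le \delta n/2$ has size at most $\delta n/2$, so the overshoot is controlled — and also $|S|\le n/2$ since $\delta<1/2$. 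Now $\phi_G(\delta)\ge 5/9 > 1/2$ gives $|E\setminus A| \ge \Delta\cdot\tfrac59\cdot\tfrac{\delta n}{2}$, i.e. $|A|/|E| \le 1 - \tfrac59\delta \cdot \tfrac{\Delta n/2}{|E|} = 1-\tfrac59\delta$. Combined with $c/n \le$ a small quantity in the complementary case, one gets $c(A)/n + |A|/|E| \le 1-\eta/40$ after choosing the thresholds; the arithmetic with $\eta\le\delta/5$ is what makes the constants work, and the hypothesis $n\ge 360/(\eta\delta)$ absorbs the rounding errors from the greedy accumulation.

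To nail the dichotomy cleanly I would instead argue directly: suppose for contradiction that $c/n + |A|/|E| > 1-\eta/40$. Since $|A|\ge \eta|E|$ we are not in a degenerate regime, and since $|A| \le (1-\eta)|E|$ we get $c/n > \eta - \eta/40 = \tfrac{39}{40}\eta$, so $c \ge \tfrac{39}{40}\eta n > 0$ is a genuinely large number of components. The total excess $\sum_{\text{components } C}(|C|-1) = n - c$ of vertices over components, combined with $c$ large, means the components that are "small" (say of size $\le 2n/c$) cover at least, say, half the vertices by a Markov-type count; feeding these small components one at a time into $S$ until $|S|\in[\delta n/2,\delta n]$ uses only small components (each of size $\le 2n/c \le \delta n/2$ once $n$ is large relative to $1/(\eta\delta)$, which is exactly where $n\ge 360/(\eta\delta)$ enters). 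Then $|E\setminus A|\ge \Delta\phi_G(\delta)|S| \ge \Delta\cdot\tfrac12\cdot\tfrac{\delta n}{2} = \tfrac{\Delta\delta n}{4}$, so $|A|/|E|\le 1-\tfrac{\delta}{4}\cdot\tfrac{\Delta n/2}{\Delta n/2} = 1-\tfrac\delta4$. Hence $c/n + |A|/|E| \le \tfrac{39}{40}\cdot\tfrac{\delta}{5} + \cdots$; more carefully, $c/n \le 1-|A|/|E| \le 1$ is too weak, so I use $c \le n - (\text{total size of small components}) + (\#\text{small components})$ together with the fact that taking out $S$ with $|S|\ge\delta n/2$ small components removes many vertices but also many components — one shows $c/n \le 1 - \delta/2 + (\text{error})$, and then $c/n + |A|/|E| \le (1-\delta/2) + (1-\delta/4) - 1 + \text{error} = 1 - 3\delta/4 + \text{error} \le 1-\eta/40$ using $\eta\le\delta/5$ and $n$ large.

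\medskip

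\noindent\textbf{Main obstacle.} The only delicate point is the bookkeeping in the greedy step: one must guarantee that when accumulating components smallest-first, the set $S$ can be made to land in a prescribed window $[\delta n/2,\delta n]$ using only components so small that each individual one cannot cause a big overshoot, and simultaneously that removing $S$ genuinely reduces the normalized component count $c/n$ by a definite amount. Getting both a useful lower bound on $|S|$ (to invoke $\phi_G(\delta)$) and a useful count of how many components were consumed (to bound $c/n$) from the same greedy procedure — with all rounding errors swallowed by $n\ge 360/(\eta\delta)$ — is where the real care lies; the expansion input itself is then a one-line application of hypothesis (2) in the definition of $\cG_{\Delta,\del}$.
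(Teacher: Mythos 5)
Your overall instinct---boundary edges of small components are unoccupied, and expansion makes those boundaries large---is the right one, but the dichotomy you build around it has a genuine hole, in two places. First, the witness set $S$ need not exist. Under your contradiction hypothesis you only get $c(A) > \tfrac{39}{40}\eta n$, and the components of size at most $2n/c$ are then guaranteed to cover only about $c/2 \ge \tfrac{39}{80}\eta n$ vertices (your ``cover at least half the vertices by a Markov-type count'' is false: think of one giant component plus $c-1$ isolated vertices). Since $\eta \le \delta/5$, this mass can be far below the $\delta n/2$ you need to accumulate, so in the regime $c/n$ roughly between $\eta$ and $\delta/2$ --- e.g.\ a single giant component together with $\Theta(\eta n)$ isolated vertices --- neither your ``easy case'' ($c/n \le \tfrac{39}{40}\eta$) nor your greedy construction applies. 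Second, even when $S$ can be built, the complementary bound you invoke, $c/n \le 1-\delta/2 + \mathrm{error}$, is unjustified and false in general: with $(1-\eta)n$ singletons plus one giant component one has $c/n \approx 1-\eta > 1-\delta/2$, yet removing a $\delta n$-sized union of singletons does not reduce the component count per vertex at all. More fundamentally, a single witness set of size about $\delta n$ certifies only an $O(\delta)$ fraction of missing edges, which cannot offset a value of $c/n$ that may be as large as $1-\eta$; the singleton case is exactly tight (each isolated vertex contributes $1$ to $c$ and forces about $\Delta/2$ missing edges, a perfect cancellation), so the needed slack of order $\eta$ cannot come from any one small chunk.

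This is why the paper's proof argues differently: it accounts for the missing edges forced by \emph{all} small components simultaneously --- $\Delta/2$ unoccupied edges per singleton vertex, $\tfrac59\cdot\tfrac{\Delta}{2}$ per vertex in components of size $2$ to $\delta n$ (via $\phi_G(\delta)\ge 5/9$), with a global factor $1/2$ for double counting --- and then, because the singleton accounting cancels exactly, it extracts the strict slack from the \emph{additional} cut between the union of all small components and the rest, using $\phi_G(1/2)\ge 1/10$ applied to $\min\{n_1+n_2,\,n-n_1-n_2\}$, finishing with a three-way case analysis on $(n_1+n_2)/n$ in which the extreme case $n_1/n>1-\eta$ is ruled out by $|A|\ge\eta|E|$. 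Your argument is missing both this simultaneous per-vertex accounting and the $\phi_G(1/2)$ input, and without them the bound cannot be closed; repairing your sketch essentially forces you back to that accounting.
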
 
\begin{proof}
  Given $A$, let $n_1$ denote the number of vertices in connected
  components of size $1$ and $n_2$ the number in components of size at
  least $2$ and at most $\del n$. We will prove the lemma by a case
  analysis based on $n_{1}+n_{2}$. Before doing this, we
  collect some useful facts.  

  First, observe that
  \begin{equation}
    \label{eq:cA}
    c(A) \le n_1 + n_2/2 + 1/\del  \,.
  \end{equation}
  Second, the number of unoccupied edges satisfies
  \begin{equation}
    \label{eqEAlb}
    |E| - |A| \ge n_1 \frac{\Delta}{2} + n_2 \frac{5}{9} \frac{\Delta}{2}+ \min \{ n_1 +n_2, n-n_1-n_2 \} \cdot  \frac{\Delta} {20} \,
  \end{equation}
  by double counting the unoccupied edges and the definition of
  $G_{\Delta,\delta}$.  Together~\eqref{eq:cA}, \eqref{eqEAlb},
  $|E| = n\Delta/2$, and a little algebra gives
  \begin{equation}
    \label{eqCaAE2}
    \frac{c(A)}{n} + \frac{|A|}{|E|} \le 1 - \frac{1}{18} \frac{n_2}{n}-   \frac{  \min \{ n_1+n_2, n-n_1 - n_2 \} }{ 10 n  }    + \frac{1}{\del n} \,.
  \end{equation}
  
  We now perform the case analysis. First, if
  $\frac{n_1 + n_2}{n} \le \eta/2$, then since $|A|/|E| \le 1- \eta$
  and $c(A)\leq n_{1}+n_{2}+(\delta n)^{-1}$,
  we directly see that 
  \begin{equation*}
    \frac{c(A)}{n} + \frac {|A|}{|E|}  \le \frac{\eta}{2} + 1- \eta + \frac{1}{\del n} \le 1- \frac{\eta}{2} + \frac{1}{\del n} \,.
  \end{equation*}

  Next suppose $\frac{n_1 + n_2}{n} > 1- \eta /2$.  
  If $n_{2}/n<\eta/2$, then $n_{1}/n >
  1-\eta$. This implies $(|E| - |A|)/|E| > 1- \eta$, or $|A| < \eta |E|$ which contradicts
  the assumption on $A$. 
  We may therefore assume that $n_{2}/n \geq \eta/2$
  and so \eqref{eqCaAE2} gives
  \[
  \frac{c(A)}{n} + \frac {|A|}{|E|} \le 1-  \frac{\eta}{36} + \frac{1}{\del n}\, .
  \]

  Finally if $ \eta /2 \le \frac{n_1 + n_2}{n} \le 1- \eta/2$
  then~\eqref{eqCaAE2} gives
  \begin{equation*}
    \frac{c(A)}{n} + \frac {|A|}{|E|}  \le 1-   \frac{\eta}{20} + \frac{1}{\del n} \,.
  \end{equation*}
  In each case we've shown that 
  $\frac{c(A)}{n} + \frac {|A|}{|E|} \le 1- \frac{\eta}{36} + \frac{1}{\del n} $,
  and the lemma follows.
\end{proof}

Recall that $\Omega = \{0,1\}^{E}$, and that
$Z = \sum_{A\in\Omega}q^{c(A)} (e^\beta -1)^{|A|}$. When we write
$A\in\Omega$ we call the edges in $A$ \emph{occupied} and the edges
not in $A$ \emph{unoccupied}.  In light of Lemma~\ref{lemNoMiddle}, we
decompose the state space into three pieces.  Recall that
\begin{align*}
\Omega_{\dis} &\bydef \{ A \in \Omega : |A| \le \eta |E| \} \,  \\
\Omega_{\ord} &\bydef \{ A \in \Omega : |A| \ge (1-\eta) |E| \} \\
\Omega_{\err} &\bydef \Omega \setminus (\Omega_{\dis} \cup \Omega_{\ord}) \,.
\end{align*}
Let $Z^{\ord}, Z^{\dis}$, $Z^{\err}$ be the corresponding random
cluster model partition functions. Thus
\begin{align}
  \label{eq:Zdecomp}
  Z = Z^{\ord} + Z^{\dis} + Z^{\err}\, .
\end{align}
The next lemma shows that $Z^{\err}$ represents an exponentially small
fraction of the partition function. It also establishes that unless
$\beta$ lies in the interval $(\beta_0, \beta_1)$, then up to an
exponentially small correction, $Z$ is given by $Z^{\ord}$ or $Z^{\dis}$.

\begin{lemma}
  \label{lem:ErrQual}
  If $q$ and $n$ are sufficiently large as a function of $\Delta$ and $\del$ then the following hold.
  \begin{enumerate}
\item For $\beta \ge0$, $Z^{\err}/Z \le e^{-n}$. 
\item For $\beta \ge \beta_1$,  $Z^{\dis}/Z \le e^{-n}$.
\item For $\beta \le \beta_0$,  $Z^{\ord}/Z \le e^{-n}$.
\end{enumerate}
\end{lemma}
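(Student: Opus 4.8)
The plan is to prove all three parts by comparing the relevant partition function to a suitable lower bound for $Z$ obtained by restricting to a single well-chosen configuration (or a small family of configurations), and then to bound the numerator by a crude union bound over configurations combined with Lemma~\ref{lemNoMiddle} for part (1).

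Let me sketch the key steps. First, I would record two easy lower bounds on $Z$. The disordered ground state $A = \emptyset$ contributes $q^{n}$, so $Z \ge q^{n}$; hence $Z \ge Z^{\dis}$ trivially but, more usefully, $n^{-1}\log Z \ge \log q$ always. The ordered ground state $A = E$ contributes $q^{1}(e^{\beta}-1)^{|E|} = q (e^\beta-1)^{n\Delta/2}$ (since $G$ is connected, as it is an expander), so $Z \ge q(e^\beta-1)^{n\Delta/2}$. Taking the better of these, $n^{-1}\log Z \ge \max\{\log q,\ \tfrac{\Delta}{2}\log(e^\beta-1)\} + o(1)$.

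For part (1), for any $A \in \Omega_{\err}$ we have $\eta|E| \le |A| \le (1-\eta)|E|$, so Lemma~\ref{lemNoMiddle} gives $c(A) \le (1-\eta/40)n - |A|\Delta/(2n)\cdot(2/\Delta)\cdot\ldots$ — more precisely $c(A)/n + |A|/|E| \le 1 - \eta/40$. Therefore each term satisfies
\begin{equation*}
  q^{c(A)}(e^\beta-1)^{|A|} = q^{c(A)}(e^\beta-1)^{|E|\cdot(|A|/|E|)} \le q^{(1-\eta/40)n}\cdot \max\{1,(e^\beta-1)^{|E|}\}\cdot (\text{correction}),
\end{equation*}
and a short computation shows $q^{c(A)}(e^\beta-1)^{|A|} \le e^{-\Omega(n)}\cdot \max\{q^n,\ q(e^\beta-1)^{|E|}\}$ uniformly in $A \in \Omega_{\err}$; here one uses that raising $(e^\beta-1)$ to a power between $\eta|E|$ and $(1-\eta)|E|$ interpolates (log-linearly) between $1$ and $(e^\beta-1)^{|E|}$, while the deficit $\eta/40$ in the exponent of $q$ provides a factor $q^{-\eta n/40}$ to spare. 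Summing over the at most $2^{|E|} = 2^{n\Delta/2}$ configurations in $\Omega_{\err}$, and using $q^{\eta n /40} \gg 2^{n\Delta/2}$ for $q$ large (this is where largeness of $q$ relative to $\Delta$ is used), we get $Z^{\err} \le 2^{n\Delta/2} q^{-\eta n/40}\cdot \max\{q^n, q(e^\beta-1)^{|E|}\} \le e^{-n}\cdot Z$.

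For parts (2) and (3), the argument is similar but uses the definitions $e^{\beta_0}-1 = q^{1.9/\Delta}$ and $e^{\beta_1}-1 = q^{2.1/\Delta}$. For part (3), when $\beta \le \beta_0$: for $A \in \Omega_{\ord}$ we have $|A| \ge (1-\eta)|E|$, and $c(A) \le n - |A| + 1 \le \eta|E| + 1 = \eta n\Delta/2 + 1$ (each unoccupied edge can increase the component count by at most one, starting from a connected graph). Hence $q^{c(A)}(e^\beta-1)^{|A|} \le q^{\eta n\Delta/2 + 1}\cdot (q^{1.9/\Delta})^{n\Delta/2} = q^{1+ n\Delta(1.9/\Delta + \eta)/2} = q\cdot q^{0.95 n + \eta n \Delta/2}$; since $\eta \le 1/100$ and... actually one wants this to beat the lower bound $Z \ge q^n$, so one needs $0.95 + \eta\Delta/2 < 1$, i.e. $\eta < 0.1/\Delta$ — this is \emph{not} implied by $\eta \le 1/100$ for large $\Delta$, so the bound $c(A) \le n - |A| + 1$ is too weak and one must instead invoke the expansion of $G$ to say that $A \in \Omega_{\ord}$, having so few unoccupied edges, has $c(A) = 1 + O(1)$, or more carefully use that the unoccupied edge set $E \setminus A$ with $|E\setminus A| \le \eta|E|$ can disconnect $G$ into few pieces because $\phi_G(1/2) \ge 1/10$; a set of $\le \eta|E| = \eta n\Delta/2$ edges can separate off at most a $(\eta n \Delta/2)/(\Delta/10\cdot (n/2)) = \eta\cdot 10$-fraction... this needs care. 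Summing over $\le 2^{|E|}$ configurations then gives $Z^{\ord}/Z \le e^{-n}$ once $q$ is large; part (2) is symmetric, comparing $Z^{\dis}$ (where each term is $\le q^n \cdot 1$, i.e. $\le q^n$, bounded by summing $q^n$ over $2^{\eta|E|}$ sparse sets) against $Z \ge q(e^{\beta_1}-1)^{|E|} = q\cdot q^{2.1 n/2} = q\cdot q^{1.05 n}$, giving a spare factor $q^{0.05n}/2^{\eta n \Delta/2}$, again $\to 0$ exponentially for $q$ large.

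The main obstacle I anticipate is the component-counting bound for $\Omega_{\ord}$ in part (3): one must show that an edge set $A$ occupying a $(1-\eta)$-fraction of $E$ has $c(A)$ much smaller than the trivial bound $\eta|E| + 1$ allows — small enough that $q^{c(A)}(e^{\beta_0}-1)^{|A|} \ll q^n$. This is exactly the kind of estimate that the small-set expansion hypothesis $\phi_G(1/2) \ge 1/10$ is designed to supply: any component of $(V,A)$ of size $\le n/2$ must have all of its boundary edges unoccupied, and there are at most $\eta|E|$ unoccupied edges, so the total size of all small components is $O(\eta n)$, whence $c(A) \le O(\eta n) + 1$. Choosing $\eta = \min\{1/100,\delta/5\}$ sufficiently small relative to the constant $1/10$ makes $q^{c(A)}(e^{\beta_0}-1)^{|A|}$ beat $q^n$ by an exponential factor, and then the union bound over $2^{|E|}$ configurations closes the argument once $q \ge q(\Delta,\delta)$. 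I would state this small-component bound as an auxiliary observation (it is essentially the content of \eqref{eqEAlb} read in the other direction) and then the three parts follow by the bookkeeping above.
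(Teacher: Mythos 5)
Your overall strategy (ground-state lower bounds $Z\ge q^{n}$ and $Z\ge q(e^{\beta}-1)^{n\Delta/2}$, per-configuration upper bounds, and a crude union bound over at most $2^{n\Delta/2}$ configurations absorbed by taking $q$ large) is exactly the paper's, and your part (1) is correct: your ``log-linear interpolation'' is the paper's observation that each term is at most $z^{2|A|/\Delta+c(A)-n}$ with $z=\max\{q,(e^{\beta}-1)^{\Delta/2}\}$, so Lemma~\ref{lemNoMiddle} gives a spare factor $q^{-\eta n/40}$. In part (2), however, the intermediate bound ``each term is $\le q^{n}$'' is false for $\beta\ge\beta_{1}$, since $e^{\beta}-1\ge q^{2.1/\Delta}>1$: as $\beta\to\infty$ even the single-edge term $q^{n-1}(e^{\beta}-1)$ exceeds $q^{n}$, and $Z^{\dis}$ grows without bound while your numerator bound and your fixed denominator $q(e^{\beta_{1}}-1)^{|E|}$ do not, so the displayed ``spare factor'' computation only makes sense at $\beta=\beta_{1}$. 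The repair is routine and is what the paper does: keep the factor $(e^{\beta}-1)^{|A|}\le(e^{\beta}-1)^{\eta|E|}$, divide by $Z\ge q(e^{\beta}-1)^{|E|}$, and only then use $e^{\beta}-1\ge e^{\beta_{1}}-1$ on the net power $(e^{\beta}-1)^{-(1-\eta)|E|}$. (Also the number of sparse sets is $\binom{|E|}{\le\eta|E|}$, not $2^{\eta|E|}$, though $2^{|E|}$ suffices.)

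The genuine gap is in part (3), the step you yourself flag. You propose to bound $c(A)$ for $A\in\Omega_{\ord}$ using only $\phi_{G}(1/2)\ge 1/10$ and then to ``choose $\eta$ sufficiently small relative to the constant $1/10$'' --- but $\eta=\min\{1/100,\delta/5\}$ is fixed by the setup and can equal $1/100$, so it is not at your disposal. With only $\phi_{G}(1/2)\ge1/10$ (i.e.\ Lemma~\ref{lem:simple}), the union $W$ of the components of $(V,A)$ of size at most $n/2$ satisfies only $|W|\le 5\eta n$, hence $c(A)\le 5\eta n+1\le n/20+1$; the per-term bound then becomes $q^{c(A)}(e^{\beta_{0}}-1)^{|A|}\le q\cdot q^{(0.05+0.95)n}=q\cdot q^{n}$, which gives no exponential gain over $Z\ge q^{n}$ and cannot absorb the union-bound factor $2^{\Delta n/2}$; with your cruder double-counting constant ($|W|\le10\eta n$) it fails outright. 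The missing ingredient is the second hypothesis $\phi_{G}(\delta)\ge5/9$, and this is precisely why $\eta\le\delta/5$ is built into the definition: the bound $|W|\le5\eta n\le\delta n$ lets you apply $\phi_{G}(\delta)\ge5/9$ to $W$ itself (all edges in $E(W,W^{c})$ are unoccupied), giving $|W|\le\frac{9}{5}\cdot\frac{\eta n}{2}<\eta n$ and hence $c(A)\le\eta n$, which is the bound the paper uses; then the exponent is $\eta+0.95\le0.96<1$ and the remaining $q^{0.04n}$ beats $2^{\Delta n/2}$ for $q=q(\Delta,\delta)$ large. With these two fixes your argument coincides with the paper's proof.
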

\begin{proof}
  Let $z\bydef \max\left\{q, (e^{\beta}-1)^{\Delta/2}\right\}$. Then
  $Z \geq z^n$ so that for $A\in \Omega$,
  \begin{equation*}
    \frac{q^{c(A)} (e^{\beta} -1) ^{| A|}}{Z} \leq z^{2|A|/\Delta + c(A)-n}\, .
  \end{equation*}
  By Lemma~\ref{lemNoMiddle}, we have for every $A \in \Omega_{\err}$,
  \begin{equation*}
    \frac{2|A|}{\Delta} + c(A)-n\leq  - \frac{\eta n }{40} \,.
  \end{equation*}
  Since $\abs{\Omega_{\err}}\leq |\Omega|=2^{n\Delta/2}$, it follows that 
  \begin{equation*}
    Z^{\err}/Z\leq 2^{n\Delta/2}z^{-\eta n/40} \le 2^{n\Delta/2}q^{-\eta  n/40}  \, ,
  \end{equation*}
  which proves part (1) for $q=q(\Delta,\delta)$ large enough. 
    
  Next suppose $\beta \ge \beta_1$.  To prove part (2), recall that
  $\eta\leq 1/100$, so using $|\Omega_{\dis}|\leq 2^{n\Delta/2}$,
  \begin{equation*}
    \frac{ Z^{\dis}} { Z} \le (e^{\beta} -1)^{-\Delta n/2} Z^{\dis} \le  (e^{\beta} -1)^{(\eta -1) \Delta n/2} 2^{\Delta n/2} q^n    \le q^{-2.1 \cdot .99 n/2  +n } 2^{\Delta n/2}   \le e^{-n}
  \end{equation*}
  for $q = q(\Delta)$ large enough.

  Lastly, suppose $\beta \le\beta_0$.  Then, using that
  $A\in\Omega_{\ord}$ implies $c(A)$ is at most $\eta n$,
  \begin{equation*}
    \frac{Z^{\ord}}{Z} \le  q^{-n} Z^{\ord} \le q^{(\eta-1)n}
    2^{\Delta n/2} (e^{\beta_{0}}-1)^{\Delta n/2}  \le q^{-.99 n + .95 n}
    2^{\Delta n/2}\le e^{-n} 
  \end{equation*}
  for $q= q(\Delta)$ large enough, which proves part (3).
\end{proof}

Lemma~\ref{lem:ErrQual} implies the contribution of $Z^{\err}$ to $Z$
is negligible at all temperatures, and so it suffices to control $Z^{\dis}$, $Z^{\ord}$
or both, depending on the value of $\beta$.  We will do this by
defining two polymer models and proving they have convergent cluster
expansions for $\beta \in [0, \beta_1 ]$ and
$\beta \in [\beta_0, \infty)$ respectively. Crucially, since
$\beta_{0}<\beta_{1}$, these two intervals overlap.

\subsection{Polymer models}
\label{sec:polymer-models}

Let $\cP$ be a collection of (possibly edge-labelled) finite connected
subgraphs of some given finite or infinite graph.  We refer to the elements of $\cP$ as
\emph{polymers}.  We say that two polymers $\gamma_1, \gamma_2\in \cP$
are \emph{compatible}, denoted $\gamma_1 \sim \gamma_2$, if they are
vertex disjoint, and we write $\gamma_{1}\nsim\gamma_{2}$ to denote
incompatibility.   Let $w\colon \cP\to \mathbb C$; $w$ is called a
\emph{weight function}.  The triple $(\cP, \sim, w)$ is a special case
of a \emph{polymer model} as defined by Koteck\'{y} and
Preiss~\cite{kotecky1986cluster}, generalizing a technique used to
study statistical mechanics models on lattices, see, e.g.,~\cite{gruber1971general}.  

Let $\cP' \subseteq \cP$ be a finite subset of polymers, and let
$ \Omega(\cP')$ denote the family of all sets of pairwise compatible
polymers from $\cP'$. Then the expression
\begin{equation*}
\Xi(\cP')\bydef \sum_{\Gamma \in \Omega(\cP')} \prod_{\gamma\in \Gamma}w(\gamma)
\end{equation*}
is the \emph{partition function} of the polymer model $(\cP', \sim,w)$.  The cluster
expansion is a formal power series for $\log \Xi(\cP')$.  In order to
describe the cluster expansion we require some notation.

Suppose that $\Gamma=(\gamma_1, \ldots, \gamma_t)$ is an ordered tuple
of polymers.  We define the \emph{incompatibility graph} $H_{\Gamma}$
to be the graph on the vertex set ${1,\ldots, t}$ where
$\{i,j\}\in E(H_{\Gamma})$ if and only if $i \neq j$ and $\gamma_i$ is
incompatible with $\gamma_j$.  A \emph{cluster} is an ordered tuple
$\Gamma$ of polymers whose incompatibility graph $H_\Gamma$ is
connected.  Given a graph $H$, the \emph{Ursell function} $\phi(H)$ of
$H$ is
\begin{equation*}
\phi(H) \bydef \frac{1}{|V(H)|!} \sum_{\substack{A \subseteq E(H)\\ \text{spanning, connected}}}  (-1)^{|A|} \, .
\end{equation*}

Let $\cC$ be the set of all clusters of polymers from $\cP'$. The
\emph{cluster expansion} is the formal power series in the weights
$w(\gamma)$
\begin{equation}
  \label{eq:ce}
  \log \Xi(\cP') = \sum_{\Gamma \in \cC}  w(\Gamma) \,,
\end{equation}
where 
\begin{equation*}
  w(\Gamma)\bydef  \phi(H_{\Gamma})
  \prod_{\gamma\in\Gamma}w(\gamma)\, .
\end{equation*}
The convergence of the infinite series on the right-hand side
of~\eqref{eq:ce} is not automatic. The following theorem
gives a convenient condition for convergence, and a useful
consequence. 

Let $E(\gam)$ denote the set of edges in the polymer $\gam$. For a
cluster $\Gamma$ let $\| \Gamma\| \bydef \sum_{\gam \in \Gamma} |E(\gam)|$
and write $\Gamma \nsim \gam$ if there exists $\gam' \in \Gamma$ so
that $\gam \nsim \gam'$.

\begin{theorem}[\cite{kotecky1986cluster}]
\label{thmKP}
Suppose that there exists $r \ge 0$ such that for all polymers $\gam \in \cP$, 
\begin{equation}
\label{eqKPcond}
\sum_{\gam' \nsim \gam}  |w(\gam')| e^{(1+r)|E(\gam')|}  \le |E(\gam)| \,,
\end{equation}
then the cluster expansion for $\log \Xi(\cP')$ converges absolutely for every finite subset $\cP' \subseteq \cP$.  Moreover, for all polymers $\gam$,
\begin{equation}
  \label{eqKPtail}
  \sum_{\Gamma \in \cC, \;  \Gamma \nsim \gam} \left |  w(\Gamma) \right| e^{r\|\Gamma\|} \le |E(\gam)| \,.
\end{equation}
\end{theorem}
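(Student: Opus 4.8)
I would prove this---the Koteck\'{y}--Preiss convergence criterion, in the sharpened form that also delivers a tail bound decaying in cluster size---by the standard induction on cluster weight. First I would reduce the claim. To get absolute convergence of \eqref{eq:ce} for finite $\cP'$ it is enough to establish the tail bound \eqref{eqKPtail}: every cluster $\Gamma=(\delta_1,\dots,\delta_t)$ has $t\ge1$ and $\delta_1\nsim\delta_1$, so $\Gamma\nsim\delta_1$ with $\delta_1\in\cP'$, whence
\begin{align*}
\sum_{\Gamma\in\cC}|w(\Gamma)|\ \le\ \sum_{\Gamma\in\cC}|w(\Gamma)|\,e^{r\|\Gamma\|}\ &\le\ \sum_{\gam\in\cP'}\ \sum_{\Gamma\in\cC,\;\Gamma\nsim\gam}|w(\Gamma)|\,e^{r\|\Gamma\|}\\
&\le\ \sum_{\gam\in\cP'}|E(\gam)|\ <\ \infty .
\end{align*}
Moreover, since $|w(\Gamma)|\,e^{r\|\Gamma\|}$ is exactly the cluster weight obtained by replacing $w$ with $\tilde w(\gam)\bydef|w(\gam)|\,e^{r|E(\gam)|}$, and \eqref{eqKPcond} says precisely $\sum_{\gam'\nsim\gam}|\tilde w(\gam')|\,e^{|E(\gam')|}\le|E(\gam)|$, it suffices to treat the case $r=0$.

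For $r=0$ I would fix a polymer $\gam$ and, for $n\ge0$, set
\[
S_n(\gam)\bydef\sum_{\substack{\Gamma\in\cC,\;\Gamma\nsim\gam\\\|\Gamma\|\le n}}|w(\Gamma)|,
\]
and prove $S_n(\gam)\le|E(\gam)|$ for all $n$ and all $\gam$ by induction on $n$, the case $n=0$ being trivial. For the inductive step I would control the Ursell weights via the tree-graph inequality---for a connected graph $H$ on $t$ vertices, $t!\,|\phi(H)|$ is at most the number of spanning trees of $H$; equivalently one can invoke Penrose's tree identity---which rewrites the sum defining $S_n(\gam)$ as a sum over rooted, tree-decorated clusters: a root polymer $\delta\nsim\gam$ carrying an unordered collection, with multiplicities, of strictly smaller sub-clusters each incompatible with $\delta$. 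Summing over the number $k$ of such sub-clusters produces a factor $\sum_{k\ge0}S_{n-1}(\delta)^k/k!=\exp(S_{n-1}(\delta))$, and the inductive hypothesis $S_{n-1}(\delta)\le|E(\delta)|$ closes the estimate:
\begin{align*}
S_n(\gam)\ \le\ \sum_{\delta\nsim\gam}|w(\delta)|\exp\!\big(S_{n-1}(\delta)\big)\ \le\ \sum_{\delta\nsim\gam}|w(\delta)|\,e^{|E(\delta)|}\ \le\ |E(\gam)| ,
\end{align*}
the last inequality being the $r=0$ instance of \eqref{eqKPcond}. Since $S_n(\gam)$ is nondecreasing in $n$, letting $n\to\infty$ yields $\sum_{\Gamma\nsim\gam}|w(\Gamma)|\le|E(\gam)|$, which is \eqref{eqKPtail} for $r=0$, and hence in general after the rescaling above.

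The step I expect to be the main obstacle is the combinatorial bookkeeping in the induction: clusters are \emph{ordered} tuples that may repeat polymers, so turning $S_n(\gam)$ into a recursion over a rooted sub-cluster structure requires carefully distributing the global symmetry factor $1/|V(H_\Gamma)|!$ together with the spanning-tree count across the tree---this is exactly what generates the $1/k!$, hence the exponential, and what forces the hypothesis to carry the extra factor $e^{|E(\gam')|}$ rather than merely $e^{r|E(\gam')|}$. Verifying the tree-graph inequality in the form used, and handling degenerate contributions (a repeated polymer creates a clique among its copies in $H_\Gamma$, and one should check polymers have at least one edge so that sub-clusters are genuinely smaller), are the remaining routine technical points.
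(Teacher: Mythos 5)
The paper does not prove Theorem~\ref{thmKP} at all --- it is quoted from Koteck\'{y}--Preiss \cite{kotecky1986cluster} --- so there is no internal proof to compare against; I can only assess your argument on its own terms. Your route (reduce absolute convergence to the tail bound \eqref{eqKPtail} using the fact that every cluster is incompatible with its own first polymer, absorb $e^{r\|\Gamma\|}$ into modified weights so that the case $r=0$ suffices, then induct on truncated sums using the Penrose/tree-graph bound $t!\,|\phi(H)|\le \#\{\text{spanning trees of }H\}$) is a standard and legitimate proof of this criterion, though it is not the original Koteck\'{y}--Preiss argument, which proceeds by induction over the polymer set controlling partition functions directly and uses no tree-graph inequality. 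The one step that is not correct as literally written is the displayed recursion $S_n(\gam)\le\sum_{\delta\nsim\gam}|w(\delta)|\exp\big(S_{n-1}(\delta)\big)$: after applying the tree-graph bound and rooting at a polymer $\delta\nsim\gam$, the objects hanging off the root are rooted trees of polymers (each tree edge joining incompatible polymers), and the sum of their weights \emph{dominates} the corresponding cluster sum rather than being dominated by it, since $|\phi(H)|\le T(H)/t!$ goes the wrong way for substituting cluster sums back into the exponential. The standard repair is to run the induction on the rooted-tree majorant $T_n(\gam)$ (the sum over such decorated rooted trees of total edge-size at most $n$ whose root is incompatible with $\gam$), prove $T_n(\gam)\le |E(\gam)|$ from $T_n(\gam)\le\sum_{\delta\nsim\gam}|w(\delta)|\,e^{T_{n-1}(\delta)}$ together with \eqref{eqKPcond}, and only afterwards conclude $\sum_{\Gamma\nsim\gam}|w(\Gamma)|\le T_\infty(\gam)\le|E(\gam)|$. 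With that adjustment --- plus the points you already flag (polymers carry at least one edge, overcounting the choice of root is harmless, repeated polymers are incompatible with their own copies) --- your proof is complete and correct.
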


Our applications of polymers models will involve weights $w(\gamma)$
that are analytic functions of a parameter $\beta$. By verifying
that~\eqref{eqKPcond} holds uniformly for all $\beta$ in a domain in
the complex plane, we will obtain analyticity of $\log \Xi$ in the same
domain, as Theorem~\ref{thmKP} then implies that the right-hand side
of~\eqref{eq:ce} converges uniformly in $\beta$ in the domain.

Note that when the weights $w(\gamma)$ of a polymer model are all
non-negative reals, we can define an associated Gibbs measure $\nu$ on
$\Omega(\cP')$ by
\begin{equation}
\label{eqNuGamma}
 \nu(\Gamma) \bydef \frac{ \prod_{\gamma\in \Gamma}w(\gamma)} {\Xi(\cP')} \,.
\end{equation}

\subsection{Disordered polymer model}
\label{sec:dis}

In this section we describe a polymer model that captures deviations
from the disordered ground state $A_{\dis} = \emptyset$. 

Define \emph{disordered polymers} to be connected subgraphs $(V',E')$ of $G$ with
$|E'| \le \eta n$. Let $\cP_{\dis}=\cP_{\dis}(G)$ be the set of disordered polymers in $G$. 
Two polymers are compatible if they are vertex disjoint. For a polymer
$\gamma$, let $|\gamma|$ denote the number of vertices of $\gamma$ and
$| E( \gamma )|$ the number of edges.  The weight of the polymer is
defined to be
\begin{equation*}
  w^{\dis}_\gamma \bydef q^{1- |\gamma|} (e^{\beta} -1)^{|E(\gamma)|}.
\end{equation*}
The \emph{disordered polymer partition function } is
\begin{equation*}
\Xi^{\dis} \bydef \sum_{\Gamma}
\prod_{\gamma\in\Gamma}w^{\dis}_{\gamma},
\end{equation*}
where the sum is over all compatible collections of disordered
polymers.

\begin{prop}
  \label{prop:DisPoly}
  If $q$ and $n$ are sufficiently large as a function of $\Delta$ and $\del$,
   then for all $\beta\in \mathbb C$ such that 
   $\left| e^{\beta}-1\right| \le e^{\beta_{1}}-1$,
   the disordered polymer model
  satisfies~\eqref{eqKPcond} with $r=\log q/(4\Delta)$.
\end{prop}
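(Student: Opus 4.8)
The plan is to verify the Koteck\'y--Preiss condition~\eqref{eqKPcond} of Theorem~\ref{thmKP} directly, with $r=\log q/(4\Delta)$, uniformly in $\beta\in\mathbb C$ with $|e^\beta-1|\le e^{\beta_1}-1=q^{2.1/\Delta}$. Since $|w^{\dis}_\gamma|=q^{1-|\gamma|}|e^\beta-1|^{|E(\gamma)|}$ and $q\ge 1$, only this modulus bound is used and complex $\beta$ plays no further role (it enters only in the later analyticity statements, and the positive $r$ is what supplies the decay $e^{r\|\Gamma\|}$ in the tail estimate~\eqref{eqKPtail}). Two disordered polymers are incompatible precisely when they share a vertex, and a connected polymer with at least one edge satisfies $|V(\gamma)|\le |E(\gamma)|+1\le 2|E(\gamma)|$. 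Hence it is enough to prove the vertex-localised bound
\[
  S(v)\;\bydef\;\sum_{\gamma'\ni v}|w^{\dis}_{\gamma'}|\,e^{(1+r)|E(\gamma')|}\;\le\;\tfrac12\qquad\text{for every }v\in V(G),
\]
and in fact I would show $S(v)=o_q(1)$.

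To bound $S(v)$, enumerate the polymers $\gamma'\ni v$ by first choosing their vertex set $V'\ni v$, of size $k'\bydef|V'|$, and then a connected spanning subgraph on $V'$. There are at most $(e\Delta)^{k'}$ connected vertex sets of size $k'$ containing $v$ (a standard count via rooted spanning subtrees). With $y\bydef |e^\beta-1|\,e^{1+r}\le e\,q^{2.35/\Delta}$, and bounding the sum over connected spanning edge sets crudely by the sum over all subsets of $E_G(V')$ (the set of edges of $G$ induced on $V'$), the total contribution of a fixed $V'$ is at most $q^{1-k'}\sum_{E'\subseteq E_G(V')}y^{|E'|}=q^{1-k'}(1+y)^{|E_G(V')|}$. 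The lone spare factor $q^1$ here --- the single factor of $q$ that $q^{c(A)}$ assigns to each component, set against a $q^{-1}$ for each of the $k'$ vertices --- is what produces geometric decay in $k'$, and the whole estimate reduces to controlling $|E_G(V')|$.

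This is where the small-set expansion enters. A disordered polymer has at most $\eta n$ edges, hence at most $\eta n+1<\delta n$ vertices for $n$ large (using $\eta\le\delta/5$), so $\phi_G(\delta)\ge 5/9$ applies to $V'$: counting the at-least-$\tfrac59\Delta k'$ edges leaving $V'$ gives $|E_G(V')|\le\tfrac12(\Delta k'-\tfrac59\Delta k')=\tfrac29\Delta k'$. Together with the trivial $|E_G(V')|\le\binom{k'}{2}$ one gets $|E_G(V')|\le\min\{\binom{k'}{2},\tfrac29\Delta k'\}$, the binomial bound being smaller for $k'\le\tfrac49\Delta+1$. Splitting $S(v)$ at this crossover: for $k'\le\tfrac49\Delta+1$ the contribution of each $V'$ is at most $(2e)^{\binom{k'}{2}}q^{1-k'+\tfrac{2.35}{2\Delta}k'(k'-1)}$, and the exponent of $q$, being convex in $k'$, is maximised on this range at $k'=2$ or $k'=\tfrac49\Delta+1$, where for $\Delta\ge5$ it is $\le-\tfrac12$; summing over the $(e\Delta)^{k'}$ choices of $V'$ across this finite range of $k'$ gives a bound $C(\Delta)\,q^{-1/2}$. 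For $k'>\tfrac49\Delta+1$ each $V'$ contributes at most $q\cdot\bigl((2e)^{2\Delta/9}q^{4.7/9-1}\bigr)^{k'}$ with $4.7/9-1<0$; summing over the $(e\Delta)^{k'}$ choices of $V'$ over the remaining (integer) values $k'\ge4$ --- here $\Delta\ge5$ again, ensuring the geometric tail begins at $k'\ge4$ so the decay $q^{-(1-4.7/9)k'}$ beats the $q^1$ prefactor --- gives a further $o_q(1)$ contribution. Adding the two, $S(v)=o_q(1)\le\tfrac12$ once $q$ is large as a function of $\Delta$, which completes the verification.

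The crux is the numerology of the last step, and in particular playing the two bounds on $|E_G(V')|$ against each other: the binomial bound is indispensable for small, possibly dense polymers --- without it the $k'=2$ term alone is unbounded, since the expansion bound overestimates $|E_G(V')|$ there by a factor of order $\Delta$ --- while the expansion bound is indispensable for large polymers, regularity alone ($|E_G(V')|\le\tfrac\Delta2 k'$) being too weak to make even the sum over $V'$ converge. One then checks that the resulting $q$-exponents are strictly negative precisely under $\Delta\ge5$, i.e. that $1-2.1/\Delta>\tfrac12$ with enough slack to absorb the extra weight $e^{(1+r)|E(\gamma')|}$; the remaining ingredients --- the count of connected vertex sets, and the confinement $k'<\delta n$ of polymers to the expander regime --- are routine.
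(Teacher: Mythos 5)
Your proof is correct and follows essentially the same route as the paper: reduce the Koteck\'y--Preiss condition to the vertex-pinned bound $\sum_{\gamma\ni v}|w^{\dis}_\gamma|e^{(1+r)|E(\gamma)|}\le \tfrac12$, split polymers into a small (possibly dense) regime and a large regime where the small-set expansion $\phi_G(\delta)\ge 5/9$ controls the edge density, and win by the same $q$-power numerology that uses $\Delta\ge 5$ and $2.1/\Delta+1/(4\Delta)<$ the available decay. The only real difference is bookkeeping: you enumerate by connected vertex sets of size $k'$ (the $(e\Delta)^{k'}$ animal count) and then sum over all edge subsets of $G[V']$, playing $|E_G(V')|\le\min\bigl\{\binom{k'}{2},\tfrac29\Delta k'\bigr\}$ against $q^{1-k'}$, whereas the paper enumerates connected edge sets of size $k$ via the same $(e\Delta)^{k}$-type count and instead bounds the vertex count from below, by $|\gamma|\ge\sqrt{2k}$ for $2<k<\Delta/2$ and by $|\gamma|\ge\tfrac{9k}{2\Delta}$ via expansion for $k\ge\Delta/2$; your endpoint checks at $k'=2$ and $k'=\tfrac49\Delta+1$ and the tail starting at $k'\ge4$ are numerically sound, so this is a legitimate variant of the same argument rather than a different method.
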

\begin{proof}
  We will show that for $\beta \le \beta_1$ and for every
$v \in V(G)$,
\begin{equation}
\label{eqKPgoalDis}
\sum_{\gamma \ni v} e^{(1+r)|E(\gamma)|} \left| w^{\dis}_\gamma \right| \le  \frac{1}{ 2} \,.
\end{equation}
This is sufficient to verify~\eqref{eqKPcond} for the disordered
polymer model: given a polymer $\gamma'$, sum
\eqref{eqKPgoalDis} over all vertices of $\gamma'$. Since
$|\gam'|/2\leq |E(\gam')|$, we obtain~\eqref{eqKPcond}. We will
prove~\eqref{eqKPgoalDis} in three steps.

First we consider polymers with $|E(\gamma)|=1$ and $|E(\gamma)|=2$.
The contribution to the left-hand side of~\eqref{eqKPgoalDis} from such polymers
is exactly
$e^{1+r}\Delta |e^{\beta} -1| q^{-1} + \frac{3}{2} e^{2+2r}\Delta (\Delta-1)
|e^{\beta} -1|^2 q^{-2} $.  Since $\Delta\geq5$ and $\abs{e^{\beta} -1}\leq q^{\frac{2.1}{\Delta}}$,
this is at most $1/6$ for $q=q(\Delta)$ large enough.

Next we consider polymers with $2 < |E(\gamma)| < \Delta/2$.  Note
that $|\gamma|\geq \sqrt{2|E(\gamma)|}$ for any polymer.   By~\cite[Lemma~2.1
(c)]{BorgsChayesKahnLovasz} we can bound the number of polymers with $k$ edges
containing a fixed vertex $v$ by $(e\Delta)^{k}$.   We bound the
contribution of these polymers to the left-hand side of~\eqref{eqKPgoalDis} by
\begin{equation*}
\sum_{ 3 \le k < \Delta/2}   (e^{2+r} \Delta)^{k} q^{1 -\sqrt{2k}    }  \abs{e^{\beta} -1}^k \le \sum_{ 3 \le k < \Delta/2}   (e^{2+r} \Delta)^{k} q^{1 -\sqrt{2k}  +\frac{2.1}{\Delta}k    }  
\le   \sum_{ 3 \le k < \Delta/2}   |e^{2+r} \Delta|^{k} q^{2.05 -\sqrt{2k}    } \,.
\end{equation*}
Since $\sqrt{6}-2.05>1/8$, this is at most $1/6$ for
$q=q(\Delta)$ large enough.   

For larger polymers we need two facts. First, that
$\Delta\abs{\gamma} \geq 2\abs{E(\gamma)} + \abs{E(V(\gamma),
  V(\gamma)^c)}$.  Second, since $\gamma$ defines a connected subgraph
we have $|\gamma| \le |E(\gamma)| +1 \le 2\eta n \leq \del n$, and so
the vertices of $\gamma$ satisfy the small set expansion condition
guaranteed by $G\in \cc G_{\Delta,\delta}$. Together these facts imply
$|\gamma| \ge\frac{ 9|E(\gamma)|}{ 2\Delta}$.  Using~\cite[Lemma~2.1
(c)]{BorgsChayesKahnLovasz} we bound the contribution of these
polymers to the left-hand side of~\eqref{eqKPgoalDis} by
\begin{equation*}
     \sum_{k \ge \Delta/2} (e^{2+r} \Delta)^{k}
    q^{1 - \frac{9k}{2\Delta}     }  \abs{e^{\beta} -1}^k 
                                                        \le  \sum_{k \ge \Delta/2} (e^{2+r} \Delta)^{k}
    q^{1 - \frac{9k}{2\Delta}  + \frac{2.1 k}{\Delta}  }  
       =  \sum_{k \ge \Delta/2} (e^{2+r} \Delta)^{k}
    q^{1 - \frac{12 k}{5\Delta}    } \, ,
\end{equation*}
and again this is at most $1/6$ for $q=q(\Delta)$ large enough.
\end{proof}

We remark that it is useful to consider complex $\beta$ here in order 
to derive analyticity properties of limiting free energies later (see for example Lemma~\ref{prop:limf} below).

\subsubsection{Disordered polymer measure on edges}
\label{sec:disord-polym-meas}
In addition to the Gibbs measure for the disordered polymer model
given by~\eqref{eqNuGamma}, the disordered polymer model also defines
a probability measure $\overline \nu_{\dis}$ on $\Omega=\{0,1\}^E$ via
projection.  To obtain a sample $A$ from $\overline \nu_{\dis}$, first
sample a configuration of compatible disordered polymers $\Gamma$ from
$\nu_{\dis}$, that is, with probability
$\prod_{\gamma \in \Gamma} w^{\dis}_\gamma / \Xi^{\dis} $.  Second,
let
\begin{equation*}
A = \bigcup_{\gamma \in \Gamma} E(\gamma) \, .
\end{equation*}
We will show in Section~\ref{sec:accur-order-expans} that when the
disordered cluster expansion converges, i.e., for $\beta \le \beta_1$,
the distribution $\overline \nu_{\dis}$ is very close to the
distribution of the random cluster model measure $\mu$ conditioned on
$\Omega_{\dis}$.

\subsection{Ordered polymer expansion}
\label{sec:ord}

Next we define a polymer model that describes deviations
from the ordered ground state $A_{\ord}=E$. We need a more complicated
construction compared to the disordered polymer model to
handle the non-local cluster weight. The basic idea of the ordered
polymer model is that, given an edge configuration $A$, polymers represent
the connected components of the `boundary' $\cB(A)$ of $A$. We begin
by making this precise.

\subsubsection{Boundary of occupied edges}
\label{sec:bound-occup-edges}

The precise notion of boundary is given by the following
construction. Given $A\subseteq E$, let $\cB_0(A)$ be the set of unoccupied edges
$E \setminus A$.  To form $\cB_{i+1}(A)$ from $\cB_i(A)$ we add any
edge $e$ incident to a vertex $v$ with at least $5\Delta/9$ incident
edges in $\cB_i(A)$. This procedure stabilizes and results in a set
$\cB_{\infty}(A)$ of edges, of which $\cB_{0}(A)$ are unoccupied and
$\cB_{\infty}(A)\setminus\cB_{0}(A)$ are occupied.

\begin{lemma}
  \label{lem:ord-gen}
  For any $A\subseteq E$,
  the algorithm to generate $\cB_{\infty}(A)$ runs in time quadratic in
  $\abs{\cB_{0}(A)}$. Moreover,
  $\abs{\cB_{\infty}(A)}\leq 10\abs{\cB_{0}(A)}$.
\end{lemma}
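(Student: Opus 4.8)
The plan is to analyze the saturation procedure $\cB_0(A) \to \cB_1(A) \to \cdots$ by tracking a suitable potential function that decreases each time a new edge is added, and then combine this with the small-set expansion hypothesis to control the total size of $\cB_\infty(A)$. For the running time, the key observation is that each iteration either adds at least one new edge or terminates; since there are at most $|E|$ edges and each scan for a saturated vertex (a vertex with at least $5\Delta/9$ incident edges currently in $\cB_i(A)$) touches only vertices near the current boundary, the whole computation can be organized to run in time $O(|\cB_0(A)|^2)$ --- one maintains for each vertex a counter of incident boundary edges, updates counters in $O(\Delta)$ time whenever an edge is added, and only ever examines vertices incident to $\cB_\infty(A)$, of which there are $O(|\cB_0(A)|)$ by the size bound we are about to prove. (One should remark that $\Delta$ is a fixed constant here, so factors of $\Delta$ are absorbed.)

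For the size bound $|\cB_\infty(A)| \le 10\,|\cB_0(A)|$, I would argue as follows. Let $B \bydef \cB_\infty(A)$ and let $W$ be the set of vertices incident to at least one edge of $B$. Every edge added after stage $0$ is added because some endpoint became saturated, so in the final set $B$, the set $S$ of vertices that are ``fully saturated'' --- meaning all $\Delta$ of their incident edges lie in $B$, or more precisely at least $5\Delta/9$ do --- carries most of the edges. More usefully: by construction every edge of $B \setminus \cB_0(A)$ is incident to a vertex $v$ with at least $5\Delta/9$ incident edges in $B$; call such a vertex \emph{heavy} and let $S$ be the set of heavy vertices. Then $B \setminus \cB_0(A) \subseteq E(S, V)$ (edges with an endpoint in $S$), and in fact each edge of $B$ not in $\cB_0(A)$ has a heavy endpoint, so $|B| \le |\cB_0(A)| + \Delta|S|$. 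The main point is therefore to bound $|S|$ in terms of $|\cB_0(A)|$.

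To bound $|S|$: first, if $|S| \le \del n$ we may apply the small-set expansion condition $\phi_G(\del)\ge 5/9$, which gives $|E(S,S^c)| \ge \frac{5}{9}\Delta|S|$. On the other hand every vertex of $S$ has at least $5\Delta/9$ incident edges of $B$, so counting incidences, $2|B| \ge \sum_{v\in S}(\text{incident edges of }B) \ge \frac{5\Delta}{9}|S|$, and moreover the edges of $B$ incident to $S$ but leaving $S$ are all of $E(S,S^c)$ (each such edge is unoccupied-or-added, hence in $B$, since a heavy vertex forces all... — more carefully, at least the count $|E(S,S^c)|$ edges incident to $S$ lie in $B$ because each vertex in $S$ has $\ge 5\Delta/9 > \Delta/2$ incident $B$-edges, so $B$-edges dominate at every vertex of $S$). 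Combining $\frac{5}{9}\Delta|S| \le |E(S,S^c)| \le |B| \le |\cB_0(A)| + \Delta|S|$ is not yet enough, so instead I would track the gain more carefully: the edges of $\cB_0(A)$ are the only ``source'' of saturation, and a discharging/counting argument comparing $\sum_{v\in S}(\deg_B(v) - \tfrac{\Delta}{2})$ (positive, at least $\tfrac{\Delta}{18}|S|$) against the ``budget'' supplied by $\cB_0(A)$ (each original boundary edge contributes to at most two such vertices) yields $|S| \le c\,|\cB_0(A)|$ for an explicit constant, hence $|B|\le 10|\cB_0(A)|$ after choosing constants. If instead $|S| > \del n$, then $|B| \ge \frac{5\Delta}{18}|S| > \frac{5\Delta\del n}{18}$ forces $|\cB_0(A)|$ to be a positive fraction of $|E|=\Delta n/2$ by the same expansion bookkeeping applied at scale $1/2$ (using $\phi_G(1/2)\ge 1/10$), and then $|B|\le|E| = \Delta n/2 \le 10|\cB_0(A)|$ trivially. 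The main obstacle is getting the constant right in the discharging step --- making sure that the expansion hypothesis $\phi_G(\del)\ge 5/9$ (strictly above $1/2$) is leveraged so that each saturated vertex is ``paid for'' by a bounded number of original boundary edges; this is where the precise threshold $5\Delta/9$ in the definition of the procedure is used, and it is the only genuinely delicate point.
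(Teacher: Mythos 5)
Your proposal takes a genuinely different route from the paper, and the step you yourself flag as delicate is exactly where it does not close. For reference, the paper's argument is purely local and uses no expansion hypothesis at all: when a vertex becomes saturated it has at least $5\Delta/9$ incident boundary edges and at most $4\Delta/9$ of its edges can be newly added, so each newly added edge is charged to the edges that triggered it at rate $(4\Delta/9)/(5\Delta/9)=4/5$; an edge of $\cB_0(A)$ can be charged through both endpoints (total at most $8/5$), while an added edge can only trigger further additions at its other endpoint (at most $4/5$), and the geometric series gives $|\cB_\infty(A)|\le 2|\cB_0(A)|\sum_{j\ge0}(4/5)^j=10|\cB_0(A)|$.

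There are two concrete gaps in your version. First, the small-$|S|$ discharging is asserted rather than carried out, and when one carries it out it does not give the stated constant: summing, over heavy $v$, the inequality (number of $\cB_0(A)$-edges at $v$) $\ge \frac{5\Delta}{9}-\deg_S(v)$ and using $\phi_G(\del)\ge 5/9$ to get $2|E(S,S)|\le\frac{4\Delta}{9}|S|$ yields $2|\cB_0(A)|\ge\frac{\Delta}{9}|S|$, hence $|S|\le 18|\cB_0(A)|/\Delta$ and only $|\cB_\infty(A)|\le|\cB_0(A)|+\Delta|S|\le 19|\cB_0(A)|$; even with a sharper edge count one lands around $14$--$15$, not $10$, and the constants here are not free to be "chosen" since the threshold $5\Delta/9$, the expansion constants, and the target $10$ (used in Proposition~\ref{prop:OrdPoly}) are all fixed. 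Second, and more seriously, the regime $|S|>\del n$ is not covered: the claim that $\phi_G(1/2)\ge 1/10$ then forces $|\cB_0(A)|\ge |E|/10$ does not follow from "the same bookkeeping", because with expansion constant $1/10$ one only gets $2|E(S,S)|\le\frac{9}{10}\Delta|S|$, and since $\frac59<\frac9{10}$ the resulting lower bound on $|\cB_0(A)|$ is vacuous; for $|S|>n/2$ no expansion bound applies to $S$ at all. So the argument fails precisely where only the weak expansion is available, which is why the paper's charging argument avoids expansion entirely (the lemma indeed holds for arbitrary $\Delta$-regular graphs). A minor point: your justification that every edge leaving $S$ lies in $\cB_\infty(A)$ ("$B$-edges dominate at every vertex of $S$") is not the right reason; the correct one is stabilization — a heavy vertex with an incident edge outside $\cB_\infty(A)$ would force that edge to be added. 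The running-time part of your proposal is fine and essentially matches the paper's.
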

\begin{proof}
  First, observe that the same set $\cB_{\infty}(A)$ results no matter
  the order in which edges are added. The first claim therefore will
  follow from the second as there are at most $10\abs{\cB_{0}(A)}$
  edges added, and finding the next edge to add (if one exists) takes
  time at most $10\Delta\abs{\cB_{0}(A)}$.

  To prove the second claim, observe that each incident edge at a
  vertex can be `charged' at most $4/5=(4\Delta/9)/(5\Delta/9)$ for
  the new edges added to the boundary. Since each edge in $\cB_0(A)$
  is incident to $2$ vertices it can be charged at most $8/5$ in
  total, and each subsequent edge can be charged at most $4/5$.  Thus
  \begin{equation*}
    |\cB_{\infty}(A)| \le 2 | \cB_0(A)| \sum_{j=0}^\infty (4/5)^j = 10 |\cB_0(A)| \,.\qedhere
  \end{equation*}
\end{proof}

\subsubsection{Ordered polymers}
\label{sec:order-polymers}

We define \emph{ordered polymers} to be connected subgraphs $\gamma$
of $G$ with an edge labelling
$\ell\colon E(\gamma) \to \{\text{occupied},\text{unoccupied}\}$,
subject to (i) $\abs{E_{u}(\gamma)}\leq \eta n$, where $E_u(\gamma)$
denotes the set of unoccupied edges of $\gamma$, and (ii)
$\cB_{\infty}(E_{u}(\gamma))=E(\gamma)$, i.e., the inductive boundary
procedure applied to the unoccupied edges of $\gamma$ stabilizes at
$\gamma$. Let $\cP_{\ord}=\cP_{\ord}(G)$ be the set of disordered
polymers in $G$.  As usual, two polymers are compatible if they are
vertex disjoint.

Let $c'(\gamma)$ denote the number of components of the graph
$(V,E\setminus E_{u}(\gamma))$ with fewer than $n/2$ vertices. We
think of these as `finite components', cf.\ Lemma~\ref{lem:simple}
below and also Section~\ref{sec:phase-transition} where we make a
similar definition with $G$ replaced by an infinite tree. The weight
function for ordered polymers is
\begin{equation*}
  w^{\ord}_\gamma \bydef q^{c'(\gamma)} (e^{\beta}-1)^{-|E_u(\gamma)|}.
\end{equation*}
The \emph{ordered polymer partition function} is
\begin{equation*}
\Xi^{\ord}\bydef
\sum_{\Gamma}\prod_{\gamma\in\Gamma}w^{\ord}_\gamma,
\end{equation*} 
where the sum is over compatible collections of ordered polymers.

We end this section with two lemmas that will be used to prove the
convergence of the ordered cluster expansion. First, recall the
following well-known fact about expander graphs, see e.g. \cite[Lemma
2.3]{trevisan2016lecture}.
\begin{lemma}
  \label{lem:simple}
 Let $G=(V,E)$ be a graph and 
 let $\abs{E'}\leq \eta \abs{E}$. Then $(V,E\setminus E')$
  contains a connected component of size
  $\left(1-\frac{\eta}{2\phi_{G}(1/2)}\right)|V|$.
\end{lemma}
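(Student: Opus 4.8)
The plan is to show that every connected component of $(V,E\setminus E')$ other than the largest one has at most $\frac{\eta}{2\phi_G(1/2)}|V|$ vertices, which forces the largest component to have at least $\left(1-\frac{\eta}{2\phi_G(1/2)}\right)|V|$ vertices. To this end, let $C_1,C_2,\dots$ be the connected components of $(V,E\setminus E')$ listed in decreasing order of size, and group them: let $S$ be the union of all components except $C_1$. Then $|S| = |V| - |C_1|$, and I claim $|S| \le |V|/2$ — indeed, if $|S| > |V|/2$, swap the roles by considering $V \setminus S = C_1$, which then satisfies $|C_1| < |V|/2$; in either case one of $S$ or $C_1$ has at most $|V|/2$ vertices, and WLOG (after possibly relabelling which side we call $S$, using that any single component is itself a union of components of $(V, E\setminus E')$) we may take $S$ to be a union of components with $|S|\le |V|/2$ and $|S| \ge |V| - |C_1|$.

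The key step is the edge-counting estimate. Every edge of $G$ with one endpoint in $S$ and one in $S^c$ must be cut, i.e., must lie in $E'$, because $S$ is a union of connected components of $(V, E\setminus E')$ and hence no edge of $E\setminus E'$ leaves $S$. Therefore
\begin{equation*}
  |E(S,S^c)| \le |E'| \le \eta |E| = \eta \frac{\Delta |V|}{2}.
\end{equation*}
On the other hand, since $|S|\le |V|/2$, the definition of the expansion profile gives
\begin{equation*}
  |E(S,S^c)| \ge \phi_G(1/2)\, \Delta |S|.
\end{equation*}
Combining the two displays yields $\phi_G(1/2)\,\Delta|S| \le \eta \Delta |V|/2$, hence $|S| \le \frac{\eta}{2\phi_G(1/2)}|V|$. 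Since $|C_1| = |V| - |S| \ge \left(1 - \frac{\eta}{2\phi_G(1/2)}\right)|V|$, this proves the lemma.

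The only genuine subtlety — and the step I would be most careful about — is the bookkeeping that lets us apply $\phi_G(1/2)$, namely arranging that the relevant vertex set $S$ has size at most $|V|/2$. This is handled by the observation that the collection of components of $(V, E\setminus E')$ can be split into two groups $S, S^c$ with $|S| \le |V|/2$ and with the larger group $S^c$ being a single component provided $|C_1| \ge |V|/2$; and $|C_1| \ge |V|/2$ holds automatically once $\eta/(2\phi_G(1/2)) \le 1/2$, which is exactly the regime of interest (in our application $\eta \le 1/100$ and $\phi_G(1/2) \ge 1/10$). If instead $|C_1| < |V|/2$, then taking $S = C_1$ directly gives $|E(C_1, C_1^c)| \le |E'|$ and the same chain of inequalities yields $|C_1| \le \frac{\eta}{2\phi_G(1/2)}|V| < |V|/2$, so *every* component is small and $\sum_i |C_i| = |V|$ is impossible once $\eta/(2\phi_G(1/2)) < 1$ unless there is a component of the claimed size — but this degenerate branch does not actually arise under our standing assumptions, so in the write-up I would simply note $\eta/(2\phi_G(1/2)) \le 1/2$ and run the argument with $S$ the union of all-but-largest components.
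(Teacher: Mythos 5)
Your main computation is fine and is the standard one: when the largest component $C_1$ has at least $|V|/2$ vertices, the set $S=V\setminus C_1$ is a union of components, so $E(S,S^c)\subseteq E'$, and applying the expansion profile to $S$ gives $|S|\le \frac{\eta}{2\phi_G(1/2)}|V|$. (The paper gives no proof of Lemma~\ref{lem:simple}; it cites Trevisan's notes, where the argument is exactly this plus the step you are missing.) The genuine gap is the reduction to that case. Your assertion that $|C_1|\ge |V|/2$ ``holds automatically once $\eta/(2\phi_G(1/2))\le 1/2$'' is precisely what needs proof, and the justification you offer---that if every component had size at most $\frac{\eta}{2\phi_G(1/2)}|V|<|V|$ then the component sizes could not sum to $|V|$---is a non sequitur: many small components can of course cover $V$. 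Note also that some hypothesis of the form $\eta<\phi_G(1/2)$ really is needed (it holds in the paper's application, where $\eta\le 1/100$ and $\phi_G(1/2)\ge 1/10$): if $G$ consists of three equal-sized internally expanding blobs joined pairwise by a few edges, so that $\phi_G(1/2)$ is attained by cutting off one blob, and $E'$ is the set of all inter-blob edges, then $\eta=\phi_G(1/2)$ while the largest component of $(V,E\setminus E')$ has only $|V|/3$ vertices, less than the claimed $(1-\tfrac{\eta}{2\phi_G(1/2)})|V|=|V|/2$. So the bad branch cannot be waved away; it must be excluded by an argument that uses $\eta<\phi_G(1/2)$.

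The standard way to close the gap is a doubling/summation argument over \emph{all} components. Suppose every component of $(V,E\setminus E')$ has at most $|V|/2$ vertices. Then $\phi_G(1/2)$ applies to each component $C_i$, every edge counted by $\sum_i|E(C_i,C_i^c)|$ lies in $E'$, and each such edge is counted at most twice, so
\[
2|E'|\;\ge\;\sum_i |E(C_i,C_i^c)|\;\ge\;\phi_G(1/2)\,\Delta\sum_i|C_i|\;=\;\phi_G(1/2)\,\Delta|V|\;=\;2\phi_G(1/2)\,|E|,
\]
i.e.\ $\eta\ge\phi_G(1/2)$. Hence when $\eta<\phi_G(1/2)$ there is a (necessarily unique) component $C_1$ with $|C_1|>|V|/2$, and your chain of inequalities applied to $S=V\setminus C_1$ finishes the proof. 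Your fallback of taking $S$ to be the union of all-but-the-largest components cannot be used directly in the bad case, exactly because then $|V\setminus C_1|>|V|/2$ lies outside the range where $\phi_G(1/2)$ gives any lower bound. With this one extra counting step (and the explicit hypothesis $\eta<\phi_G(1/2)$, harmless for the paper's use of the lemma) your write-up becomes correct.
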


Next we bound the number of unoccupied edges of a polymer in terms of $c'$.
\begin{lemma}
  \label{lem:cbound}
  For all ordered polymers $\gamma$,
  $|E_u(\gamma)| \ge \frac{5}{9}\Delta\cdot c'(\gamma)$.
\end{lemma}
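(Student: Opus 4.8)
The plan is to prove the inequality by comparing $|E_u(\gamma)|$ with the size of a single well-chosen edge cut of $G$. The naive approach — summing the boundary sizes $|E(C_i,C_i^c)|$ over the $c'(\gamma)$ finite components $C_i$ and invoking small-set expansion on each — loses a factor of $2$, because an unoccupied edge joining two distinct finite components gets charged twice. Instead I will isolate the union of all finite components as a single set and apply expansion to it directly.

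First I would analyze the structure of $G'\bydef (V,E\setminus E_u(\gamma))$. Condition (i) in the definition of an ordered polymer gives $|E_u(\gamma)|\le \eta n\le \eta|E|$, so Lemma~\ref{lem:simple} produces a connected component $C_0$ of $G'$ with $|C_0|\ge \bigl(1-\tfrac{\eta}{2\phi_G(1/2)}\bigr)n\ge(1-5\eta)n$, using $\phi_G(1/2)\ge 1/10$; since $\eta\le 1/100$ this exceeds $n/2$, so $C_0$ is the unique component of $G'$ of size at least $n/2$. Writing $F\bydef V\setminus C_0$ for the union of the remaining components, it follows that $c'(\gamma)$ equals the number of components of $G'$ contained in $F$, and $|F|=n-|C_0|\le 5\eta n\le\del n$, where the last inequality uses $\eta=\min\{1/100,\del/5\}$. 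This bound $|F|\le\del n$ is the crucial point: it means the strong small-set expansion hypothesis $\phi_G(\del)\ge 5/9$ applies to $F$.

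Now assume $c'(\gamma)\ge 1$ (otherwise the statement is trivial), so $F\neq\emptyset$. Applying $\phi_G(\del)\ge 5/9$ to $S=F$ yields
\[
|E_G(F,\,V\setminus F)|\;\ge\;\frac{5}{9}\,\Delta\,|F|\;\ge\;\frac{5}{9}\,\Delta\,c'(\gamma),
\]
the second inequality because $F$ is the disjoint union of $c'(\gamma)$ nonempty vertex sets. Finally, every edge of $G$ between $F$ and $C_0$ must lie in $E_u(\gamma)$: such an edge, if present in $G'$, would connect a finite component to $C_0$, contradicting that these are distinct components of $G'$. Hence $|E_u(\gamma)|\ge |E_G(F,V\setminus F)|\ge\frac{5}{9}\Delta\,c'(\gamma)$, as required. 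The argument uses only condition (i) together with both expansion hypotheses defining $\cG_{\Delta,\del}$; condition (ii) plays no role. The one step requiring care is checking $|F|\le\del n$ — i.e., that the finite components are collectively small enough to be governed by the $\phi_G(\del)\ge 5/9$ bound rather than merely the weaker $\phi_G(1/2)\ge 1/10$ bound — and this is exactly where the choice of $\eta$ and Lemma~\ref{lem:simple} enter, and what makes the constant $5/9$ come out.
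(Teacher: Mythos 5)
Your proof is correct and follows essentially the same route as the paper: both apply Lemma~\ref{lem:simple} with $\phi_G(1/2)\ge 1/10$ to isolate a giant component, note that the union of the remaining (small) components has size at most $5\eta n\le \del n$ and at least $c'(\gamma)$, and then apply $\phi_G(\del)\ge 5/9$ to that union, using that every edge leaving it is unoccupied.
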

\begin{proof}
  Let $S_{1},\dots, S_{t}$ denote the connected components of
  $(V,E\setminus E_{u}(\gamma))$.
  By  Lemma~\ref{lem:simple},
  since $\phi_G(1/2)\geq 1/10$,
  we may assume without loss of generality that
   $S_{1}$ contains at least
  $\left(1-{5\eta}\right)n\geq (1-\del n)$ vertices.  Let $U$ be the union
  of the vertices in $S_{2},\dots, S_{t}$, so
  $c'(\gamma)=t-1\le |U|$. Since any edge leaving $U$ must be
  unoccupied and $\abs{U}\leq \del n$, the
  claim follows since  $\phi_G( \del  ) \ge 5/9$. 
\end{proof}

\subsubsection{Convergence of Ordered Expansion}
\label{sec:conv-order-expans}

\begin{prop}
  \label{prop:OrdPoly}
  If $q=q(\Delta)$ is sufficiently large, then for all $\beta\in \mathbb C$ such that 
   $\left| e^{\beta}-1\right| \ge e^{\beta_{0}}-1$ the ordered polymer model satisfies~\eqref{eqKPcond} with 
  $r=\log q/(200\Delta)$.
\end{prop}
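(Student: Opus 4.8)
The plan is to verify the Koteck\'y--Preiss criterion~\eqref{eqKPcond} for the ordered polymer model by establishing, in analogy with the disordered case (Proposition~\ref{prop:DisPoly}), a per-vertex bound of the form
\begin{equation*}
  \sum_{\gamma\ni v} e^{(1+r)|E(\gamma)|}\,\bigl|w^{\ord}_\gamma\bigr| \le \tfrac12,
\end{equation*}
which, summed over the at most $|\gamma'|\le 2|E(\gamma')|$ vertices of a given polymer $\gamma'$, yields~\eqref{eqKPcond}. Recall that $|w^{\ord}_\gamma| = q^{c'(\gamma)}|e^\beta-1|^{-|E_u(\gamma)|}$, and that on the relevant domain $|e^\beta-1|\ge e^{\beta_0}-1 = q^{1.9/\Delta}$, so $|e^\beta-1|^{-|E_u(\gamma)|} \le q^{-1.9|E_u(\gamma)|/\Delta}$. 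The key structural inputs are Lemma~\ref{lem:cbound}, which gives $c'(\gamma)\le \frac{9}{5\Delta}|E_u(\gamma)|$, so that $q^{c'(\gamma)}\le q^{9|E_u(\gamma)|/(5\Delta)}$, and hence $|w^{\ord}_\gamma|\le q^{-(1.9-1.8)|E_u(\gamma)|/\Delta} = q^{-|E_u(\gamma)|/(10\Delta)}$ — a genuine exponential decay in $|E_u(\gamma)|$.

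The next ingredient is a counting bound: how many ordered polymers $\gamma$ with $|E_u(\gamma)| = m$ contain a fixed vertex $v$? Here is the subtlety that makes the ordered model harder than the disordered one — a polymer $\gamma$ is determined by its unoccupied edge set $E_u(\gamma)$ (since $E(\gamma) = \cB_\infty(E_u(\gamma))$ is then forced), so I should count connected-ish configurations of unoccupied edges. By Lemma~\ref{lem:ord-gen}, $|E(\gamma)| = |\cB_\infty(E_u(\gamma))| \le 10 m$, and $\gamma$ is connected as a subgraph. The plan is to bound the number of such $\gamma$ containing $v$ by first noting that $\cB_0(E_u(\gamma)) = E_u(\gamma)$ need not itself be connected, but $\cB_\infty$ of it is; so I want to count connected edge sets of size at most $10m$ through $v$ (giving $(e\Delta)^{10m}$ via~\cite[Lemma~2.1(c)]{BorgsChayesKahnLovasz}) and then observe that each is the boundary closure of at most one admissible $E_u$-set — or alternatively, count the $E_u$-sets directly by a similar connected-subgraph argument applied to an auxiliary graph recording which unoccupied edges get merged by the closure operation. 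Either way one obtains at most $(C\Delta)^{m}$ polymers with $|E_u(\gamma)| = m$ through $v$, for an absolute constant $C$. Combined with the weight bound and $r = \log q/(200\Delta)$, the factor $e^{(1+r)|E(\gamma)|}\le e^{(1+r)\cdot 10m}$ is at most $q^{\epsilon m/\Delta}$ for small $\epsilon$, so the summand is bounded by $(C\Delta)^m q^{-m/(10\Delta) + \epsilon m/\Delta}$, and summing the geometric series over $m\ge 1$ gives something $\le 1/2$ once $q = q(\Delta)$ is large enough, exactly as in the three-step estimate of Proposition~\ref{prop:DisPoly}. Treating complex $\beta$ changes nothing since only $|e^\beta-1|$ enters.

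I expect the main obstacle to be the counting step: unlike the disordered model, the correspondence between a polymer $\gamma$ and a connected subgraph is mediated by the non-injective boundary-closure map $\cB_\infty$, so one must argue carefully that the number of valid unoccupied-edge sets $E_u$ with $\cB_\infty(E_u) = E(\gamma)$ and $|E_u| = m$ — equivalently, the number of polymers — does not blow up. The cleanest route is probably: (i) $E(\gamma)=\cB_\infty(E_u(\gamma))$ is a connected subgraph through $v$ with at most $10m$ edges, so there are at most $(e\Delta)^{10m}$ choices for $E(\gamma)$; (ii) given $E(\gamma)$, the unoccupied subset $E_u(\gamma)\subseteq E(\gamma)$ is one of at most $2^{|E(\gamma)|}\le 2^{10m}$ subsets. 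This gives at most $(2e\Delta)^{10m}$ polymers, which is still of the form $(C\Delta)^{m}$ with a larger $C$, and the $q^{-m/(10\Delta)}$ decay absorbs it for $q$ large. One should also double-check the small-polymer regime ($m=1,2$) separately, as in Proposition~\ref{prop:DisPoly}, since the asymptotic geometric-series bound is wasteful there; but since $c'(\gamma)=0$ is impossible only when $E_u(\gamma)=\emptyset$ and otherwise $|w^{\ord}_\gamma|$ is already tiny, these terms are easily controlled. The value $r = \log q/(200\Delta)$ is chosen generously small so that $e^{(1+r)|E(\gamma)|}$ with the factor-$10$ blowup from $|E(\gamma)|\le 10|E_u(\gamma)|$ is still dominated by the weight decay.
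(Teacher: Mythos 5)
Your proposal is correct and follows essentially the same route as the paper: a per-vertex Koteck\'y--Preiss bound, the weight estimate $c'(\gamma)\le \tfrac{9}{5\Delta}|E_u(\gamma)|$ from Lemma~\ref{lem:cbound} together with $|e^{\beta}-1|\ge q^{1.9/\Delta}$, the size bound $|E(\gamma)|\le 10|E_u(\gamma)|$ from Lemma~\ref{lem:ord-gen}, and the count of at most $(2e\Delta)^{10m}$ labelled connected subgraphs through $v$ (connected edge sets times $2^{|E(\gamma)|}$ label choices), followed by summing the resulting geometric series in $m=|E_u(\gamma)|$. The paper likewise needs no separate treatment of small polymers, so your cautionary remark there is unnecessary but harmless.
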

\begin{proof}
We will show that for  $\beta \ge \beta_0$ and for every $v \in V(G)$,
\begin{equation}
\label{eqKPgoalOrd}
\sum_{\gamma \ni v} e^{(1+r)|E(\gamma)|} \left|w^{\ord}_\gamma\right| \le  \frac{1}{ 2} \,.
\end{equation}
As in the proof of Proposition~\ref{prop:DisPoly}, this suffices to
verify condition~\eqref{eqKPcond}. 

We index polymers by $k = |E_u(\gamma)|$.  By Lemma~\ref{lem:ord-gen}
$|E(\gamma)| \le 10 |E_u(\gamma)|$.  By~\cite[Lemma~2.1
(c)]{BorgsChayesKahnLovasz} we can bound the number of polymers with
$|E_u(\gamma)|=k$ containing a vertex $v$ by $(2e\Delta)^{10k}$, where
the factor of $2$ accounts for the choice of occupied/unoccupied for
each edge.  Then, since
$\abs{e^{\beta} -1} \ge q^{\frac{1.9}{\Delta}}$, by Lemma~\ref{lem:cbound}
\begin{align*}
  \sum_{\gamma \ni v}  e^{(1+r)| E(\gamma)|} \left|w^{\ord}_\gamma\right| 
  &\le  \sum_{k \ge 1}  (2e^{2+r}\Delta)^{10k}   q^{\frac{9k}{5\Delta}} \abs{e^{\beta} -1}^{-k} \\ 
&\le  \sum_{k \ge 1} (2e^{2+r}\Delta)^{10k}   q^{ \left ( \frac{9}{5\Delta}  - \frac{1.9}{\Delta}     \right ) k}  \\
&= \sum_{k \ge 1} (2e^{2+r}\Delta)^{10k}   q^{ - \frac{ k}{10 \Delta}} , 
\end{align*}
which is at most $1/ 2$ for $q=q(\Delta)$ sufficiently large.
\end{proof}

\subsubsection{Ordered polymer model measure on edges}
\label{sec:ord-on-edge}
Let $\nu_{\ord}$ be the polymer model measure defined
by~\eqref{eqNuGamma}. As for the disordered polymer model we can
define a measure $\overline \nu_{\ord}$ on
$\Omega=\{0,1\}^{E}$. 
To obtain a sample $A$ from $\overline \nu_{\ord}$ we sample a
collection $\Gamma$ of compatible ordered polymers according to
$\nu_{\ord}$ and then let
\begin{equation*}
A = E \setminus \bigcup_{\gamma \in \Gamma} E_u(\gamma)  \, .
\end{equation*}

\subsection{Consequences of the cluster expansion convergence}
\label{secConseq}

This section derives some consequences of Theorem~\ref{thmKP}.  Let
$\cC_{\dis}$ and $\cC_{\ord}$ be the sets of clusters of polymers in
$\cP_{\dis}$ and $\cP_{\ord}$, respectively.  For
$\ast\in \{\dis,\ord\}$ we write $\cC_{\ast}^{v}$ for the set of
clusters containing the vertex $v$.  We will always assume that $q$ is
large enough that Propositions~\ref{prop:DisPoly}
and~\ref{prop:OrdPoly} apply.

\subsubsection{Truncated cluster expansion error bounds}
\label{sec:trunc-clust-expans}

The following lemma will be used extensively
in
Section~\ref{sec:finite-size-scaling} and Section~\ref{sec:phase-transition}.
\begin{lemma}
\label{lemVertexTail}
For every $v \in G$ and $m\geq 1$,
\begin{equation}
\label{eqVertexTail}
\sum_{\substack{\Gamma\in \cC^{v}_{\ast} \\ \|\Gamma\| \ge m}}\left |  w^{\ast}(\Gamma) \right| \le q^{-\frac{m}{200 \Delta}} \,.
\end{equation}
\end{lemma}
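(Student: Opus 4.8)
The plan is to apply the tail bound \eqref{eqKPtail} from the Koteck\'y--Preiss theorem (Theorem~\ref{thmKP}) together with the fact, verified in Propositions~\ref{prop:DisPoly} and~\ref{prop:OrdPoly}, that both polymer models satisfy the convergence condition \eqref{eqKPcond} with an explicit $r$. Recall that in those propositions $r = \log q/(4\Delta)$ for the disordered model and $r = \log q/(200\Delta)$ for the ordered model; since $4 < 200$, the value $r_0 \bydef \log q/(200\Delta)$ is a valid (possibly weaker) choice for \emph{both} models, and this is the value appearing in the exponent of \eqref{eqVertexTail}. So the first step is to fix $r = r_0 = \log q/(200\Delta)$ throughout.

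The core of the argument is a standard trick for extracting a tail bound from \eqref{eqKPtail}. For a cluster $\Gamma \in \cC_\ast^v$ with $\|\Gamma\| \ge m$, we have $e^{r_0 m} \le e^{r_0 \|\Gamma\|}$, hence
\begin{equation*}
  \sum_{\substack{\Gamma \in \cC^v_\ast \\ \|\Gamma\| \ge m}} \left| w^\ast(\Gamma) \right|
  \le e^{-r_0 m} \sum_{\substack{\Gamma \in \cC^v_\ast \\ \|\Gamma\| \ge m}} \left| w^\ast(\Gamma) \right| e^{r_0 \|\Gamma\|}
  \le e^{-r_0 m} \sum_{\Gamma \in \cC^v_\ast} \left| w^\ast(\Gamma) \right| e^{r_0 \|\Gamma\|}.
\end{equation*}
It remains to bound the last sum by $1$ (or a constant absorbed into the exponent). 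For this I would observe that every cluster $\Gamma \in \cC_\ast^v$ is incompatible with the single-edge polymer at any edge incident to $v$; more directly, one can apply \eqref{eqKPtail} with $\gamma$ taken to be a polymer consisting of a single edge $e \ni v$ (in the disordered case, $\{e\}$ is a valid disordered polymer with $|E(\gamma)| = 1$; in the ordered case one takes the single \emph{unoccupied} edge $e$ and applies $\cB_\infty$ to it — one must check this yields a legitimate ordered polymer of bounded size, which it does since $\abs{\cB_\infty(\{e\})} \le 10$ by Lemma~\ref{lem:ord-gen}). Any cluster containing the vertex $v$ contains a polymer occupying $v$, which is necessarily vertex-incompatible with $\gamma$, so $\cC_\ast^v \subseteq \{\Gamma : \Gamma \nsim \gamma\}$. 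Then \eqref{eqKPtail} gives $\sum_{\Gamma \in \cC_\ast^v} |w^\ast(\Gamma)| e^{r_0 \|\Gamma\|} \le |E(\gamma)|$, which is $1$ in the disordered case and at most $10$ in the ordered case. Combining, $\sum_{\|\Gamma\| \ge m} |w^\ast(\Gamma)| \le 10\, e^{-r_0 m} = 10\, q^{-m/(200\Delta)}$, and the factor $10$ is removed by taking $q$ large enough (so that $10 \le q^{m/(400\Delta)}$ for all $m \ge 1$, say, at the cost of a harmless constant — or simply by noting the statement is for $q$ sufficiently large and absorbing the constant by slightly enlarging the constant in the denominator, then reverting; cleanest is to just carry the $10$ and note $10 q^{-m/(200\Delta)} \le q^{-m/(201\Delta)}$ for $q$ large, then observe $201$ can be replaced by $200$ by redoing Proposition~\ref{prop:OrdPoly} with marginally more room, or accept the clean bound as stated for $q$ large).

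The main obstacle — really the only subtlety — is the bookkeeping around the constant $|E(\gamma)| = 10$ in the ordered case versus the clean exponent $m/(200\Delta)$ claimed in the lemma, and the corresponding choice of a valid ``probe'' polymer $\gamma$ through $v$ in the ordered model. I would handle this by being slightly generous: either verify that Proposition~\ref{prop:OrdPoly} in fact leaves enough slack that $\sum_{\Gamma \in \cC^v_\ord} |w^\ord(\Gamma)| e^{r_0\|\Gamma\|} \le 1$ directly (plausible, since the geometric series there is bounded well below $1/2$ for $q$ large, so a Koteck\'y--Preiss-type single-polymer bound can be tightened), or simply absorb the factor of $10$ into the requirement that $q$ be large, since $10 \, q^{-m/(200\Delta)} \le q^{-m/(200\Delta)+1/(200\Delta)\cdot \log_q 10^{200\Delta}}$ — cleaner: for $q \ge 10^{200\Delta}$ we have $10 \le q^{1/(200\Delta)} \le q^{m/(200\Delta)}\cdot q^{-(m-1)/(200\Delta)}$, hmm — the genuinely clean route is to note the lemma is quantified over ``$q$ large enough'', replace the target exponent's $200$ by $201$ momentarily to swallow the $10$, and then recover $200$ by re-examining that the proof of Proposition~\ref{prop:OrdPoly} actually gives \eqref{eqKPcond} with $r = \log q/(199\Delta)$ for $q$ large (the geometric ratio $q^{-1/(10\Delta)}$ there has room to spare). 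Everything else is the one-line inequality above, so the proof is short.
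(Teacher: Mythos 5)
Your proposal is correct and is essentially the paper's own proof: apply \eqref{eqKPtail} with the probe polymer $\gamma$ a single edge containing $v$ and with $r=\log q/(200\Delta)$ (valid for both models since it is smaller than the $r$ of Proposition~\ref{prop:DisPoly}), then pull out $e^{-rm}$ from the tail. The only inefficiency is your detour around the factor $10$: because $\Delta\ge 5$, a vertex incident to a single unoccupied edge has fewer than $5\Delta/9$ boundary edges, so $\cB_{\infty}(\{e\})=\{e\}$ and the single unoccupied edge is itself a legitimate ordered polymer with $|E(\gamma)|=1$; hence the clean bound $q^{-m/(200\Delta)}$ follows directly, with no need to absorb constants or re-tune Proposition~\ref{prop:OrdPoly}.
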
 
\begin{proof}
  This follows from Propositions~\ref{prop:DisPoly}
  and~\ref{prop:OrdPoly} by applying~\eqref{eqKPtail} with $\gamma$ a single edge
  containing $v$ and $r=\log q/(200\Delta)$.
\end{proof}

An important consequence of
Lemma~\ref{lemVertexTail} is the following. For $m\geq 1$, define
truncated cluster expansions
\begin{equation}\label{eq:Tmdef}
  T_{m}^{\ast} \bydef \mathop{\sum_{\Gamma\in\cC_{\ast}}}_{\|\Gamma\|< m}
  w^{\ast}(\Gamma), \qquad \ast\in \{\dis,\ord\}
\end{equation}
By summing~\eqref{eqVertexTail} over all vertices we have that
\begin{equation}
  \label{eq:EdgePolySum}
 \sum_{\substack{\Gamma\in \cC_{\ast} \\ \|\Gamma\| \ge m}}\left |  w^{\ast}(\Gamma) \right| \le nq^{-\frac{m}{200 \Delta}}, \qquad \ast \in \{\dis,\ord\} 
\end{equation}
and so
\begin{equation}
  \label{eq:approx}
  \abs{T_{m}^{\ast} - \log \Xi_{\ast}}\leq n q^{-\frac{m}{200 \Delta}}, \qquad \ast \in \{\dis,\ord\}\, ,
\end{equation}
which will be used in Section~\ref{sec:algor-cons}.

\subsubsection{Probabilistic estimates for measures on edges}
\label{sec:prob-estim-meas}

We first show that the set of edges contained in a polymer sample is
typically small.
Recall that for $\ast\in\{\dis, \ord\}$,
 $\nu_{\ast}$ is the Gibbs measure associated to the $\ast$-polymer model 
(given by~\eqref{eqNuGamma}) and $\overline \nu_{\ast}$ is the measure
on $\{0,1\}^E$ induced by $\nu_{\ast}$ (as defined in Subsections~\ref{sec:disord-polym-meas}, \ref{sec:ord-on-edge}).
\begin{lemma}
  \label{lem:largedev}
  Let $\ast\in\{\dis, \ord\}$ and let $\mathbf \Gamma$ be a random
  configuration of compatible polymers sampled from $ \nu_{\ast}$.
  Then for $\alpha>0$,
\[
\P(\|\mathbf \Gamma\|>\alpha n)\leq e^{-2n}\, 
\]
for $q=q(\Delta, \alpha)$ sufficiently large. 
\end{lemma}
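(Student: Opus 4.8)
The plan is to use an exponential-moment (Chernoff) bound driven by the convergence of the cluster expansion. Fix $\ast\in\{\dis,\ord\}$ and let $r$ be the constant from Proposition~\ref{prop:DisPoly} or~\ref{prop:OrdPoly} as appropriate; in both cases $r\ge \log q/(200\Delta)$. For a real parameter $t\ge 0$ I would introduce the tilted partition function
\begin{equation*}
  \Xi_{\ast}(t)\bydef \sum_{\Gamma}\prod_{\gamma\in\Gamma}w^{\ast}_{\gamma}\,e^{t|E(\gamma)|},
\end{equation*}
the sum running over compatible collections of $\ast$-polymers, so that $\Xi_\ast(0)=\Xi_\ast$ and $\E_{\nu_{\ast}}[e^{t\|\mathbf\Gamma\|}] = \Xi_{\ast}(t)/\Xi_{\ast}$ directly from the definition of $\nu_\ast$. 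Markov's inequality applied with $t=r$ then gives
\begin{equation*}
  \P\bigl(\|\mathbf\Gamma\|>\alpha n\bigr)\le e^{-r\alpha n}\,\frac{\Xi_{\ast}(r)}{\Xi_{\ast}},
\end{equation*}
so the task reduces to showing $\Xi_{\ast}(r)/\Xi_{\ast}\le e^{Cn}$ for some $C=C(\Delta)$; taking $q$ large enough as a function of $\Delta$ and $\alpha$ so that $r\alpha=\tfrac{\alpha\log q}{200\Delta}\ge C+2$ then finishes the proof.

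For the denominator I would just use $\Xi_{\ast}\ge 1$ (the empty configuration, recalling that $w^{\ast}_\gamma\ge 0$ for the real $\beta$ in the range where the $\ast$-expansion converges). For the numerator, the key observation is that the tilted weights $w^{\ast}_{\gamma}e^{r|E(\gamma)|}$ satisfy the Koteck\'y--Preiss condition~\eqref{eqKPcond} with parameter $0$, because
\begin{equation*}
  \sum_{\gamma'\nsim\gamma}\bigl|w^{\ast}_{\gamma'}e^{r|E(\gamma')|}\bigr|\,e^{|E(\gamma')|}
  =\sum_{\gamma'\nsim\gamma}\bigl|w^{\ast}_{\gamma'}\bigr|\,e^{(1+r)|E(\gamma')|}\le |E(\gamma)|
\end{equation*}
by Proposition~\ref{prop:DisPoly} (resp.~\ref{prop:OrdPoly}). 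Hence $\log\Xi_{\ast}(r)$ has an absolutely convergent cluster expansion whose terms are $w^{\ast}(\Gamma)e^{r\|\Gamma\|}$, and applying~\eqref{eqKPtail} with $r$ replaced by $0$ to a single edge and summing over all $n\Delta/2$ edges of $G$ yields $\sum_{\Gamma}|w^{\ast}(\Gamma)|e^{r\|\Gamma\|}\le n\Delta/2$ (using that each cluster is incompatible with at least one edge). Comparing with the absolutely convergent cluster expansion of $\log\Xi_{\ast}$ gives $|\log\Xi_{\ast}(r)-\log\Xi_{\ast}|\le\sum_{\Gamma}|w^{\ast}(\Gamma)|\bigl(e^{r\|\Gamma\|}-1\bigr)\le n\Delta/2$, i.e.\ $\Xi_{\ast}(r)/\Xi_{\ast}\le e^{n\Delta/2}$, which is the desired bound with $C=\Delta/2$.

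Most of this is routine (Chernoff and bookkeeping of constants), and the one place that needs a little attention is the step from the single-edge tail bound~\eqref{eqKPtail} to the global estimate on $\sum_{\Gamma}|w^{\ast}(\Gamma)|e^{r\|\Gamma\|}$, together with the check that the tilted model still verifies~\eqref{eqKPcond}. If one prefers to bypass the cluster expansion, the bound $\Xi_{\ast}(r)\le e^{n/2}$ can instead be obtained by assigning each polymer in a configuration to a distinguished one of its vertices and invoking the per-vertex estimates~\eqref{eqKPgoalDis}/\eqref{eqKPgoalOrd} proved inside Propositions~\ref{prop:DisPoly} and~\ref{prop:OrdPoly}; this yields the same conclusion with an even better constant.
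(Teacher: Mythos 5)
Your proof is correct and follows essentially the same route as the paper: tilt the polymer weights by $e^{t|E(\gamma)|}$, bound the tilted partition function through the Koteck\'y--Preiss machinery, use $\Xi_{\ast}\ge 1$ for the denominator, and finish with Markov's inequality. The only (cosmetic) difference is that the paper tilts by $t=3/\alpha$ and re-runs the estimates of Propositions~\ref{prop:DisPoly} and~\ref{prop:OrdPoly} for the modified weights, whereas you tilt by $t=r$ so that the tilted model satisfies~\eqref{eqKPcond} with parameter $0$ automatically and the dependence of $q$ on $\alpha$ enters only in the final comparison of exponents.
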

\begin{proof}
  Fix $\ast\in\{\dis, \ord\}$ and let $\mathbf \Gamma$ be a random
  sample from $ \nu_{\ast}$.  Consider the cumulant generating
  function
  \begin{equation*}
    h_t(\|\mathbf \Gamma\|)\bydef \log \E e^{t\|\mathbf \Gamma\|}\, .
  \end{equation*}
  For $t>0$, we introduce an auxiliary polymer model on $\cP_{\ast}$
  by modifying the polymer weights:
 \begin{equation*}
   \tilde w^{\ast}_{\gamma}=w^{\ast}_{\gamma}\cdot e^{t|E(\gamma)|}\, .
 \end{equation*}
 Let $\tilde \Xi_{\ast}$ be the modified partition function.  Then
 $\log \tilde \Xi_{\ast}- \log \Xi_{\ast}= h_t(\|\mathbf \Gamma\|)$.
 Setting $t=3/\alpha$, the estimates used to prove
 Propositions~\ref{prop:DisPoly} and~\ref{prop:OrdPoly} still hold
 with $w^{\ast}$ replaced with $\tilde w^{\ast}$ if $q$ is
   sufficiently large, and so, by~\eqref{eq:EdgePolySum} with $m=1$ and with the modified weights, we have
\begin{equation*}
\log \tilde\Xi_{\ast}\leq \sum_{\gamma\in \cC_{\ast}}|\tilde w^{\ast}(\Gamma)|\leq n \, .
\end{equation*}
Since $\Xi_{\ast}\geq 1$ we then have $h_t(\|\mathbf \Gamma\|)\leq n$ also.
By Markov's inequality
\begin{equation*}
\P(\|\mathbf \Gamma\|> \alpha n)\leq e^{-t\alpha n} \E e^{{t}\|\mathbf \Gamma\|}\leq e^{-t\alpha n} e^{n}\le e^{-2n}\, . \qedhere
\end{equation*}
\end{proof}

The next lemma shows that $\overline \nu_{\dis}$ and $\overline \nu_{\ord}$ exhibit exponential decay of correlations. In the following we let $\mathbf A $ denote a random edge subset drawn according to the measure $\overline \nu_{\dis}$ or $\overline \nu_{\ord}$.
\begin{lemma}
\label{lemPolyExpDecay}
\begin{enumerate}
\item For $\beta \le \beta_1$, $\overline \nu_{\dis}$ exhibits exponential decay of correlations and $|\mathbf A|$ obeys a central limit theorem with respect to $\overline \nu_{\dis}$.   

\item For $\beta \ge \beta_0$, $\overline \nu_{\ord}$ exhibits exponential decay of correlations and $|\mathbf A|$ obeys a central limit theorem with respect to $\overline \nu_{\ord}$.  
\end{enumerate}
\end{lemma}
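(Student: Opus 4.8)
The plan is to deduce both the exponential decay of correlations and the central limit theorem directly from the cluster expansion convergence, specifically from the tail bound of Lemma~\ref{lemVertexTail}, which applies for $\beta\le\beta_1$ in the disordered case and $\beta\ge\beta_0$ in the ordered case. I will treat the two parts in parallel, writing $\ast\in\{\dis,\ord\}$ and $\nu=\overline\nu_{\ast}$, since once the cluster expansion converges the arguments are formally identical; only the definition of the induced edge-event differs (an edge $e$ is ``on'' under $\overline\nu_{\dis}$ iff some polymer of $\Gamma$ uses $e$, and ``on'' under $\overline\nu_{\ord}$ iff no polymer uses $e$ as an unoccupied edge).

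\emph{Exponential decay of correlations.} The key observation is that for an edge $e$, the event $\{e\text{ on}\}$ depends only on which polymers of the sampled configuration $\mathbf\Gamma$ are incident to $e$, and the probability of any fixed ``local pattern'' of polymers around $e$ can be written as a ratio of polymer partition functions, hence as an exponential of a difference of cluster expansions by~\eqref{eq:ce}. Concretely, for edges $e,f$ one writes $\nu(e\text{ on})$, $\nu(f\text{ on})$ and $\nu(e\text{ on},f\text{ on})$ as sums over compatible polymer families containing prescribed polymers through $e$, respectively $f$, respectively both; taking logs and cancelling, the ``connected'' contributions to $\log\big(\nu(e,f)/(\nu(e)\nu(f))\big)$ are exactly those clusters $\Gamma$ with $\Gamma\nsim$ (a polymer through $e$) \emph{and} $\Gamma\nsim$ (a polymer through $f$); any such cluster has $\|\Gamma\|\ge \dist_G(e,f)$ since its incompatibility graph is connected and it must ``reach'' from $e$ to $f$ through vertex-disjoint, hence graph-adjacent, polymers. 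Applying Lemma~\ref{lemVertexTail} with $m=\dist_G(e,f)$ then gives $|\mu_n(e,f)-\mu_n(e)\mu_n(f)|\le C q^{-\dist_G(e,f)/(200\Delta)}$, i.e.\ exponential decay with rate $\eps=\log q/(200\Delta)$. The cleanest way to package this is to first prove a general lemma: for a convergent polymer model, any two bounded functions $F,G$ supported on vertex sets at distance $d$ satisfy $|\E_\nu[FG]-\E_\nu F\,\E_\nu G|\le \|F\|_\infty\|G\|_\infty\cdot(\text{tail at }d)$, via the standard cluster-expansion identity for $\log$ of the ratio of partition functions with and without the constraints, cf.\ the derivation of~\eqref{eqKPtail}.

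\emph{Central limit theorem.} I will obtain the CLT for $|\mathbf A|$ via convergence of cumulants, using that $|\mathbf A|=|E|-\sum_\gamma |E_u(\gamma)|$ (ordered) or $|\mathbf A|=\sum_\gamma|E(\gamma)|$ (disordered), so in either case $|\mathbf A|$ differs by a constant from a linear statistic of the polymer configuration. The cumulant generating function $\log\E_\nu e^{t|\mathbf A|}$ is, up to an additive constant, $\log\tilde\Xi_\ast - \log\Xi_\ast$ for the tilted weights $\tilde w_\gamma = w_\gamma e^{\pm t|E(\gamma)|}$ as in the proof of Lemma~\ref{lem:largedev}; since the Koteck\'y--Preiss condition~\eqref{eqKPcond} holds with room to spare, it continues to hold for $\tilde w_\gamma$ for all $t$ in a fixed complex neighbourhood of $0$ (uniformly in $n$), so $\log\E_\nu e^{t|\mathbf A|}$ is analytic in $t$ near $0$ uniformly in $n$, and equals $\sum_{\Gamma\in\cC_\ast} w^{\ast}_t(\Gamma)$. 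Each cumulant of $|\mathbf A|$ is therefore a sum over clusters, and by~\eqref{eq:EdgePolySum} each is $O(n)$; the point is that the variance $\var_\nu(|\mathbf A|)$ is \emph{also} $\Omega(n)$ — this follows because the single-edge marginals are each bounded away from $0$ and $1$ (an explicit computation from the $k=1$ term of the expansion) and, by the exponential decay just established, the covariances are summable, so $\var_\nu(|\mathbf A|)=\Theta(n)$. Rescaling, the $j$-th cumulant of $(|\mathbf A|-\E|\mathbf A|)/\sqrt{\var|\mathbf A|}$ is $O(n)/n^{j/2}\to 0$ for $j\ge 3$, while the first two converge to $0$ and $1$; convergence of all cumulants to those of $N(0,1)$ gives the CLT.

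\emph{Main obstacle.} The routine part is the cluster-expansion bookkeeping for decay of correlations; the genuinely delicate point is the lower bound $\var_\nu(|\mathbf A|)=\Omega(n)$, since a priori the expansion only gives upper bounds on cumulants. I expect to handle this by isolating the contribution of ``isolated single-edge polymers'': conditioning on the polymer configuration away from a fixed edge $e$, the conditional probability that $e$ is on stays bounded strictly inside $(0,1)$ uniformly (because the weight of the single-edge polymer at $e$ is $\Theta(q^{\pm}\cdot(e^\beta-1)^{\pm1})$, which in the relevant $\beta$-range is neither $0$ nor $\infty$ after normalization — one must check the regime $\beta\in[\beta_0,\beta_1]$ carefully, as well as the extremes $\beta\to 0$ and $\beta\to\infty$ separately), giving a uniform positive lower bound on $\var(\,\1[e\text{ on}]\mid \text{rest})$ and hence, after summing and using the decay of correlations to control cross terms, $\var_\nu(|\mathbf A|)\ge c n$. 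An alternative, perhaps cleaner, route is to note $\frac{d^2}{dt^2}\log\E_\nu e^{t|\mathbf A|}\big|_{t=0}=\var_\nu(|\mathbf A|)$ and to lower-bound this second derivative directly from the $k=1$ term of the tilted expansion, with the higher-order terms absorbed as a lower-order error via~\eqref{eq:EdgePolySum}; I would present whichever of these is shorter once the $\beta$-dependence of the single-edge marginal is pinned down.
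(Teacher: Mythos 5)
Your proposal follows essentially the same route as the paper: exponential decay of correlations is obtained by writing the edge--edge correlation as a sum of cluster weights over clusters of size at least $\dist(e,f)$ and invoking the Koteck\'y--Preiss tail bound, and the CLT is proved by the cumulant method, bounding all cumulants of $|\mathbf A|$ by $O(n)$ via the cluster expansion and supplying the crucial lower bound $\var(|\mathbf A|)=\Omega(n)$ through a conditioning argument (the paper handles exactly this step by citing a conditioning lemma of Davies and Perkins). The only caution is that your first heuristic for the variance bound (marginals in $(0,1)$ plus summable covariances) is not by itself sufficient, but the conditional-variance argument you then propose is the correct and standard fix, matching the paper.
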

\begin{proof}
These are  standard consequences of the condition~\eqref{eqKPcond}, so
we will only provide a sketch.  To prove exponential decay of correlations, we need to show that there exist constants $C, \eps>0$ so that for all $e, f \in E$,
\begin{equation}
\label{eq:corneeded}
\left |\overline \nu_{\dis} (e,f)  -  \overline \nu_{\dis} (e)
  \overline \nu_{\dis} (f)   \right | \le C e^{-\eps \dist(e,f)} \,. 
\end{equation}
 See
e.g.~\cite[Theorem 1.3]{cannon2019bipartite} for details.
Establishing~\eqref{eq:corneeded} amounts
to observing that the correlation between edges $e$ and $f$,
$\overline \nu_{\dis} (e,f) - \overline \nu_{\dis} (e) \overline
\nu_{\dis} (f)$, equals a weighted sum over clusters of disordered
polymers containing both $e$ and $f$ of the cluster weight.  The size
of any such cluster is at least $\dist(e,f)$, and a
tail bound like~\eqref{eq:approx} shows the total weight of
these clusters is exponentially small in $\dist(e,f)$.

Likewise for $\overline \nu_{\ord}$, by taking complements and using
inclusion-exclusion it is enough to show that
\begin{equation}
\label{eq:corneeded2}
\left |\overline \nu_{\ord} (\overline e, \overline f)  -  \overline
  \nu_{\ord} (\overline e) \overline \nu_{\ord} ( \overline f)
\right | \le C e^{-\eps \dist(e,f)} \,,
\end{equation}
where $ \overline \nu_{\ord} (\overline e) $ is the probability
$e \notin A$ and $\overline \nu_{\ord} (\overline e, \overline f) $ is
the probability that $\{e \notin A\} \wedge \{f \notin A\}$.
Again~\eqref{eq:corneeded2} can be expressed as a sum of cluster
weights with clusters of size at least $\dist(e,f)$ and so we obtain
exponential decay of correlations.

To prove a central limit theorem for $|\mathbf A|$ under $\overline \nu_{\dis}$ we first  center and normalize, letting $Y = ( |\mathbf A| - \E |\mathbf A| )/\sigma $ where $\sigma^2 =\var(|\mathbf A|)$.  Now by the method of moments (or cumulants) it is enough to show that for each fixed $k \ge 3$, the $k$th cumulant of $Y$, $\kappa_k(Y)$, vanishes as $n \to \infty$.    Using the cluster expansion we can express
\begin{align*}
|\kappa_k(Y) | &=   \left |\sum_{\Gamma \in \cC_{\dis}}   w^{\dis}(\Gamma) \left (\frac{\|\Gamma\|}{\sigma} \right)^k \right | \\
&\le \frac{1}{\sigma^3}   \sum_{\Gamma \in \cC_{\dis}}   |w^{\dis}(\Gamma)| \| \Gamma\|^k  \\
&\le \frac{ \Delta  n}{\sigma^3} \sum_{t \ge 1} e^{-kt} t^k = O \left( \frac{n}{\sigma^3}  \right)\,,
\end{align*}
where we applied~\eqref{eqKPtail} in the last line. 
A simple conditioning argument (see e.g.~\cite[Lemma 9]{davies2021approximately}) shows that $\sigma = \Omega(n^{1/2})$, and so for $k\ge 3$, $\kappa_k(Y) \to 0$, as desired. 
 The proof for $\overline \nu_{\ord}$ is similar, substituting missing edges for occupied edges. 
\end{proof}

\begin{rmk}
  The correlation between edges $e$ and $f$ is a joint cumulant of the
  indicator random variables that each is in $A$.  The same techniques
  can be used to show that joint cumulants of the indicators of $k$ edges decay
  exponentially in the size of the minimum spanning tree connecting
  the edges in $G$, see~\cite{cannon2019bipartite} for more details.
\end{rmk}

The next lemma states that up to total variation distance $\eps$, the
measures induced by $\overline \nu_{\dis}$ and $\overline \nu_{\ord}$
on the local neighborhood of any vertex $v$ are determined by clusters
contained in a (larger) neighborhood of $v$. To make this precise, for
$\ast\in \{\dis,\ord\}$, $T\in \N$ and a vertex $v$, define
$\overline\nu_{\ast}^{B_{T}(v)}$ to be the projection of
$\overline\nu_{\ast}$ to $\{0,1\}^{E(B_{T}(v))}\subset
\{0,1\}^{E}$. Here $B_{T}(v)$ is the ball of radius $T$ around $v$.
\begin{lemma}
\label{lemDepthDistribution}
Suppose $T>0, \eps >0$, and $\ast\in \{\dis,\ord\}$. There is an $m$
large enough as a function of $\Delta, T, \eps$ so that for any
$v\in V$, $\overline \nu_{\ast}^{B_{T}(v)}$ is determined up to total
variation distance $\eps$ by clusters which lie entirely in $B_m(v)$.
\end{lemma}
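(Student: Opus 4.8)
The plan is to express the marginal $\overline\nu_{\ast}^{B_T(v)}$ explicitly in terms of the cluster expansion and then truncate the expansion to clusters supported near $v$, using the tail bound of Lemma~\ref{lemVertexTail} (equivalently~\eqref{eqKPtail}) to control the error. First I would recall that for an edge configuration $B \subseteq E(B_T(v))$, the probability $\overline\nu_{\dis}^{B_T(v)}(B)$ can be computed by restricting the polymer partition function: it is a ratio of the form $\Xi^{\dis}(\text{polymers compatible with the prescribed occupancy on } E(B_T(v)))$ to $\Xi^{\dis}$, and hence $\log$ of this probability is a difference of two convergent cluster expansions. The key observation is that all the clusters that differ between numerator and denominator must intersect $E(B_T(v))$ — clusters that are vertex-disjoint from $B_T(v)$ cancel in the ratio. (For the ordered model one argues identically with $E_u(\gamma)$ and the occupancy labelling; the boundary construction $\cB_\infty$ only ever reaches vertices within bounded distance of an unoccupied edge, so a polymer meeting $B_T(v)$ in an unoccupied edge is still "anchored" near $v$.)

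Next I would split the relevant clusters by whether they are contained in $B_m(v)$ or not. Clusters $\Gamma$ that intersect $E(B_T(v))$ but are \emph{not} contained in $B_m(v)$ must have $\|\Gamma\| \ge m - T$ (a connected cluster reaching from $B_T(v)$ to outside $B_m(v)$ spans at least $m-T$ edges). By Lemma~\ref{lemVertexTail}, summing over the $O(\Delta^T)$ vertices $v'$ of $B_T(v)$, the total absolute weight of such clusters is at most $|E(B_T(v))| \cdot q^{-(m-T)/(200\Delta)}$, which is $\le \eps/4$, say, once $m$ is large enough as a function of $\Delta, T, \eps$. Defining a "truncated marginal" by keeping only the clusters contained in $B_m(v)$ in both the numerator and denominator expansions, the difference of the two log-probabilities (true vs. truncated) is bounded by twice this quantity. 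Exponentiating and using $|e^x - 1| \le 2|x|$ for $|x|$ small converts the additive error on $\log$-probabilities into a multiplicative — hence total-variation — error of the desired size; summing over the finitely many configurations $B$ on $E(B_T(v))$ gives total variation distance $\le \eps$.

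The main obstacle — really the only subtle point — is making precise the claim that $\overline\nu_{\ast}^{B_T(v)}$ depends, through the cluster expansion, only on clusters meeting $B_T(v)$, and that "conditioning" the polymer model on the occupancy pattern in $B_T(v)$ yields again a polymer partition function to which Theorem~\ref{thmKP} applies so that the expansions converge and the cancellation argument is rigorous. For the disordered model this is clean since the induced measure $\overline\nu_{\dis}$ is a projection and the event $\{$occupancy $= B$ on $E(B_T(v))\}$ is a union of events of the form "a specified set of polymers is present and no other polymer touches $E(B_T(v))$", which one handles by the standard device of restricting the polymer family. For the ordered model one must additionally check that the labelling constraint and the non-local weight $q^{c'(\gamma)}$ interact well with restriction to a neighborhood — but since $c'(\gamma)$ is a function of the polymer $\gamma$ alone (not of the ambient configuration), the same argument goes through verbatim. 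Once this is set up, the estimate is an immediate application of the already-established tail bound.
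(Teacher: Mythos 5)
Your overall strategy---express the probabilities of local events through the cluster expansion, truncate to clusters that reach outside $B_m(v)$ (hence of size at least $m-T$) using Lemma~\ref{lemVertexTail} summed over the $O(\Delta^T)$ vertices of $B_T(v)$, exponentiate the logarithmic error, and sum over the finitely many local events---is the same as the paper's. But the first step, which you yourself flag as the only subtle point, contains a genuine gap as written. You assert that for an exact occupancy pattern $B\subseteq E(B_T(v))$ the probability $\overline\nu_{\dis}^{B_T(v)}(B)$ is a ratio of the form $\Xi^{\dis}(\text{restricted polymer family})/\Xi^{\dis}$, so that its logarithm is a difference of two convergent cluster expansions. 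This is false: the event $\{A\cap E(B_T(v))=B\}$ requires not only that no polymer contain an edge of $E(B_T(v))\setminus B$ (which is indeed a restriction of the polymer family) but also that every edge of $B$ be \emph{covered} by some polymer, and this covering constraint cannot be imposed by deleting polymers from the family; the numerator is therefore not a polymer partition function and its logarithm has no single cluster expansion. Your proposed remedy---decomposing the event into ``a specified set of polymers is present and no other polymer touches $E(B_T(v))$''---does not rescue the claim as stated: each such piece is a product of polymer weights times a ratio of partition functions, the specified anchored polymers may be arbitrarily large (so an additional tail/truncation argument is needed before everything is supported in $B_m(v)$), and the logarithm of the resulting \emph{sum} over pieces is not a difference of cluster expansions. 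A patched version along these lines could be made to work, but it requires noticeably more argument than you indicate.

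The clean fix, and the route the paper takes, is to apply the expansion only to the monotone events: for the disordered model, the events $\{A\subseteq F^{c}\}$ for $F\subseteq E(B_T(v))$ (and for the ordered model the analogous events phrased in terms of unoccupied edges). For these, the constrained measure genuinely is the polymer model on a restricted family, giving the exact identity
\begin{equation*}
\log \overline\nu_{\dis}(A\subseteq F^{c}) \;=\; -\sum_{\Gamma\in\cC_{\dis}} w^{\dis}(\Gamma)\, 1_{\Gamma\cap F\neq\emptyset}\,,
\end{equation*}
to which Lemma~\ref{lemVertexTail} applies directly since every contributing cluster meets $B_T(v)$; the marginal $\overline\nu_{\ast}^{B_{T}(v)}$ is then recovered by inclusion--exclusion over the sets $F$, with the resulting factor $2^{O(\Delta^{T})}$ absorbed by taking $m$ large as a function of $\Delta,T,\eps$. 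Your quantitative estimates (the $m-T$ lower bound on the size of clusters escaping $B_m(v)$, the vertex tail bound, and the conversion of additive errors on logarithms into total variation error) are fine once this substitution is made.
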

\begin{proof}
  We prove this for $\overline \nu_{\dis}$, the proof for $\overline
  \nu_{\ord}$ is identical. Let $A\subseteq
  E$ be distributed according to
  $\overline\nu_{\dis}$. We first claim it is enough to give, for each
  $F \subseteq E(B_T(v))$, a quantity $\kappa(F)$ so that
  \begin{enumerate}
  \item $| \kappa(F) - \overline \nu_{\dis}( A\subset F^c)| 
    \le \eps 2^{-2 \Delta^T}$. 
  \item $\kappa(F)$ depends only on clusters contained in
    $B_m(v)$ for some $m= m(T,\Delta,\eps)$.
  \end{enumerate}
The lemma follows from these two properties by calculating $\overline
\nu_{\ast}^{B_{T}(v)}(\cdot)$ 
via inclusion-exclusion, and summing the error bound over all
subsets of $E(B_T(v))$.

To find such a $\kappa(F)$, observe there is an exact formula for
$\overline \nu_{\dis}( A\subset F^{c})$ in terms of clusters:
\begin{equation*}
\log \overline \nu_{\dis}( A\subset F^{c}) 
= - \sum_{\substack { \Gamma \in \cC_{\dis}}}  
 w^{\dis}(\Gamma) 1_{\Gamma\cap F\ne\emptyset} \,,
\end{equation*}
where $1_{\Gamma\cap F\ne\emptyset}$
indicates that $\Gamma$ contains a polymer which contains an edge from $F$.
Since $F$ spans at most $|V(B_T(v))|\leq \Delta^T+1$ vertices, by Lemma~\ref{lemVertexTail} we can truncate the RHS and
obtain
\begin{equation}
  \label{eq:mult}
\left| \log \overline \nu_{\dis}( A\subset F^{c})
+ \sum_{\substack { \Gamma \in \cC_{\dis}  
  \\ \|\Gamma \| \le m}  } w^{\dis}(\Gamma)1_{\Gamma \cap F \ne \emptyset} \right | \le q^{- \frac{ m}{200 \Delta}}(\Delta^T+1) \,.
\end{equation}
Note the quality of~\eqref{eq:mult} is independent of $F$. The desired quantity is
\begin{align*}
\kappa(F) &= \exp \left ( -\sum_{\substack { \Gamma \in \cC_{\dis}  
  \\ \|\Gamma \| \le m}  } w^{\dis}(\Gamma)1_{\Gamma \cap F \ne \emptyset}   \right)  \,.
\end{align*}
Taking $m$ large enough as a function of $\Delta, T,
\eps$ gives properties (1) and (2) and proves the lemma, as, since
$\kappa(F)\in
(0,1)$, the accurate multiplicative approximation guaranteed
by~\eqref{eq:mult} implies an accurate additive approximation.
\end{proof}

\subsection{Polymer model approximation of the partition function}
\label{sec:accur-order-expans}
Using the results above we now show that the scaled polymer model
partition functions are good approximations to $Z^{\dis}$ and
$Z^{\ord}$. We also show that the measures $\overline \nu_{\dis}$
and $\overline \nu_{\ord}$ on edge sets are good approximations to
$\mu_{\dis}$ and $\mu_{\ord}$, where $\mu_\dis$ and $\mu_\ord$ are the
random cluster measure $\mu$ conditioned
on $\Omega_\dis$ and $\Omega_\ord$, respectively.

\begin{lemma}
  \label{lem:DisQual}
  For $q=q(\Delta, \delta)$ sufficiently large and $\beta \le \beta_1$,
  \begin{equation}\label{eq:DisQual}
    \abs{Z^{\dis} - q^{n}\cdot \Xi^{\dis}} \leq e^{-n} Z^{\dis}\, .
  \end{equation}
  Moreover,
  \begin{equation}
  \label{eqTVdistDIS}
\| \mu_{\dis} - \overline\nu_{\dis} \| _{TV} \le e^{-n} \,. 
\end{equation}
\end{lemma}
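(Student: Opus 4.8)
The plan is to identify $q^{n}\Xi^{\dis}$ with a partition-function-type sum over edge sets and then bound its discrepancy with $Z^{\dis}$ using small-set expansion. First I would set up the polymer-to-configuration correspondence: a compatible family $\Gamma$ of disordered polymers determines the edge set $A(\Gamma)\bydef\bigcup_{\gamma\in\Gamma}E(\gamma)$, and since the polymers are vertex-disjoint connected graphs they are precisely the connected components of $(V,A(\Gamma))$ carrying an edge; hence $\Gamma\mapsto A(\Gamma)$ is a bijection onto $\mathcal{A}\bydef\{A\subseteq E:\text{every component of }(V,A)\text{ has at most }\eta n\text{ edges}\}$. A short count of components and non-isolated vertices gives $q^{n}\prod_{\gamma\in\Gamma}w^{\dis}_{\gamma}=q^{c(A)}(e^{\beta}-1)^{|A|}$ for $A=A(\Gamma)$, hence $q^{n}\Xi^{\dis}=\sum_{A\in\mathcal{A}}q^{c(A)}(e^{\beta}-1)^{|A|}$, which is to be compared with $Z^{\dis}=\sum_{A\in\Omega_{\dis}}q^{c(A)}(e^{\beta}-1)^{|A|}$. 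Both sums contain every $A$ with $|A|\le\eta n$ (such $A$ have each component of size at most $\eta n$ and satisfy $|A|\le\eta n\le\eta|E|$), so $\lvert Z^{\dis}-q^{n}\Xi^{\dis}\rvert$ is at most the total weight of $\{A\in\Omega_{\dis}:|A|>\eta n\}$ plus that of $\{A\in\mathcal{A}:|A|>\eta n\}$.

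The next step is to bound these two error sums; since $\emptyset\in\Omega_{\dis}$ gives $Z^{\dis}\ge q^{n}$, it suffices to bound each by $e^{-n}q^{n}$. The key input is a rank estimate from small-set expansion: if $C$ is a connected subgraph of $G\in\cG_{\Delta,\del}$ with $|V(C)|\le\del n$, applying $\phi_{G}(\del)\ge5/9$ to $S=V(C)$ gives $\Delta|V(C)|\ge 2|E(C)|+\tfrac{5}{9}\Delta|V(C)|$, hence $|V(C)|\ge\tfrac{9}{2\Delta}|E(C)|$ and so $|V(C)|-1\ge\tfrac{9}{4\Delta}|E(C)|$. For $A\in\mathcal{A}$ every component has at most $\eta n\le\del n$ vertices, so summing over components yields $n-c(A)\ge\tfrac{9}{4\Delta}|A|$; since $\tfrac{9}{4\Delta}=\tfrac{2.25}{\Delta}>\tfrac{2.1}{\Delta}\ge\log_{q}(e^{\beta}-1)$ when $\beta\le\beta_{1}$, we get $q^{c(A)}(e^{\beta}-1)^{|A|}=q^{n}(e^{\beta}-1)^{|A|}q^{-(n-c(A))}\le q^{\,n-|A|/(10\Delta)}$, and summing over the at most $2^{n\Delta/2}$ such $A$ with $|A|>\eta n$ gives a bound $\le 2^{n\Delta/2+1}q^{\,n-\eta n/(10\Delta)}\le e^{-n}q^{n}$ for $q=q(\Delta,\del)$ large. (This $\mathcal{A}$-sum also equals $q^{n}\Xi^{\dis}\,\nu_{\dis}(\lVert\mathbf{\Gamma}\rVert>\eta n)$, so Lemma~\ref{lem:largedev} is an alternative route.) For $A\in\Omega_{\dis}$ with $|A|>\eta n$ the same estimate applies provided every component has at most $\del n$ vertices; the only extra case is a component $C$ with $|V(C)|>\del n$, which is cruder: then $c(A)\le n-|V(C)|+1<(1-\del)n+1$ while $(e^{\beta}-1)^{|A|}\le q^{2.1\eta|E|/\Delta}=q^{1.05\eta n}$, so the weight is at most $q^{\,(1-\del+1.05\eta)n+1}$, an exponent that is at most $(1-\tfrac34\del)n+1$ since $\eta\le\del/5$, and again the $\le 2^{n\Delta/2}$ such configurations sum to $\le e^{-n}q^{n}$. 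Taking $q$ slightly larger replaces $e^{-n}$ by $\tfrac15 e^{-n}$ throughout, and this proves the first inequality.

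For the total variation bound I would reuse the bijection: $\overline\nu_{\dis}(A)=q^{c(A)}(e^{\beta}-1)^{|A|}/(q^{n}\Xi^{\dis})$ for $A\in\mathcal{A}$ and $\overline\nu_{\dis}(A)=0$ otherwise, while $\mu_{\dis}(A)=q^{c(A)}(e^{\beta}-1)^{|A|}/Z^{\dis}$ for $A\in\Omega_{\dis}$. Splitting $2\lVert\mu_{\dis}-\overline\nu_{\dis}\rVert_{TV}=\sum_{A}\lvert\mu_{\dis}(A)-\overline\nu_{\dis}(A)\rvert$ over $A\in\Omega_{\dis}\cap\mathcal{A}$, $A\in\Omega_{\dis}\setminus\mathcal{A}$ and $A\in\mathcal{A}\setminus\Omega_{\dis}$: on the first set the contribution is at most $\lvert q^{n}\Xi^{\dis}-Z^{\dis}\rvert/\max(Z^{\dis},q^{n}\Xi^{\dis})$, controlled by the first inequality; on $\Omega_{\dis}\setminus\mathcal{A}$, where $A\notin\mathcal{A}$ forces a component with more than $\eta n$ edges, and on $\mathcal{A}\setminus\Omega_{\dis}$, where $|A|>\eta|E|\ge\eta n$, the contributions are exactly the two error sums just bounded, divided by $Z^{\dis}$ and by $q^{n}\Xi^{\dis}\ge\tfrac12 Z^{\dis}$ respectively. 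Choosing $q$ large enough that each of these finitely many pieces is at most a small constant times $e^{-n}$ gives $\lVert\mu_{\dis}-\overline\nu_{\dis}\rVert_{TV}\le e^{-n}$.

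The main obstacle is the mismatch of thresholds: a disordered polymer carries at most $\eta n$ edges while a configuration of $\Omega_{\dis}$ may carry up to $\eta|E|=\eta n\Delta/2$ edges, so neither of $\Omega_{\dis}$, $\mathcal{A}$ contains the other and no one-sided estimate is available. Making the symmetric-difference bound work hinges on the expansion rank inequality $n-c(A)\ge\tfrac{9}{4\Delta}|A|$ for $A\in\mathcal{A}$, whose constant $\tfrac{2.25}{\Delta}$ only barely exceeds the exponent $\tfrac{2.1}{\Delta}$ that $\beta\le\beta_{1}$ forces, together with the separate coarse treatment of the atypical disordered configurations carrying a single macroscopic component, for which one falls back on $c(A)\le n-|V(C)|+1$. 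Everything else is routine counting absorbed by taking $q$ large in terms of $\Delta,\del$.
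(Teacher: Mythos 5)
Your proposal is correct up to routine bookkeeping, but it takes a genuinely different route from the paper's. The paper writes $Z^{\dis}$ as $q^{n}\Xi^{\dis}_{\leq}$, the contribution to $\Xi^{\dis}$ of collections carrying at most $\eta|E|$ edges in total, and bounds the complementary tail $\Xi^{\dis}_{>}$ by the large-deviation estimate of Lemma~\ref{lem:largedev}, i.e.\ by tilting the polymer weights and re-running the Koteck\'y--Preiss verification; the total variation bound is then dispatched as a near-identical argument. You instead compare the two edge-set sums directly: $q^{n}\Xi^{\dis}$ is the weight of all $A$ whose components each have at most $\eta n$ edges, $Z^{\dis}$ the weight of all $A$ with $|A|\le\eta|E|$, and you bound the symmetric difference by brute-force enumeration over at most $2^{n\Delta/2}$ configurations combined with the isoperimetric rank bound $n-c(A)\ge\tfrac{9}{4\Delta}|A|$ coming from $\phi_{G}(\del)\ge 5/9$, with a separate crude bound for disordered configurations containing a component on more than $\del n$ vertices; the TV estimate is an explicit three-way split over $\Omega_{\dis}\cap\mathcal{A}$, $\Omega_{\dis}\setminus\mathcal{A}$, $\mathcal{A}\setminus\Omega_{\dis}$. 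The paper's route buys brevity and reuse (Lemma~\ref{lem:largedev} serves both phases and later arguments); your route is elementary and self-contained, and it has the additional merit of confronting the threshold mismatch head on: since a disordered polymer carries at most $\eta n$ edges while $\Omega_{\dis}$ allows $\eta|E|=\eta n\Delta/2$, the step ``by definition $Z^{\dis}=q^{n}\Xi^{\dis}_{\leq}$'' in the paper is really an equality only up to the weight of configurations in $\Omega_{\dis}$ containing a component with more than $\eta n$ edges --- precisely the terms your second error sum and macroscopic-component case control. Two small points to tidy: passing from $|V(C)|\ge\tfrac{9}{2\Delta}|E(C)|$ to $|V(C)|-1\ge\tfrac{9}{4\Delta}|E(C)|$ needs the trivial extra observation $|V(C)|-1\ge 1$ when $|E(C)|<4\Delta/9$, and your estimates (e.g.\ $\eta n+1\le\del n$ and the final exponent comparisons) implicitly require $n$ sufficiently large, as elsewhere in Section~\ref{sec:polym-model-repr}.
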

\begin{proof}
  Write $\Xi^{\dis}_{\leq }$ for the contribution to $\Xi^{\dis}$ of
  compatible collections $\Gamma$ of polymers with
  $|E(\Gamma)|\leq \eta |E|$ and let
  $\Xi^{\dis}_{>} = \Xi^{\dis} -\Xi^{\dis}_{\leq }$. By definition we
  have $Z^{\dis}=q^{n}\cdot \Xi^{\dis}_{\leq }$. By Lemma~\ref{lem:largedev} we have
  $\Xi^{\dis}_{>}\leq e^{-2n}\Xi^{\dis}\leq 2 e^{-2n} \Xi^{\dis}_{\leq
  }$.  It follows that
  \begin{equation}
    \abs{Z^{\dis} - q^{n}\cdot \Xi^{\dis}}= q^n \Xi^{\dis}_{> }\leq
    2q^n e^{-2n}\cdot \Xi^{\dis}_{\leq }= e^{-n} Z^{\dis}\, .
  \end{equation}
  The proof of~\eqref{eqTVdistDIS} is nearly identical, see, e.g., the
  proof of~\cite[Lemma 14]{jenssen2019revisited}.
\end{proof}

We now turn to $Z^{\ord}$. This requires a preparatory lemma.
\begin{lemma}
  \label{lem:factor}
  Let $\Gamma = \{\gamma_1, \ldots, \gamma_k\}$ be a collection of
  compatible polymers, and assume $|E_{u}(\Gamma)|\leq \eta |E|$. The
  number of connected components in the graph
  $G-(E_u(\gamma_1)\cup\ldots \cup E_u(\gamma_k))$ is
  $c'(\gamma_1)+\ldots+c'(\gamma_k)+1$.
\end{lemma}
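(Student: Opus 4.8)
The plan is to recast the statement as an identity about the $\mathbb F_2$ cut space of $G$ and then to localize it to the individual polymers. Write $\mathrm{Cut}(G)\subseteq \mathbb F_2^{E}$ for the cut space, i.e.\ the $\mathbb F_2$-span of the characteristic vectors of the sets $E(S,S^c)$, $S\subseteq V$; the basic fact is that for connected $G$ and any $F\subseteq E$ one has $c(V,E\setminus F)-1=\dim_{\mathbb F_2}\!\bigl(\mathrm{Cut}(G)\cap \mathbb F_2^{F}\bigr)$ (the right-hand side counts the independent $G$-bonds supported in $F$). Since the $\gamma_i$ are compatible they are vertex disjoint, so the sets $E_u(\gamma_1),\dots,E_u(\gamma_k)$ are pairwise disjoint and $\mathbb F_2^{E_u(\Gamma)}=\bigoplus_i \mathbb F_2^{E_u(\gamma_i)}$. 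By Lemma~\ref{lem:simple} (applied with $E'=E_u(\Gamma)$ and with $E'=E_u(\gamma_i)$, using $|E_u(\gamma_i)|\le|E_u(\Gamma)|\le\eta|E|$) each of $(V,E\setminus E_u(\Gamma))$ and $(V,E\setminus E_u(\gamma_i))$ has a unique component of size exceeding $n/2$, whence $c(V,E\setminus E_u(\gamma_i))=c'(\gamma_i)+1$ and the assertion of the lemma is equivalent to $\dim\bigl(\mathrm{Cut}(G)\cap\mathbb F_2^{E_u(\Gamma)}\bigr)=\sum_i\dim\bigl(\mathrm{Cut}(G)\cap\mathbb F_2^{E_u(\gamma_i)}\bigr)$. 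The inequality ``$\ge$'' is immediate, since the subspaces $\mathrm{Cut}(G)\cap\mathbb F_2^{E_u(\gamma_i)}$ lie in $\mathrm{Cut}(G)\cap\mathbb F_2^{E_u(\Gamma)}$ and have pairwise disjoint supports.

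For the reverse inequality it suffices to prove that every $G$-cut $F\subseteq E_u(\Gamma)$ splits as $F=\bigsqcup_i (F\cap E_u(\gamma_i))$ with each piece $F\cap E_u(\gamma_i)$ again a $G$-cut, and this need only be checked for $F$ ranging over the spanning family $\{E(C,C^c):C\text{ a connected component of }(V,E\setminus E_u(\Gamma))\}$. So fix such a component $C$, which we may take to satisfy $|C|<n/2$. Then $\phi_G(1/2)\ge 1/10$ and $E(C,C^c)\subseteq E_u(\Gamma)$ force $|C|\le (10/\Delta)\,|E_u(\Gamma)|\le 5\eta n\le\del n$, so that the stronger bound $\phi_G(\del)\ge 5/9$ applies to $C$ and to all of its subsets; this is where the expansion hypothesis enters. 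Vertex-disjointness of the polymers then gives a clean local picture: for each $i$, any $\gamma_i$-edge that crosses $C$ lies in $E(C,C^c)$ and has both endpoints in $V(\gamma_i)$, hence belongs to $E_u(\gamma_i)$; consequently $T_i:=C\cap V(\gamma_i)$ has the property that its edge boundary within $\gamma_i$ is contained in $E_u(\gamma_i)$ and in fact equals $E(C,C^c)\cap E_u(\gamma_i)$ (and when this set is empty, connectedness of $\gamma_i$ forces $T_i=\emptyset$ or $T_i=V(\gamma_i)$).

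The remaining and decisive step is to show that each piece $E(C,C^c)\cap E_u(\gamma_i)$ is itself a $G$-cut; this I expect to be the main obstacle. It is here that the defining property $\cB_{\infty}(E_u(\gamma_i))=E(\gamma_i)$ of ordered polymers must be used in an essential way, and it cannot be dropped: already for $k=2$ the identity fails if one takes $\gamma_1,\gamma_2$ to be the two single-edge polymers at the edges of a $2$-edge cut of $G$ separating off a small set of vertices --- and it is precisely such small sparse cuts that membership in $\cG_{\Delta,\del}$ forbids, which is why small-set expansion is also needed here. My plan for this step is to use the closure property together with the expansion-forced bound $|C|\le\del n$ to show that $E(C,C^c)\cap E_u(\gamma_i)$ is the edge boundary of a union of components of $(V,E\setminus E_u(\gamma_i))$, hence a $G$-cut (a sum of $G$-bonds); concretely, I would run the boundary-growing operator starting from $E(C,C^c)\cap E_u(\gamma_i)\subseteq E_u(\gamma_i)$ --- whose closure is then contained in $E(\gamma_i)$ --- and track how the $5\Delta/9$ firing rule, with $\phi_G(\del)\ge 5/9$ bounding the edge boundaries of every vertex set that arises, forces $T_i$ to match up, modulo components of $(V,E\setminus E_u(\gamma_i))$, with such a union. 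Carrying this through, and in particular calibrating it against the constants $1/10$, $5/9$, and $5\Delta/9$, is the technically heaviest part of the argument; organizing it by induction on $\#\{i:E(C,C^c)\cap E_u(\gamma_i)\ne\emptyset\}$ is likely to help.
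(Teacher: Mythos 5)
Your linear-algebraic framing is sound as far as it goes: the identity $c(V,E\setminus F)-1=\dim_{\mathbb F_2}\bigl(\mathrm{Cut}(G)\cap \mathbb F_2^{F}\bigr)$ is correct, the reduction of the lemma to the claim that each piece $E(C,C^c)\cap E_u(\gamma_i)$ is again a $G$-cut is valid (using Lemma~\ref{lem:simple} to identify $c'(\gamma_i)$ with $c(V,E\setminus E_u(\gamma_i))-1$), and your observations that the polymer closure condition and the small-set expansion hypothesis are both indispensable (your two-single-edge-polymer example) are on target. But the proposal stops exactly where the lemma actually lives: you state that the decisive step is to show each $E(C,C^c)\cap E_u(\gamma_i)$ is a $G$-cut, and then offer only a plan (``run the boundary-growing operator\dots, calibrating it against the constants\dots, induction is likely to help'') rather than an argument. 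Since everything before that point is essentially bookkeeping, this is a genuine gap, not a technicality.

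For comparison, the paper proves the stronger statement that for each small component $S$ of $G-E_u(\Gamma)$ the entire boundary $E(S,S^c)$ lies in a \emph{single} $E_u(\gamma_j)$, and does so with a short contradiction argument you could have run directly, bypassing the cut-space machinery. Suppose $E(S,S^c)$ meets $E_u(\gamma_1),\dots,E_u(\gamma_\ell)$ with $\ell>1$, and let $T\subseteq S$ be the vertices of $S$ having fewer than $\Delta$ incident edges from each of $E(\gamma_1),\dots,E(\gamma_\ell)$. If $T=\emptyset$, vertex-disjointness means every vertex of $S$ has all $\Delta$ of its edges in exactly one $E(\gamma_i)$, and connectedness of $S$ forces a single $i$, contradicting $\ell>1$. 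If $T\neq\emptyset$, then (since $|T|\le \del n$, by the size bound you already derived) $\phi_G(\del)\ge 5/9$ gives $|E(T,T^c)|\ge \tfrac{5}{9}\Delta|T|$, and all edges of $E(T,T^c)$ lie in $E(\gamma_1)\cup\dots\cup E(\gamma_\ell)$; hence some $u\in T$ has at least $5\Delta/9$ incident edges from these polymers, all from the \emph{same} polymer by vertex-disjointness, and then the defining closure property $\cB_{\infty}(E_u(\gamma_i))=E(\gamma_i)$ forces all $\Delta$ edges at $u$ into that polymer, contradicting $u\in T$. This is precisely the ``calibration of the $5\Delta/9$ firing rule against $\phi_G(\del)\ge 5/9$'' that your plan gestures at; without carrying out some version of it, your proof is incomplete.
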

\begin{proof}
  Let $S_1,\ldots, S_t$ denote the vertex sets of the components of
  $G\setminus E_{u}(\Gamma)$, 
  and without loss of generality let
  $S_1$ be the largest component.
  As in the proof of Lemma~\ref{lem:cbound},
  we have that
 $|S_{1}|\geq
  (1-\del)n$ by Lemma~\ref{lem:simple} and our assumption on
  $|E_{u}(\Gamma)|$.  We claim that for for each
  $i\geq2$, we have $E(S_i, S_i^{c})\subset
  E_u(\gamma_j)$ for some $j$.  This suffices to prove the lemma.
 
  Suppose, towards a contradiction, that $E(S_2, S_2^{c})$ contains edges from
  more than one of the sets $E_u(\gamma_i)$. Without loss of
  generality, let the $\ell>1$ indices for which $E(S_2,S_2^c)\cap
  E_u(\gamma_i)\neq\emptyset$ be $1,2,\dots, \ell$. 
  
  Let $T\subset S_2$ denote the set of vertices in $S_2$ that have
  fewer than $\Delta$ incident edges from each of the sets
  $E_u(\gamma_1), \ldots, E_u(\gamma_\ell)$.  If $T=\emptyset$,
  pairwise compatibility of polymers implies each vertex in $S_{2}$ is
  incident to $\Delta$ edges in exactly one of the sets
  $E_{u}(\gamma_{i})$, $1\leq i\leq\ell$. Since $S_{2}$ is connected,
  all edges incident to $S_{2}$ must in fact be from the \emph{same}
  set $E_{u}(\gamma_{i})$, contradicting $\ell>1$.

  To conclude the proof, we show $T=\emptyset$ is the only
  possibility. Suppose not, i.e., $T\neq\emptyset$. Note that all of
  the edges in $E(T, T^{c})$ belong to
  $E(\gamma_1)\cup\ldots \cup E(\gamma_\ell)$. Moreover
  $|T|\leq \del n$ and so $|E(T,T^{c})|\geq 5\Delta |T|/9$ since
  $\phi_G(\del)\geq5/9$.  It follows that there is a vertex $u\in T$
  incident to $\geq 5\Delta /9$ edges in
  $E(\gamma_1)\cup\ldots \cup
  E(\gamma_\ell)$.   Without loss of generality, it must be the case that $u$ is incident to
  $\geq 5\Delta /9$ edges in
  $E(\gamma_1)$, as $u$ cannot be an endpoint of unoccupied edges in
  distinct compatible polymers.  By the definition of the polymer $\gamma_1$, all of
  the $\Delta$ edges incident to $u$ must then belong to $E(\gamma_1)$,
  contradicting the definition of $T$. 
 \end{proof}

\begin{lemma}
  \label{lem:OrdQual}
 If $q=q(\Delta, \delta)$ is sufficiently large and
  $\beta \ge \beta_0$, then
  \begin{equation}
    \label{eq:OrdTail}
    \abs{Z^{\ord} - q(e^{\beta}-1)^{\frac{\Delta n}{2}}\cdot
      \Xi^{\ord}}\leq e^{- n} Z^{\ord}.
  \end{equation}
  
   Moreover,
  \begin{equation}
  \label{eqTVdistORD}
  \| \mu_{\ord} - \overline\nu_{\ord} \| _{TV} \le e^{-n} \,. 
\end{equation}
\end{lemma}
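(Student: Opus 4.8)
The plan is to mirror the proof of Lemma~\ref{lem:DisQual}, with ``connected components of $A$'' replaced by ``connected components of the boundary $\cB_{\infty}(E\setminus A)$''. Fix a real $\beta\ge\beta_{0}$, so $e^{\beta}-1\ge q^{1.9/\Delta}>0$. Call $A\in\Omega_{\ord}$ \emph{good} if $|\cB_{\infty}(E\setminus A)|\le\eta n$, write $\Omega_{\ord}^{g}$ for the set of good configurations and $Z^{\ord,g}$ for the corresponding partition function; let $\Xi^{\ord}_{\le}$ be the contribution to $\Xi^{\ord}$ of compatible collections $\Gamma$ with $|E(\Gamma)|\le\eta n$ and $\Xi^{\ord}_{>}=\Xi^{\ord}-\Xi^{\ord}_{\le}$. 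The first step is a weight-preserving bijection between $\Omega_{\ord}^{g}$ and the collections counted by $\Xi^{\ord}_{\le}$. Given good $A$, set $B=E\setminus A$ and let $\gamma_{1},\dots,\gamma_{k}$ be the components of the subgraph $\cB_{\infty}(B)$, each inheriting the occupied/unoccupied labelling from $A$; one checks each $\gamma_{j}$ is a legal ordered polymer, since $|E_{u}(\gamma_{j})|\le|B|\le|\cB_{\infty}(B)|\le\eta n$ and $\cB_{\infty}(E_{u}(\gamma_{j}))=E(\gamma_{j})$ because the boundary procedure is triggered vertex by vertex and the $\gamma_{j}$ are vertex-disjoint, so running it on $B=\bigsqcup_{j}E_{u}(\gamma_{j})$ decomposes into independent runs on the pieces. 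Conversely, given a compatible collection $\Gamma=\{\gamma_{1},\dots,\gamma_{k}\}$ with $|E(\Gamma)|\le\eta n$, the configuration $A=E\setminus\bigcup_{j}E_{u}(\gamma_{j})$ is good and satisfies $\cB_{\infty}(E\setminus A)=\bigcup_{j}E(\gamma_{j})$, recovering $\Gamma$. Under this bijection the weights factor: since $E\setminus A=\bigsqcup_{j}E_{u}(\gamma_{j})$ we have $(e^{\beta}-1)^{|A|}=(e^{\beta}-1)^{\Delta n/2}\prod_{j}(e^{\beta}-1)^{-|E_{u}(\gamma_{j})|}$, while by Lemma~\ref{lem:factor} the graph $(V,A)=G-\bigcup_{j}E_{u}(\gamma_{j})$ has exactly $1+\sum_{j}c'(\gamma_{j})$ components, so $q^{c(A)}=q\prod_{j}q^{c'(\gamma_{j})}$; multiplying gives $q^{c(A)}(e^{\beta}-1)^{|A|}=q(e^{\beta}-1)^{\Delta n/2}\prod_{j}w^{\ord}_{\gamma_{j}}$, and summing over good $A$ yields $Z^{\ord,g}=q(e^{\beta}-1)^{\Delta n/2}\,\Xi^{\ord}_{\le}$.

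It then remains to bound the two ``bad'' contributions. On the polymer side, $\Xi^{\ord}_{>}\le e^{-2n}\Xi^{\ord}$ follows from Lemma~\ref{lem:largedev} applied to $\nu_{\ord}$ with $\alpha=\eta$, exactly as in Lemma~\ref{lem:DisQual}. On the random cluster side I would bound $Z^{\ord}-Z^{\ord,g}$, the contribution of $A\in\Omega_{\ord}$ with $|\cB_{\infty}(E\setminus A)|>\eta n$. For such $A$, Lemma~\ref{lem:ord-gen} gives $|E\setminus A|\ge|\cB_{\infty}(E\setminus A)|/10>\eta n/10$; moreover, since $|E\setminus A|\le\eta|E|$, Lemma~\ref{lem:simple} (with $\phi_{G}(1/2)\ge1/10$) shows $(V,A)$ has a unique giant component while the others have total size $\le5\eta n\le\delta n$, and then $\phi_{G}(\delta)\ge5/9$ gives $|E\setminus A|\ge\frac{5}{9}\Delta\,(c(A)-1)$, so $q^{c(A)-1}\le q^{\frac{9}{5\Delta}|E\setminus A|}$. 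Using $E\in\Omega_{\ord}$ with $c(E)=1$ to get $Z^{\ord}\ge q(e^{\beta}-1)^{\Delta n/2}$ and $e^{\beta}-1\ge q^{1.9/\Delta}$,
\[
\frac{q^{c(A)}(e^{\beta}-1)^{|A|}}{Z^{\ord}}\le q^{c(A)-1}(e^{\beta}-1)^{-|E\setminus A|}\le q^{\left(\frac{9}{5\Delta}-\frac{1.9}{\Delta}\right)|E\setminus A|}=q^{-\frac{|E\setminus A|}{10\Delta}}\le q^{-\frac{\eta n}{100\Delta}}.
\]
Summing over the at most $2^{\Delta n/2}$ such configurations gives $Z^{\ord}-Z^{\ord,g}\le e^{-2n}Z^{\ord}$ for $q=q(\Delta,\delta)$ large. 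Combining this with $\Xi^{\ord}_{>}\le e^{-2n}\Xi^{\ord}$ and the identity $Z^{\ord,g}=q(e^{\beta}-1)^{\Delta n/2}\Xi^{\ord}_{\le}$ (which also yields $q(e^{\beta}-1)^{\Delta n/2}\Xi^{\ord}\le2Z^{\ord}$) gives $|Z^{\ord}-q(e^{\beta}-1)^{\Delta n/2}\Xi^{\ord}|\le e^{-n}Z^{\ord}$, i.e.~\eqref{eq:OrdTail}.

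Finally, for~\eqref{eqTVdistORD} I would run the standard coupling argument: on $\Omega_{\ord}^{g}$ the bijection shows $\mu_{\ord}$ and $\overline\nu_{\ord}$ assign probabilities agreeing up to the multiplicative factor $Z^{\ord}/\big(q(e^{\beta}-1)^{\Delta n/2}\Xi^{\ord}\big)=1+O(e^{-n})$ (by~\eqref{eq:OrdTail}), while both assign mass $O(e^{-n})$ to the complement of $\Omega_{\ord}^{g}$ (by the bad-configuration bound above and by Lemma~\ref{lem:largedev} respectively); hence $\|\mu_{\ord}-\overline\nu_{\ord}\|_{TV}\le e^{-n}$ for $q,n$ large, cf.\ the proof of~\cite[Lemma 14]{jenssen2019revisited}. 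I expect the main obstacle to be the bijection of the first paragraph: verifying that the components of $\cB_{\infty}(E\setminus A)$ are genuinely legal ordered polymers and that the construction inverts correctly requires care with the inductive boundary procedure (the vertex-disjointness and saturation properties are what make the procedure on a union decompose over the pieces), and the weight factorization rests essentially on Lemma~\ref{lem:factor}. The only estimate not reducible to earlier results is the random-cluster-side large-deviation bound $Z^{\ord}-Z^{\ord,g}\le e^{-2n}Z^{\ord}$, but as sketched above it follows from the expansion hypotheses by a short computation.
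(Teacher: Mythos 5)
Your proof is correct, and its skeleton is the same as the paper's: identify $Z^{\ord}$ (up to the good/bad split) with a scaled, truncated ordered polymer partition function via the boundary construction and Lemma~\ref{lem:factor}, control the polymer-side tail with Lemma~\ref{lem:largedev}, and deduce the total variation bound as in \cite[Lemma 14]{jenssen2019revisited}. The difference is in how the matching with $\Omega_{\ord}$ is organized. The paper truncates at the level of collections, asserting the exact identity $Z^{\ord}=q(e^{\beta}-1)^{\Delta n/2}\,\Xi^{\ord}_{\leq}$ with $\Xi^{\ord}_{\leq}$ the contribution of collections satisfying $\abs{E_{u}(\Gamma)}\leq\eta\abs{E}$; it has no good/bad decomposition of $\Omega_{\ord}$ and hence no analogue of your bound on $Z^{\ord}-Z^{\ord,g}$. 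Your version restricts to configurations with $\abs{\cB_{\infty}(E\setminus A)}\leq\eta n$, which guarantees that every boundary component respects the per-polymer cap $\abs{E_{u}(\gamma)}\leq\eta n$ in the definition of $\cP_{\ord}$, at the price of the extra energy--entropy estimate for ordered configurations with large boundary (which you prove correctly from $Z^{\ord}\geq q(e^{\beta}-1)^{\abs{E}}$, Lemmas~\ref{lem:ord-gen} and~\ref{lem:simple}, and the expansion hypotheses). That extra step is not wasted work: since $\eta\abs{E}=\eta\Delta n/2>\eta n$, a configuration in $\Omega_{\ord}$ can in principle produce a boundary component with more than $\eta n$ unoccupied edges, which is not a legal polymer as defined, so the paper's one-line identity silently glosses over exactly the configurations your bad-set bound disposes of; your argument is therefore a more careful (and slightly more robust) rendering of the same idea. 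Likewise, your explicit check that the boundary procedure decomposes over vertex-disjoint components --- needed for both directions of the bijection and for the weight factorization via Lemma~\ref{lem:factor} --- fills in a verification the paper leaves implicit.
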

\begin{proof}
  Write $\Xi^{\ord}_{\leq }$ for the contribution to $\Xi^{\ord}$ of
  compatible collections $\Gamma$ of polymers with
  $|E_{u}(\Gamma)|\leq \eta |E|$. Then
  $Z^{\ord}=q(e^{\beta}-1)^{\frac{\Delta n}{2}}\Xi^{\ord}_{\leq}$ by
  Lemma~\ref{lem:factor}.  By Lemma~\ref{lem:largedev} we have
  $\Xi^{\ord}_{>}\leq e^{-2n}\Xi^{\ord}\leq 2 e^{-2n} \Xi^{\ord}_{\leq
  }$.  It follows that
  \begin{equation*}
    \abs{Z^{\ord}- q(e^{\beta}-1)^{\frac{\Delta n}{2}}\Xi^{\ord}}
    =q(e^{\beta}-1)^{\frac{\Delta n}{2}}\Xi^{\ord}_{>}
     \leq
    2e^{-2n}q(e^{\beta}-1)^{\frac{\Delta
        n}{2}}\Xi^{\ord}_{\leq } 
        \le 
        e^{-n}Z^{\ord}.
  \end{equation*}
As in the proof of Lemma~\ref{lem:DisQual}, \eqref{eqTVdistORD} follows from a similar argument. 
\end{proof}

\begin{cor}
  \label{cor:polyapprox}
  If $q$ and $n$ are sufficiently large as a function of $\Delta$ and $\delta$, then for all $\beta>0$
 \begin{align} \label{eqtildeZ}
 \tilde Z(q,\beta) \bydef q^n \cdot \Xi^{\dis} +
   q(e^{\beta}-1)^{\Delta n/2} \cdot \Xi^{\ord}
 \end{align} 
 is an $e^{-n/2}$-relative approximation to $Z_G(q,\beta)$. 
\end{cor}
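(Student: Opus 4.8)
\emph{Plan.} The goal is to show $\tilde Z$ and $Z$ agree up to a multiplicative factor $1+O(e^{-n})$; since the error is exponentially small this immediately gives the $e^{-n/2}$-relative approximation for $n$ large. Throughout we use $\beta>0$, so all polymer weights are nonnegative and $Z=Z^{\dis}+Z^{\ord}+Z^{\err}$ as in \eqref{eq:Zdecomp}. Following the proofs of Lemmas~\ref{lem:DisQual} and~\ref{lem:OrdQual}, split $\Xi^{\dis}=\Xi^{\dis}_{\le}+\Xi^{\dis}_{>}$, where $\Xi^{\dis}_{\le}$ collects the compatible families $\Gamma$ with $|E(\Gamma)|\le\eta|E|$, and similarly $\Xi^{\ord}=\Xi^{\ord}_{\le}+\Xi^{\ord}_{>}$ with the split on $|E_u(\Gamma)|\le\eta|E|$. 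Then $\tilde Z$ is a sum of four nonnegative pieces, and the plan is to bound each.

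\emph{Upper bound, valid for all $\beta>0$.} For a disordered family $\Gamma$, the polymers are exactly the nontrivial components of $A_{\Gamma}\bydef\bigcup_{\gamma\in\Gamma}E(\gamma)$, the map $\Gamma\mapsto A_{\Gamma}$ is injective, and $q^{n}\prod_{\gamma}w^{\dis}_{\gamma}=q^{c(A_{\Gamma})}(e^{\beta}-1)^{|A_{\Gamma}|}$. If $|E(\Gamma)|\le\eta|E|$ then $A_{\Gamma}\in\Omega_{\dis}$, so $q^{n}\Xi^{\dis}_{\le}\le Z^{\dis}$; if $|E(\Gamma)|>\eta|E|$ then $|A_{\Gamma}|>\eta|E|$, while every polymer has at most $\eta n$ edges, so by Lemma~\ref{lem:simple} (which forces any member of $\Omega_{\ord}$ to have a component with more than $\eta n$ edges once $n$ is large) $A_{\Gamma}\notin\Omega_{\ord}$; hence $A_{\Gamma}\in\Omega_{\err}$ and $q^{n}\Xi^{\dis}_{>}\le Z^{\err}$. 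The analogous injective map $\Gamma\mapsto E\setminus E_{u}(\Gamma)$ for ordered families, combined with Lemma~\ref{lem:factor} (applicable on the range $|E_{u}(\Gamma)|\le\eta|E|$), gives $q(e^{\beta}-1)^{\Delta n/2}\prod_{\gamma}w^{\ord}_{\gamma}=q^{c(A_{\Gamma})}(e^{\beta}-1)^{|A_{\Gamma}|}$ with $A_{\Gamma}\in\Omega_{\ord}$, so $q(e^{\beta}-1)^{\Delta n/2}\Xi^{\ord}_{\le}\le Z^{\ord}$. Summing the four pieces, $\tilde Z\le Z+q(e^{\beta}-1)^{\Delta n/2}\Xi^{\ord}_{>}+Z^{\err}$, and $Z^{\err}\le e^{-n}Z$ by Lemma~\ref{lem:ErrQual}(1).

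The step I expect to be the main obstacle is controlling the last piece $q(e^{\beta}-1)^{\Delta n/2}\Xi^{\ord}_{>}$ for \emph{all} $\beta>0$: for $\beta<\beta_{0}$ the ordered cluster expansion is not known to converge, Lemma~\ref{lem:largedev} is unavailable, and indeed $\Xi^{\ord}$ itself can be enormous; moreover Lemma~\ref{lem:factor} fails here, so there is no clean identity linking $1+\sum_{\gamma}c'(\gamma)$ to $c(A_{\Gamma})$. The plan is a brute-force estimate: for a contributing family with $a\bydef|E_{u}(\Gamma)|>\eta|E|$ we have $q(e^{\beta}-1)^{\Delta n/2}\prod_{\gamma}w^{\ord}_{\gamma}=q^{1+\sum_{\gamma}c'(\gamma)}(e^{\beta}-1)^{|E|-a}$; Lemma~\ref{lem:cbound} bounds $\sum_{\gamma}c'(\gamma)\le\frac{9}{5\Delta}a$, and comparing against $z\bydef\max\{q,(e^{\beta}-1)^{\Delta/2}\}$, for which $Z\ge z^{n}$, one checks each term is at most $z^{1+n-\eta n/10}$. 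Since a family is determined by its set of unoccupied edges there are at most $2^{|E|}=2^{n\Delta/2}$ terms, so $q(e^{\beta}-1)^{\Delta n/2}\Xi^{\ord}_{>}\le 2^{n\Delta/2}z^{1-\eta n/10}\,Z\le 2^{n\Delta/2}q^{1-\eta n/10}\,Z$, using $z\ge q$ and that the exponent is negative for $n$ large; as $\eta$ depends only on $\del$ and $q\ge\Delta^{C\Delta}$, this is $\le e^{-n}Z$ once $q$ and $n$ are large in terms of $\Delta,\del$.

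\emph{Lower bound.} Here I would split on the location of $\beta$ relative to $\beta_{0}<\beta_{1}$. If $\beta\le\beta_{0}$, Lemma~\ref{lem:ErrQual}(1),(3) give $Z^{\ord}+Z^{\err}\le 2e^{-n}Z$, so $Z^{\dis}\ge(1-2e^{-n})Z$, and Lemma~\ref{lem:DisQual} gives $\tilde Z\ge q^{n}\Xi^{\dis}\ge(1-e^{-n})Z^{\dis}$. The case $\beta\ge\beta_{1}$ is symmetric via Lemma~\ref{lem:ErrQual}(1),(2), Lemma~\ref{lem:OrdQual}, and $\tilde Z\ge q(e^{\beta}-1)^{\Delta n/2}\Xi^{\ord}$. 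For $\beta_{0}<\beta<\beta_{1}$ both Lemmas~\ref{lem:DisQual} and~\ref{lem:OrdQual} apply, so $\tilde Z\ge(1-e^{-n})(Z^{\dis}+Z^{\ord})=(1-e^{-n})(Z-Z^{\err})\ge(1-2e^{-n})Z$. Combining with the upper bound yields $(1-O(e^{-n}))Z\le\tilde Z\le(1+O(e^{-n}))Z$, which for $n$ large is the desired $e^{-n/2}$-relative approximation.
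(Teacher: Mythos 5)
Your proof is correct, and on the upper bound it follows a genuinely different route from the paper. The paper's own proof is a one-line case split: on the ranges $\beta\le\beta_0$, $\beta_0\le\beta\le\beta_1$, $\beta\ge\beta_1$ it simply invokes Lemmas~\ref{lem:ErrQual}, \ref{lem:DisQual} and~\ref{lem:OrdQual} to identify $\tilde Z$ with the dominant contribution(s) to $Z$ in each range; your lower bound is exactly this argument. For the upper bound, however, the paper's citation leaves implicit how to control the ``wrong-phase'' polymer term where its cluster expansion is not known to converge, most notably $q(e^{\beta}-1)^{\Delta n/2}\,\Xi^{\ord}$ for $\beta<\beta_0$, where Lemma~\ref{lem:OrdQual} and Lemma~\ref{lem:largedev} do not apply. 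You address this head on: the term-by-term injections $\Gamma\mapsto A_\Gamma$ give $q^n\Xi^{\dis}\le Z^{\dis}+Z^{\err}$ and $q(e^{\beta}-1)^{\Delta n/2}\Xi^{\ord}_{\le}\le Z^{\ord}$ for \emph{every} $\beta>0$ (the latter via Lemma~\ref{lem:factor}), and the leftover piece $\Xi^{\ord}_{>}$ is handled by an energy/entropy estimate in the spirit of the proof of Lemma~\ref{lem:ErrQual}, applying Lemma~\ref{lem:cbound} polymer by polymer and comparing against $z^n\le Z$; your arithmetic there checks out, since $1+\tfrac{9a}{5\Delta}+\tfrac{2(|E|-a)}{\Delta}=1+n-\tfrac{a}{5\Delta}$ and $a>\eta\Delta n/2$ give the exponent $1+n-\eta n/10$, and the $2^{\Delta n/2}q^{-\eta n/10}$ prefactor is beaten by taking $q$ large in terms of $\Delta,\delta$. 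What the paper's approach buys is brevity, leaning entirely on the three lemmas; what yours buys is a uniform-in-$\beta$ upper bound that makes explicit why the off-range term is harmless, at the modest cost of re-deriving (and implicitly using injectivity of) the weight-preserving correspondences that Lemmas~\ref{lem:DisQual} and~\ref{lem:OrdQual} already encapsulate. One small remark: your bound $\tilde Z\le Z+q(e^{\beta}-1)^{\Delta n/2}\Xi^{\ord}_{>}+Z^{\err}$ double counts $Z^{\err}$ (the three in-range pieces already sum to at most $Z$), but this only costs a harmless extra factor $e^{-n}Z$.
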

\begin{proof}
  The result follows by applying Lemmas~\ref{lem:ErrQual},
  \ref{lem:DisQual} and~\ref{lem:OrdQual} in the the ranges
  $\beta \le \beta_0$, $ \beta_0 \le \beta \le\beta_1$ and
  $\beta \ge\beta_1$\, .
\end{proof}

\section{Phase transitions on trees and random graphs}
\label{sec:phase-transition}

In this section we prove that the $q$-state random cluster model on
locally tree-like graphs in $\cG_{\Delta,\del}$ has a unique phase
transition, and we characterize the critical point. To do this we
define ordered and disordered polymer models for the random cluster
model on the infinite $\Delta$-regular tree $\bT_\Delta$ and its
finite depth-$L$ truncations $\bT_\Delta^L$, and then relate these models
to the random cluster model on locally tree-like $\Delta$-regular
graphs.

\subsection{The random cluster model and polymer models on finite trees}
\label{secPolyonTrees}

Fix $\Delta$ and let $\bT_{\Delta}$ be the rooted infinite
$\Delta$-regular tree with root vertex $r$.  Let $\bT_\Delta^L$ denote
the finite subtree of $\bT_{\Delta}$ with root $r$ and depth $L$ where
each non-leaf vertex has degree $\Delta$.

\subsubsection{Free boundary conditions and the disordered polymer
  model on finite trees}\label{secDisPolyonTrees}
We start with $\bT_\Delta^L$.  The random cluster model on
$\bT_\Delta^L$ with free boundary conditions is simply the random
cluster model on $\bT_\Delta^L$.  In particular,
\begin{equation*}
Z^{\free}_{\bT_\Delta^L}(q,\beta) = \sum_{A \subseteq E(\bT_\Delta^L)}  q^{c(A)} (e^{\beta}-1)^{|A|} \,,
\end{equation*}
where $c(A)$ is the number of connected components of
$(V(\bT_\Delta^L),A)$. To distinguish these boundary conditions we
call the resulting measure on edges the \emph{free random cluster
  model on $\bT_{\Delta}^{L}$}. Recall from
Section~\ref{sec:locally-tree-like} that $\mu^{\free}$ is the weak
limit of the free random cluster model measures on $\bT_{\Delta}^{L}$ as $L\to\infty$.

Next we define a disordered polymer model on $\bT_{\Delta}^{L}$.  Define polymers to
be connected components of $\bT_\Delta^L$.  Unlike in
Section~\ref{sec:polym-model-repr}, we do not restrict the size of
polymers. The weight of a polymer is again given by
$w^{\dis}_\gamma = q^{1 - |\gamma|} (e^{\beta} -1)^{|E(\gamma)|}$.
Call the resulting polymer model partition function
$\Xi^{\dis}_{\bT_\Delta^L}$.
  
\begin{lemma}
  \label{lemFreePartition}
  We have the equality
  \begin{equation*}
    Z^{\free}_{\bT_\Delta^L}(q,\beta) = q^{|V(\bT_\Delta^L)|}   \cdot \Xi^{\dis}_{\bT_\Delta^L} \,.
  \end{equation*}
  Moreover, the induced measure $\overline\nu_{\dis}$ on edges is
  the free random cluster measure on $\bT_{\Delta}^{L}$. 
  \end{lemma}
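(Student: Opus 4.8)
The plan is to exhibit a weight-preserving bijection between edge subsets of $\bT_\Delta^L$ and compatible families of disordered polymers, from which both assertions of the lemma follow immediately.

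\emph{Step 1: the bijection.} Given $A\subseteq E(\bT_\Delta^L)$, I would let $\Phi(A)$ be the set of connected components of $(V(\bT_\Delta^L),A)$ containing at least one edge. Each such component is a connected subgraph of $\bT_\Delta^L$, hence a disordered polymer (recall that on $\bT_\Delta^L$ there is no size restriction), and distinct components are vertex disjoint and so pairwise compatible. Conversely, for a compatible family $\Gamma=\{\gamma_1,\dots,\gamma_k\}$ of disordered polymers I would set $\Psi(\Gamma)\bydef\bigcup_{i=1}^{k}E(\gamma_i)$. The key point to verify is that $\Phi$ and $\Psi$ are mutually inverse: because the $\gamma_i$ are vertex disjoint, every edge of $\Psi(\Gamma)$ incident to a vertex of $\gamma_i$ already lies in $E(\gamma_i)$, so the component of $(V(\bT_\Delta^L),\Psi(\Gamma))$ meeting $V(\gamma_i)$ is exactly $\gamma_i$, giving $\Phi(\Psi(\Gamma))=\Gamma$; and $\Psi(\Phi(A))=A$ is immediate, with the empty family corresponding to $A=\emptyset$.

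\emph{Step 2: the weight identity.} Next I would note that if $\Phi(A)=\{\gamma_1,\dots,\gamma_k\}$, then the $|V(\bT_\Delta^L)|-\sum_i|\gamma_i|$ vertices not covered by the $\gamma_i$ are isolated in $(V(\bT_\Delta^L),A)$, whence $c(A)=k+|V(\bT_\Delta^L)|-\sum_i|\gamma_i|$ and $|A|=\sum_i|E(\gamma_i)|$. This gives
\[
q^{c(A)}(e^{\beta}-1)^{|A|}
= q^{|V(\bT_\Delta^L)|}\prod_{i=1}^{k} q^{\,1-|\gamma_i|}(e^{\beta}-1)^{|E(\gamma_i)|}
= q^{|V(\bT_\Delta^L)|}\prod_{i=1}^{k} w^{\dis}_{\gamma_i},
\]
and summing over all $A$ (equivalently over all compatible families $\Gamma$) yields $Z^{\free}_{\bT_\Delta^L}(q,\beta)=q^{|V(\bT_\Delta^L)|}\cdot\Xi^{\dis}_{\bT_\Delta^L}$.

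\emph{Step 3: the induced measure.} Finally, since $\Phi(A)$ is the unique polymer family mapped to $A$ by $\Psi$, the measure $\overline\nu_{\dis}$ assigns to $A$ the probability $\nu_{\dis}(\Phi(A))=\prod_{\gamma\in\Phi(A)}w^{\dis}_\gamma\big/\Xi^{\dis}_{\bT_\Delta^L}$, which by the two identities above equals $q^{c(A)}(e^{\beta}-1)^{|A|}/Z^{\free}_{\bT_\Delta^L}(q,\beta)$, i.e.\ the free random cluster measure on $\bT_\Delta^L$. I do not expect a real obstacle: the only point needing care is Step 1, the verification that $\Phi$ and $\Psi$ are inverse bijections, which rests on the elementary fact that a union of pairwise vertex-disjoint connected subgraphs has exactly those subgraphs (plus isolated vertices) as its components.
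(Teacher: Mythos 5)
Your proposal is correct and follows essentially the same route as the paper: the paper's proof consists precisely of observing the weight-preserving bijection between edge sets $A$ and compatible polymer families, with $c(A)=|V(\bT_\Delta^L)|+\sum_i(1-|\gamma_i|)$, which you simply spell out in more detail (including the induced-measure consequence).
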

\begin{proof}
  This follows from two facts: 1) There is a bijection between subsets
  $A \subseteq E(\bT_\Delta^L)$ and collections of mutually compatible
  polymers and 2) if $A \subseteq E(\bT_\Delta^L)$ corresponds to the
  set of polymers $\{\gamma_1, \ldots, \gamma_k\}$, then
  $c(A)=|V(\bT_\Delta^L)|+\sum_i (1-|\gamma_i|)$. That is, the
  bijection of 1) is weight-preserving.
\end{proof}

\subsubsection{Wired boundary conditions and the ordered polymer model
  on finite trees}

Informally, the random cluster model on $\bT_\Delta^L$ with wired
boundary conditions is obtained by declaring that the boundary
vertices belong to a single connected component.  Formally, we call a
subset of $V(\bT_\Delta^L)$ \emph{finite} if it contains no boundary
vertex. Then,
\begin{equation*}
Z^{\wire}_{\bT_\Delta^L}(q,\beta) \bydef \sum_{A \subseteq E(\bT_\Delta^L)}  q^{c_{w}(A)} (e^{\beta}-1)^{|A|} \,,
\end{equation*}
where $c_{w}(A)$ is the number of finite connected components of
$(V(\bT_\Delta^L), A)$ plus 1; the additional one is to account for
the single component containing the boundary vertices. We call the
resulting measure on edges the \emph{wired random cluster measure on
  $\bT_{\Delta}^{L}$}. Recall from Section~\ref{sec:locally-tree-like}
that $\mu^{\wire}$ is the weak limit of the wired random cluster
measures on $\bT_{\Delta}^{L}$ as $L\to\infty$.

Ordered polymers on $\bT_\Delta^L$ are defined as in
Section~\ref{sec:polym-model-repr}, but with no restriction on their size.
The weight function is
\begin{equation*}
  w^{\ord}_\gamma \bydef q^{c^{\prime}(\gamma)} (e^{\beta}-1)^{-|E_u(\gamma)|},
\end{equation*}
where $c^{\prime}(\gamma)$ is the number of finite connected
components of the graph
$(V(\bT_\Delta^L),E(\bT_\Delta^L) \setminus E_u(\gamma))$. We have the
following analogue of Lemma~\ref{lem:factor}.
\begin{lemma}
  \label{lem:factorTree}
  Let $\Gamma = \{\gamma_1, \ldots, \gamma_k\}$ be a collection of
  compatible ordered polymers on $\bT_\Delta^L$. Then
  \begin{equation}
    \label{eq:cwfact}
c_{w} \left( E(\bT_\Delta^L) \setminus \bigcup_{i=1}^k E_u(\gamma_i) \right) =1+ \sum_{i=1}^kc^{\prime}(\gamma_i) \,.
\end{equation}
\end{lemma}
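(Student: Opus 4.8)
\emph{Proof proposal.} The plan is to run the proof of Lemma~\ref{lem:factor} in this setting, with the edge-expansion hypothesis of $\cG_{\Delta,\del}$ replaced by the fact that $\bT_\Delta^L$ expands well on sets of non-leaf vertices for free. Write $E_u(\Gamma)\bydef\bigcup_{i=1}^k E_u(\gamma_i)$. Since $c_w$ of an edge set equals $1$ plus the number of its \emph{finite} components (those containing no leaf of $\bT_\Delta^L$), the identity~\eqref{eq:cwfact} is equivalent to: the number of finite components of $(V(\bT_\Delta^L),E(\bT_\Delta^L)\setminus E_u(\Gamma))$ equals $\sum_{i=1}^k c'(\gamma_i)$. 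The substitute for the expander hypothesis is that any set $W$ of non-leaf vertices of $\bT_\Delta^L$ satisfies $|E(W,W^c)|\ge(\Delta-2)|W|\ge\frac{5\Delta}{9}|W|$ (using $\Delta\ge5$): each $w\in W$ has degree $\Delta$, the subgraph induced on $W$ is a forest and hence has at most $|W|-1$ edges, and $\Delta|W|=2|E(\bT_\Delta^L[W])|+|E(W,W^c)|$.

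The main step is the analogue of the claim in the proof of Lemma~\ref{lem:factor}: every finite component $S$ of $(V(\bT_\Delta^L),E(\bT_\Delta^L)\setminus E_u(\Gamma))$ satisfies $E(S,S^c)\subseteq E_u(\gamma_j)$ for exactly one index $j$. I would prove this by copying the argument of Lemma~\ref{lem:factor}, using that compatible polymers are vertex-disjoint (hence pairwise edge-disjoint) and that $E(\gamma_i)=\cB_\infty(E_u(\gamma_i))$ is stable. Suppose $E(S,S^c)$ meets $E_u(\gamma_1),\dots,E_u(\gamma_\ell)$ with $\ell\ge2$, and let $T$ be the set of $v\in S$ incident to fewer than $\Delta$ edges of each of $E(\gamma_1),\dots,E(\gamma_\ell)$. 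If $T=\emptyset$, then each $v\in S$ has all $\Delta$ of its edges in one $E(\gamma_{i(v)})$; connectivity of $S$ and edge-disjointness of the polymers force $i(v)$ to be a constant $i^{*}$, so every edge incident to $S$ lies in $E(\gamma_{i^{*}})$, whence $E(S,S^c)\subseteq E_u(\Gamma)\cap E(\gamma_{i^{*}})=E_u(\gamma_{i^{*}})$, contradicting $\ell\ge2$. If $T\ne\emptyset$, then $T$ consists of non-leaf vertices, so $|E(T,T^c)|\ge\frac{5\Delta}{9}|T|$; as in Lemma~\ref{lem:factor} one checks $E(T,T^c)\subseteq E(\gamma_1)\cup\dots\cup E(\gamma_\ell)$ (edges from $T$ leaving $S$ are boundary edges of $S$, hence in some $E_u(\gamma_i)$ with $i\le\ell$; edges from $T$ to $S\setminus T$ reach a vertex all of whose edges lie in one $E(\gamma_i)$, $i\le\ell$), so some $u\in T$ is incident to at least $\frac{5\Delta}{9}$ edges of this union, all lying in a single $E(\gamma_{i_0})$ by vertex-disjointness, and then stability of $E(\gamma_{i_0})=\cB_\infty(E_u(\gamma_{i_0}))$ forces all $\Delta$ edges at $u$ into $E(\gamma_{i_0})$ --- contradicting $u\in T$.

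Granting the claim, the counting identity follows. Let $j(S)$ be the unique index supplied by the claim for a finite component $S$ of $(V,E\setminus E_u(\Gamma))$; then $S$ is still connected after deleting only $E_u(\gamma_{j(S)})$ and its boundary lies in $E_u(\gamma_{j(S)})$, so $S$ is a finite component of $(V,E\setminus E_u(\gamma_{j(S)}))$, and since distinct polymers are edge-disjoint the map $S\mapsto(j(S),S)$ injects into $\bigsqcup_i\{\text{finite components of }(V,E\setminus E_u(\gamma_i))\}$. For surjectivity, suppose a finite component $F$ of $(V,E\setminus E_u(\gamma_j))$ is not a component of $(V,E\setminus E_u(\Gamma))$; its boundary lies in $E_u(\gamma_j)$, so it is a disjoint union of $m\ge2$ components $S^{(1)},\dots,S^{(m)}$ of $(V,E\setminus E_u(\Gamma))$, all finite. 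As $F$ has nonempty boundary in $\bT_\Delta^L$, we may take $S^{(1)}$ incident to a boundary edge of $F$, which lies in $E_u(\gamma_j)$; and since $F$ induces a connected subgraph, some $S^{(b)}$ with $b\ne1$ is joined to $S^{(1)}$ by an edge of $\bT_\Delta^L$, which lies inside $F$ and hence in some $E_u(\gamma_i)$ with $i\ne j$ (as $F$ contains no edge of $E_u(\gamma_j)$). Thus the finite component $S^{(1)}$ has boundary edges from two different polymers, contradicting the claim. Hence the map is a bijection and~\eqref{eq:cwfact} follows.

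The crux, I expect, is the claim --- and within it, recognising that $\bT_\Delta^L$ itself provides the edge expansion of non-leaf sets that replaces the hypothesis $\phi_G(\del)\ge5/9$, together with the point that the components one needs to control (the finite ones) contain no leaves, which is exactly what makes this replacement legitimate. The reduction of~\eqref{eq:cwfact} to the claim, including the short no-splitting argument, is routine and parallels Lemma~\ref{lem:factor}.
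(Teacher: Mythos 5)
Your proposal is correct and follows essentially the same route as the paper, which likewise observes that finite (leaf-free) subsets of $\bT_\Delta^L$ satisfy $|E(S,S^c)|\geq(\Delta-2)|S|\geq \tfrac{5\Delta}{9}|S|$ for $\Delta\geq 5$ and then reruns the argument of Lemma~\ref{lem:factor} to show each finite component is incident to unoccupied edges of exactly one polymer. Your write-up simply makes explicit two points the paper leaves implicit (the $T=\emptyset$/$T\neq\emptyset$ dichotomy transplanted to the tree, and the final bijective counting of finite components), so there is nothing to correct.
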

\begin{proof}
Let $S_1,\ldots, S_t$ denote the vertex sets of the finite components of 
$E(\bT_\Delta^L) \setminus \bigcup_{i=1}^k E_u(\gamma_i)$.
In particular, by the definition of $c_{w}$, 
the left hand side of~\eqref{eq:cwfact} is equal to $t+1$.

If $S\subseteq V(\bT_\Delta^L)$ is finite,
then $|E(S, S^c)|\geq (\Delta-2)|S|$. 
It then follows, as in the proof of Lemma~\ref{lem:factor},
that each of the sets $S_1,\ldots, S_t$ is 
incident to edges from precisely one 
of the polymers $\gamma_1, \ldots, \gamma_k$. 
The result follows.
\end{proof}

With this we prove that the polymer model partition function equals the wired random cluster partition function after scaling.

\begin{lemma}
  \label{lemWirePartition}
  We have the equality
  \begin{equation*}
    Z^{\wire}_{\bT_\Delta^L}(q,\beta) = q (e^{\beta}-1)^{|E(\bT_\Delta^L)|} \Xi^{\ord}_{\bT_\Delta^L} \,.
  \end{equation*}
  Moreover, the induced measure $\overline\nu_{ord}$ on edges is the
  wired random cluster measure on $\bT_{\Delta}^{L}$.
\end{lemma}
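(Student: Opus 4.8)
\textbf{Proof plan for Lemma~\ref{lemWirePartition}.}
The plan is to mirror the proof of Lemma~\ref{lemFreePartition}, establishing a weight-preserving bijection between subsets $A\subseteq E(\bT_\Delta^L)$ and compatible collections of ordered polymers. First I would observe that, exactly as in the construction on $\cG_{\Delta,\del}$, each $A\subseteq E(\bT_\Delta^L)$ has an associated boundary $\cB_\infty(E\setminus A)$ obtained by iterating the inductive boundary procedure on the unoccupied edges; the connected components of this boundary, with their occupied/unoccupied edge labels inherited from $A$, are a compatible collection of ordered polymers $\Gamma(A)=\{\gamma_1,\dots,\gamma_k\}$. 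Conversely, given a compatible collection $\Gamma$ we set $A = E(\bT_\Delta^L)\setminus\bigcup_i E_u(\gamma_i)$, and I would check that applying the boundary procedure to $E\setminus A$ recovers exactly $\Gamma$ — this uses the stabilization condition (ii) in the definition of ordered polymers together with the fact that distinct compatible polymers cannot share a vertex incident to their unoccupied edges, so the boundary does not ``leak'' between polymers. This makes the two maps mutually inverse.

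Next I would verify that the bijection is weight-preserving after the claimed scaling. For $A\leftrightarrow\Gamma$ we have $E(\bT_\Delta^L)\setminus A = \bigcup_i E_u(\gamma_i) = E_u(\Gamma)$, so $(e^\beta-1)^{|A|} = (e^\beta-1)^{|E(\bT_\Delta^L)|}\cdot\prod_i (e^\beta-1)^{-|E_u(\gamma_i)|}$. For the cluster weight, Lemma~\ref{lem:factorTree} gives $c_w(A) = 1 + \sum_i c'(\gamma_i)$, hence $q^{c_w(A)} = q\cdot\prod_i q^{c'(\gamma_i)}$. Multiplying, $q^{c_w(A)}(e^\beta-1)^{|A|} = q(e^\beta-1)^{|E(\bT_\Delta^L)|}\prod_i w^{\ord}_{\gamma_i}$. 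Summing over all $A$ (equivalently over all compatible $\Gamma$) yields $Z^{\wire}_{\bT_\Delta^L} = q(e^\beta-1)^{|E(\bT_\Delta^L)|}\,\Xi^{\ord}_{\bT_\Delta^L}$. The ``moreover'' statement follows because the probability under $\overline\nu_{\ord}$ of drawing the edge set $A$ is $\prod_i w^{\ord}_{\gamma_i}/\Xi^{\ord}_{\bT_\Delta^L}$, which by the identity just derived equals $q^{c_w(A)}(e^\beta-1)^{|A|}/Z^{\wire}_{\bT_\Delta^L}$, i.e.\ the wired random cluster weight of $A$.

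The main obstacle is the surjectivity/injectivity of the boundary map: one must confirm that for \emph{every} $A$ the boundary $\cB_\infty(E\setminus A)$ decomposes into genuine ordered polymers (in particular that each component satisfies the stabilization condition, which is automatic from the definition of $\cB_\infty$, and that there is no size constraint to worry about since on the tree we impose none), and that the inverse map is well-defined, i.e.\ that $\cB_\infty$ applied to $E\setminus\bigl(E\setminus\bigcup_i E_u(\gamma_i)\bigr) = \bigcup_i E_u(\gamma_i)$ returns precisely the polymers $\gamma_i$ and not some merged or enlarged object. This last point is where compatibility is essential: since the $\gamma_i$ are vertex-disjoint, no vertex can accumulate $\ge 5\Delta/9$ unoccupied edges drawn from more than one $\gamma_i$, so the boundary growth starting from $\bigcup_i E_u(\gamma_i)$ proceeds independently within each polymer and terminates at $\gamma_i$ by condition (ii). I would also note the minor point, used implicitly, that $(\Delta-2)$-expansion of finite subsets of $\bT_\Delta^L$ (as in Lemma~\ref{lem:factorTree}) plays the role that $\phi_G(\del)\ge 5/9$ played in Lemma~\ref{lem:factor}, and that for $\Delta\ge5$ one has $\Delta-2\ge 5\Delta/9$, so all the boundary-procedure thresholds are respected.
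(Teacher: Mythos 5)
Your proposal is correct and follows essentially the same route as the paper: the paper's (one-line) proof also rests on the weight-preserving bijection between edge sets $A\subseteq E(\bT_\Delta^L)$ and compatible collections of ordered polymers, with Lemma~\ref{lem:factorTree} supplying the identity $c_w(A)=1+\sum_i c'(\gamma_i)$ that makes the weights match after the scaling $q(e^{\beta}-1)^{|E(\bT_\Delta^L)|}$. Your additional verification that $\cB_\infty$ applied to $\bigcup_i E_u(\gamma_i)$ recovers exactly the $\gamma_i$ (using vertex-disjointness) is a detail the paper leaves implicit, and it is argued correctly.
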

\begin{proof}
This follows from Lemma~\ref{lem:factorTree}, which implies there is a
weight-preserving bijection between sets $A\subseteq E(\bT_\Delta^L)$ 
and collections of mutually compatible ordered polymers. 
\end{proof}

\subsection{Infinite trees and limiting free energies}
\label{sec:limit-free-energ}

To motivate the definitions that follow, we begin by rewriting the cluster
expansions for $\Xi^{\ord}$ and $\Xi^{\dis}$ for a given
finite $\Delta$-regular graph $G$ on $n$ vertices.   For a cluster $\Gamma$, let $u(\Gamma)$ be the number of distinct vertices contained
in $\Gamma$. Write $\cC_{\dis}^{v}(G)$ for the set of disordered clusters
containing $v$, and similarly for $\ord$.  Then 
\begin{align}
\label{eqPinnedDis}
\log \Xi^{\dis} &= \sum_{v \in V(G)}  \sum_{\Gamma\in\cC_{\dis}^{v}(G)}  \frac{1}{u(\Gamma)}w^{\dis}(\Gamma), \\
\label{eqPinnedOrd}
\log \Xi^{\ord} &=  \sum_{v \in V(G)} \sum_{\Gamma\in\cC_{\ord}^{v}(G)} \frac{1}{u(\Gamma)} w^{\ord}(\Gamma) \,.
\end{align}

Using this as a model, we consider ordered and disordered polymer
models on the infinite $\Delta$-regular tree $\bT_\Delta$ rooted at
$r$.  Here we define polymers and weights exactly as for
$\bT_\Delta^L$ above in Section~\ref{secPolyonTrees}, but with the
additional condition that the polymers be finite.  In particular, for
an ordered polymer $\gamma$ the weight function is
\begin{equation*}
 w^{\ord}_\gamma \bydef q^{c^{\prime}(\gamma)} (e^{\beta}-1)^{-|E_u(\gamma)|},
\end{equation*}
where $c^{\prime}(\gamma)$ is the number of finite connected
components of the graph
$(V(\bT_\Delta),E(\bT_\Delta) \setminus E_u(\gamma))$; here we
mean finite in the usual sense of finite cardinality.

\begin{lemma}
\label{lemTreeConverge}
For $q=q(\Delta)$ sufficiently large the following hold with
$r=\log q/(200\Delta)$:
\begin{enumerate}
\item For $\beta\in \mathbb C$ such that 
   $\left| e^{\beta}-1\right| \le e^{\beta_{1}}-1$,  
the disordered polymer model on $\bT_\Delta$
satisfies~\eqref{eqKPcond}.
\item For $\beta\in \mathbb C$ such that 
   $\left| e^{\beta}-1\right| \ge e^{\beta_{0}}-1$,  
the ordered polymer model on $\bT_\Delta$
satisfies~\eqref{eqKPcond}.
\end{enumerate}
\end{lemma}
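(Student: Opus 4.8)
The statement is that the disordered and ordered polymer models on the infinite tree $\bT_\Delta$ satisfy the Koteck\'y--Preiss condition~\eqref{eqKPcond} in the same parameter ranges as on graphs in $\cG_{\Delta,\del}$. The key observation is that the proofs of Propositions~\ref{prop:DisPoly} and~\ref{prop:OrdPoly} for finite graphs $G \in \cG_{\Delta,\del}$ are \emph{essentially local}: they bound $\sum_{\gamma \ni v} e^{(1+r)|E(\gamma)|}|w^{\ast}_\gamma|$ using only (i) the maximum degree $\Delta$, (ii) the bound $(e\Delta)^k$ or $(2e\Delta)^{10k}$ on the number of polymers with a given number of edges through a fixed vertex (from~\cite[Lemma~2.1(c)]{BorgsChayesKahnLovasz}), and (iii) lower bounds on $|\gamma|$ in terms of $|E(\gamma)|$ and on $|E_u(\gamma)|$ in terms of $c'(\gamma)$. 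So the plan is to verify that each of these ingredients transfers to $\bT_\Delta$, and then the verification of~\eqref{eqKPcond} is verbatim.

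\textbf{Disordered case.} For item (1), on $\bT_\Delta$ every polymer $\gamma$ is a finite connected subgraph of a tree, hence is itself a tree, so $|E(\gamma)| = |\gamma| - 1$, which gives the bound $|\gamma| \ge \sqrt{2|E(\gamma)|}$ used in the graph case (in fact something stronger). The counting bound $(e\Delta)^k$ from~\cite[Lemma~2.1(c)]{BorgsChayesKahnLovasz} applies to any graph of maximum degree $\Delta$, in particular to $\bT_\Delta$. With these two facts, the three-step estimate in the proof of Proposition~\ref{prop:DisPoly} goes through unchanged: the contributions of polymers with $|E(\gamma)| \in \{1,2\}$, with $2 < |E(\gamma)| < \Delta/2$, and with $|E(\gamma)| \ge \Delta/2$ are each at most $1/6$ for $q$ large, giving~\eqref{eqKPgoalDis} and hence~\eqref{eqKPcond} with $r = \log q/(4\Delta) \ge \log q/(200\Delta)$. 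One subtlety: the third step in the graph proof used the small-set expansion of $G$ to get $|\gamma| \ge 9|E(\gamma)|/(2\Delta)$; on the tree we instead have the much better $|\gamma| = |E(\gamma)|+1$, so no expansion hypothesis is needed and the same (indeed stronger) bound holds.

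\textbf{Ordered case.} For item (2), ordered polymers on $\bT_\Delta$ are defined with $c'(\gamma)$ counting \emph{finite} connected components of $(V(\bT_\Delta), E(\bT_\Delta)\setminus E_u(\gamma))$. The one place the graph proof of Proposition~\ref{prop:OrdPoly} invokes $\cG_{\Delta,\del}$ is through Lemma~\ref{lem:cbound}, which gives $|E_u(\gamma)| \ge \tfrac{5}{9}\Delta \cdot c'(\gamma)$. On the tree this must be replaced by the analogous bound coming from the fact that a finite vertex set $S$ in $\bT_\Delta$ satisfies $|E(S,S^c)| \ge (\Delta-2)|S|$ (this is exactly the isoperimetric estimate already used in the proof of Lemma~\ref{lem:factorTree}): each finite component $S_i$ of $(V(\bT_\Delta), E(\bT_\Delta)\setminus E_u(\gamma))$ has all of its boundary edges unoccupied, and these boundaries are disjoint across components, so $|E_u(\gamma)| \ge \sum_i (\Delta-2)|S_i| \ge (\Delta-2) c'(\gamma)$. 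Since $\Delta \ge 5$ we have $\Delta - 2 \ge 3 \ge \tfrac{9}{5}$, so in particular $|E_u(\gamma)| \ge \tfrac{9}{5} c'(\gamma)$, which is what actually gets used in the exponent bookkeeping (the line $q^{1 - 9k/(2\Delta)}$ in the proof of Proposition~\ref{prop:OrdPoly} used $c'(\gamma) \le \tfrac{9}{5\Delta}|E_u(\gamma)|$). Together with $|E(\gamma)| \le 10|E_u(\gamma)|$ from Lemma~\ref{lem:ord-gen} (which is purely local and holds on $\bT_\Delta$) and the counting bound $(2e\Delta)^{10k}$, the geometric sum in Proposition~\ref{prop:OrdPoly} is reproduced verbatim and is at most $1/2$ for $q$ large, establishing~\eqref{eqKPcond} with $r = \log q/(200\Delta)$.

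\textbf{Main obstacle.} There is no real obstacle here; the lemma is a soft transfer of already-proven estimates. The only point requiring a moment's care is that on the infinite tree one must define $c'(\gamma)$ via \emph{finite cardinality} components (as the excerpt already does) and then check that Lemma~\ref{lem:factor}/Lemma~\ref{lem:simple}-type reasoning is replaced by the elementary tree isoperimetry $|E(S,S^c)| \ge (\Delta-2)|S|$ for finite $S$; this is strictly easier than the expander argument, so if anything the tree case is the simpler of the two. Thus the proof amounts to the remark ``the estimates proving Propositions~\ref{prop:DisPoly} and~\ref{prop:OrdPoly} use only the maximum degree, the subgraph-counting bound, and local isoperimetry, all of which hold on $\bT_\Delta$; the better tree isoperimetry makes the small-set expansion hypothesis unnecessary,'' followed by pointing to the two displayed geometric sums.
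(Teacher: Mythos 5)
Your approach is the same as the paper's: the paper's proof of this lemma is precisely the remark that one can repeat the proofs of Propositions~\ref{prop:DisPoly} and~\ref{prop:OrdPoly} verbatim once one notes the tree isoperimetry $|E(S,S^c)|\ge(\Delta-2)|S|$ for finite $S\subset V(\bT_\Delta)$, and your disordered case (subtrees satisfy $|\gamma|=|E(\gamma)|+1$, the counting bound is degree-local) is correct as written.

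In the ordered case, however, two points need repair. First, your justification of $|E_u(\gamma)|\ge(\Delta-2)c'(\gamma)$ claims the boundaries $E(S_i,S_i^c)$ of distinct finite components are disjoint; this is false in general (take $E_u(\gamma)$ to be all edges incident to two adjacent vertices $u,v$: this is a legitimate ordered polymer, the finite components are $\{u\}$ and $\{v\}$, and the edge $uv$ lies in both boundaries). The lazy fix of "each edge is counted at most twice" only yields $c'(\gamma)\le 2|E_u(\gamma)|/(\Delta-2)$, and since $2/(\Delta-2)>1.9/\Delta$ for every $\Delta$, this is not enough to beat $|e^{\beta}-1|^{-k}\le q^{-1.9k/\Delta}$; the exponent would have the wrong sign. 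The correct fix is exactly the argument of Lemma~\ref{lem:cbound}: let $U$ be the union of the finite components, note every edge of $E(U,U^c)$ is unoccupied, and apply $|E(U,U^c)|\ge(\Delta-2)|U|\ge(\Delta-2)c'(\gamma)$ to $U$ rather than summing over components. Second, the comparison you state, $\Delta-2\ge 9/5$ hence $|E_u(\gamma)|\ge\tfrac95 c'(\gamma)$, is not the inequality the computation in Proposition~\ref{prop:OrdPoly} uses: that proof needs $c'(\gamma)\le\tfrac{9}{5\Delta}|E_u(\gamma)|$, i.e.\ $|E_u(\gamma)|\ge\tfrac{5\Delta}{9}c'(\gamma)$ (the bound $c'(\gamma)\le\tfrac59|E_u(\gamma)|$ would be far too weak, since $5/9>1.9/\Delta$). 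What saves you is that $\Delta-2\ge 5\Delta/9$ for all $\Delta\ge 5$, so the corrected tree bound $|E_u(\gamma)|\ge(\Delta-2)c'(\gamma)$ does imply the inequality actually used, and the geometric sum then goes through verbatim. With these two corrections your argument coincides with the intended proof.
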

\begin{proof}
We can mimic the proofs of Propositions~\ref{prop:DisPoly} and \ref{prop:OrdPoly} once we note that the tree $\bT_\Delta$ satisfies the following optimal expansion condition: for a finite set $S\subset V(\bT_\Delta)$, $|E(S, S^c)|\geq (\Delta-2)|S|$.
\end{proof}

Let $\cC_{\ord}^{r}$ and $\cC_{\dis}^{r}$ be the respective sets of clusters on
$\bT_\Delta$ containing the root $r$.  Define
\begin{equation*}
  \overline f_{\ord}(\beta,q) \bydef  \sum_{\Gamma \in \cC_{\ord}^r}
                                \frac{1}{u(\Gamma)} w^{\ord}(\Gamma),
                                \qquad 
 \overline f_{\dis} (\beta, q) \bydef  \sum_{\Gamma \in \cC_{\dis}^r} \frac{1}{u(\Gamma)} w^{\dis}(\Gamma)  \,,
\end{equation*}
By Lemma~\ref{lemTreeConverge} and~\eqref{eqKPtail} of Theorem~\ref{thmKP},
for $q$ large and $\beta\in[\beta_0, \beta_1]$ these series converge and are functions of $\Delta, \beta, q$.
Further define
\begin{equation}
  \label{eq:ftreelim}
  f_{\ord}(\beta,q) \bydef \frac{\Delta}{2} \log(e^{\beta}-1) +
  \overline   f_{\ord}, \qquad
  f_{\dis} (\beta, q)  \bydef \log q + \overline f _{\dis}.
\end{equation}

\begin{prop}
  \label{prop:limf}
  For any sequence of locally tree-like graphs
  $G_{n}\in \cc G_{\Delta,\delta}$ and $q = q(\Delta,\delta)$ large
  enough, we have
  \begin{align}
    \label{eq:flimdis}
    \lim_{n \to \infty} &\frac{1}{n} \log Z^{\dis}_{G_n} 
    = f_\dis, && \abs{e^{\beta}-1} \le e^{\beta_{1}}-1\\
    \label{eq:flimord}
    \lim_{n \to \infty} &\frac{1}{n} \log Z^{\ord}_{G_n} 
    = f_\ord, &&  \abs{e^{\beta}-1}\ge e^{\beta_{0}} -1.
  \end{align}
  Moreover these limits are uniform on the given regions of $\beta$.
\end{prop}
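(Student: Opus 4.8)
### Proof proposal for Proposition~\ref{prop:limf}

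\textbf{Overall strategy.} The plan is to write $\log Z^{\dis}_{G_n}$ (resp.\ $\log Z^{\ord}_{G_n}$) using the cluster expansion of the finite-graph polymer model, express it as a sum over vertices of a sum over clusters anchored at that vertex (as in~\eqref{eqPinnedDis}--\eqref{eqPinnedOrd} together with Lemmas~\ref{lem:DisQual} and~\ref{lem:OrdQual}), and then compare the per-vertex contribution $\sum_{\Gamma \in \cC_{\dis}^v(G_n)} u(\Gamma)^{-1} w^{\dis}(\Gamma)$ with the tree quantity $\overline f_{\dis}$. The comparison rests on the fact that, because $G_n$ is locally tree-like, for all but a vanishing fraction of vertices $v$ the ball $B_m(v)$ is a tree, so clusters of bounded size $\|\Gamma\| < m$ anchored at $v$ are in weight-preserving bijection with the corresponding tree clusters anchored at the root of $\bT_\Delta$. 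The tail bound~\eqref{eqVertexTail} (via Lemma~\ref{lemVertexTail}) controls the clusters of size $\geq m$ on both the graph and the tree side, uniformly in $\beta$ over the stated regions.

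\textbf{Key steps.} First, by Lemma~\ref{lem:DisQual}, $\log Z^{\dis}_{G_n} = n\log q + \log \Xi^{\dis} + O(e^{-n})$, and similarly $\log Z^{\ord}_{G_n} = \log q + \tfrac{\Delta n}{2}\log(e^\beta - 1) + \log \Xi^{\ord} + O(e^{-n})$ by Lemma~\ref{lem:OrdQual}; so it suffices to show $\frac1n \log \Xi^{\ast} \to \overline f_{\ast}$. Second, fix $m$ and split $\log \Xi^{\dis} = \sum_{v} \sum_{\Gamma \in \cC_{\dis}^v(G_n),\, \|\Gamma\| < m} u(\Gamma)^{-1} w^{\dis}(\Gamma) + R_{n,m}$ where, by~\eqref{eqVertexTail} summed over all $n$ vertices, $|R_{n,m}| \leq n q^{-m/(200\Delta)}$; the analogous decomposition of $\overline f_{\dis}$ (a single root sum) has tail $\leq q^{-m/(200\Delta)}$. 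Third, the local tree-likeness hypothesis gives: for every $m$, the fraction of vertices $v \in V(G_n)$ with $B_m(v)$ a tree tends to $1$; for such $v$, every cluster $\Gamma \in \cC_{\dis}^v(G_n)$ with $\|\Gamma\| < m$ (hence occupying at most $m$ vertices, all within distance $m$ of $v$) lies inside $B_m(v)$, and since this ball is isomorphic to the depth-$m$ truncation $\bT_\Delta^m$ around the root, there is a weight-preserving bijection between such clusters and root clusters on $\bT_\Delta$ of size $< m$. (Here one uses that the polymer weights $w^{\dis}_\gamma = q^{1-|\gamma|}(e^\beta-1)^{|E(\gamma)|}$ are local; for the ordered model, one must additionally check that $c'(\gamma)$, the count of small components of $(V, E \setminus E_u(\gamma))$, is computed identically on $G_n$ and on $\bT_\Delta$ for polymers living inside a tree ball — this follows because deleting the few unoccupied edges of such a $\gamma$ cannot merge or split components outside $B_m(v)$, cf.\ Lemmas~\ref{lem:factor} and~\ref{lem:factorTree}.) For the $o(n)$ vertices whose ball is not a tree, bound their contribution crudely by $n\cdot o(1) \cdot (\text{per-vertex bound } \tfrac12)$ using~\eqref{eqKPgoalDis}/\eqref{eqKPgoalOrd}. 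Fourth, combining, $\big|\tfrac1n \log \Xi^{\dis} - \overline f_{\dis}\big| \leq o(1) + 2 q^{-m/(200\Delta)}$ for each fixed $m$; letting $n\to\infty$ then $m\to\infty$ gives the limit. Finally, uniformity in $\beta$: every estimate above — the Koteck\'y--Preiss tail~\eqref{eqKPtail}, Lemmas~\ref{lem:DisQual}/\ref{lem:OrdQual}, and the bijection argument — holds uniformly for $\beta$ in $\{|e^\beta-1| \leq e^{\beta_1}-1\}$ (resp.\ $\{|e^\beta-1| \geq e^{\beta_0}-1\}$), so the convergence is uniform there.

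\textbf{Main obstacle.} The routine parts are the cluster-expansion bookkeeping and the tail estimates, which are immediate from Theorem~\ref{thmKP} and Lemma~\ref{lemVertexTail}. The delicate point is the weight-preserving bijection for the \emph{ordered} model: the weight $w^{\ord}_\gamma$ depends on $c'(\gamma)$, a global quantity counting connected components of $G_n$ (or $\bT_\Delta$) after deleting $E_u(\gamma)$, so one must argue carefully that for a polymer supported in a tree ball $B_m(v)$ this count agrees with the count on $\bT_\Delta$ around the root. The argument is that deleting $E_u(\gamma)$ only creates small ($< n/2$, resp.\ finite) components consisting of vertices ``cut off'' locally, and by the expansion/connectivity arguments already used in Lemma~\ref{lem:factor} and Lemma~\ref{lem:factorTree} these small components are entirely determined by the local structure of $\gamma$; the giant component behaves like the infinite component of the tree. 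One also needs the size restriction $|E_u(\gamma)| \leq \eta n$ in the graph polymer definition to be compatible with $\|\Gamma\| < m$ for large $n$, which is automatic since $m$ is fixed while $n\to\infty$. I expect this to be the only place requiring genuine care.
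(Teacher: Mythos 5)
Your proposal is correct and follows essentially the same route as the paper's proof: reduce to $\tfrac1n\log\Xi^{\ast}\to\overline f_{\ast}$ via Lemmas~\ref{lem:DisQual} and~\ref{lem:OrdQual}, decompose $\log\Xi^{\ast}$ as a per-vertex cluster sum, truncate at $\|\Gamma\|<m$ using Lemma~\ref{lemVertexTail}, match truncated sums with the tree root sums at vertices whose $m$-ball is a tree (your extra care about $c'(\gamma)$ for the ordered weights is a point the paper passes over silently, and your justification via the expansion arguments of Lemmas~\ref{lem:factor}/\ref{lem:factorTree} is the right one), and absorb the $o(n)$ bad vertices with a uniform per-vertex bound. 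The only nit is that the crude bound for bad vertices should be drawn from the cluster-level tail estimate \eqref{eqKPtail}/Lemma~\ref{lemVertexTail} (bounding each anchored cluster series by an absolute constant) rather than the polymer-level condition \eqref{eqKPgoalDis}, a trivial substitution.
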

\begin{proof}
  We give the proof of the first statement, as the proof of the second
  statement is the same up to changes in notation. Fix $\eps>0$ and
  let $m= \log (2/\eps)$.  Let $V_m$ be the set of vertices of $G_n$
  whose depth-$m$ neighborhood is not a tree.  Recall that we write $\cC_{\dis}^{v}(G_n)$ for the set of clusters on
  $G_{n}$ that contain a vertex $v$. Then using Lemma~\ref{lem:DisQual},
\begin{align*}
\left | \log Z^{\dis}_{G_n} - n f _{\dis} \right| 
&\leq \left | \log (q^n\Xi^{\dis}_{G_n}) - n f _{\dis} \right| +2e^{-n}  \\
&=  \left |\sum_{v \in V}    \left( \sum_{\Gamma\in\cC_{\dis}^{v}(G_n)}  \frac{1}{u(\Gamma)}w^{\dis}(\Gamma)   -   \sum_{\Gamma \in \cC_{\dis}^{r}} \frac{1}{u(\Gamma)} w^{\dis}(\Gamma)\right )  \right|  + 2e^{-n}\\
&\le \sum_{v \in V}   \left |    \sum_{\Gamma\in\cC_{\dis}^{v}(G_n)}  \frac{1}{u(\Gamma)}w^{\dis}(\Gamma)   -   \sum_{\Gamma \in \cC_{\dis}^{r}} \frac{1}{u(\Gamma)} w^{\dis}(\Gamma)  \right| + 2e^{-n} \\
&\le \eps n + \sum_{v \in V}   \left |
  \sum_{\substack{\Gamma\in\cC_{\dis}^{v}(G_n) \\ \|\Gamma\| \le m}}  \frac{1}{u(\Gamma)}w^{\dis}(\Gamma)   -   \sum_{\substack{\Gamma \in \cC_{\dis}^{r} \\ \|\Gamma\| \le m}} \frac{1}{u(\Gamma)} w^{\dis}(\Gamma)  \right| +2e^{-n}\\
&\le \eps n  + 2 |V_{m}|+2e^{-n}\,.
\end{align*}
The penultimate inequality follows from Lemma~\ref{lemVertexTail} applied to each series inside the absolute values.  The final inequality follows from applying Lemma~\ref{lemVertexTail} to $v \in V_m$ to bound each series by $1$ in absolute value and noting that if $v \notin V_m$, the two series inside the absolute values are identical.   Since $G_n$ is locally tree-like, $|V_{m}|/n \to 0$ and so $\limsup |    \frac{1}{n} \log Z^{\dis}_{G_n} - f_\dis|  \le \eps$.  Taking $\eps \to 0$ proves the statement. 

The same proof shows that $\lim_{n \to \infty} \frac{1}{n} \log Z^{\ord}_{G_n} = f_\ord$ for $\abs{e^{\beta}-1}\ge e^{\beta_{0}} -1$.
\end{proof}

\subsection{Determining the critical point}
\label{subsecCritical}
We will define the critical point $\beta_c(q,\Delta)$ implicitly in terms of the
functions $f_\ord$ and $f_{\dis}$. It will be convenient to first
obtain the formula for $f_{\dis}$ given in Theorem~\ref{thmPhases}.

\begin{lemma}
  \label{lem:fdislem}
   For $\beta\in \mathbb C$ such that 
   $\abs{ e^{\beta}-1} \le e^{\beta_{1}}-1$
  \begin{equation}
    \label{eq:fdis}
    f_{\dis}(\beta,q) = \log q + \frac{\Delta}{2} \log\left(1+\frac{e^{\beta}-1}{q}\right)  \,.
  \end{equation}
\end{lemma}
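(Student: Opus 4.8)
The plan is to prove~\eqref{eq:fdis} by passing to power series in $x\bydef e^{\beta}-1$ and matching Taylor coefficients at $\beta=0$. Recall from~\eqref{eq:ftreelim} that $f_{\dis}=\log q+\overline f_{\dis}$, where $\overline f_{\dis}=\sum_{\Gamma\in\cC^{r}_{\dis}}\frac{1}{u(\Gamma)}w^{\dis}(\Gamma)$ is the cluster expansion of the disordered polymer model on $\bT_{\Delta}$. By Lemma~\ref{lemTreeConverge} together with~\eqref{eqKPtail} (applied with $\gamma$ a single edge at $r$) this series converges absolutely and uniformly on $\{\,|e^{\beta}-1|\le e^{\beta_{1}}-1\,\}$; since each summand is a polynomial in $x$, $\overline f_{\dis}$ is analytic in $\beta$ on the open domain $\{\,|e^{\beta}-1|<e^{\beta_{1}}-1\,\}$ and continuous up to its boundary. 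The right-hand side of~\eqref{eq:fdis} is analytic on the same domain because $|x/q|\le q^{2.1/\Delta-1}<1$ for $q$ large, so that $1+x/q$ stays in the disc of radius $1$ about $1$. Both functions depend on $\beta$ only through $e^{\beta}-1$, hence are $2\pi i$-periodic, so agreement on the connected component of $\beta=0$ propagates to the whole domain and, by continuity, to its closure. It therefore suffices to verify that for every $k\ge1$ the coefficient of $x^{k}$ in $\overline f_{\dis}$ equals that of $\tfrac{\Delta}{2}\log(1+x/q)$, namely $\tfrac{\Delta}{2}\cdot\tfrac{(-1)^{k-1}}{k\,q^{k}}$.

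First I would identify the $x^{k}$-coefficient of $\overline f_{\dis}$. Since $w^{\dis}_{\gamma}=q^{1-|\gamma|}x^{|E(\gamma)|}$ and, on a tree, every polymer is a subtree (so $|E(\gamma)|=|\gamma|-1$), for any cluster $\Gamma$ one has $\prod_{\gamma\in\Gamma}w^{\dis}_{\gamma}=q^{-\|\Gamma\|}x^{\|\Gamma\|}$. Hence only clusters with $\|\Gamma\|=k$ contribute, and the coefficient is $q^{-k}B_{k}$ where
\[ B_{k}\bydef\sum_{\Gamma\in\cC^{r}_{\dis},\ \|\Gamma\|=k}\frac{\phi(H_{\Gamma})}{u(\Gamma)}, \]
a \emph{finite} sum that depends only on the radius-$k$ ball of $\bT_{\Delta}$ about $r$, and in particular is independent of $q$ and $\beta$. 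The identity to be proved thus reduces to the purely combinatorial statement $B_{k}=\tfrac{\Delta}{2}\cdot\tfrac{(-1)^{k-1}}{k}$.

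To evaluate $B_{k}$ I would use a genuine finite regular graph rather than a tree. Fix a $\Delta$-regular graph $G$ on $N$ vertices whose girth exceeds $4k$ (such graphs exist for every $\Delta$ and every prescribed girth). The same weight-preserving bijection as in Lemma~\ref{lemFreePartition} --- which needs no restriction on polymer sizes --- shows that $q^{-N}Z_{G}(q,\beta)$ equals the partition function $\Xi$ of the disordered polymer model on $G$. On one hand, expanding $\log\Xi$ via its (for $x$ near $0$, absolutely convergent) cluster expansion and reading off the coefficient of $x^{k}$ exactly as above gives $q^{-k}\sum_{v\in V(G)}\sum_{\Gamma\ni v,\ \|\Gamma\|=k}\phi(H_{\Gamma})/u(\Gamma)$, using $\sum_{v\in\Gamma}1/u(\Gamma)=1$; since the girth exceeds $4k$, the radius-$k$ ball about \emph{every} vertex of $G$ is isomorphic as a rooted graph to the radius-$k$ ball of $\bT_{\Delta}$, so each inner sum equals $B_{k}$ and the coefficient is $N q^{-k}B_{k}$. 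On the other hand, writing $q^{c(A)}x^{|A|}=q^{N}(x/q)^{|A|}\,q^{\,c(A)+|A|-N}$ and noting that $c(A)+|A|-N\ge0$ is the cycle rank of $A$, one gets $q^{-N}Z_{G}=\sum_{A\subseteq E}(x/q)^{|A|}q^{\,c(A)+|A|-N}$; every $A$ of positive cycle rank contains a cycle and hence at least $4k+1$ edges, so modulo $x^{2k+1}$ this sum coincides with $\sum_{A\subseteq E}(x/q)^{|A|}=(1+x/q)^{N\Delta/2}$, whence the coefficient of $x^{k}$ in $\log\big(q^{-N}Z_{G}\big)$ equals $\tfrac{N\Delta}{2}\cdot\tfrac{(-1)^{k-1}}{k\,q^{k}}$. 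Equating the two expressions and cancelling $Nq^{-k}$ yields $B_{k}=\tfrac{\Delta}{2}\cdot\tfrac{(-1)^{k-1}}{k}$, which completes the proof.

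The step I expect to be the real obstacle --- or rather the place where one must resist a natural but doomed shortcut --- is the evaluation of $B_{k}$. One would like to compute it directly on $\bT_{\Delta}$ or on its truncations $\bT_{\Delta}^{L}$, for which $\Xi^{\dis}_{\bT_{\Delta}^{L}}=(1+x/q)^{|V(\bT_{\Delta}^{L})|-1}$ is exactly known by the forest identity and Lemma~\ref{lemFreePartition}. But $\bT_{\Delta}^{L}$ is \emph{almost all boundary}: a constant fraction of its vertices lie within $O(1)$ of a leaf and have atypical local structure, so $\tfrac{1}{|V(\bT_{\Delta}^{L})|}\log\Xi^{\dis}_{\bT_{\Delta}^{L}}\to\log(1+x/q)$ rather than $\tfrac{\Delta}{2}\log(1+x/q)$, and the per-vertex contributions to the cluster expansion do not converge to the coefficients of $\overline f_{\dis}$. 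Replacing the truncated tree by a boundaryless finite graph --- a $\Delta$-regular graph of large girth, in which every radius-$k$ ball is the $\Delta$-regular tree ball --- is precisely what makes each per-vertex contribution uniform and equal to the desired quantity $B_{k}$.
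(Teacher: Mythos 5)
Your proof is correct, but it takes a genuinely different route from the paper's. The paper works entirely on the tree: it groups the clusters in $\overline f_{\dis}$ according to their support $T$ (a finite subtree containing the root), recognizes each grouped sum $\sum_{\Gamma:\,\cG(\Gamma)=T}w^{\dis}(\Gamma)$ as a joint cumulant of the edge indicators of $T$ under the free random cluster measure on $T$, and uses the fact that this measure is independent edge percolation with retention probability $(e^{\beta}-1)/(e^{\beta}-1+q)$ to conclude that every group with at least two edges vanishes; only the $\Delta$ single-edge subtrees at the root survive, and together with the factor $1/|V(T)|=1/2$ they yield $\tfrac{\Delta}{2}\log\left(1+\tfrac{e^{\beta}-1}{q}\right)$ directly for all $\beta$ in the region. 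You instead extract the coefficient of $x^{k}$ (with $x=e^{\beta}-1$), observe that it equals $q^{-k}B_{k}$ with $B_{k}$ a finite, purely combinatorial quantity independent of $q$ and $\beta$, and evaluate $B_{k}$ by transferring to a finite $\Delta$-regular graph of girth $>4k$, where the unrestricted disordered polymer representation $q^{-N}Z_{G}=\Xi$ is exact and $q^{-N}Z_{G}\equiv(1+x/q)^{N\Delta/2}$ modulo high powers of $x$ because positive cycle rank forces at least girth-many edges; all the steps check out (polymers in clusters of size $k$ are trees by the girth assumption, the per-vertex cluster sums coincide with $B_{k}$ since every radius-$k$ ball is the tree ball, and the bijection of Lemma~\ref{lemFreePartition} indeed holds on any graph with unrestricted polymers). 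What each approach buys: the paper's argument is shorter and needs no auxiliary graph, but it leans on the cumulant interpretation of grouped cluster weights, which it invokes without detailed proof; yours replaces that probabilistic input by an elementary formal-power-series comparison, at the cost of invoking the existence of $\Delta$-regular graphs of large girth, and your closing observation correctly identifies why the truncated trees $\bT_{\Delta}^{L}$ cannot play this role (boundary vertices change the per-vertex constant from $\Delta/2$ to $1$). Two minor simplifications: girth $>2k$ already suffices, and the analytic-continuation preamble can be dropped entirely, since absolute convergence of the cluster expansion on all of $\{\,|e^{\beta}-1|\le e^{\beta_{1}}-1\,\}$ lets you regroup the series by $\|\Gamma\|=k$ pointwise there, which together with $|x/q|<1$ gives \eqref{eq:fdis} on the whole region at once.
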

\begin{proof}
  This proof uses the following generalization of
  Section~\ref{secDisPolyonTrees}. Given a finite subtree $T$ of
  $\mathbb T_\Delta$, define the disordered polymer model on $T$ just
  as we did for $\bT_\Delta^L$ and let $\cC_{\dis}(T)$ denote the
  collection of clusters of disordered polymers in $T$.  For a cluster
  $\Gamma$ let $\cG(\Gamma)$ denote the graph union of all polymers in
  $\Gamma$. As in Lemma~\ref{lemFreePartition}, the polymer model partition function on $T$ is a scaling of the random cluster model partition function.

  Note that the random cluster measure on a finite tree with free boundary
  conditions has a very simple description: it is independent edge
  percolation with the probability of retaining each edge being
  $\frac{e^{\beta} - 1 }{ e^{\beta}-1 + q }$~\cite[Chapter
  10]{grimmett2006random}.  This independence implies that any joint cumulant
  involving indicators of at least two edges vanishes, i.e., for
  all trees $T$ with at least two edges,
  \begin{equation}
    \label{eq:treecluster}
    \sum_{\substack{\Gamma\in \cC_{\dis}(T):\\  \cG(\Gamma)=T}} w^{\dis}(\Gamma)=0\,,
  \end{equation}
since the left-hand side is the joint cumulant of the edges of $T$ in
the random cluster model on $T$. 

 To conclude, note that we have
  \begin{equation*}
    \overline f_{\dis}=\sum_{\Gamma \in \cC_r^{\dis}}
    \frac{1}{u(\Gamma)} w^{\dis}(\Gamma) = \sum_T
    \frac{1}{|V(T)|}\sum_{\substack{\Gamma\in \cC_{\dis}(T):\\
        \cG(\Gamma)=T}}w(\Gamma)
  \end{equation*}
  where the first sum on the right hand side is over all finite
  subtrees of $\mathbb T_\Delta$ containing the root.  By
  Lemma~\ref{lemTreeConverge} part (1), these sums are absolutely
  convergent.  By \eqref{eq:treecluster}, only trees consisting of a
  single edge contribute to the sum over $T$, and there are $\Delta$
  of these.  Each contributes
  $ \log\left(1+\frac{e^{\beta}-1}{q}\right) $, and this gives the
  result.
\end{proof}

\begin{prop}
  \label{prop:unique}
  For all $\Delta \ge 5$ and $q=q(\Delta, \delta)$ large enough, there is a
  unique $\beta_{c}(q,\Delta) \in (\beta_0,\beta_1) $ such that
  $f_{\ord}(\beta)=f_{\dis}(\beta)$. Moreover, $f_{\ord} < f_{\dis}$ for
  $\beta \in [\beta_0,\beta_c)$ and $f_{\ord} > f_{\dis}$ for
  $\beta \in (\beta_c,\beta_1]$.
\end{prop}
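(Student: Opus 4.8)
The plan is to analyze the function $g(\beta) \bydef f_{\ord}(\beta) - f_{\dis}(\beta)$ on the overlap interval $[\beta_0,\beta_1]$ and show it changes sign exactly once, from negative to positive. The key structural input is that, when $q$ is large, both $\overline f_{\ord}$ and $\overline f_{\dis}$ are small perturbations: by Lemma~\ref{lemTreeConverge} together with the tail bound~\eqref{eqKPtail} applied to a single edge at the root, $|\overline f_{\ord}|$ and $|\overline f_{\dis}|$ are $O(q^{-c/\Delta})$ uniformly for $\beta \in [\beta_0,\beta_1]$ (indeed the leading cluster contributions are exponentially small in $q$). Consequently, from~\eqref{eq:ftreelim} and Lemma~\ref{lem:fdislem},
\begin{equation*}
g(\beta) = \frac{\Delta}{2}\log(e^{\beta}-1) - \log q - \frac{\Delta}{2}\log\!\left(1+\frac{e^{\beta}-1}{q}\right) + \overline f_{\ord}(\beta) - \overline f_{\dis}(\beta).
\end{equation*}
The first three terms form an explicit ``main part'' $g_0(\beta)$, and the remainder $g-g_0$ is uniformly $O(q^{-c/\Delta})$ in both value and — after a Cauchy-estimate argument using analyticity of the cluster expansion in a complex neighborhood, which Lemma~\ref{lemTreeConverge} provides — in derivative.

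First I would analyze $g_0$. Writing $x = e^{\beta}-1 \in [q^{1.9/\Delta}, q^{2.1/\Delta}]$, we have $g_0 = \tfrac{\Delta}{2}\log x - \log q - \tfrac{\Delta}{2}\log(1+x/q)$. Since $x = o(q)$ on this range, $\tfrac{\Delta}{2}\log(1+x/q) = O(x/q) = o(1)$, so $g_0(\beta) = \tfrac{\Delta}{2}\log x - \log q + o(1)$, which runs from $\approx (\tfrac{1.9}{2}-1)\log q < 0$ at $\beta_0$ to $\approx (\tfrac{2.1}{2}-1)\log q > 0$ at $\beta_1$. Moreover $\frac{d}{d\beta}\big(\tfrac{\Delta}{2}\log(e^\beta-1)\big) = \tfrac{\Delta}{2}\cdot\frac{e^\beta}{e^\beta-1} \ge \tfrac{\Delta}{2}$, and the correction $\tfrac{\Delta}{2}\frac{d}{d\beta}\log(1+x/q)$ is $O(x/q)=o(1)$, so $g_0'(\beta) \ge \Delta/2 - o(1) > 1$ throughout $[\beta_0,\beta_1]$. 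Combining with the derivative bound $|g'-g_0'| = O(q^{-c/\Delta}) < 1/2$ for $q$ large gives $g'(\beta) > 0$ on all of $[\beta_0,\beta_1]$, so $g$ is strictly increasing there; and $g(\beta_0) < 0 < g(\beta_1)$ (the $O(q^{-c/\Delta})$ corrections cannot overcome the $\Theta(\log q)$ gaps) yields a unique root $\beta_c \in (\beta_0,\beta_1)$, with $g<0$ to its left and $g>0$ to its right. This is exactly the claimed statement.

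The main obstacle I anticipate is the derivative control on the cluster-expansion remainders $\overline f_{\ord} - \overline f_{\dis}$: individual polymer weights $w^{\ord}_\gamma, w^{\dis}_\gamma$ depend on $\beta$ in a way that is not monotone, and differentiating the cluster expansion term-by-term requires justifying uniform convergence of the differentiated series. The clean route is the analyticity remark made in the paper after Theorem~\ref{thmKP}: verify~\eqref{eqKPcond} on a complex disk $\{|e^\beta - 1| \le e^{\beta_1}-1\}$ (for $\dis$) and $\{|e^\beta-1|\ge e^{\beta_0}-1\}$ (for $\ord$) — which Propositions~\ref{prop:DisPoly},~\ref{prop:OrdPoly} and Lemma~\ref{lemTreeConverge} already do — so that $\overline f_{\ord}, \overline f_{\dis}$ are analytic on a fixed-width complex neighborhood of $[\beta_0,\beta_1]$ with sup-norm $O(q^{-c/\Delta})$ there; Cauchy's estimate then bounds their derivatives on the real interval by $O(q^{-c/\Delta})$ as well. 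A secondary (minor) point is bookkeeping the constant $c$ in the $q^{-c/\Delta}$ bounds so that it genuinely beats the fixed numbers coming from $\beta_0,\beta_1$; since those exponents $1.9/\Delta, 2.1/\Delta$ are separated by a $\Theta(1/\Delta)$ gap while the remainder is $q^{-\Omega(1/\Delta)}$ with a larger implied constant in the exponent (e.g.\ $r = \log q/(200\Delta)$ feeds tails of order $q^{-\Omega(1/\Delta)}$), one checks the inequality holds for $q=q(\Delta)$ large, exactly as in the other propositions of this section.
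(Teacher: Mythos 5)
Your overall skeleton (strict monotonicity of $g=f_{\ord}-f_{\dis}$ on the overlap window, plus sign checks at $\beta_0,\beta_1$ using the explicit formula for $f_{\dis}$ and the smallness of $\overline f_{\ord}$) is the same as the paper's, and your endpoint sign computation is essentially identical to the paper's use of~\eqref{eq:fdiff}. Where you genuinely differ is the monotonicity step: the paper never differentiates the cluster expansion. Instead it observes that $\frac{1}{n}\cdot\frac{e^\beta-1}{e^\beta}\frac{d}{d\beta}\log Z^{\ord}_{G_n}$ is the expected number of edges under the random cluster measure conditioned on $\Omega_{\ord}$, hence at least $(1-\eta)n\Delta/2$ by definition of $\Omega_{\ord}$, and interchanges limit and derivative via the uniform convergence of analytic functions in Proposition~\ref{prop:limf}; this yields $\frac{d}{d\beta}(f_{\ord}-f_{\dis})\geq e^{\beta}\frac{\Delta}{2}\bigl[\frac{1-\eta}{e^{\beta}-1}-\frac{1}{q+e^{\beta}-1}\bigr]>0$ with no need to control $\overline f_{\ord}'$ at all. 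Your route (Cauchy estimates on $\overline f_{\ord}$ using the complex convergence region from Lemma~\ref{lemTreeConverge}) is a legitimate alternative and is more self-contained, at the cost of the issues below.

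Two concrete points. First, your displayed formula for $g$ double-counts the disordered correction: by Lemma~\ref{lem:fdislem}, $\overline f_{\dis}$ \emph{is} $\frac{\Delta}{2}\log\bigl(1+\frac{e^{\beta}-1}{q}\bigr)$, so the formula should contain either the explicit logarithm or $-\overline f_{\dis}$, not both; this is harmless (both are $o_q(1)$) but the identity as written is false. Second, and more substantively, the Cauchy estimate degrades at $\beta_0$: the ordered expansion converges on $\{\,|e^{\beta}-1|\geq e^{\beta_0}-1\,\}$, which contains the half-plane $\mathrm{Re}\,\beta\geq\beta_0$ but no disk of fixed radius centered at $\beta_0$, so for real $\beta$ near $\beta_0$ your derivative bound is of the form $q^{-1/(200\Delta)}/(\beta-\beta_0)$ and does not stay below $\Delta/2$ on a window of width roughly $q^{-1/(200\Delta)}$ above $\beta_0$ (for the disordered side this is moot, since $\overline f_{\dis}$ has an explicit formula with an explicit, small derivative). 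This gap is patchable in either of two easy ways: rerun Propositions~\ref{prop:DisPoly}, \ref{prop:OrdPoly} and Lemma~\ref{lemTreeConverge} with slightly widened exponents (say $1.85$ and $2.15$ in place of $1.9$ and $2.1$) so that $[\beta_0,\beta_1]$ lies compactly inside the convergence regions, or observe that on the short window above $\beta_0$ where the derivative bound fails, $g<0$ already follows from the value bounds (the main term is about $-0.05\log q$ there while the remainders are $O(1)$), so uniqueness of the root and the claimed sign pattern only require monotonicity on the rest of the interval. With one of these patches your argument is complete.
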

\begin{proof}[Proof of Proposition~\ref{prop:unique}]
Our proof of this proposition follows the strategy
of~\cite{laanait1991interfaces}. 

We begin with a computation. Let $\beta\in [\beta_0, \beta_1]$ so that both the
ordered and disordered expansions converge.  Then
by Proposition~\ref{prop:limf} and Lemma~\ref{lem:fdislem},
\begin{align*}
  \label{eq:slope}
  \frac{d}{d\beta} (f_{\ord}-f_{\dis}) 
  &= 
  \frac{d}{d\beta} \lim_{n\to\infty}  \frac{1}{n}\log Z^{\ord}_{G_{n}}- \frac{\Delta}{2}\cdot\frac{e^\beta}{q+e^\beta-1}\\
 & =
  \lim_{n\to\infty}  \frac{1}{n}  \frac{d}{d\beta}  \log Z^{\ord}_{G_{n}}- \frac{\Delta}{2}\cdot\frac{e^\beta}{q+e^\beta-1}\, .
  \end{align*}
The interchange of the derivative and limit is valid since $f_{\ord}$ is a uniform limit of analytic functions by Proposition~\ref{prop:limf}. To bound the first term we note that 
\[
 \frac{1}{n}\cdot \frac{e^\beta-1}{e^\beta}  \frac{d}{d\beta}  \log Z^{\ord}_{G_{n}}
\]
is the expected number of edges in a random cluster configuration conditioned on $\Omega_{\ord}$ and is therefore at least $(1-\eta)n\Delta/2$. It follows that 
\[
  \frac{d}{d\beta} (f_{\ord}-f_{\dis}) \geq e^{\beta}\frac{\Delta}{2}\left[ \frac{1-\eta}{e^\beta-1}- \frac{1}{q+e^\beta-1}\right] >0,
\]
since $\eta\leq 1/100$ and $\beta\in [\beta_0, \beta_1]$.

Next, note that
\begin{equation}
  \label{eq:fdiff}
f_{\dis}-f_{\ord}= \frac{\Delta}{2}\log\left(\frac{q^{2/\Delta}}{e^\beta-1}+q^{2/\Delta-1} \right)- \overline f_{\ord} .
\end{equation}
By Lemma~\ref{lemTreeConverge} part (2) and Lemma~\ref{lemVertexTail} (with $\bT_\Delta$ in place of $G$) we have $|\overline f_{\ord}|\leq q^{\frac{1}{200\Delta}}$.
It follows that for $q$ sufficiently large, if $\beta=\beta_0$ then
$f_{\dis}>f_{\ord}$ and if $\beta=\beta_1$ then $f_{\ord}>f_{\dis}$.
Since $f_{\ord}-f_{\dis}$ is a continuous and strictly increasing
function of $\beta$ on $[\beta_0,\beta_1]$, we obtain that there is a unique $\beta_{c}\in(\beta_0,\beta_1)$ at which $f_{\ord}=f_{\dis}$.
\end{proof}

  \begin{cor}
    \label{cor:crit}
    For all $\Delta\geq 5$ and $q=q(\Delta)$ large enough,
    $\beta_{c}(q,\Delta)$ is given by
    \begin{equation}
      \label{eq:crit}
      \beta_{c}(q,\Delta) = (1+o_{q}(1))\frac{2\log q}{\Delta}.
    \end{equation}
  \end{cor}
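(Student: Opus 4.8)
The plan is to read off the asymptotics of $\beta_c$ directly from its defining identity $f_{\ord}(\beta_c) = f_{\dis}(\beta_c)$ (Proposition~\ref{prop:unique}), using the closed form $f_{\dis}(\beta,q) = \log q + \frac{\Delta}{2}\log\bigl(1 + (e^{\beta}-1)/q\bigr)$ from Lemma~\ref{lem:fdislem}, the decomposition $f_{\ord} = \frac{\Delta}{2}\log(e^{\beta}-1) + \overline f_{\ord}$ from \eqref{eq:ftreelim}, and the fact that the tree correction $\overline f_{\ord}$ is negligible when $q$ is large.

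First I would record that $\overline f_{\ord}(\beta_c) = o_q(1)$. By Proposition~\ref{prop:unique}, $\beta_c \in (\beta_0,\beta_1)$, so $e^{\beta_c}-1 \ge e^{\beta_0}-1$ and the ordered cluster expansion on $\bT_\Delta$ converges (Lemma~\ref{lemTreeConverge}). Applying Lemma~\ref{lemVertexTail} on $\bT_\Delta$, exactly as in the proof of Proposition~\ref{prop:unique}, and using that every cluster has $\|\Gamma\| \ge 1$, gives
\[
  |\overline f_{\ord}(\beta_c)| \;\le\; \sum_{\Gamma \in \cC_{\ord}^{r}} |w^{\ord}(\Gamma)| \;\le\; q^{-1/(200\Delta)} \;=\; o_q(1).
\]
Next, subtracting the two free energies exactly as in \eqref{eq:fdiff} and using $f_{\ord}(\beta_c) = f_{\dis}(\beta_c)$ yields
\[
  \frac{\Delta}{2}\log\!\left(\frac{q^{2/\Delta}}{e^{\beta_c}-1} + q^{2/\Delta-1}\right) \;=\; \overline f_{\ord}(\beta_c) \;=\; o_q(1),
\]
so $\frac{q^{2/\Delta}}{e^{\beta_c}-1} + q^{2/\Delta-1} = 1 + o_q(1)$. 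Since $\Delta \ge 5$ forces $q^{2/\Delta-1} = o_q(1)$, we obtain $\frac{q^{2/\Delta}}{e^{\beta_c}-1} = 1 + o_q(1)$, i.e.\ $e^{\beta_c}-1 = q^{2/\Delta}\bigl(1 + o_q(1)\bigr)$. Finally, since $q^{2/\Delta}\to\infty$ we also have $e^{\beta_c} = q^{2/\Delta}(1+o_q(1))$, and taking logarithms gives $\beta_c = \frac{2}{\Delta}\log q + \log(1+o_q(1)) = \frac{2}{\Delta}\log q + o_q(1) = (1+o_q(1))\frac{2\log q}{\Delta}$.

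There is essentially no obstacle here: the whole argument reduces to the observation that convergence of the ordered expansion on the tree (already established in Lemma~\ref{lemTreeConverge}) forces $\overline f_{\ord}(\beta_c)\to 0$ as $q\to\infty$, after which everything is elementary asymptotics of the explicit formula for $f_{\dis}$. The only point needing a little care is confirming that both the correction term and $q^{2/\Delta-1}$ are genuinely $o_q(1)$ — immediate since $\Delta$ is held fixed while $q\to\infty$ — so that both sides of the displayed identity are under control.
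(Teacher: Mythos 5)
Your argument is correct and is essentially the paper's proof: the paper also obtains \eqref{eq:crit} by setting $f_{\ord}(\beta_c)=f_{\dis}(\beta_c)$ in \eqref{eq:fdiff} and solving for $\beta$, using the smallness of $\overline f_{\ord}$ established via Lemma~\ref{lemVertexTail} in the proof of Proposition~\ref{prop:unique}. You have simply written out the elementary asymptotics (including the correct bound $|\overline f_{\ord}|\le q^{-1/(200\Delta)}$) that the paper leaves implicit.
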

  \begin{proof}
    By Proposition~\ref{prop:unique}, the claim follows by equating
    $f_{\ord}$ and $f_{\dis}$ and solving for $\beta$, see~\eqref{eq:fdiff}.
  \end{proof}

Proposition~\ref{prop:unique} implies there is a unique transition
point on locally tree-like sequences of finite graphs satisfying our
expansion hypotheses. The next proposition shows that the transition
is first order.
\begin{prop}
\label{propFreeEnergy}
For any sequence of locally tree-like graphs $G_n$ from
$\cG_{\Delta,\del}$, if $q=q(\Delta,\delta)$ is large enough then
\begin{enumerate}
\item For $\beta < \beta_c$,  $\limsup_{n \to \infty} \frac{1}{n} \log \mu_{n} ( \Omega \setminus \Omega_{\dis}) <0$. 
\item For $\beta > \beta_c$,  $\limsup_{n \to \infty} \frac{1}{n} \log \mu_{n} ( \Omega \setminus \Omega_{\ord}) <0$. 
\end{enumerate}
\end{prop}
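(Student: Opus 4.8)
The plan is to reduce the proposition entirely to results already established: the a priori bounds of Lemma~\ref{lem:ErrQual} and the free-energy asymptotics of Proposition~\ref{prop:limf}, combined with the ordering $f_{\ord} < f_{\dis}$ on $[\beta_0,\beta_c)$ and $f_{\ord} > f_{\dis}$ on $(\beta_c,\beta_1]$ from Proposition~\ref{prop:unique}. I treat the two symmetric cases $\beta < \beta_c$ and $\beta > \beta_c$, and within each I split further according to whether $\beta$ lies in the overlap interval $(\beta_0,\beta_1)$, where both cluster expansions converge, or outside it.

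Consider first $\beta < \beta_c$. Since $\Omega \setminus \Omega_{\dis} = \Omega_{\ord} \sqcup \Omega_{\err}$, we have $\mu_n(\Omega \setminus \Omega_{\dis}) = (Z^{\ord} + Z^{\err})/Z$, and because $\limsup_n n^{-1}\log(a_n + b_n) = \max(\limsup_n n^{-1}\log a_n, \limsup_n n^{-1}\log b_n)$, it suffices to show $\limsup_n n^{-1}\log(Z^{\ord}/Z) < 0$ and $\limsup_n n^{-1}\log(Z^{\err}/Z) < 0$ separately. The second follows immediately from Lemma~\ref{lem:ErrQual}(1). For the first: if $\beta \le \beta_0$, then Lemma~\ref{lem:ErrQual}(3) gives $Z^{\ord}/Z \le e^{-n}$ directly. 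If $\beta_0 < \beta < \beta_c$, then $\beta \in (\beta_0,\beta_1)$, so both statements of Proposition~\ref{prop:limf} apply; using $Z \ge Z^{\dis}$ (and noting $Z^{\dis} \ge q^n > 0$ since $\emptyset \in \Omega_{\dis}$), we get
\[
  \tfrac{1}{n}\log(Z^{\ord}/Z) \le \tfrac{1}{n}\log Z^{\ord}_{G_n} - \tfrac{1}{n}\log Z^{\dis}_{G_n} \longrightarrow f_{\ord} - f_{\dis},
\]
which is strictly negative by Proposition~\ref{prop:unique} since $\beta < \beta_c$. The case $\beta > \beta_c$ is entirely analogous with ordered and disordered interchanged: $\mu_n(\Omega \setminus \Omega_{\ord}) = (Z^{\dis} + Z^{\err})/Z$; the $Z^{\err}/Z$ term is again controlled by Lemma~\ref{lem:ErrQual}(1); if $\beta \ge \beta_1$ then Lemma~\ref{lem:ErrQual}(2) gives $Z^{\dis}/Z \le e^{-n}$; and if $\beta_c < \beta < \beta_1$ then $Z \ge Z^{\ord} \ge q(e^{\beta}-1)^{\Delta n/2} > 0$ (as $E \in \Omega_{\ord}$ and $\beta > 0$), so $n^{-1}\log(Z^{\dis}/Z) \to f_{\dis} - f_{\ord} < 0$ by Propositions~\ref{prop:limf} and~\ref{prop:unique}.

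I do not expect a real obstacle here; essentially all the work has been pushed into the earlier results, and this proposition is the payoff. The one point requiring care — and the reason for the inner case split — is that Propositions~\ref{prop:DisPoly} and~\ref{prop:OrdPoly}, hence Proposition~\ref{prop:limf}, only provide convergent expansions for the disordered model when $|e^{\beta}-1| \le e^{\beta_1}-1$ and for the ordered model when $|e^{\beta}-1| \ge e^{\beta_0}-1$; outside these ranges one cannot compare $Z^{\ord}$ and $Z^{\dis}$ via their free-energy limits and must instead fall back on the crude but sufficient bounds of Lemma~\ref{lem:ErrQual}. It is precisely the overlap $\beta_0 < \beta_c < \beta_1$ that makes the two halves of the argument meet.
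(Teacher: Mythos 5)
Your proposal is correct and follows essentially the same route as the paper: decompose $\Omega\setminus\Omega_{\dis}$ (resp.\ $\Omega\setminus\Omega_{\ord}$) into the opposite phase plus $\Omega_{\err}$, use Lemma~\ref{lem:ErrQual} for the error set and for $\beta$ outside $(\beta_0,\beta_1)$, and inside the overlap interval compare $Z^{\ord}$ and $Z^{\dis}$ via Proposition~\ref{prop:limf} together with the sign of $f_{\ord}-f_{\dis}$ from Proposition~\ref{prop:unique}. Your write-up merely makes explicit the bound $\mu_n(\Omega_{\ord})\le Z^{\ord}/Z^{\dis}$ and the positivity of the partition functions, which the paper leaves implicit.
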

\begin{proof}
  The lemma follows by combining Proposition~\ref{prop:limf} with the
  estimates of Lemma~\ref{lem:ErrQual}.
  If $\beta\in (\beta_0, \beta_c)$, then by Proposition~\ref{prop:limf} 
  and  Lemma~\ref{lem:ErrQual} part $(1)$, 
  \[
  \limsup_{n \to \infty} \frac{1}{n} \log \mu_{n} ( \Omega \setminus \Omega_{\dis}) \le \max \left \{ -1, f_{\ord}-f_{\dis} \right \}<0
  \] 
  where the last inequality follows from by Proposition~\ref{prop:unique}. Similarly if $\beta\in (\beta_c, \beta_1)$, then the quantity in part $(2)$ is at most $\max \{ -1, f_{\dis}-f_{\ord} \}<0$.
  
  If $\beta\leq \beta_0$, then by Lemma~\ref{lem:ErrQual}, 
  $\limsup_{n \to \infty} \frac{1}{n} \log \mu_{n} ( \Omega \setminus \Omega_{\dis}) \leq -1$.
  Similarly if $\beta\geq \beta_1$, then
  $\limsup_{n \to \infty} \frac{1}{n} \log \mu_{n} ( \Omega \setminus \Omega_{\ord}) \leq -1$.
\end{proof}

\subsection{Local convergence and proof of Theorem~\ref{thmPhases}}

Recall from Sections~\ref{sec:disord-polym-meas}
and~\ref{sec:ord-on-edge} that the disordered and ordered polymer
measures on a graph $G_{n}$ induce measures $\overline \nu_{\dis}^{n}$
and $\overline \nu_{\ord}^{n}$ on edges.
\begin{prop}
\label{propLocalConvergence}
Let $G_n$ be a sequence of locally tree-like graphs from $\cG_{\Delta,\del}$, $\Delta \ge 5$.  Then for $q$ large,
\begin{enumerate}
\item If $\beta \le \beta_1$, $\overline \nu_{\dis}^{n} \xrightarrow{loc} \mu^{\free}$.
\item If $\beta \ge \beta_0$, $\overline \nu_{\ord}^{n} \xrightarrow{loc} \mu^{\wire}$.
\end{enumerate}
\end{prop}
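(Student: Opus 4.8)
The plan is to use the cluster expansion to reduce the law of $\overline\nu_{\dis}^n$ (resp.\ $\overline\nu_{\ord}^n$) in a bounded neighbourhood of a typical vertex of $G_n$ to the corresponding polymer measure on the infinite $\Delta$-regular tree $\bT_\Delta$, and then to identify the latter with $\mu^{\free}$ (resp.\ $\mu^{\wire}$) using Lemma~\ref{lemFreePartition} (resp.\ Lemma~\ref{lemWirePartition}). I will spell out part (1); part (2) is the same argument after replacing `disordered' by `ordered', occupied edges by unoccupied edges, $\mu^{\free}$ by $\mu^{\wire}$, and Lemma~\ref{lemFreePartition} and Lemma~\ref{lemTreeConverge}(1) by Lemma~\ref{lemWirePartition} and Lemma~\ref{lemTreeConverge}(2).

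Fix $\eps,T>0$; the goal is to produce, for all large $n$, a set of vertices of $G_n$ of density at least $1-\eps$ on which the projection of $\overline\nu_{\dis}^n$ to $E(B_T(v))$ is within total variation distance $\eps$ of the projection of $\mu^{\free}$ to the depth-$T$ neighbourhood of the root $r$ of $\bT_\Delta$. I would argue in three steps. First, Lemma~\ref{lemDepthDistribution} supplies an $m=m(\Delta,T,\eps)$ and, for each $v$, a function $\kappa_v$ on subsets $F\subseteq E(B_T(v))$ depending only on disordered clusters of $G_n$ contained in $B_m(v)$, such that $\overline\nu_{\dis}^{n,B_T(v)}$ is determined, up to total variation distance $\eps/2$ and via inclusion--exclusion as in the proof of that lemma, by $\kappa_v$. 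Second, since the disordered polymer model on $\bT_\Delta$ satisfies~\eqref{eqKPcond} (Lemma~\ref{lemTreeConverge}(1)), the proof of Lemma~\ref{lemDepthDistribution} applies verbatim on $\bT_\Delta$ with the same $m$, producing an analogous $\kappa_r$ on subsets $F\subseteq E(B_T(r))$ that determines, to within total variation $\eps/2$, the projection to $E(B_T(r))$ of the induced edge measure $\overline\nu_{\dis}^{\bT_\Delta}$; moreover, by Lemma~\ref{lemFreePartition} together with a routine $L\to\infty$ limiting argument (the polymer side converging by the tail bound~\eqref{eqKPtail}), $\overline\nu_{\dis}^{\bT_\Delta}$ coincides with $\mu^{\free}$ on every bounded neighbourhood of $r$. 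Third, since $G_n$ is $\Delta$-regular and locally tree-like, for all large $n$ the ball $B_m(v)$ is a tree for a set of vertices $v$ of density at least $1-\eps$, and when it is, $B_m(v)$ is isomorphic as a rooted graph ($v\mapsto r$) to $B_m(r)\cong\bT_\Delta^m$; taking $n$ large enough that the constraint $|E'|\le\eta n$ in the definition of $\cP_{\dis}(G_n)$ is vacuous on subgraphs of $B_m(v)$, this isomorphism is a weight- and compatibility-preserving bijection between disordered polymers (hence clusters) of $G_n$ contained in $B_m(v)$ and those of $\bT_\Delta$ contained in $B_m(r)$, whence $\kappa_v=\kappa_r$ under the identification $E(B_T(v))\cong E(B_T(r))$. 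Chaining the three approximations with the triangle inequality for total variation distance then yields the claim on the good set of vertices.

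The main obstacle will be the matching in the third step: one must check that the weight functions restrict to the same thing on polymers supported inside a tree-like ball. For $w^{\dis}$ this is immediate. For $w^{\ord}$ it requires verifying that the finite-component count $c'(\gamma)$ --- the number of components of $(V(G_n),E(G_n)\setminus E_u(\gamma))$ of size $<n/2$ on $G_n$, versus the number of \emph{finite} components of $(V(\bT_\Delta),E(\bT_\Delta)\setminus E_u(\gamma))$ on the tree --- agrees under the isomorphism, and likewise that the inductive condition $\cB_\infty(E_u(\gamma))=E(\gamma)$ defining an ordered polymer is computed locally. For the component count, if $\gamma\subseteq B_m(v)$ then by Lemma~\ref{lem:simple} (using $\phi_G(1/2)\ge 1/10$ and $\eta\le\del/5$) the graph $(V(G_n),E(G_n)\setminus E_u(\gamma))$ has a single component of size $>n/2$ and every other component $S$ has $|S|\le\del n$, so $|E_u(\gamma)|\ge|E(S,S^c)|\ge\frac{5}{9}\Delta|S|$ since $\phi_G(\del)\ge 5/9$; hence the components counted by $c'(\gamma)$ are small pieces cut off near $E_u(\gamma)\subseteq B_m(v)$ and, for $n$ large, lie inside $B_{m'}(v)$ for some $m'=m'(\Delta,T,\eps)$, where they match the finite components produced by the same operation on $\bT_\Delta$. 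The clean fix is to run the whole argument with $m$ enlarged to such an $m'$ throughout. A secondary and easier point is that the localization radius $m$ from Lemma~\ref{lemDepthDistribution} may be taken the same for the $G_n$- and $\bT_\Delta$-models, since it depends only on $\Delta,T,\eps$ through the constant in~\eqref{eqKPcond}, which is uniform over the relevant graphs.
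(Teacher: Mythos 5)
Your proposal is correct and follows essentially the same route as the paper's proof: localize both measures via (the proof of) Lemma~\ref{lemDepthDistribution}, match the truncated cluster sums over a tree-like ball $B_m(v)\cong B_m(r)$, and identify the tree-side object with $\mu^{\free}$ (resp.\ $\mu^{\wire}$) through Lemmas~\ref{lemFreePartition} and~\ref{lemWirePartition} and the $L\to\infty$ weak limit --- the only cosmetic difference being that the paper works directly with the truncations $\bT_\Delta^L$ rather than first passing through an induced measure on the infinite tree. Your explicit verification that $c'(\gamma)$ and the boundary construction $\cB_\infty(E_u(\gamma))$ are computed locally (with the enlarged radius $m'$) correctly fills in a detail that the paper leaves implicit in its remark that the ordered case is identical.
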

\begin{proof}
  We begin with the first statement. 
  To ease notation let us denote $\overline \nu_{\dis}^{n}$
  by $\overline \nu_{\dis}$. 
  Recall that for $T>0$ and $v\in G_n$, 
  $B_T(v)$ denotes the depth-$T$ neighborhood of $v$.
  Recall also that $\overline\nu_{\dis}^{B_{T}(v)}$ denotes the projection of
$\overline\nu_{\dis}$ to $\{0,1\}^{E(B_{T}(v))}$.

For $L>0$, let $\nu_{\dis, L}$ denote the Gibbs measure associated to
the disordered polymer model on $\bT_{\Delta}^L$ as defined in
Section~\ref{secDisPolyonTrees}.  By Lemma~\ref{lemFreePartition} the
induced measure $\overline \nu_{\dis, L}$ on
$\{0,1\}^{E(\bT_\Delta^L)}$ is the free random cluster measure on
$\bT_\Delta^L$.

We let $r$ denote the root of the tree $\bT_{\Delta}^L$.
We will show that for $T>0$ and $\eps>0$, for all $L$ sufficiently large
 the distribution $\overline\nu_{\dis}^{B_{T}(v)}$ of a randomly
 chosen $v \in G_n$ is within distance $2\eps$ of $\overline
 \nu_{\dis, L}^{B_{T}(r)}$ in total variation distance.
 This suffices to prove part (1) since $\mu^{\free}$ is the weak limit of 
$\overline \nu_{\dis, L}$ as $L\to \infty$.

We will apply Lemma~\ref{lemDepthDistribution}.  Given $\eps>0$, let
$m= m(\Delta, T,\eps)$ large enough as required by the lemma.  Since
$G_n$ is locally tree-like, with high probability over the choice of
$v$, the depth-$m$ neighborhood of $v$ will be a tree, so we can
condition on this.  Lemma~\ref{lemDepthDistribution} tells us that up
to total variation distance $\eps$, $\overline\nu_{\dis}^{B_{T}(v)}$
is determined by clusters contained in $B_m(v)$.
  
  By Lemma~\ref{lemTreeConverge}, the cluster expansion of the
  disordered polymer model on $\bT_{\Delta}^L$ converges for all
  $L$. For $L\geq m$, we may apply the proof of
  Lemma~\ref{lemDepthDistribution} to show that up to total variation
  distance $\eps$, $\overline \nu_{\dis, L}^{B_{T}(r)}$ is determined
  by clusters contained in $B_m(r)$.
  
  Since $B_m(v)$ and $B_m(r)$ are identical, we have
  \begin{equation*}
  \| \overline\nu_{\dis}^{B_{T}(v)} - \overline \nu_{\dis, L}^{B_{T}(r)}\|_{TV}\leq 2\eps
  \end{equation*}
 as required.  

  The proof of the second claim is identical, using
  Lemma~\ref{lemWirePartition} in place of
  Lemma~\ref{lemFreePartition}.
\end{proof}

\begin{proof}[Proof of Theorem~\ref{thmPhases}]
  Claim (1) follows from Proposition~\ref{prop:limf}. The limit
  $\lim_{n \to \infty} \frac{1}{n} \log Z_{G_n}$ is analytic for
  $\beta \in (0 , \infty) \setminus \{ \beta_c \}$ since $f_{\dis}$ is
  analytic on $(0, \beta_1]$ and $f_{\ord}$, as a uniform limit of
  analytic functions, is analytic on $[\beta_0, \infty)$. The formula
  for $f_{\dis}$ when $\beta<\beta_{c}$ follows from
  Lemma~\ref{lem:fdislem}. Claims (2) and (3) follow immediately from
  Proposition~\ref{propFreeEnergy}. Claim (8) follows from
  Lemma~\ref{lem:ErrQual}.

  To emphasize the dependence on $n$, write $\mu^{n}_\dis$ and
  $\mu_\ord^{n}$ denote the distributions of $\mu_n$ conditioned on
  $\Omega_{\dis}$ and  $\Omega_{\ord}$, respectively.  To conclude, we will
  prove the following strengthening of Claims (4), (5), (6), and (7):
\begin{enumerate}[label=(\roman*)]
\item For $\beta \leq \beta_1$, $ \mu_{\dis}^{n} \xrightarrow{loc} \mu^{\free}$ as $n\to\infty$. 
\item For $\beta \geq \beta_0$, $ \mu_{\ord}^{n} \xrightarrow{loc} \mu^{\wire}$ as $n\to\infty$. 
\item For $\beta \le \beta_1$, $\mu_{\dis}^{n}$ exhibits exponential decay of correlations and $|\mathbf A|$ obeys a central limit theorem with respect to $\mu_{\dis}^{n}$.   
\item For $\beta \ge \beta_0$, $\mu_{\ord}^{n}$ exhibits exponential decay of correlations and $|\mathbf A|$ obeys a central limit theorem with respect to $\mu_{\ord}^{n}$.  
\end{enumerate}
Given Lemmas~\ref{lem:DisQual} and~\ref{lem:OrdQual} it is enough to
prove (i)--(iv) for $\overline \nu^{n}_{\dis}$ and $\overline \nu^{n}_{\ord}$
in place of $\mu^{n}_{\dis}$ and $\mu^{n}_{\ord}$.  Claims (i) and (ii) then
follow from Proposition~\ref{propLocalConvergence}. Claims (iii) and
(iv) follow from Lemma~\ref{lemPolyExpDecay} combined with the
observation that since the diameter of an expander graph is
$O(\log n)$, the total variation distance error $e^{-n}$ from
Lemmas~\ref{lem:DisQual} and~\ref{lem:OrdQual} can be absorbed in the
constant in the exponential decay bound.
\end{proof}

  \section{Slow mixing of Markov chains}
  \label{secSlow}
In this section we prove Theorem~\ref{thmSlowMix}.
We will give the proof for \emph{Chayes-Machta (CM)
 dynamics}~\cite{chayes1998graphical} and then indicate how to adapt the proof
for the (much simpler) case of random cluster and Potts Glauber dynamics.

  We begin
by recalling the definition of the \emph{Chayes-Machta (CM)
  dynamics}~\cite{chayes1998graphical}, a generalization of
Swendsen-Wang dynamics for the Potts model to the setting of the
random cluster model.  Given a random cluster configuration
$A\in \Omega=\{0,1\}^E$, one step of the CM dynamics is defined as
follows:
\begin{enumerate}
\item declare each component of $A$ to be `active' independently with
  probability $1/q$, and declare all vertices in active components to be active;
\item delete all edges in $A$ that connect two active vertices;
\item add each edge in $E$ that connects two active vertices independently with probability $p=1-e^{-\beta}$.
\end{enumerate}
We use $P_{\mathrm{CM}}(\cdot,\cdot)$ to denote the transition matrix of the CM dynamics, and $\mu_A^t$ the $t$-step distribution of the chain started at configuration $A$.  The \textit{mixing time} of the CM dynamics is:
\[ \tau_{\mathrm{mix}} = \inf \{ t: \max_{A \subset E} \| \mu - \mu_A^t\|_{TV} \le 1/4 \,. \]

Our general strategy follows one previously used at $\beta=\beta_c$, e.g.,~\cite{borgs2012tight,galanis2016ferromagnetic}. Our ability to extend slow mixing to an interval around $\beta_c$ stems from our ability to control the contribution of subdominant phases off criticality (Lemma~\ref{lembetainterval} below).

We begin with a lemma that says CM dynamics are
unlikely to transition from an ordered configuration to a disordered configuration.
\begin{lemma}\label{lemconductance}
For $q=q(\Delta)$ sufficiently large and $\beta\in (\beta_0, \beta_1)$, $P_{\mathrm{CM}}(A, \Omega_\dis)<e^{-n\Delta/40}$ for all $A\in\Omega_\ord$.
 \end{lemma}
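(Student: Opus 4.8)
The plan is to show that a single CM step can reach $\Omega_{\dis}$ from $A\in\Omega_{\ord}$ only if the (essentially unique) giant component of $(V,A)$ is activated \emph{and} almost none of the edges inside it are resampled back in, and that the second of these events is already exponentially unlikely.

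First I would record that $A\in\Omega_{\ord}$ means $|E\setminus A|\le\eta|E|$, so by Lemma~\ref{lem:simple} (using $\phi_G(1/2)\ge 1/10$) the graph $(V,A)$ has a connected component $S$ with $|S|\ge(1-5\eta)n$. Write $A'$ for the configuration after one CM step and let $\mathcal E$ be the event that $S$ is declared active in step~(1). On $\mathcal E^{c}$ no vertex of $S$ is active, so no edge of $A$ incident to $S$ is deleted in step~(2) and no edge incident to $S$ is added in step~(3); since all but at most $\Delta|V\setminus S|\le 5\eta\Delta n=10\eta|E|$ edges of $A$ lie inside $S$, on $\mathcal E^{c}$ we get $|A'|\ge(1-\eta)|E|-10\eta|E|=(1-11\eta)|E|>\eta|E|$, hence $A'\notin\Omega_{\dis}$. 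Therefore $P_{\mathrm{CM}}(A,\Omega_{\dis})\le\mathbb P(A'\in\Omega_{\dis}\mid\mathcal E)$.

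Next I would analyse $\mathcal E$. The key observation is that on $\mathcal E$ every edge of $G$ with both endpoints in $S$ joins two active vertices, so it is deleted (if present) in step~(2) and then independently reinserted in step~(3) with probability $p=1-e^{-\beta}$; thus, conditioned on $\mathcal E$, the restriction of $A'$ to the set $E(S)$ of edges of $G$ with both endpoints in $S$ is an \emph{exact} product $\mathrm{Bernoulli}(p)$ measure. Double-counting degrees gives $m:=|E(S)|\ge\Delta(|S|-n/2)\ge\Delta n(\tfrac12-5\eta)\ge\tfrac{9}{10}|E|$ (using $\eta\le\tfrac1{100}$). If $A'\in\Omega_{\dis}$ then $|A'\cap E(S)|\le|A'|\le\eta|E|$, so at least $m-\eta|E|\ge\tfrac12|E|=\tfrac14\Delta n$ of the $m$ edges of $E(S)$ must be \emph{absent} from $A'$. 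Since $\beta>\beta_0$ and $p$ is increasing in $\beta$, $1-p<e^{-\beta_0}=(1+q^{1.9/\Delta})^{-1}<q^{-1.9/\Delta}$, so a union bound over which edges of $E(S)$ are absent gives
\[
\mathbb P(A'\in\Omega_{\dis}\mid\mathcal E)\le\binom{m}{m-\eta|E|}(1-p)^{m-\eta|E|}\le 2^{\Delta n/2}\,q^{-1.9(m-\eta|E|)/\Delta}\le 2^{\Delta n/2}\,q^{-c n}
\]
for an absolute constant $c>0$. A routine check of constants shows the right-hand side is below $e^{-n\Delta/40}$ once $\log q\ge C\Delta$ for a suitable absolute constant $C$, which holds for $q=q(\Delta)$ large; this proves the lemma.

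The computation itself is short, and the expansion hypotheses enter only through Lemma~\ref{lem:simple} and the elementary degree count forcing $E(S)$ to contain a constant fraction of all edges. The step I would be most careful to state precisely is the observation that, conditioned on $S$ being active, the law of $A'$ restricted to $E(S)$ is \emph{exactly} i.i.d.\ $\mathrm{Bernoulli}(p)$ --- this is what legitimises the binomial tail bound --- together with tracking constants so that the final exponent genuinely beats $-n\Delta/40$. (Note that only $\beta>\beta_0$ is used; the upper bound $\beta<\beta_1$ is not needed here.)
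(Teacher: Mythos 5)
Your proposal is correct, but it takes a somewhat different route from the paper. You first extract, via the expansion hypothesis and Lemma~\ref{lem:simple}, a giant component $S$ of $(V,A)$, split on whether $S$ is activated, observe that conditionally on activation the restriction of $A'$ to $E(S)$ is exactly product $\mathrm{Bernoulli}(p)$, and then beat the entropy factor $2^{\Delta n/2}$ by $(1-p)^{\Omega(n\Delta)}\le q^{-\Omega(n)}$ via a union bound over which edges of $E(S)$ end up absent. The paper's proof never uses the giant component or expansion at all: it works directly with the number $m$ of edges joining two active vertices, notes that if $m<|E|/2$ then too few edges can be deleted for $A'$ to reach $\Omega_{\dis}$, and if $m\ge|E|/2$ it applies a Chernoff bound to the number $X$ of edges re-added in step (3), showing $X\ge pm/2$ forces $|A'|\ge(p/4-\eta)|E|>\eta|E|$ and $\P(X<pm/2)\le e^{-pm/8}\le e^{-n\Delta/40}$. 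The trade-off: the paper's argument is more economical and only needs $p>8\eta$ (a mild condition on $q$) rather than your requirement $\log q\gtrsim\Delta$ coming from the $2^{\Delta n/2}$-versus-$q^{-cn}$ comparison, and it yields the stated $e^{-n\Delta/40}$ bound with $\Delta$ appearing naturally in the exponent; your argument, on the other hand, isolates the clean structural fact that activating the giant component triggers an exact i.i.d.\ resampling of a $(1-o(1))$-fraction of all edges, which is a transparent reason why the ordered phase cannot collapse in one step. Since the paper's standing assumption is $q\ge\Delta^{C\Delta}$, your stronger requirement on $q$ is harmless here, and the constants you track do give an exponent below $-n\Delta/40$ for such $q$, so the proposal stands as a valid alternative proof.
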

\begin{proof}
  Let $U\subseteq V$ denote the set of vertices declared active at
  Step 1 in the definition of CM dynamics and let $A'$ denote the
  random edge configuration resulting from Steps 1,2 and 3.  Let
  $m=|E\cap\binom{U}{2}|$, that is, the number of edges of $G$ joining
  two active vertices.  Note that the number of edges removed from the
  configuration in Step 2 is at most $m$ and so if $m<|E|/2$, then
  $|A'|>|A|-|E|/2\geq (1/2-\eta)|E|$.  Therefore
  $A'\notin \Omega_\dis$ and so we may assume that $m\geq|E|/2$.
 
  Letting $X$ denote the number edges added at Step 3 we have
  $|A'|>|A|-m+X$. If $X\geq pm/2$, it follows that
\[
  |A'|\geq (1-\eta) |E| -(1-p/2)m \geq (p/4
  -\eta) |E| >\eta|E|
\]
Since $p=1-e^{-\beta}=1-o_{q}(1)$ for $\beta\in (\beta_0, \beta_1)$,
we have $p>8\eta$ for $q$ large. As a result, $A'\notin \Omega_\dis$. 
The result follows by noting that $\P(X<pm/2)\leq e^{-pm/8}$ by Chernoff's bound.
\end{proof}

The next lemma says that near $\beta_{c}$, it is exponentially more
likely to see a disordered or ordered configuration than a
configuration in $\Omega_{\err}$.
\begin{lemma}\label{lembetainterval}
If $q=q(\Delta,\delta)$ is sufficiently large,
  $|\beta-\beta_c|\leq \frac{1}{20\Delta}$, and $n$ is sufficiently large, then
\[
\mu_{n}(\Omega_\dis)\geq e^{-n/20} \quad\text{and}\quad \mu_{n}(\Omega_\ord)\geq e^{-n/20}\, .
\]
\end{lemma}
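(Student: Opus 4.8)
The plan is to control the ratio $Z^{\ord}_{G_n}/Z^{\dis}_{G_n}$ to within a factor $e^{\pm n/25}$ on the interval $I \bydef [\beta_c-\tfrac{1}{20\Delta},\,\beta_c+\tfrac{1}{20\Delta}]$, and then to read off the two inequalities from the decomposition $Z=Z^{\dis}+Z^{\ord}+Z^{\err}$. Granting the ratio bound, Lemma~\ref{lem:ErrQual} gives $Z^{\err}\le e^{-n}Z$, so $Z\le 2(Z^{\dis}+Z^{\ord})$ for $n\ge 1$, whence
\[
\mu_n(\Omega_{\dis})=\frac{Z^{\dis}}{Z}\ge\frac{Z^{\dis}}{2(Z^{\dis}+Z^{\ord})}=\frac{1}{2\bigl(1+Z^{\ord}_{G_n}/Z^{\dis}_{G_n}\bigr)}\ge\frac14 e^{-n/25}\ge e^{-n/20}
\]
once $n\ge 100\ln 4$, and symmetrically $\mu_n(\Omega_{\ord})\ge e^{-n/20}$ using $Z^{\dis}_{G_n}/Z^{\ord}_{G_n}\le e^{n/25}$.

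First I would check that $I\subset(\beta_0,\beta_1)$ for $q$ large. By Corollary~\ref{cor:crit}, $\beta_c=(1+o_q(1))\tfrac{2\log q}{\Delta}$, while $\beta_0=\log(1+q^{1.9/\Delta})=\tfrac{1.9\log q}{\Delta}+o(1)$ and $\beta_1=\log(1+q^{2.1/\Delta})=\tfrac{2.1\log q}{\Delta}+o(1)$, so $\min\{\beta_c-\beta_0,\beta_1-\beta_c\}\ge\tfrac{0.05\log q}{\Delta}\ge\tfrac{1}{20\Delta}$ for $q=q(\Delta)$ large; in particular $I$ sits (with room) inside the common region of convergence of the ordered and disordered expansions.

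Next I would bound $|f_{\ord}(\beta)-f_{\dis}(\beta)|$ on $I$. By Proposition~\ref{prop:unique}, $f_{\ord}(\beta_c)=f_{\dis}(\beta_c)$, so by the mean value theorem it suffices to bound the derivative of $f_{\ord}-f_{\dis}$ on $I$. Exactly as in the proof of Proposition~\ref{prop:unique}: on a complex neighbourhood of $I$ (contained in the common region of convergence) both $f_{\ord}$ and $f_{\dis}$ are uniform limits of the analytic functions $\tfrac1n\log Z^{\ord}_{G_n}$ and $\tfrac1n\log Z^{\dis}_{G_n}$ by Proposition~\ref{prop:limf}, hence their derivatives are the corresponding limits; and for $\ast\in\{\dis,\ord\}$ one has the elementary identity $\tfrac1n\tfrac{d}{d\beta}\log Z^{\ast}_{G_n}=\tfrac{e^\beta}{e^\beta-1}\cdot\tfrac1n\E_{\mu^n_{\ast}}|\mathbf A|\in\bigl[0,\tfrac{e^\beta}{e^\beta-1}\cdot\tfrac{\Delta}{2}\bigr]$, since $0\le|\mathbf A|\le|E|=n\Delta/2$. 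As $\tfrac{e^\beta}{e^\beta-1}\le 1+q^{-1.9/\Delta}\le\tfrac65$ on $I$ (because $\beta\ge\beta_0$), we get $0\le f_{\dis}'(\beta),\,f_{\ord}'(\beta)\le\tfrac{3\Delta}{5}$ there, so $|f_{\ord}'-f_{\dis}'|\le\tfrac{3\Delta}{5}$ on $I$, and therefore $|f_{\ord}(\beta)-f_{\dis}(\beta)|\le\tfrac{3\Delta}{5}\cdot\tfrac{1}{20\Delta}=\tfrac{3}{100}$ for all $\beta\in I$.

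Finally, by the uniform convergence in Proposition~\ref{prop:limf}, for $n$ large (depending on $q,\Delta,\delta$) we have $\bigl|\tfrac1n\log Z^{\ord}_{G_n}-f_{\ord}(\beta)\bigr|<\tfrac{1}{200}$ and $\bigl|\tfrac1n\log Z^{\dis}_{G_n}-f_{\dis}(\beta)\bigr|<\tfrac{1}{200}$ uniformly for $\beta\in I$, so
\[
\left|\frac1n\log\frac{Z^{\ord}_{G_n}}{Z^{\dis}_{G_n}}\right|\le|f_{\ord}(\beta)-f_{\dis}(\beta)|+\frac{1}{100}\le\frac{3}{100}+\frac{1}{100}=\frac{1}{25},
\]
i.e.\ $e^{-n/25}\le Z^{\ord}_{G_n}/Z^{\dis}_{G_n}\le e^{n/25}$ on $I$, which is exactly the input used in the first paragraph. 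The only delicate point is the interchange of limit and derivative needed to bound $f_{\ord}'$, but this is precisely the argument already carried out in the proof of Proposition~\ref{prop:unique} (uniform limits of analytic functions), together with the identity writing $\tfrac{d}{d\beta}\log Z$ as an expectation of $|\mathbf A|$. (If one prefers to avoid local tree-likeness, the same ratio bound follows directly from Lemmas~\ref{lem:DisQual} and~\ref{lem:OrdQual} together with the vertex tail bound of Lemma~\ref{lemVertexTail}, replacing $f_{\ord},f_{\dis}$ by the scaled logarithms $\tfrac\Delta2\log(e^\beta-1)$ and $\log q$.) Everything else is bookkeeping with the explicit constants above.
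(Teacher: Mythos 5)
Your proposal is correct and follows essentially the same route as the paper's proof: both hinge on the derivative bound for $f_{\ord}-f_{\dis}$ obtained as in Proposition~\ref{prop:unique} (via writing $\tfrac{d}{d\beta}\log Z^{\ast}$ as an expectation of $|\mathbf A|$), the fact that $f_{\ord}(\beta_c)=f_{\dis}(\beta_c)$, the uniform convergence of Proposition~\ref{prop:limf}, and Lemma~\ref{lem:ErrQual} to discard $Z^{\err}$. The only differences are cosmetic: you track the ratio $Z^{\ord}/Z^{\dis}$ with explicit constants and verify $[\beta_c-\tfrac{1}{20\Delta},\beta_c+\tfrac{1}{20\Delta}]\subset(\beta_0,\beta_1)$ explicitly, whereas the paper phrases the same estimate as $\liminf_n \tfrac1n\log\mu_n(\Omega_{\dis})\ge\min\{0,f_{\dis}-f_{\ord}\}>-1/20$.
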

\begin{proof}
  If $\beta\in (\beta_0, \beta_1)$, then by Proposition~\ref{prop:limf} 
  and  Lemma~\ref{lem:ErrQual} part $(1)$, 
  \[
  \lim_{n \to \infty} \frac{1}{n} \log \mu_{n} (\Omega_{\dis}) \ge \min \left \{ 0, f_{\dis}-f_{\ord} \right \}\, .
  \] 
  By the argument of Proposition~\ref{prop:unique} we have 
  \[
  \left| \frac{d}{d\beta} (f_{\ord}-f_{\dis}) \right| < \frac{e^\beta}{e^{\beta}-1}\cdot \frac{\Delta}{2}< \Delta\, .
  \]
  Since $f_{\dis}-f_{\ord}=0$ at $\beta=\beta_c$, it follows that
  $f_{\dis}-f_{\ord}> - 1/20$ for
  $\beta\in(\beta_0, \beta_c + \tfrac{1}{20 \Delta})$. The bound on
  $\mu_{n}(\Omega_\dis)$ follows. The same argument shows that
  $\lim_{n \to \infty} \frac{1}{n} \log \mu_{n} (\Omega_{\ord})>-1/20$
  for $\beta\in(\beta_c- \tfrac{1}{20 \Delta}, \beta_1)$.
\end{proof}

\begin{proof}[Proof of Theorem~\ref{thmSlowMix} for CM dynamics]
  We will establish slow mixing of CM dynamics by bounding the
  \emph{conductance} of CM dynamics defined as
\[
\Phi_{CM}=\min_{\emptyset \subset S \subset \Omega}\Phi_{CM}(S) \quad\text{where}\quad \Phi_{CM}(S)= \frac{\sum_{A\in S}\mu(A)P_{\mathrm{CM}}(A, S^c)}{\mu(S)\mu(S^c)} \,.
\]
Note
  that $P_{\mathrm{CM}}$ and $\mu$ depend on the given graph $G=(V,E)$, and in particular, on
  $n=|V(G)|$. We leave this implicit.
By a standard argument (see~\cite{montenegro2006mathematical}), it suffices to show that $\Phi_{CM}\leq e^{-\Omega(n)}$
for $\beta\in(\beta_{m},\beta_{M})$. This is straightforward from the
lemmas above: 
\begin{align*}
\Phi_{CM}
&\leq\Phi_{CM}(\Omega_\dis)=\frac{\sum_{A\in\Omega_\dis}\mu(A)P_{\mathrm{CM}}(A, \Omega_\dis^c)}{\mu(\Omega_\dis)\mu(\Omega_\dis^c)}\\
&\leq e^{n/10}\left(\sum_{A\in\Omega_\dis}\mu(A)P_{\mathrm{CM}}(A, \Omega_\ord)+\sum_{A\in\Omega_\dis}\mu(A)P_{\mathrm{CM}}(A, \Omega_\err) \right)\\
&\leq e^{n/10}\left(\mu(\Omega_\dis)e^{-n\Delta/40}+\sum_{A\in\Omega_\err}\mu(A)P_{\mathrm{CM}}(A, \Omega_\dis) \right)\\
&\leq e^{n/10}\left(e^{-n\Delta/40}+e^{-n}\right)\\
&\leq2e^{-n/40}\, .
\end{align*}
For the second inequality we used Lemma~\ref{lembetainterval}.
For the third inequality we applied Lemma~\ref{lemconductance} and reversibility, 
and for the fourth inequality we used Lemma~\ref{lem:ErrQual}.
\end{proof}
We conclude this section by noting that the above proof adapts easily
to the to the cases of random cluster and Potts model Glauber
dynamics.  First we recall their definitions. Given a random cluster
configuration $A\in \Omega=\{0,1\}^E$, one step of the \emph{random
  cluster Glauber dynamics} transitions to a new configuration $A'$ as
follows:
\begin{enumerate}
\item select an edge $e\in E$ uniformly at random;
\item set $A'=A\cup\{e\}$ with probability $\frac{\mu_G(A\cup\{e\})}{\mu_G(A\cup\{e\})+\mu_G(A\backslash\{e\})}$
\item otherwise set $A'=A\backslash \{e\}.$
\end{enumerate}

Given a Potts configuration $\sigma\in[q]^V$, one step of the \emph{Potts model Glauber dynamics} transitions to a new configuration $\sigma'$ as follows:
\begin{enumerate}
\item select a vertex $v\in V$ uniformly at random;
\item set $\sigma'(v)=k$ with probability $\mu_G^{\Potts}(\tau(v)=k \mid \tau(w)=\sigma(w)\, \,   \forall w\neq v)$ and set  $\sigma'(u)=\sigma(u)$ for all $u\neq v$.
\end{enumerate}
We note that by considering the monochromatic edges in each Potts model configuration,
the above dynamics naturally induces dynamics on the random cluster model.
The proof of Theorem~\ref{thmSlowMix} for the Glauber dynamics for the
random cluster model and Potts model are similar to the proof for CM dynamics but simpler, as the associated dynamics cannot
transition directly from $\Omega_{\dis}$ to $\Omega_{\ord}$. We omit the details.

\section{Application to random $\Delta$-regular graphs}
\label{sec:RRGexp}

In this section we prove that for $\Delta \ge 5$, there is
some $\del>0$ so that the random $\Delta$-regular graph belongs to
$\cG_{\Delta,\del}$ with high probability. We use the following result
on the expansion profile of the random regular graph which is a
combination of \cite[Theorem 1]{bollobas1988isoperimetric} and
\cite[Theorem 4.16]{hoory2006expander}.

\begin{theorem}\label{thmExpander}
Let $\Delta\geq3$ and let $G$ be a 
$\Delta$-regular graph on $n$ vertices chosen 
uniformly at random. Let $0<x<1$ be such that
\begin{equation}
  \label{expalpha}
2^{4/\Delta}<(1-x)^{1-x}(1+x)^{1+x}\, ,
\end{equation}
then with high probability $\phi_G(1/2)\geq (1-x)/2$.
Moreover, for every $\eps>0$, there exists $\delta>0$ 
such that with high probability $\phi_G(\delta)\geq(\Delta-2-\eps)/\Delta$.
\end{theorem}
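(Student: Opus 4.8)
The plan is to prove both assertions by the first-moment method in the configuration (pairing) model for random $\Delta$-regular graphs. I would first recall that if $\mathbf{G}^{*}$ denotes the multigraph obtained from a uniformly random perfect matching of $\Delta n$ points grouped into $n$ blocks of size $\Delta$, then $\P(\mathbf{G}^{*}\text{ is simple})$ tends to a positive constant depending only on $\Delta$; hence any event of probability $o(1)$ under $\mathbf{G}^{*}$ also has probability $o(1)$ under the uniformly random simple $\Delta$-regular graph. Both statements assert that, with high probability, \emph{every} vertex set $S$ of the relevant size has at least a prescribed number of edges to $S^{c}$, so in each case it suffices to union bound, over $S$, the pairing-model probability that $S$ has too few crossing edges.

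For the first assertion, fix $1\le s\le n/2$ and a set $S$ with $|S|=s$; in the pairing model $S$ owns $\Delta s$ points, each matched to a point outside $S$ with probability $(1-s/n)(1+o(1))$, and these near-independent events concentrate, so the number of crossing half-edges behaves like a $\mathrm{Bin}(\Delta s,\,1-s/n)$ variable. A standard large-deviation estimate bounds the probability that fewer than $(1-x)\Delta s/2$ edges leave $S$ by $\exp\!\big({-}\Delta s\, D(\tfrac{1-x}{2}\,\|\,1-s/n)(1+o(1))\big)$, where $H$ and $D$ denote the binary entropy and binary relative entropy in nats. Multiplying by $\binom{n}{s}=e^{nH(s/n)(1+o(1))}$ and writing $\alpha=s/n$, the exponential rate is $H(\alpha)-\Delta\alpha\, D(\tfrac{1-x}{2}\,\|\,1-\alpha)$; I would check that this is maximized over $\alpha\in(0,\tfrac12]$ at $\alpha=\tfrac12$, where it equals $\ln 2-\tfrac{\Delta}{2}\big(\ln 2-H(\tfrac{1-x}{2})\big)$, and that this is negative exactly when $H(\tfrac{1-x}{2})<(1-\tfrac2\Delta)\ln 2$, which on exponentiating is precisely the hypothesis $2^{4/\Delta}<(1-x)^{1-x}(1+x)^{1+x}$. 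Summing the geometrically decaying bounds over $s$ then gives $o(1)$; this is the Bollob\'as isoperimetric computation, and one could alternatively just invoke the cited result.

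For the second assertion it is cleaner to control internal edges: since $|E(S,S^{c})|=\Delta|S|-2e(S)$, the bound $\phi_{G}(\del)\ge(\Delta-2-\eps)/\Delta$ is equivalent to $e(S)\le(1+\tfrac{\eps}{2})|S|$ for all $S$ with $|S|\le\del n$. Given $\eps>0$, this is vacuous for $|S|<2/\eps$. For $2/\eps\le s\le\del n$, the number of ways to prescribe $m$ internal pairs among the $\Delta s$ points of $S$ is at most $\binom{\Delta s}{2}^{m}/m!$, each realized with probability $O(1/n)$, so a union bound over $S$ gives
\begin{equation*}
\P\big(\exists\, S,\ |S|=s,\ e(S)\ge(1+\tfrac{\eps}{2})s\big)\ \le\ \binom{n}{s}\Big(\tfrac{Cs}{n}\Big)^{(1+\eps/2)s}\ \le\ \Big(eC^{1+\eps/2}(s/n)^{\eps/2}\Big)^{s}
\end{equation*}
for a constant $C=C(\Delta)$. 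Choosing $\del=\del(\eps,\Delta)$ small enough that $eC^{1+\eps/2}\del^{\eps/2}<\tfrac12$ makes this geometric in $s$, so the union over $2/\eps\le s\le\del n$ is $o(1)$; the finitely many residual values (bounded $s$ with $e(S)\ge s+1$) each contribute $\binom{n}{s}\cdot O(n^{-(s+1)})=O(n^{-1})$. This is the standard local-sparsity argument for sparse random graphs.

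The main obstacle will be the bookkeeping in the first part: verifying that the Stirling approximations of $\binom{n}{s}$ and of the pairing-model probability combine to give precisely the stated threshold, and in particular that the rate $H(\alpha)-\Delta\alpha\, D(\tfrac{1-x}{2}\,\|\,1-\alpha)$ attains its maximum at $\alpha=\tfrac12$ rather than at some intermediate density --- a short convexity argument. Everything else, including the second part and the passage from the pairing model to the uniform model once the failure probabilities are shown to be $o(1)$ rather than merely $O(1)$, is routine.
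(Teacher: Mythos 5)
The paper does not actually prove Theorem~\ref{thmExpander}: it is quoted as a combination of \cite[Theorem 1]{bollobas1988isoperimetric} and \cite[Theorem 4.16]{hoory2006expander}, so any complete argument here goes beyond what the paper supplies. Your second part is essentially the standard proof of the cited small-set expansion statement and is sound, up to two small points: the claim that the bound is ``vacuous'' for $|S|<2/\eps$ is not right (such an $S$ fails $e(S)\le(1+\eps/2)|S|$ precisely when $e(S)\ge |S|+1$, e.g.\ if it spans two independent cycles), but your ``residual values'' estimate handles exactly this case; and the final sum is $o(1)$ not because it is geometric (a geometric series started at $s=\lceil 2/\eps\rceil$ is only $O(1)$) but because each term with bounded $s$ is itself $O\bigl((s/n)^{\eps s/2}\bigr)=o(1)$, after which the geometric decay takes over for large $s$. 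These are easily repaired.

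The first part, as proposed, has a genuine gap. In the pairing model the number of crossing edges of a set $S$ with $|S|=s=\alpha n$ is \emph{not} bounded in its lower tail by the $\mathrm{Bin}(\Delta s,1-\alpha)$ surrogate. The exact probability of seeing $m$ crossing edges is $\binom{\Delta s}{m}\binom{\Delta(n-s)}{m}\,m!\,M(\Delta s-m)\,M(\Delta(n-s)-m)/M(\Delta n)$ with $M(2k)=(2k)!/(2^k k!)$, and a Stirling computation shows that for $\alpha<1/2$ its lower-tail exponent is strictly larger (the rare event is exponentially \emph{more} likely) than the binomial relative-entropy bound you invoke; already for a set of two vertices against four, the probability of no crossing edge is $1/5$, whereas the binomial estimate gives $1/9$. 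The two rate functions happen to coincide at $\alpha=1/2$, which is why your threshold calculation reproduces condition~\eqref{expalpha}, but for $\alpha<1/2$ the union bound is then being run with an invalid upper bound, and the ``short convexity argument'' you defer to would be a statement about the wrong function: what must be maximized is the exact pairing-model rate, not $H(\alpha)-\Delta\alpha D(\tfrac{1-x}{2}\,\|\,1-\alpha)$. The fix is to carry out the first-moment computation with the exact matching counts — which is precisely Bollob\'as's proof — or to do what the paper does and simply cite \cite{bollobas1988isoperimetric}, as you note is possible.
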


\begin{prop}
  \label{prop:RRG-exp}
  For every $\Delta \ge 5$, there exists $\del >0$ so that a uniformly
  chosen $\Delta$-regular graph on $n$ vertices is in $\cc G_{\Delta,\delta}$ with
 probability $1-o(1)$ as $n \to \infty$.  Moreover, there is a polynomial-time algorithm
  that accepts/rejects graphs that (i) only accepts $G$
  if 
  $G \in \cc G_{\Delta,\delta}$ and (ii) it accepts with probability
  $1-o(1)$ for a randomly chosen $\Delta$-regular graph.
\end{prop}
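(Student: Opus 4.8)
The statement has two parts, with different plans.

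\emph{The probabilistic statement.} This should follow directly from Theorem~\ref{thmExpander}. For the requirement $\phi_G(1/2)\ge 1/10$, I would note that for every $\Delta\ge 5$ there is an $x<4/5$ satisfying~\eqref{expalpha}: at $x=4/5$ the right side equals $(1/5)^{1/5}(9/5)^{9/5}>2>2^{4/5}\ge 2^{4/\Delta}$, so~\eqref{expalpha} persists for $x$ slightly below $4/5$ by continuity, and Theorem~\ref{thmExpander} gives $\phi_G(1/2)\ge (1-x)/2>1/10$ with high probability. For the requirement $\phi_G(\del)\ge 5/9$, I would apply the ``moreover'' part of Theorem~\ref{thmExpander} with a fixed $\eps<2/9$ (e.g.\ $\eps=1/9$): since $\Delta\ge 5$, $(\Delta-2-\eps)/\Delta\ge (3-\eps)/5>5/9$, producing some $\del>0$ with $\phi_G(\del)\ge 5/9$ with high probability. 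Shrinking $\del$ into $(0,1/2)$ is harmless because $\alpha\mapsto\phi_G(\alpha)$ is non-increasing; this $\del$ then witnesses $G\in\cG_{\Delta,\del}$ with high probability.

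\emph{The algorithmic statement.} Deciding membership in $\cG_{\Delta,\del}$ exactly requires computing two values of the expansion profile, which is NP-hard in general, so the plan is a conservative polynomial-time test: it never accepts a graph outside $\cG_{\Delta,\del}$, but accepts the random graph with probability $1-o(1)$, by checking polynomial-time-verifiable \emph{sufficient} conditions. For $\phi_G(1/2)\ge1/10$, compute the second adjacency eigenvalue $\lambda_2$ and accept this part only when $\lambda_2\le 4\Delta/5$; soundness is the discrete Cheeger inequality $\phi_G(1/2)=h(G)/\Delta\ge(\Delta-\lambda_2)/(2\Delta)$, and a random $\Delta$-regular graph passes with high probability because, by Friedman's second-eigenvalue theorem, $\lambda_2\le 2\sqrt{\Delta-1}+o(1)\le 4\Delta/5$ for $\Delta\ge 5$ (this is one place $\Delta\ge5$ is used; at $\Delta=5$ the inequality is tight and needs a slightly finer input). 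For $\phi_G(\del)\ge5/9$ --- equivalently, via $|E(S,S^c)|=\Delta|S|-2e(S)$ and a connected-components reduction, that no \emph{connected} $S$ with $|S|\le\del n$ has more than $\tfrac29\Delta|S|$ internal edges --- I would enumerate all connected subgraphs on at most $C\log n$ vertices (only polynomially many for fixed $\Delta$ by~\cite[Lemma~2.1(c)]{BorgsChayesKahnLovasz}), verify the edge bound on each, and combine with a certificate for larger sets; once this condition holds it also yields the internal-edge bound needed for $\phi_G(1/2)\ge1/10$ on all sets of size $\le\del n$, so the spectral certificate is only needed on $[\del n,n/2]$.

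\emph{The main obstacle.} The hard part is certifying $\phi_G(\del)\ge5/9$ in polynomial time at \emph{all} scales up to $\del n$, not just the logarithmic scale handled by enumeration. Spectral methods are of little use here: the expander mixing lemma degrades for sublinear sets and, for the degrees and constants at hand, does not reach $5/9$ even at linear scales, and one can build $\Delta$-regular graphs that are spectrally good and locally tree-like yet contain a dense intermediate-sized set, so no purely local and no purely spectral test suffices. Overcoming this would require a separate polynomial-time certificate for intermediate scales --- for instance a flow- or convex-programming-based one, or a counting certificate using that a random $\Delta$-regular graph has only $n^{o(1)}$ short cycles --- together with a check that the certificates cover the entire range $[1,\del n]$ with the constants $\tfrac29\Delta$ and $5/9$. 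I expect essentially all of the difficulty of the algorithmic half to be concentrated here.
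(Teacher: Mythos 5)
Your treatment of the probabilistic half is correct and essentially the paper's argument: choose $x$ with $(1-x)/2\geq 1/10$ satisfying~\eqref{expalpha} (your check at $x=4/5$ is fine for all $\Delta\geq 5$), and apply the second half of Theorem~\ref{thmExpander} with a fixed $\eps<2/9$ (the paper takes $\eps=2/9$) to get some $\del>0$ with $\phi_G(\del)\geq 5/9$ with high probability; monotonicity of $\alpha\mapsto\phi_G(\alpha)$ lets you shrink $\del$.

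The algorithmic half, however, has a genuine gap, and you have located it yourself: you never produce a polynomial-time certificate for $\phi_G(\del)\geq 5/9$ at scales between $\Theta(\log n)$ and $\del n$, and you explicitly defer this as ``the main obstacle.'' Since the acceptance condition of the proposition is exactly membership in $\cG_{\Delta,\del}$, this is the heart of the claim, not an optional refinement. The paper's resolution is short but different in kind from your local-enumeration-plus-Cheeger plan: it first observes that with high probability the random graph satisfies the two expansion bounds with a strict margin, $\phi_G(1/2)\geq 1/10+\tilde\eps$ and $\phi_G(\del)\geq 5/9+\tilde\eps$, and then invokes the approximation algorithm of Guruswami and Sinop~\cite{guruswami2013rounding} (Lasserre SDP rounding for graph partitioning) to approximate \emph{both} $\phi_G(1/2)$ and $\phi_G(\del)$ closely enough that, on the high-probability event, the algorithm outputs a certificate of the weaker bounds $1/10$ and $5/9$, while by construction it only accepts graphs genuinely satisfying them. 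This is precisely the ``convex-programming-based certificate'' you speculate about but do not carry out. A secondary issue: even the part you do propose to certify, $\phi_G(1/2)\geq 1/10$ via Cheeger plus Friedman, fails at the boundary case $\Delta=5$, since $2\sqrt{\Delta-1}=4=4\Delta/5$ there and the resulting bound is $(1-\eps)/10<1/10$; you flag this but leave it unresolved, whereas the proposition requires $\Delta\geq 5$ inclusively. As written, the proposal proves the first sentence of the proposition but not the second.
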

\begin{proof}
Let $G$ be a uniformly
 chosen $\Delta$-regular graph on $n$ vertices. 
 By substituting $x=1/10$ into~\eqref{expalpha},
 Theorem~\ref{thmExpander} shows that $\phi_{G}(1/2)\geq 1/10$ 
 with high probability. Moreover, taking $\eps=2/9$, the second half of Theorem~\ref{thmExpander} 
shows that there exists $\delta>0$ such that  
$\phi_{G}(\delta)\geq 5/9$ with high probability.
 This proves the first claim.  
 We remark that one can extract explicit
  sufficient conditions on $\delta$ from the proof of \cite[Theorem
  4.16]{hoory2006expander}.

Note that it also holds that 
for some $\tilde\eps>0$, $\phi_{G}(1/2)\geq 1/10 + \tilde \eps$ and 
$\phi_{G}(\delta)\geq 5/9 +\tilde\eps$
with high probability.   Then using the approximation algorithm from~\cite{guruswami2013rounding}, we can approximate $\phi_{G}(1/2)$ and $\phi_{G}(\delta)$ and with high probability get a certificate that $\phi_{G}(1/2)\geq 1/10$ and $\phi_{G}(\delta)\geq 5/9$. 
\end{proof}

\section{Finite Size Scaling}
\label{sec:finite-size-scaling}

In this section $G_{n}$ is always a random $\Delta$-regular graph on
$n$ vertices.  Our objective is to determine the limiting distribution
of $\log Z^{\ord}_{G_n} - n f_{\ord} $ and
$\log Z^{\dis}_{G_n} - n f_{\dis} $ as $n \to \infty$.  This
will prove Theorems~\ref{thmCritical} and~\ref{thm:Q}. To ease
notation, we let $Z^{\dis}$ and  $Z^{\ord}$ denote
$Z^{\dis}_{G_n}$ and $Z^{\ord}_{G_n}$, respectively.

To state the key proposition we need to introduce a class of graphs that will
capture the way in which a $\Delta$-regular graph locally deviates
from being a tree.  Assume $\Delta\geq 3$. Let $\bT_{\Delta-2,\Delta}$
denote the rooted infinite tree whose root has $\Delta-2$ children and
for which every other vertex is degree $\Delta$. For $k\geq 3$ the
\emph{$\Delta$-regular tree rooted at $C_{k}$} is the graph
$\bT_\Delta^{C_{k}}$ obtained by attaching to each vertex of a
$k$-cycle $C_{k}$ a copy of $\bT_{\Delta-2,\Delta}$, and rooting the
resulting graph at a distinguished vertex $r$ in $C_{k}$.

We define disordered and ordered polymers on $\bT_\Delta^{C_k}$
exactly as we did for $\bT_\Delta$ in
Section~\ref{sec:limit-free-energ}. For $q=q(\Delta)$ large enough the
cluster expansions for these polymer models converge provided
$\beta\leq \beta_{1}$ and $\beta\geq \beta_{0}$, respectively. This
can be established by repeating the proof of Lemma~\ref{lemTreeConverge}. For
$\ast\in \{\dis,\ord\}$ let $\cC_{\ast}^r(\bT_\Delta^{C_{k}})$ denote
the set of $\ast$-clusters that contain the root $r$ of
$\bT_{\Delta}^{C_{k}}$.  To help distinguish notation, in this section
we write $\cC_{\ast}^{r}(\bT_\Delta)$ for the sets of $\ast$-clusters
on the rooted $\Delta$-regular tree that contain the root.  We then
let
\begin{align*}
\alpha_k^{\dis} &\bydef \sum_{ \Gamma \in \cC_{\dis}^{r}(\bT_\Delta^{C_{k}})} w^{\dis}(\Gamma)-\sum_{\Gamma \in \cC_{\dis}^r(\bT_\Delta)} w^{\dis}(\Gamma)  \\
\alpha_k^{\ord} &\bydef \sum_{ \Gamma \in \cC_{\ord}^r(\bT_\Delta^{C_{k}})} w^{\ord}(\Gamma)-\sum_{\Gamma \in \cC_{\ord}^r(\bT_\Delta)} w^{\ord}(\Gamma) \,.
\end{align*}
Note that $\alpha_k^{\dis}$ is well-defined as the difference of two
absolutely convergent power series when $\beta \le \beta_1$, and
similarly for $\alpha_k^{\ord}$ when $\beta \ge \beta_0$.

\begin{prop}
\label{propQDistribution}
Let $(Y_1, Y_2, \dots)$ be a sequence of independent Poisson random variables where $Y_k$ has mean $(\Delta-1)^{k}/(2k)$.  
\begin{enumerate}
\item For $\beta \le \beta_1$, $W_n^{\dis} \bydef \log Z^{\dis} - n f
  _{\dis}$ converges in distribution to $W^{\dis}$ given by the almost
  surely absolutely convergent series
\begin{align*}
W^{\dis} \bydef \sum_{ k \ge 3}  \alpha_k^{\dis} Y_k \,.
\end{align*}
\item For $\beta \ge \beta_0$, $W_n^{\ord} \bydef \log Z^{\ord} - n f
  _{\ord}$ converges in distribution to $W^{\ord}$ given by the almost
  surely absolutely convergent series
\begin{align*}
 W^{\ord} \bydef \log q +  \sum_{ k \ge 3}  \alpha_k^{\ord} Y_k \,.
\end{align*}
\item For $\beta =\beta_c$, $W_n \bydef \log Z^{\ord}  - \log Z^{\dis}
  $ converges in distribution to $ W$ given by the almost surely
  absolutely convergent series
\begin{align*}
W \bydef   \log q +  \sum_{ k \ge 3}  (\alpha_k^{\ord} - \alpha_k^{\dis}) Y_k \,.
\end{align*}
Moreover, letting $Q\bydef e^W$, we have $Q/q \to 1 $ in probability as $q \to \infty$. 
\end{enumerate}
\end{prop}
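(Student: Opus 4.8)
The argument rests on the cluster-expansion formulas~\eqref{eqPinnedDis}--\eqref{eqPinnedOrd}, the a priori comparisons of Lemmas~\ref{lem:DisQual} and~\ref{lem:OrdQual}, the tail bound Lemma~\ref{lemVertexTail}, and the classical fact that the short-cycle counts of a random $\Delta$-regular graph converge jointly to independent Poissons. The plan is first to pass from $Z$ to the polymer partition functions: Lemma~\ref{lem:DisQual} gives $W_n^{\dis}=\log\Xi^{\dis}_{G_n}-n\overline f_{\dis}+O(e^{-n})$ and Lemma~\ref{lem:OrdQual} gives $W_n^{\ord}=\log q+\log\Xi^{\ord}_{G_n}-n\overline f_{\ord}+O(e^{-n})$, the extra $\log q$ being the $q$-prefactor in $Z^{\ord}=q(e^{\beta}-1)^{\Delta n/2}\Xi^{\ord}$. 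Using~\eqref{eqPinnedDis}/\eqref{eqPinnedOrd} and $n\overline f_{\ast}=\sum_{v\in V(G_n)}\overline f_{\ast}$, both become $\sum_{v}\Delta_v^{\ast}+O(e^{-n})$ where the \emph{local defect} $\Delta_v^{\ast}:=\sum_{\Gamma\in\cC_{\ast}^{v}(G_n)}u(\Gamma)^{-1}w^{\ast}(\Gamma)-\overline f_{\ast}$ measures the discrepancy between the cluster sum pinned at $v$ in $G_n$ and the corresponding sum on $\bT_\Delta$. The key point is that any cluster through $v$ whose graph union $\cG(\Gamma)$ lies inside a neighbourhood of $v$ isomorphic to the corresponding neighbourhood of the root of $\bT_\Delta$ makes identical contributions to the two sums and cancels; since $\|\Gamma\|<m$ forces $\cG(\Gamma)\subseteq B_m(v)$, Lemma~\ref{lemVertexTail} gives $|\Delta_v^{\ast}|\le 2q^{-m/(200\Delta)}$ whenever $B_m(v)$ is a tree, and in general $\Delta_v^{\ast}$ is controlled by the clusters through $v$ that ``see'' a cycle of $G_n$.

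Next I would localize around short cycles. Fix a constant $L$ (sent to $\infty$ after $n$). If $C$ is a $k$-cycle of $G_n$ with no other cycle within distance $2L$, then the radius-$2L$ neighbourhood of $C$ is isomorphic as a rooted graph to the corresponding region of $\bT_\Delta^{C_{k}}$, and $C$ is the only cycle in it; so the clusters of size at most $L$ through vertices near $C$ that see a cycle are, via this isomorphism and weight-preservation, in bijection with the analogous clusters on $\bT_\Delta^{C_{k}}$, and assembling their defects over the vertices near $C$ reproduces $\alpha_k^{\ast}$ up to a truncation error $O(q^{-L/(200\Delta)})$ from Lemma~\ref{lemVertexTail} and boundary corrections of the same exponential order. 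Writing $N_k=N_k(G_n)$ for the number of $k$-cycles, this gives, for each fixed $L$,
\begin{equation*}
W_n^{\dis}=\sum_{k=3}^{L}N_k\,\alpha_k^{\dis}+E_{n,L}^{\dis},\qquad W_n^{\ord}=\log q+\sum_{k=3}^{L}N_k\,\alpha_k^{\ord}+E_{n,L}^{\ord},
\end{equation*}
where $E_{n,L}^{\ast}$ gathers: (i) clusters with $\|\Gamma\|>L$; (ii) clusters whose footprint contains two independent cycles; (iii) short cycles within distance $2L$ of another cycle; and (iv) the term $O\!\big(q^{-L/(200\Delta)}\sum_{k\le L}N_k\big)$.

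The substance is the control of $E_{n,L}^{\ast}$: the crude bound $\sum_v(\text{Lemma~\ref{lemVertexTail} tail})=O(nq^{-L/(200\Delta)})$ is useless, so instead I would estimate $\E_{G_n}\big|E_{n,L}^{\ast}\big|$ by a first-moment computation in the configuration model. Summing over abstract connected shapes $H$ carrying at least one cycle the expected number of copies of $H$ in $G_n$ --- which is $O(1)$ when $H$ has exactly one cycle and $O(1/n)$ when it has two --- times the total absolute cluster weight attributable to $H$ (bounded by $q^{-|E(H)|/(200\Delta)}$-type factors via Theorem~\ref{thmKP}), and using $\E N_\ell=O\!\big((\Delta-1)^\ell/\ell\big)$, one obtains $\E_{G_n}\big|E_{n,L}^{\ast}\big|\le\varepsilon_L$ for contributions (i) and (iv), with $\varepsilon_L\to0$ independently of $n$; contributions (ii) and (iii) come from shapes with more edges than vertices, hence have expected weight $O_L(1/n)$ and are $o_P(1)$ for each fixed $L$. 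The same input shows $|\alpha_k^{\ast}|\le q^{-k/(200\Delta)}$ (Theorem~\ref{thmKP} applied to a single edge of the $k$-cycle, after noting that the non-cycle part of $\alpha_k^{\ast}$ cancels between $\bT_\Delta^{C_{k}}$ and $\bT_\Delta$ up to clusters of size $\ge k-1$), so with $\E Y_k=(\Delta-1)^k/(2k)$ the series $\sum_{k\ge3}\alpha_k^{\ast}Y_k$ converges absolutely a.s. Now invoking $(N_3,N_4,\dots)\to(Y_3,Y_4,\dots)$ in distribution, the continuous mapping theorem gives $\sum_{k=3}^{L}N_k\alpha_k^{\ast}\to\sum_{k=3}^{L}Y_k\alpha_k^{\ast}$ in distribution for each $L$, the right side converging a.s.\ to $W^{\dis}$, resp.\ to $W^{\ord}-\log q$, as $L\to\infty$; combined with the bounds on $E_{n,L}^{\ast}$, a standard three-$\varepsilon$ argument yields parts (1) and (2).

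For part (3), $\beta_c\in(\beta_0,\beta_1)$ by Proposition~\ref{prop:unique}, so both expansions converge at $\beta_c$ and $f_{\ord}(\beta_c)=f_{\dis}(\beta_c)$, whence $W_n=\log Z^{\ord}-\log Z^{\dis}=W_n^{\ord}-W_n^{\dis}$; since $W_n^{\ord}$ and $W_n^{\dis}$ are, up to $o_P(1)$ and the deterministic $\log q$, the \emph{same} function of the single random vector $(N_k)_{k\ge3}$, they converge jointly and $W_n\to W^{\ord}-W^{\dis}=\log q+\sum_{k\ge3}(\alpha_k^{\ord}-\alpha_k^{\dis})Y_k=W$ in distribution. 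Finally $Q/q=\exp\!\big(\sum_{k\ge3}(\alpha_k^{\ord}-\alpha_k^{\dis})Y_k\big)$ and, since $|\alpha_k^{\ord}-\alpha_k^{\dis}|\le 2q^{-k/(200\Delta)}$,
\begin{equation*}
\E\sum_{k\ge3}\big|\alpha_k^{\ord}-\alpha_k^{\dis}\big|\,Y_k\le\sum_{k\ge3}\frac{\big((\Delta-1)q^{-1/(200\Delta)}\big)^{k}}{k}\xrightarrow[q\to\infty]{}0,
\end{equation*}
so $\sum_{k\ge3}(\alpha_k^{\ord}-\alpha_k^{\dis})Y_k\to0$ in probability as $q\to\infty$ by Markov's inequality, giving $Q/q\to1$ in probability. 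The main obstacle is twofold: replacing the lossy vertex union bound by first-moment estimates in the configuration model, so that the contribution of clusters wrapping around long or multiple cycles is seen to be summably small; and the bookkeeping --- involving the $1/u(\Gamma)$ weighting, the $\bT_\Delta$-versus-$\bT_\Delta^{C_{k}}$ cancellations, and boundary effects --- needed to identify the per-cycle contribution precisely as $\alpha_k^{\ast}$. The Poisson limit for cycles and the three-$\varepsilon$ passage to the limit are routine.
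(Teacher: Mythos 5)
Your proposal is correct and follows essentially the same route as the paper's proof: write $W_n^{\ast}$ via the pinned cluster sums, cancel the tree-like contributions, attribute what remains to short cycles with per-cycle contribution $\alpha_k^{\ast}$ bounded by $q^{-k/(200\Delta)}$, and invoke the joint Poisson limit for cycle counts, with part (3) obtained from the common coupling of the cycle counts and a first-moment bound in $q$. The only real difference is bookkeeping: the paper truncates the cluster expansion at $m=\Theta(\log n)$ so the crude vertex union bound already costs only $\eps/2$, and then handles cycles of length between $T$ and $m$ by citing Lemma~\ref{lemShortCycles} (parts (1) and (3)), whereas you keep the cutoff $L$ fixed and re-derive comparable estimates via configuration-model first moments over cyclic shapes, which is a workable but slightly heavier variant of the same argument.
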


To prove Proposition~\ref{propQDistribution} we will need several results about the distribution of short cycles in random $\Delta$-regular graphs.
\begin{lemma}
\label{lemShortCycles}
For $k \ge 3$, let $X_k$ denote the number of cycles of length $k$ in the random $\Delta$-regular  graph on $n$ vertices. Then
\begin{enumerate}
\item  For $3 \le k \le  \frac{\log n}{5 \log \Delta}$, $\E X_k = (1+O(k^2/n)) \frac{ (\Delta-1)^k}{2k}$~\cite{mckay2004short}. 
\item For any fixed $T$, the joint distribution of $X_3, \dots, X_T$ converges to that of independent Poisson random variables of means $ \frac{ (\Delta-1)^k}{2k}$, $k=3, \dots, T$~\cite{wormald1981asymptotic,bollobas1980probabilistic}.  
\item For every fixed $T>0$, with high probability over the choice of $G$, the depth-$t$ neighborhood of every vertex contains at most one cycle of length at most $T$ for $t =  \frac{\log n}{5 \log \Delta}$~\cite[Lemma 2.1]{lubetzky2010cutoff}. 
\end{enumerate}
\end{lemma}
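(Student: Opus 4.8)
I would prove all three statements through the configuration (pairing) model $G_n^{*}$: give each of the $n$ vertices $\Delta$ half-edges and take a uniformly random perfect matching of the $n\Delta$ half-edges; conditioned on the resulting multigraph being simple (no loops, no parallel edges) this has exactly the law of the uniform random $\Delta$-regular graph. Everything is easy to estimate in $G_n^{*}$, where pairings behave nearly independently, and one transfers to $G_n$ using that simplicity has probability bounded away from $0$ and is asymptotically independent of the presence of any fixed collection of short cycles. For part (1): a potential $k$-cycle in $G_n^{*}$ is given by a cyclic sequence of $k$ distinct vertices ($\tfrac{1}{2k}n(n-1)\cdots(n-k+1)$ of these, dividing by the $2k$ rotations and the reflection) plus, at each of its $k$ vertices, an ordered choice of two of the $\Delta$ half-edges ($\Delta(\Delta-1)$ ways each), and the prescribed $k$ half-edge pairs occur with probability $\prod_{i=0}^{k-1}(n\Delta-1-2i)^{-1}$; hence
\[
\E^{*}[X_k] \;=\; \frac{(\Delta(\Delta-1))^{k}}{2k}\cdot\frac{n!}{(n-k)!}\cdot\prod_{i=0}^{k-1}\frac{1}{n\Delta-1-2i} \;=\; \frac{(\Delta-1)^{k}}{2k}\prod_{i=0}^{k-1}\frac{1-i/n}{1-(1+2i)/(n\Delta)} \;=\; \frac{(\Delta-1)^{k}}{2k}\bigl(1+O(k^{2}/n)\bigr).
\]
To pass to $G_n$: conditioning the matching on a fixed $k$-cycle deletes $2k$ half-edges and forbids $O(k)$ chord/return edges, changing $\P^{*}(\mathrm{simple})$ by a factor $1+O(k^{2}/n)$, so $\E^{*}[X_k\mathbf 1_{\mathrm{simple}}]=\P^{*}(\mathrm{simple})\,\E^{*}[X_k]\,(1+O(k^{2}/n))$ and therefore $\E X_k=\E^{*}[X_k\mid\mathrm{simple}]=(1+O(k^{2}/n))\tfrac{(\Delta-1)^{k}}{2k}$ in the stated range (where these errors are genuinely $o(1)$); this is the computation of~\cite{mckay2004short}.

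For part (2) I would use the method of factorial moments. Let $X_1^{*}$, $X_2^{*}$ count loops and parallel pairs of edges of $G_n^{*}$. The first-moment computation above, combined with the fact that vertex-disjoint cycles impose disjoint (hence asymptotically independent) half-edge constraints while non-disjoint configurations are a factor $O(1/n)$ rarer, shows that for all fixed $m_1,\dots,m_T\ge0$ the joint falling-factorial moments satisfy
\[
\E^{*}\Bigl[\prod_{j=1}^{T}(X_j)_{m_j}\Bigr]\;\longrightarrow\;\prod_{j=1}^{T}\lambda_j^{m_j},\qquad \lambda_j:=\frac{(\Delta-1)^{j}}{2j},
\]
so $(X_1^{*},X_2^{*},X_3,\dots,X_T)$ converges jointly to independent Poissons with means $\lambda_j$. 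Since $G_n^{*}$ is simple precisely when $X_1^{*}=X_2^{*}=0$, we get $\P^{*}(\mathrm{simple})\to e^{-\lambda_1-\lambda_2}=e^{-(\Delta^{2}-1)/4}>0$ and, by the joint Poisson limit and independence, the conditional law of $(X_3,\dots,X_T)$ given $\{X_1^{*}=X_2^{*}=0\}$ converges to the unconditioned law of independent Poissons with means $\lambda_3,\dots,\lambda_T$; this is~\cite{wormald1981asymptotic,bollobas1980probabilistic}.

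For part (3) the key reduction is: if some $B_t(v)$ contains two \emph{distinct} cycles of length $\le T$, then joining the two cycles to $v$ by shortest paths of length $\le t$ produces a connected subgraph of $G_n$ of cyclomatic number $\ge 2$ with at most $s:=2T+2t$ edges, so it suffices to show $G_n$ contains no such subgraph with high probability. Any connected subgraph of cyclomatic number $\ge 2$ contains, with no more edges, a theta-graph, figure-eight, or dumbbell, i.e.\ a subgraph assembled from two branch vertices and three paths (or two cycles and a path) of lengths summing to at most $s$; it is thus specified by $O(1)$ path-lengths and an embedding, and a routine first-moment estimate in $G_n^{*}$ gives
\[
\E^{*}\bigl[\#\{\text{connected subgraphs with cyclomatic number}\ge2\text{ and}\le s\text{ edges}\}\bigr]\;=\;O\bigl(\mathrm{poly}(s)\cdot(\Delta-1)^{s}/n\bigr).
\]
With $t=\tfrac{\log n}{5\log\Delta}$ one has $s\log(\Delta-1)=\tfrac{2\log(\Delta-1)}{5\log\Delta}\log n+O(1)<(1-o(1))\log n$, since $(\Delta-1)^{2}<\Delta^{5}$, so this tends to $0$, and the bound passes to $G_n$ because $\P^{*}(\mathrm{simple})=\Omega(1)$; this is essentially~\cite[Lemma~2.1]{lubetzky2010cutoff}.

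The step I expect to require the most care is the transfer from $G_n^{*}$ to the uniform model in parts (1)--(2): one must verify quantitatively that a fixed $k$-cycle decorrelates from the simplicity event, which is exactly what pins down the admissible range $k\le\tfrac{\log n}{5\log\Delta}$. In part (3) the only subtle point is to organize the first moment around the theta/dumbbell cores of the bicyclic obstructions, so that the combinatorial factor is $\mathrm{poly}(s)$ rather than $s^{\Theta(s)}$ — the naive ``spanning tree plus two chords'' count fails to decay once $s=\Theta(\log n)$.
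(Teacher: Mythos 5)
The paper offers no proof of this lemma at all: each part is quoted as a known result, citing McKay--Wormald--Wysocka for (1), Bollob\'as and Wormald for (2), and Lubetzky--Sly for (3). Your configuration-model sketch (first-moment and joint falling-factorial-moment computations in the pairing model, conditioning on simplicity, and a first-moment bound on bicyclic theta/dumbbell subgraphs for part (3)) is precisely the standard argument used in those cited references, so it is correct in outline and matches the route the paper implicitly relies on.
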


With this we can prove Proposition~\ref{propQDistribution}.
\begin{proof}
  Throughout this proof we assume
  the high probability event that $G_{n}\in
\cG_{\Delta,\delta}$, with $\delta$ chosen as in
Section~\ref{sec:RRGexp} occurs. We begin by establishing the claimed limits, deferring the claims
about absolute convergence almost surely to the end. Towards claim
(1), let $X_{k}$ denote the number of $k$-cycles in $G_{n}$, and define
\begin{equation*}
  \tilde W^{\dis}_{n}(T) \bydef \sum_{ k = 3}^T  \alpha_k^{\dis} X_k\, .
\end{equation*}
We will show that for any $\eps >0$, there is $T$ large enough so that  for all $t \ge T$ we have
\begin{align*}
\limsup_{n \to \infty} \P \left[ \left| \tilde W^{\dis}_{n}(t) 
-  W_{n}^{\dis} 
  \right| \ge \eps  \right ]\le \eps \,.
\end{align*} 
By Lemma~\ref{lemShortCycles}, part (2), the joint distribution of
$X_3, \dots , X_t$ converges to that of $Y_3, \dots, Y_t$, and so this
will prove that $W_{n}^{\dis}=\log Z^{\dis} - n f _{\dis}$ converges
to $W^{\dis}=\sum_{k \ge 3} \alpha_k^{\dis} Y_k$ in distribution.

Fix $\eps>0$. We begin with the formula from the proof of
Proposition~\ref{prop:limf} for $W_n^{\dis}$. Writing $G$ in place of
$G_{n}$,
\begin{equation*}
W_n^{\dis} =  \sum_{v \in V}    \left( \sum_{\Gamma \in \cC_{\dis}^{v} (G)} \frac{1}{u(\Gamma)}w^{\dis}(\Gamma)   -   \sum_{\Gamma \in \cC^{r}_{\dis}(\bT_\Delta)} \frac{1}{u(\Gamma)} w^{\dis}(\Gamma) \right)  .
\end{equation*}
Let $m = 200 \Delta \log (4n/\eps)/ \log q$. We can apply
Lemma~\ref{lemVertexTail} to obtain
\begin{equation}
  \label{eq:Qbrack}
\left| W_n^{\dis} -  \sum_{v \in V}    \left( \mathop{\sum_{\Gamma\in\cC_{\dis}^{v}(G)}}_{\|\Gamma\|\le m} \frac{1}{u(\Gamma)}w^{\dis}(\Gamma)   -   \mathop{\sum_{ \Gamma \in \cC^{r}_{\dis}(\bT_\Delta)}}_{\|\Gamma \| \le m } \frac{1}{u(\Gamma)} w^{\dis}(\Gamma)\right )   \right |  \le \eps /2  \,.
\end{equation}
 Note that the terms inside the
parentheses cancel exactly unless there is a cycle, necessarily of
length at most $m$, in the $m$-neighborhood of $v$. To measure the
error in $\tilde W^{\dis}_{n}(T) - W^{\dis}_{n}$ due to these cycles
we will reformulate~\eqref{eq:Qbrack} in a way that takes cancellations into account. 

A cluster $\Gamma$ appears in only one of the two sums in~\eqref{eq:Qbrack} for only two possible reasons: because the cluster contains a cycle in $G$, or because the cycle prevents a cluster on the tree from occurring in $G$.  For a cycle of length $k$ these possibilities only occur for clusters of size at least $k$ because smaller clusters in $G$ match the tree clusters exactly.  To account for the fact that a single cycle will appear in the neighborhood of many vertices in the sum above, we instead sum over the cycles in $G$ and remove the factor $1/u(\Gamma)$.  Formally, for each cycle $C$ let $v(C)$ be a distinguished vertex on the cycle, and let $\mathrm{Cyc}_m(G)$ be the set of all cycles of length at most $m$ of $G$.  Then the sum over $v$ in $V$ in~\eqref{eq:Qbrack} can be rewritten as
\begin{equation*}
  W^{\dis}_{n}(m)\bydef \sum_{C \in \mathrm{Cyc}_m(G)} \left( \mathop{\sum_{\Gamma\in\cC_{\dis}^{v(C)}(G)}}_{|C| \le \|\Gamma \|\le m} 
w^{\dis}(\Gamma) - \mathop{\sum_{\Gamma \in \cC_{\dis}^{r}(\bT_\Delta)}}_{|C| \le \|\Gamma \| \le m } w^{\dis}(\Gamma)\right ) \, .
\end{equation*}
If $G$ satisfies conclusion (3) of Lemma~\ref{lemShortCycles} then there is at most one cycle in the depth-$m$ neighborhood of each vertex, and hence
\begin{equation*}
  W^{\dis}_{n}(m) = \sum_{C \in \mathrm{Cyc}_m(G)} \left( \mathop{\sum_{\Gamma\in\cC_{\dis}^{r}(\bT_\Delta^{C})}}_{|C| \le \|\Gamma \|\le m} 
w^{\dis}(\Gamma) - \mathop{\sum_{\Gamma \in \cC_{\dis}^{r}(\bT_\Delta)}}_{|C| \le \|\Gamma \| \le m } w^{\dis}(\Gamma)\right ), 
\end{equation*}
where $\bT_\Delta^{C}$ is the $\Delta$-regular tree rooted at the
cycle $C$. We have used here that any cluster containing a polymer
that is not contained in the $m$-neighborhood of $r$ has size larger
than $m$, so there is no need to truncate $\bT_\Delta^{C}$ to a finite
depth. Moreover, \eqref{eq:Qbrack} can be rewritten as
\begin{equation} 
  \label{eq:Qbrack2}
\left| W_n^{\dis} - W_{n}^{\dis}(m)\right| \leq \eps/2. 
\end{equation}
By Lemma~\ref{lemVertexTail} we have that 
\begin{equation*}
  \left|\mathop{\sum_{\Gamma\in\cC_{\dis}^{r}(\bT_\Delta^{C})}}_{|C| \le \|\Gamma \|\le m} 
w^{\dis}(\Gamma) - \mathop{\sum_{\Gamma \in \cC_{\dis}^{r}(\bT_\Delta)}}_{|C| \le \|\Gamma \| \le m } w^{\dis}(\Gamma) \right | \le 2 q^{-|C|/200 \Delta}  
\end{equation*}
Using Lemma~\ref{lemShortCycles} part (1) this means the expected contribution to the error from cycles of length at least $T$ is at most 
\begin{equation*}
\E \left| \tilde W^{\dis}_{n}(T) - W^{\dis}_{n}(m)\right| \leq \sum_{t \ge T}2 \frac{(\Delta-1)^t}{2t} q^{-t/200 \Delta}  \,.
\end{equation*}
Then if $q \ge \Delta^{400 \Delta}$, the expected contribution is at most $\Delta^{-T}$.  If we take $T = |\log_\Delta (\eps^2/4)|$ then by Markov's inequality
\begin{equation*}
  \P \cb{\left| \tilde W^{\dis}_{n}(t) - W^{\dis}_{n}(m)\right| \geq \frac{\eps}{2}} \leq \frac{\eps}{2}.
\end{equation*}
for all $t\geq T$. Combining  this with~\eqref{eq:Qbrack2} we obtain
\begin{equation*}
  \P \cb{\left| \tilde W^{\dis}_{n}(t) - W^{\dis}_{n}\right| \geq \eps} \leq \eps,
\end{equation*}
for all $t\geq T$ as desired.

Part (2) of Proposition~\ref{propQDistribution} for $W_n^{\ord}$ can
be proven in the same way, and part (3) follows by combining the first
two parts since the cycle counts are coupled identically.  Next we
show that $Q/q\to 1$ in probability as $q\to\infty$.  It suffices to
prove that
\begin{equation*}
  \E \cb{ \left |\sum_{k\geq 3}(\alpha_k^{\ord}-\alpha_k^{\dis})Y_{k}
    \right| }  = o_q \left( 1 \right) 
\end{equation*}
as $q\to\infty$.  We can bound this by
\begin{align*}
 \E \cb{ \left |\sum_{k\geq 3}(\alpha_k^{\ord}-\alpha_k^{\dis})Y_{k} \right| }  &\le \sum_{k \ge 3} \E [Y_k] \left |\alpha_k^{\ord}-\alpha_k^{\dis} \right | \\
 &\le \sum_{k \ge 3}  \frac{(\Delta -1)^k}{2k} \cdot 4q^{-\frac{k}{200\Delta}} \\
 &\le \sum_{k \ge 3}  \exp \left [k\ob{ \log (\Delta-1) - \frac{\log q }{200 \Delta}  }  \right ] \\
 &= o_q \left( 1 \right)
\end{align*}
for $q=q(\Delta)$ sufficiently large, i.e., $q \ge \Delta^{400 \Delta}$. 

To conclude, observe that this last calculation (and exactly analogous
computations for parts (1) and (2)) verifies the conditions of Kolmogorov's
two-series theorem, implying the claimed almost sure absolute
convergence.
\end{proof}

\begin{proof}[Proof of Theorem~\ref{thmCritical}]
  Claim (1) follows by combining claims (1) and (2) of
  Proposition~\ref{propQDistribution}, and claim (3) is part of
  Proposition~\ref{propQDistribution} claim (3). 

Claim (2) follows from the stronger statements in the proof of Theorem~\ref{thmPhases}  that for $\beta \leq \beta_1$, $ \mu_{\dis}^{n} \xrightarrow{loc} \mu^{\free}$, and for  $\beta \geq \beta_0$, $ \mu_{\ord}^{n} \xrightarrow{loc} \mu^{\wire}$.   That proof also implies that the conditional measures both exhibit exponential decay of correlations (and central limit theorems) at $\beta_c$.
\end{proof}

\begin{proof}[Proof of Theorem~\ref{thm:Q}] The first two parts of
  this proposition are special cases of
  Proposition~\ref{propQDistribution}. The third part follows
  from the first two and Lemma~\ref{lem:ErrQual} as in the proof of
  Theorem~\ref{thmCritical} above.
\end{proof}

\section{Algorithms}
\label{sec:algor-cons}

The polymer models and estimates in Section~\ref{sec:polym-model-repr}
yield efficient approximate counting and sampling algorithms by
adapting the polymer model algorithms
from~\cite{HelmuthAlgorithmic2,JKP2,PottsAll2019stoc} to our current
setup.  In particular, if we assume $\eps > e^{-n/2}/2$, then by
Lemmas~\ref{lem:DisQual} and~\ref{lem:OrdQual} and
Corollary~\ref{cor:polyapprox} it suffices to find an FPTAS for
$\Xi^{\dis}$ when $\beta \le \beta_1$ and for $\Xi^{\ord}$ when
$\beta \ge \beta_0$, as well as polynomial-time sampling algorithms
for $\overline \nu_{\dis}$ and $\overline \nu_{\ord}$.

\subsection{Approximate counting}
\label{sec:approx}

The approximate counting algorithms
from~\cite{HelmuthAlgorithmic2,JKP2,PottsAll2019stoc} based on
truncating the cluster expansion have two main requirements: 1)
condition~\eqref{eq:approx}, or a similar statement giving an
exponentially small error bound, holds, and 2) one can list all polymers of
size at most $m$ and compute their weight functions in time
$\exp ( O(m +\log n))$.

\begin{lemma}
  \label{lem:ord-alg}
  There is an algorithm that lists all ordered polymers of size at
  most $m$ with running time $\exp(O(m + \log n))$.
\end{lemma}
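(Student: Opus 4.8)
The plan is to observe that an ordered polymer $\gamma$ is completely specified by the pair $\big(E(\gamma),E_u(\gamma)\big)$ together with the two defining conditions (i)--(ii), and then to brute-force enumerate candidates for this pair, using Lemma~\ref{lem:ord-gen} to test condition~(ii) and a standard graph traversal to evaluate the non-local weight. First I would enumerate every connected subgraph $H$ of $G$ with $|E(H)|\le m$; these are the candidates for $E(\gamma)$. By a standard branching search rooted at each vertex, together with the bound~\cite[Lemma~2.1 (c)]{BorgsChayesKahnLovasz} that the number of connected subgraphs with $k$ edges through a fixed vertex is at most $(e\Delta)^{k}$, there are at most $n\sum_{k\le m}(e\Delta)^{k}=\exp(O(m+\log n))$ such subgraphs, and they can be listed in time $\exp(O(m+\log n))$ (duplicates, arising because each subgraph is found from each of its vertices, are removed with a dictionary keyed on the sorted edge list at a $\mathrm{poly}(m,\log n)$ cost per subgraph).

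Then, for each such $H$ and each of the at most $2^{|E(H)|}\le 2^{m}$ edge-labellings $\ell\colon E(H)\to\{\text{occupied},\text{unoccupied}\}$, I would set $F\bydef\ell^{-1}(\text{unoccupied})$ and test whether $(H,\ell)$ is an ordered polymer: condition~(i) is the check $|F|\le\eta n$; condition~(ii) is verified by running the inductive boundary procedure of Section~\ref{sec:bound-occup-edges} on $F$ inside $G$, which by Lemma~\ref{lem:ord-gen} terminates in time $O(|F|^{2})=O(m^{2})$ and returns $\cB_{\infty}(F)$, and one accepts iff $\cB_{\infty}(F)=E(H)$. For each accepted pair, output the polymer $\gamma=(V(H),E(H))$ with labelling $\ell$ together with its weight $w^{\ord}_{\gamma}=q^{c'(\gamma)}(e^{\beta}-1)^{-|F|}$, where the non-local quantity $c'(\gamma)$ — the number of connected components of $(V,E\setminus F)$ of size less than $n/2$ — is computed by one linear-time traversal of $G$ with the edges of $F$ deleted, in time $O(\Delta n)$.

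This enumeration is complete and, after deduplication, non-redundant: by definition $E(\gamma)$ is a connected subgraph of $G$ with $|E(\gamma)|\le m$, so it appears among the $H$, and the labelling of $\gamma$ declares exactly $E_u(\gamma)$ unoccupied, so it appears among the $\ell$; conversely any pair passing both tests is by definition an ordered polymer of size at most $m$. The running time is the number of (subgraph, labelling) pairs, $\exp(O(m+\log n))\cdot 2^{m}$, times the per-pair cost $O(m^{2})+O(\Delta n)$, which is $\exp(O(m+\log n))$, as claimed. I do not expect any genuine mathematical obstacle here; the only points needing care are that a polymer is faithfully recoverable from the enumerated data (so the search neither misses polymers nor needs to consider connected subgraphs of size exceeding $m$), and that conditions~(i)--(ii) and the weight $w^{\ord}_{\gamma}$ can each be evaluated in $\mathrm{poly}(m,n)$ time — which is precisely what Lemma~\ref{lem:ord-gen} and a standard connected-components computation provide. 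The companion statement for disordered polymers is proved the same way, and is strictly easier since condition~(ii) and the factor $c'(\gamma)$ are absent and a polymer is simply a connected subgraph of $G$ with at most $m$ edges.
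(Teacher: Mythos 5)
Your proposal is correct and follows essentially the same route as the paper: enumerate all labelled connected edge sets with at most $m$ edges (using the $n(e\Delta)^k$ count), run the boundary construction of Lemma~\ref{lem:ord-gen} on the unoccupied edges of each candidate, and keep exactly those pairs for which the construction returns the candidate itself. The extra details you include (deduplication, the check $|F|\le\eta n$, and the $O(\Delta n)$ computation of $c'(\gamma)$ for the weights) are harmless additions; the paper defers the weight computation to the proof of the FPTAS.
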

\begin{proof}
  We can enumerate all connected edge sets of size at most $m$ in time $n
  (e\Delta)^{m}$, and hence can create a list $L$ of all labelled
  connected edge sets of size at most $m$ in time $m \exp(O(m))$. 

  By Lemma~\ref{lem:ord-gen} it takes time $\mathrm{poly}(K)$ to
  create the ordered polymer corresponding to set of $K$ unoccupied
  edges. For each element $(\gamma,\ell)$ of $L$ we apply the ordered polymer
  construction to the subset of edges of $\gamma$ labelled `unoccupied'. If
  this construction returns $(\gamma,\ell)$ we retain the element,
  otherwise we discard it. In time $\mathrm{poly}(m)
  \exp(O(m))=\exp(O(m))$ we obtain a complete list of all ordered polymers.
\end{proof}

\begin{proof}[Proof of Theorem~\ref{thmAlgorithm}, FPTAS]
  Suppose $\eps>e^{-n/2}$. 
  Note that a polynomial-time algorithm to
  compute $T_{m}^{\ord}$ (as defined in~\eqref{eq:Tmdef}) with $m=O(\log n/\eps)$ yields a
  polynomial-time algorithm for an $\epsilon$-approximation to
  $\Xi^{\ord}$ by~\eqref{eq:approx} whenever $\beta> \beta_{0}$, as in this
  case the cluster expansion converges. The same statement holds true
  for $\beta<\beta_{1}$ for approximation $\Xi^{\dis}$ by
  $T_{m}^{\dis}$. In turn, Corollary~\ref{cor:polyapprox} implies that this
  gives an FPTAS for $Z$. The existence of a polynomial-time algorithm
  to compute $T_{m}^{\ord}$ and $T_{m}^{\dis}$ can be seen as follows.

  By~\cite[Lemma~2.2]{PottsAll2019stoc}, an algorithm for computing
  $T_{m}^{\ord}$ (resp.\ $T_{m}^{\dis}$) exists provided there are
  polynomial-time algorithms to
  \begin{enumerate}
  \item list all polymers of given size $m\leq O(\log n)$,
  \item compute the weights of all polymers of size $m\leq O(\log n)$.
  \end{enumerate}

  Listing and computing the weights of disordered polymers of size $m$
  in time exponential in $m$ is elementary: all connected subgraphs on
  $k$ edges can be listed in time $n (e\Delta)^{k}$, and the weights
  are given by an explicit formula. 

  Listing ordered polymers of a given size $m$ in time exponential in
  $m$ can be done by Lemma~\ref{lem:ord-alg}. To compute the weights
  in time polynomial in $n$ requires computing the number $c'(\gamma)$
  of connected components induced by a polymer. Since it takes time
  $\abs{C(x)}$ to determine the connected component $C(x)$ of a vertex
  $x$, this can be done in time $n$ for each polymer. 

  This completes the proof when $\eps>e^{-n/2}$.
  When $\eps\leq e^{-n/2}$, one can obtain an FPTAS 
  by brute-force enumeration, as the total number
  of configurations is $2^{\Delta n/2}$.
\end{proof}

\subsection{Approximate sampling}
\label{sec:sample}

Since an efficient sampling algorithm for the Potts model when $q$ is
a positive integer follows from an efficient algorithm for the random
cluster model by the Edwards--Sokal coupling, we describe our
efficient sampling algorithm only for the random cluster model.

\begin{proof}[Proof of Theorem~\ref{thmAlgorithm}, Sampling] We
  consider only $\eps>e^{-n/2}$, as smaller $\epsilon$ can be handled
  by brute force. 

  When the disordered (respectively, ordered) cluster expansion
  converges we obtain an efficient approximate sampling algorithm for
  the measure $\bar\nu_{\dis}$ (respectively, $\bar\nu_{\ord}$)
  induced by the disordered polymer model
  by~\cite[Theorem~10]{HelmuthAlgorithmic2}; note that we have
  verified the conditions of this theorem in the previous section. By
  Lemma~\ref{lem:DisQual}, we thus obtain an efficient approximate
  sampling algorithm for $\mu_{\dis}$, the random cluster model
  conditional on the event that the configuration lies in
  $\Omega_{\dis}$, when $\beta\le \beta_{1}$.  Similarly we obtain
  efficient approximate sampling algorithms for $\mu_{\ord}$ when
  $\beta\geq \beta_{0}$.  By the approximate counting part of
  Theorem~\ref{thmAlgorithm}, which we have already proved, we can
  efficiently approximate the relative probabilities of
  $\Omega_{\ord}$ and $\Omega_{\dis}$. We thus obtain an efficient
  approximate sampling algorithm for the $q$-random cluster model by
  Corollary~\ref{cor:polyapprox}.
\end{proof}

\subsection{Application to random $\Delta$-regular  graphs}
\label{sec:RRG}

Corollary~\ref{thmAlgRandom} follows directly from Theorem~\ref{thmAlgorithm} and Proposition~\ref{prop:RRG-exp} in Section~\ref{sec:RRGexp}.

\section*{Acknowledgements}
We thank Anand Louis for pointing us to~\cite{guruswami2013rounding} and Guus Regts for helpful comments.
WP supported in part by NSF grants DMS-1847451 and CCF-1934915. TH was
supported in part by EPSRC grant EP/P003656/1.

\end{document}